\theoremstyle{plain}
\newtheorem{thm}{Theorem}[section]
\newtheorem{lem}[thm]{Lemma}
\newtheorem{cor}[thm]{Corollary}
\theoremstyle{definition}
\theoremstyle{remark}
\newtheorem{rmk}[thm]{Remark}
\newtheorem{ques}[thm]{Question}
\newtheorem{eg}[thm]{Example}
\newtheorem{con}[thm]{Conjecture}
\newcommand{\sA}{\mathscr{A}}
\newcommand{\sO}{\mathscr{O}}
\newcommand{\sW}{\mathscr{W}}
\newcommand{\bC}{\mathbb{C}}
\newcommand{\bD}{\mathbb{D}}
\newcommand{\bZ}{\mathbb{Z}}
\newcommand{\bR}{\mathbb{R}}
\newcommand{\B}{\mathcal{B}}
\newcommand{\D}{\mathcal{D}}
\newcommand{\dd}{\mathbb{D}}
\newcommand{\BH}{\mathcal{B(H)}}
\newcommand{\bN}{\mathbb{N}}
\newcommand{\cH}{{\mathcal{H}}}
\newcommand{\cK}{{\mathcal{K}}}
\newcommand{\cM}{{\mathcal{M}}}
\newcommand{\cA}{{\mathcal{A}}}
\newcommand{\lhdd}{\preccurlyeq}
\newcommand{\eps}{\varepsilon}
\newcommand{\ep}{\varepsilon}
\newcommand{\al}{\alpha}
\newcommand{\sig}{\sigma}
\newcommand{\iid}{\textup{i.i.d.}~}
\newcommand{\ind}{\operatorname{ind}}
\newcommand{\essran}{\operatorname{essran}}
\newcommand{\KH}{\mathcal{K(H)}}
\newcommand{\card}{\textup{card}}
\newcommand{\om}{\omega}
\newcommand{\Om}{\Omega}
\newcommand{\ran}{\textup{ran~}}
\newcommand{\rank}{\textup{rank}}
\newcommand{\Lat}{\textup{Lat}}
\newcommand{\sot}{\textsc{sot}}
\newcommand{\la}{\langle}
\newcommand{\ra}{\rangle}
\newcommand{\lam}{\lambda}
\newcommand{\bP}{\mathbb{P}}
\newcommand{\bE}{\mathbb{E}}
\begin{document}

\title[Random weighted shifts]{Random weighted shifts}

%----------Author 1
\author[G. Cheng]{Guozheng Cheng}
\address{Department of Mathematics\\ Sun Yat-sen University\\  Guangzhou 510275\\ P. R. China }
\email{chenggzh@mail.sysu.edu.cn}

%----------Author 2
\author[X. Fang]{Xiang Fang}
\address{Department of Mathematics \\
National Central University\\
Chungli, Taiwan }
\email{ xfang@math.ncu.edu.tw}

%----------Author 3
\author[S. Zhu]{Sen Zhu}
\address{Department of Mathematics\\Jilin University\\Changchun 130012\\P. R. China}
\email{zhusen@jlu.edu.cn}

%\thanks{This work is supported by....}

\subjclass[2010]{60H25; 47B37}
\keywords{Random operator, non-selfadjoint operator,
weighted shift, spectra, random Hardy space, invariant subspace, the Beurling-Halmos-Lax theorem, the von Neumann inequality}

\begin{abstract}
In this paper we initiate the study of a  fundamental yet untapped random model of non-selfadjoint, bounded linear operators acting on a separable complex Hilbert space.
We replace the weights $w_n=1$ in the classical unilateral shift $T$, defined as  $Te_n=w_ne_{n+1}$, where $\{e_n\}_{n=1}^\infty$ form an orthonormal basis of a complex Hilbert space, by a sequence of i.i.d. random variables $\{X_n\}_{n=1}^{\infty}$; that is, $w_n=X_n$. This paper answers  basic questions concerning such a model. We propose that this model can be studied in comparison with the classical Hardy/Bergman/Dirichlet spaces in function-theoretic operator theory.

We calculate the spectra and determine their fine structures   (Section \ref{S:spectral}).
We classify the samples up to four equivalence relationships (Section \ref{S:SampleClassify}).
 We introduce a family of random Hardy spaces and determine the growth rate of the coefficients of analytic functions in these spaces (Section \ref{S:random Hardy}).
 We compare them with three types of classical operators (Section \ref{S:vonNeumanIneq}); this is achieved in the form of generalized von Neumann inequalities.
 The invariant subspaces are shown to admit arbitrarily large indices and their semi-invariant subspaces model arbitrary contractions almost surely. We discuss a Beurling-type theorem   (Section \ref{S:InvSubSpa}).
 We  determine various non-selfadjoint algebras  generated by $T$ (Section \ref{S:algebra}).
 Their dynamical properties are clarified  (Section \ref{S:Dynamic}).
 Their iterated Aluthge transforms are shown to converge  (Section \ref{S:Aluthge}).

In summary, they provide a new random model from the viewpoint of probability theory, and they provide a new class of analytic functional Hilbert spaces from the viewpoint of operator theory.
The technical novelty in this paper is that the methodology used  draws from three (largely separate) sources:  probability theory, functional Hilbert spaces, and the approximation theory of bounded operators.
\end{abstract}

\date{\today}
\maketitle

\tableofcontents

%%%%%%%%%%%%%%%%%%%%%%%%%%%%%%%%%%%%%%%%%

\section{Introduction}

  The problems considered in the present paper answer basic questions concerning the random counterpart of a canonical, non-selfadjoint, bounded linear operator acting on a separable, complex Hilbert space. In doing so we hope to lay out a foundation for  further  investigation of such a  model, especially in comparison with the Hardy/Bergman/Dirichlet spaces in operator theory.
%This may be regarded as an effort to build up the theory of random  linear transformations on infinite dimensional spaces.
The background is as follows.
 Currently random matrix theory (RMT) has evolved into a remarkably sophisticated subject.
Random operator theory (ROT), especially for self-adjoint (and unbounded) operators, can be  traced back to the seminal work of P. Anderson \cite{Anderson} in 1958 in his study of localization of disordered systems. (In contrast, E. Wigner proposed his celebrated random matrix model in 1955 to model the nuclei of heavy atoms \cite{Wigner}.) Self-adjoint (and unbounded) ROT deals mostly with difference and differential operators so far, e.g., random Schr\"{o}dinger operators \cite{Random operator, car,klein,lang}; the main theme is, like RMT,  spectral analysis. This has met with great success in both mathematics and mathematical physics.

  On the other hand, non-selfadjoint (and bounded) ROT, fair to say, is still in its infancy, if it exists at all. Upon reviewing existing literature, it is perhaps easy  to conclude that there is a rather large gap in our current knowledge concerning the random theory  of  bounded non-selfadjoint operators acting on infinite dimensional Hilbert spaces. The latter indeed  form a focus of modern operator theory in the last several decades.
There are certainly quite a few references where ``randomness'' meets ``non-selfadjoint'' operators and we shall review some of them at the end of this introduction. In a nutshell, the results so far are usually either isolated or under a general framework;  it seems that no effort has ever been made for a systematic study of any concrete model.
%This is reminiscent of the historical order of the development  of linear transformations,  roughly divided into three stages: from matrices to self-adjoint operators to non-selfadjoint operators.

    This paper represents some initial effort to study random operator theory in parallel to non-selfadjoint, bounded operator theory. Our main contribution  is perhaps to convince people, at least  us, that  such a pursuit is meritorious and our model is indeed meaningful and deserves further investigation.
We consider one of the most important examples in non-selfadjoint operator theory, corresponding to the Hardy space $H^2(\dd)$; namely, the unilateral shift:
$$Te_n=w_ne_{n+1}, \quad n =1,2,3, \cdots,$$
where the weights  are simply
 \begin{equation}\label{E:w1} w_n=1, \qquad n =1,2,3, \cdots, \end{equation} and $\{e_n\}_{n=1}^{\infty}$ is an orthonormal basis for a separable, complex Hilbert space $\cH$. This makes perfect sense for real Hilbert spaces, but we choose ``complex'' to take advantage of the toolkit for analytic functions.
The goal is then to study  the ensemble obtained by replacing $w_n=1$ with a sequence of i.i.d. random variables, i.e.,
\begin{equation}\label{E:wX}
w_n=X_n, \quad n =1,2,3, \cdots.
\end{equation}

  In the deterministic setting, it  is probably hard to overestimate the influence of the unilateral shift  (\ref{E:w1}) in non-selfadjoint operator theory, and in complex analysis, via its incarnation as the multiplication operator on the Hardy space $H^2(\mathbb{D})$ over the unit disk. For instance,  the classification of the lattice of invariant subspaces of $T$ in (\ref{E:w1}) leads to the celebrated Beurling theorem \cite{Beurling}, with more contribution by P. Halmos \cite{Halmos61} and P. Lax \cite{Lax} later. The Beurling-Halmos-Lax theorem provides the impetus for numerous subsequent research works for more than half a century and remains active as of today.  Another outstanding example is the study of the extension theory of the $C^*$-algebra generated by $T$, resulting in the so-called BDF theory \cite{BDF,BDF77}, which answered a question of M. Atiyah in algebraic topology concerning finding the homology realization dual to K-theory, which is a cohomology. The BDF theory eventually evolved into a far-reaching field of noncommutative algebraic topology.
Moreover, it is well known that the deterministic unilateral shift plays an important role in system theory and control theory \cite{Nik1, Nik2, Linear}. In the weighted case, J. A. Ball and V. Bolotnikov discussed related ideas at length in the setting of weighted Hardy spaces \cite{Ball}.
It is plausible that the model introduced in this paper has similar roles to play.

  In the random setting, however, the territory  is largely uncharted and it is not even obvious what questions are meaningful for  (\ref{E:wX}). Ideas directly borrowed from operator theory and RMT  need to be  scrutinized.
Eigenvalues, for instance, occupy a central role in RMT, yet operators on infinite dimensional spaces may have none. The role of eigenvalues is sometimes replaced by that of invariant subspaces, which is investigated in Section \ref{S:InvSubSpa} of this paper. Another analogy with eigenvalues in RMT is the distribution of zero sets of random analytic functions. This is a fruitful idea, with important contributions from many researchers. Good entry points for this direction include \cite{HKPV, 2005 Acta, Sodin}.  In the present paper, associated with the model $T=T(\omega)$ is a random Hardy space $H^2_{\mu}=H^2_{\mu}(\omega)$ (Section \ref{S:random Hardy}). Here the randomness is on the Hilbert space instead of individual functions. Obtaining a Blaschke product-type characterization of the zero sets of these random Hardy spaces should provide an interesting contrast with both the classical Hardy space and random analytic functions. This is one of our future goals.

  Concerning our methodology, most techniques used in this paper draw from three sources, usually practiced by three largely different groups of analysts:  (i) probability theory; (ii) operator theory on Hilbert spaces of analytic functions, as represented by Shields' influential survey \cite{shields} and the enormous body of research it inspired up to this day; (iii) approximation of Hilbert space operators, as represented in the two-volume monographs of Apostol-Fialkow-Herrero-Voiculescu \cite{AFHV, Herr89}. The latter might be the most technical part of this paper.

  Next  the new findings of this paper are described  in more details.
As the precise statements often require considerable preparation, we decide to choose, in this introduction,  only  results which can be explained in somehow plain language. The  technical and complete theorems are deferred to later sections. Moreover, Sections \ref{S:random Hardy}-\ref{S:Aluthge} treat problems which can be  read largely independently with only a few exceptions, mostly on $C^*$-algebras. We do have, however, a unifying criteria for how we select our problems; namely, this paper seeks to answer the most fundamental questions, subject to our personal judgement, that an operator theorist might want to ask upon first hearing about  $T=T(\omega)$. Specialized topics such as the distribution of zero sets and Carleson measures, clearly important, will be taken up in the future.

  In the following the weight sequence $\{X_n\}_{n=1}^{\infty}$ is always assumed to be nonnegative, bounded i.i.d. random variables, and we shall write $T  \ \sim \{X_n\}_{n=1}^{\infty}$ when the weight sequence of $T$ is given by $\{X_n\}_{n=1}^{\infty}$. Please see Section \ref{S:Section2} for more on notations and assumptions.

   Section \ref{S:spectral} calculates the spectra of $T$. In fact,  all fine structures of the spectra of interests to us are determined (Theorem \ref{T:SpectPic}). In particular,  they  have a thick boundary except for the degenerate case, meaning that the essential spectrum of a generic sample $T(\omega)$  is  an (deterministic) annulus:
$$ \sigma_e(T)=\{z\in\bC: r\leq |z|\leq R\},$$ where $r=\min\essran X_1$ and $R=\max\essran X_1$. Here $\essran$ denotes the  essential range.
Indeed we discover that four different notions of spectral radii will  show up naturally in this study:
\begin{equation}\label{E:fourradii}
r\le r_0 \le r_p  \le R,
\end{equation}
where $r_0=e^{\mathbb{E} (\ln  X_1) }$ and $r_p=\mathbb{E} (X_1^p)$ with $p \in [1, \infty).$ This stands in sharp contrast with  deterministic operator theory and RMT and brings up rich complications in complex analysis. For instance, one may define several notions of Carleson measures resting on discs of different radii. All four are equal to one in the deterministic case. As for RMT, it is interesting to compare this phenomenon with the study of the spectral radius of random matrices, which, in case of i.i.d. entries,  is often (conjecturally) comparable to the square root of the size of the matrix \cite{Bordenave}.

  Section \ref{S:SampleClassify} classifies the samples. By this we mean that, for the random model $T=T(\omega)$, we  have indeed a family of bounded operators indexed by each $\omega \in \Omega$, hence a natural question is to study the equivalence relationship among them. We call this the classification problem. In Section \ref{S:SampleClassify} the samples $T(\omega)$'s are classified up to four equivalence relationships:
\begin{itemize}
\item similarity (Theorem \ref{T:similarity}),
\item asymptotical similarity (Theorem \ref{T:similarity}),
\item unitary equivalence (Theorem \ref{T:ApprUnitEqui}),
\item approximate unitary equivalence (Theorem \ref{T:ApprUnitEqui}).
\end{itemize}
%\begin{itemize}
%\item Unitarily equivalent;
%\item
%\item
%\item
%\end{itemize}
Moreover, algebraical equivalence takes more work, and is completed only in Section \ref{S:vonNeumanIneq} (Theorem \ref{T:AlgebraEquivalent}) after much preparation. The precise statements and proofs to these classification problems are rather technical but satisfacotry.
 The answer, in short, is that they usually exhibit rigidity up to unitary equivalence, and approximate unitary equivalence if $0\notin \text{ess ran} X_1$, but homogeneity  up to  approximate unitary equivalence if $0 \in \text{ess ran} X_1$. Then rigidity up to similarity:
$$\bP^2\{(\omega_1,\omega_2): T(\omega_1)\sim T(\omega_2)\} =0,$$ when $\{X_n\}$ are positive and $r<R$,
but homogeneity up to asymptotical similarity. Indeed, under mild technical restriction, there exists a deterministic bounded operator $A$ such that
$$\bP\{\om: T(\om) \sim_a A)\}=1.$$
We further characterize such $A$'s in terms of the spectral picture (Theorem \ref{T:similarity}).

Both as an application and as a tool in the above classification problems, we solve another fundamental problem concerning $T=T(\omega)$ (Theorem \ref{T:compacts}):
% 这里应该加个 Theorem 4.6 与 Theorem 1.1  只是数字的标记不同而已
%\red{
%\begin{thm}
%$\bP(\KH\subset C^*(T))=\begin{cases}
%1,& r>0,\\
%0,& r=0,
%\end{cases}$ where $\KH$ denotes the ideal of compact operators on the underlying space $\cH$ of $T$, and $C^*(T)$ denotes the $C^*$-algebra generated by $T$.
%\end{thm}
%%
\begin{thm}
Let $T\sim\{X_n\}_{n=1}^{\infty}$. Then
$$\bP\big(\KH\subset C^*(T)\big)=\bP\big(C^*(T)\cap\KH \ne\{0\}\big)=\begin{cases}
1,& 0\notin\essran X_1,\\
0,& 0\in\essran X_1.
\end{cases}$$
\end{thm}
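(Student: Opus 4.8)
The plan is to split according to whether $0\in\essran X_1$ and prove the two displayed probabilities separately. Since $\{\KH\subseteq C^*(T)\}\subseteq\{C^*(T)\cap\KH\neq\{0\}\}$, it suffices to show the \emph{larger} event has probability $1$ when $0\notin\essran X_1$, and the \emph{smaller} event has probability $0$ when $0\in\essran X_1$; the common values then follow by inclusion. For the easy direction ($0\notin\essran X_1$) I would first observe that this forces $\bP(X_1<\delta)=0$ for some $\delta>0$, so almost surely $X_n\ge\delta$ for every $n$. Then $T^*T=\operatorname{diag}(X_n^2)\ge\delta^2 I$ is invertible in $\BH$, hence invertible in the unital algebra $C^*(T)$ by spectral permanence, so $|T|^{-1}=(T^*T)^{-1/2}\in C^*(T)$. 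Consequently the partial isometry $V:=T|T|^{-1}\in C^*(T)$, and a direct check gives $Ve_n=e_{n+1}$, i.e. $V$ is the unweighted unilateral shift. Since $\KH\subseteq C^*(V)$ (the Toeplitz/Coburn algebra contains the compacts, e.g. via $I-VV^*=e_1\langle e_1,\cdot\rangle$), we conclude $\KH\subseteq C^*(V)\subseteq C^*(T)$ almost surely.

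For the hard direction ($0\in\essran X_1$) I would show that almost surely the Calkin projection $\pi$ restricts to an isometry on $C^*(T)$, i.e. $\|A\|_e=\|A\|$ for every $A\in C^*(T)$; injectivity of a $*$-homomorphism then forces $C^*(T)\cap\KH=\ker(\pi|_{C^*(T)})=\{0\}$. The engine is a single recurrence event $G$, independent of any particular $A$: for every $L\in\bN$ and all rational $\eps,\eta>0$ there are infinitely many $n$ with (i) $X_{n-1},\dots,X_{n-L}<\eps$ and (ii) $|X_{n+s}-X_{1+s}|<\eta$ for $0\le s\le L$. Conditioning on the initial block $(X_1,\dots,X_{L+1})$, which almost surely lies in the support of its law, matching it within $\eta$ is a positive-probability event on each far, disjoint window, while $0\in\essran X_1$ makes the insulating event $\{X_{n-1},\dots,X_{n-L}<\eps\}$ have positive probability; the second Borel--Cantelli lemma gives $\bP(G_{L,\eps,\eta})=1$, and a countable intersection gives $\bP(G)=1$.

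The main step is a transport-to-the-bulk argument. Fix $\omega\in G$ and $A\in C^*(T)$, approximate $A$ by a band-$\ell$ noncommutative polynomial $p=p(T,T^*)$ with $\|A-p\|<\delta$, and pick a finitely supported unit vector $u\in\operatorname{span}\{e_1,\dots,e_N\}$ with $\|pu\|\ge\|p\|-\delta$. Taking $L\ge N+\ell$ and a good index $n$ from $G$, I would transport $u$ to $v_n:=V^{n-1}u$, supported on $\{e_n,\dots,e_{n+N-1}\}$, and compare $p\,v_n$ with $V^{n-1}(pu)$ word by word: for each word $w$, if the path from a small index never reaches the left boundary then condition (ii) reproduces the weights up to $O(\eta)$, whereas if $w$ would have been annihilated by $T^*e_1=0$, then at the shifted location the path must consume one of the insulating weights $X_{n-1},\dots,X_{n-L}<\eps$, so its contribution is $O(\eps)$. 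This yields $\|p\,v_n-V^{n-1}(pu)\|\le C(\ell,\|T\|,\|p\|)(\eps+\eta)$, hence $\|A v_n\|\ge\|A\|-3\delta-C(\eps+\eta)$. As the $v_n$ march off to infinity they tend to $0$ weakly, so $\|A\|_e\ge\limsup_n\|A v_n\|\ge\|A\|-3\delta-C(\eps+\eta)$; letting $\eps,\eta\to0$ along rationals (all furnished by $G$) and then $\delta\to0$ gives $\|A\|_e\ge\|A\|$, hence equality.

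I expect the crux, and the main obstacle, to be exactly this transport estimate: making rigorous that an $\ell$-fold run of insulating weights faithfully mimics the hard boundary $T^*e_1=0$, so that the \emph{bulk} action of $p$ is reproduced arbitrarily far out with all error constants independent of $n$. This is precisely where $0\in\essran X_1$ enters (to guarantee arbitrarily long runs of arbitrarily small weights), and it is what fails in the easy direction, consistently with the dichotomy. One should also note, as a sanity check, that a boundary-supported compact such as $e_1\langle e_1,\cdot\rangle$ satisfies $\|\cdot\|_e=0\neq\|\cdot\|$, so the isometry conclusion correctly certifies that no such element survives in $C^*(T)$ when $0\in\essran X_1$.
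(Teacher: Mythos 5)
Your proposal is correct, and while the easy half coincides with the paper's, your hard half takes a genuinely different route. One small slip first: your opening reduction swaps the words \emph{larger} and \emph{smaller} --- from the inclusion $\{\KH\subseteq C^*(T)\}\subseteq\{C^*(T)\cap\KH\neq\{0\}\}$ one must prove the \emph{smaller} event has probability $1$ when $0\notin\essran X_1$ and the \emph{larger} event has probability $0$ when $0\in\essran X_1$; fortunately that is exactly what your proof then does, so this is a mislabeling, not a gap. For $0\notin\essran X_1$ your argument (spectral permanence gives $|T|^{-1}\in C^*(T)$, hence the unweighted shift $V=T|T|^{-1}\in C^*(T)$, and $C^*(V)\supseteq\KH$) is just a fleshed-out version of the paper's one-line remark that left-invertible weighted shifts generate the compacts. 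For $0\in\essran X_1$, however, the paper argues structurally: by the proof of Theorem \ref{T:ApprUnitEqui} (iii) (which rests on the recurrence Lemma \ref{L:constantCase} and the back-and-forth intertwining construction of Lemma \ref{L:WSAppUniEqu}), there is a deterministic shift $W$ with $T\cong_a W\cong W^{(\infty)}$ a.s., and then two elementary facts --- approximate unitary equivalence preserves the property $C^*(\cdot)\cap\KH=\{0\}$, and any operator unitarily equivalent to its infinite ampliation has this property --- finish it in two lines. You instead prove directly that the Calkin quotient map is isometric on $C^*(T)$ a.s., via an a.s. recurrence event (insulating runs of weights $<\eps$ followed by an $\eta$-copy of the initial weight block) and a word-by-word transport estimate in which $T^*e_1=0$ is mimicked, up to $O(\eps)$, by the insulating weights; injectivity of the restricted quotient then kills all compacts. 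The two arguments exploit the same phenomenon --- arbitrarily long runs of near-zero weights let the boundary behavior recur in the bulk, so $T$ acts like an infinite-multiplicity operator --- but what each buys is different: the paper's proof is essentially free once the classification machinery of Section \ref{S:SampleClassify} is in place (and that machinery is reused elsewhere, e.g.\ in Theorem \ref{T:ApprUnitEqui} (iv) and Theorem \ref{T:UnilateralAUE}), whereas yours is self-contained, avoids the delicate construction of Lemma \ref{L:WSAppUniEqu} entirely, and isolates the quantitative statement $\|A\|_e=\|A\|$ for all $A\in C^*(T)$. Two points in your sketch that genuinely need the care you indicate: the Borel--Cantelli step must be run conditionally on the (random) initial block, since the matching events for different windows are not unconditionally independent --- your conditioning remark handles this, though one could instead match a countable dense set of deterministic patterns as in Lemma \ref{L:constantCase}; and the error constant in the transport estimate depends on the coefficients and support size of the fixed polynomial $p$ (not just on $\|p\|$), which is harmless given your order of limits ($\eps,\eta\to 0$ before $\delta\to 0$).
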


  Here, $\KH$ is the ideal of compact operators on $\cH$ and $C^*(T)$ is the $C^*$-algebra generated by $T.$ Although the seemingly straightforward statement, the proof of the above theorem relies on some quite technical toolkit which we develop during the  classification problems.

  Section \ref{S:random Hardy} introduces the so-called random Hardy spaces associated with $T=T(\omega)$. The construction is familiar to operator theorists as laid out in \cite{shields}. Namely, each  weighted shift $T(\om)$ is unitarily  equivalent to the multiplication operator $M_z$ on a weighted Hardy space $H_{\mu}^2=H_{\mu}^2(\om),$ where $\mu$ denotes the law of $ X_1.$

  The first question on $H_{\mu}^2(\om)$ is about the  membership; namely,  what functions are in  $H_{\mu}^2(\om)$? To explain this more precisely, first of all, we observe  a zero-one law:
for any analytic function $f,$
\begin{equation*}\label{E:???}
\bP (\omega: \ f\in H_{\mu}^2(\om))\in \{0, 1\}.
\end{equation*}
This is a consequence of the Hewitt-Savage zero-one law. So we naturally introduce a deterministic function space
$$H_*=\{f(z): \ \  f\in H_{\mu}^2 \quad \text{a.s.}\}.$$
We  observe that functions in $H_{\mu}^2(\om)$ almost surely live on a disk with radius $e^{\bE(\ln X_1)}$ (Lemma \ref{T:convergence radius}).   Interesting  examples, under mild restriction and the assumption that the convergence radius of $H^2_\mu$ is almost surely one, include \begin{equation}\label{E:na} f(z)=\sum_{n=1}^{\infty}\frac{1}{n^{{\al}}}z^n \notin H_{\ast}
\end{equation} for any $\al >0$, and \begin{equation}\label{E:nna}
f(z)=\sum_{n=1}^{\infty}\frac{1}{n^{n^{\al}}}z^n \in H_{\ast}\end{equation} if and only if $\al\geq \frac{1}{2}.$ This might be rather surprising from the viewpoint of deterministic operator theory on functional Hilbert spaces.
%
%We shall show that the optimal growth rate of the coefficients of elements in $H_\ast$ is, up to constants, $e^{-c\sqrt{n\ln\ln n}}$ for some $c>0$.
%
Indeed, we have the following  (Theorem \ref{T:LIL our case} and the examples after its proof).

\begin{cor}\label{C:oneovere}
Let  $\bE(\ln X_1)=0$ and $\bE\big((\ln X_1^2)^2\big)=\sig^2 \in (0, \infty),$ and let $f(z)=\sum_{n=0}^{\infty}a_nz^n \in H_*$ be any element in $H_*$. Then
\begin{equation}\label{E:small}\sup_{f \in H_\ast} \limsup\limits_{n\to\infty}|a_n|^{\frac{\sqrt{2}}{\sig \sqrt{n\ln\ln n}}} \in [e^{-1}, 1].\end{equation}  Moreover, each $a \in  [e^{-1}, 1]$ may be achieved.
\end{cor}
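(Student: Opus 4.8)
The plan is to first translate membership in $H_*$ into a pointwise growth condition on the coefficients and then feed the law of the iterated logarithm into it. By the Shields functional model already used to define $H^2_\mu$, a power series $f=\sum a_n z^n$ lies in $H^2_\mu(\om)$ if and only if $\sum_n|a_n|^2\beta_n(\om)^2<\infty$, where $\beta_n=\prod_{k=1}^n X_k$, so that $\ln\beta_n=S_n:=\sum_{k=1}^n\ln X_k$; the Hewitt--Savage zero--one law turns this into $f\in H_*\iff\sum_n|a_n|^2\beta_n^2<\infty$ a.s. Writing $L_n=\sqrt{n\ln\ln n}$, I first record the identity
$$\limsup_{n\to\infty}|a_n|^{\frac{\sqrt2}{\sig L_n}}=\exp\Big(\tfrac{\sqrt2}{\sig}\,\rho(f)\Big),\qquad \rho(f):=\limsup_{n\to\infty}\frac{\ln|a_n|}{L_n}.$$
Since $\bE\ln X_1=0$ and $\operatorname{Var}(\ln X_1)=\sig^2/4$ (because $\bE((\ln X_1^2)^2)=4\bE((\ln X_1)^2)=\sig^2$), Theorem \ref{T:LIL our case} gives, almost surely, $\limsup_n \ln\beta_n/L_n=\sig/\sqrt2$ and $\liminf_n\ln\beta_n/L_n=-\sig/\sqrt2$. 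Thus the corollary reduces to two claims: (i) $\rho(f)\le0$ for every $f\in H_*$, giving the upper bound $1$; and (ii) every $\rho\in[-\sig/\sqrt2,0]$ is realized by some $f\in H_*$, so that every value in $[e^{-1},1]$ is attained.

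For (i), suppose $\rho(f)>0$ and pick a deterministic subsequence $m_k$ with $\ln|a_{m_k}|\ge\frac{\rho}{2}L_{m_k}$. Membership forces $|a_{m_k}|^2\beta_{m_k}^2\to0$ a.s., i.e. eventually $S_{m_k}\le-\frac{\rho}{2}L_{m_k}$. But the central limit theorem gives $\bP(S_{m_k}>-\frac\rho2 L_{m_k})\ge\frac12-o(1)$, and after passing to a rapidly growing sub-subsequence along which the relevant increments are independent, the second Borel--Cantelli lemma yields $S_{m_k}>-\frac\rho2 L_{m_k}$ infinitely often a.s.---a contradiction. Hence $\rho(f)\le0$ and $Q(f)\le1$ throughout $H_*$. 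The complementary range $[0,e^{-1})$ (i.e. $\rho<-\sig/\sqrt2$) is attained even by regular coefficients $a_n=e^{\rho L_n}$, since the LIL upper bound makes $|a_n|^2\beta_n^2\le e^{(2\rho+\sqrt2\,\sig+\eps)L_n}$ summable; this is precisely why $[e^{-1},1]$ is the nontrivial interval.

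For (ii), fix $\rho\in[-\sig/\sqrt2,0]$ and a super-sparse subsequence, say $n_k=2^{2^k}$, setting $a_{n_k}=e^{\rho L_{n_k}}$ and $a_n=0$ otherwise, so that $\rho(f)=\rho$ by construction. The crucial probabilistic input is that on such a sparse set the walk stays far below its LIL envelope: the increments $S_{n_k}-S_{n_{k-1}}$ are independent of Gaussian scale $\sqrt{n_k}$, whence $\bP(|S_{n_k}|>c L_{n_k})\approx(\ln n_k)^{-c^2/(2\tau^2)}$ with $\tau^2=\sig^2/4$, which is summable for every $c>0$; the first Borel--Cantelli lemma then gives $S_{n_k}=o(L_{n_k})$ a.s. Consequently $|a_{n_k}|^2\beta_{n_k}^2=e^{2\rho L_{n_k}+o(L_{n_k})}$, summable when $\rho<0$. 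At the endpoint $\rho=0$ one instead takes $a_{n_k}=e^{-\phi(n_k)}$ with $\phi(n_k)=o(L_{n_k})$ chosen to dominate the a.s. size of $S_{n_k}$ (e.g. $\phi(n_k)=\sqrt{n_k}\,\ln k$), which keeps $\rho(f)=0$ while forcing summability. In every case $f\in H_*$ with $Q(f)=e^{\sqrt2\,\rho/\sig}$, so all of $[e^{-1},1]$ is realized and the supremum equals $1$.

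The main obstacle is step (ii): one must show that deterministic, slowly decaying coefficients placed on a sparse set evade the almost-sure spikes of $\beta_n^2=e^{2S_n}$. This is the exact counterpart of the failure of regular decay above the LIL threshold---if $a_n=e^{\rho L_n}$ for all $n$ with $\rho>-\sig/\sqrt2$, then the LIL drives $|a_n|^2\beta_n^2\to\infty$ along the random extremal times and $f\notin H_*$---whereas a sufficiently sparse support a.s. misses those times. Calibrating the sparsity (and the endpoint correction $\phi$) against the sharp large-deviation rate $(\ln n_k)^{-c^2/(2\tau^2)}$ while simultaneously pinning $\rho(f)$ to its prescribed value is the technical heart of the argument; by comparison, the upper bound in step (i) is routine once the second Borel--Cantelli set-up is in place.
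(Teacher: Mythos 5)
Your strategy is sound and, at its core, is the same as the paper's: the constraint $\sup_{f\in H_*}Q(f)\le 1$ comes from an LIL-type argument (this is exactly Theorem \ref{T:LIL our case}(ii), which you could simply cite instead of re-deriving), and achievability of each $a\in[e^{-1},1]$ comes from placing slowly decaying coefficients on a doubly exponential sparse set, along which the walk $S_n=\sum_{j\le n}\ln X_j$ stays $o(\sqrt{n\ln\ln n})$ below its LIL envelope. Your execution of the achievability step is in fact cleaner than the paper's: by setting $a_n=0$ off the sparse set you only have to control the sparse terms, whereas the paper's Example \ref{Example:Convergence} keeps nonzero coefficients everywhere (so that the same example also pins the liminf for Theorem \ref{T:LIL our case}(iv)) and therefore needs Feller's integral test (Theorem \ref{T:upper and lower}, via Lemma \ref{L:b_n con}) in addition to the subsequence LIL. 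Your bookkeeping is correct throughout: $\operatorname{Var}(\ln X_1)=\sig^2/4$, $Q(f)=\exp\bigl(\tfrac{\sqrt2}{\sig}\rho(f)\bigr)$, and the endpoint correction $\phi(n_k)=\sqrt{n_k}\,\ln k$ is simultaneously $o(\sqrt{n_k\ln\ln n_k})$ (so $\rho(f)=0$) and much larger than the true a.s.\ size of $S_{n_k}$.

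Two steps are, however, stated as heuristics and need rigorous replacements; both replacements exist, and the paper supplies them. First, your tail bound $\bP\bigl(|S_{n_k}|>c\sqrt{n_k\ln\ln n_k}\bigr)\approx(\ln n_k)^{-c^2/(2\tau^2)}$ is a moderate-deviation claim: it does not follow from the CLT, and since $\ln X_1$ is only assumed square-integrable (not bounded below), Hoeffding-type inequalities do not apply, while Chebyshev gives only $O(1/\ln\ln n_k)=O(1/k)$, which is not summable. The correct citation is Gut's subsequence LIL, the paper's Theorem \ref{T:sup<1}: for $n_k=2^{2^k}$ one has $\limsup_k n_k/n_{k+1}<1$ and $\sum_k(\ln n_k)^{-\ep^2/2}<\infty$ for every $\ep>0$, hence $\ep^{\ast}=0$ and $S_{n_k}=o(\sqrt{n_k\ln\ln n_k})$ a.s.; the sharper bound $\limsup_k S_{n_k}/\sqrt{2\tau^2 n_k\ln k}=1$ a.s., needed to see that $\phi(n_k)=\sqrt{n_k}\,\ln k$ dominates $S_{n_k}$ at the endpoint $\rho=0$, is the paper's Theorem \ref{T:nlog k}. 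Second, in your upper-bound step the events $[S_{m_k}>-\tfrac{\rho}{2}\sqrt{m_k\ln\ln m_k}]$ are not independent along any sub-subsequence, since they all share the initial segment of the walk; the second Borel--Cantelli lemma must instead be applied to the independent block increments $\Delta_j=S_{m_{k_{j+1}}}-S_{m_{k_j}}$ (whose probability of being nonnegative tends to $1/2$ by the CLT), and the initial segment $S_{m_{k_j}}$ must be controlled separately, e.g.\ by the a.s.\ LIL lower bound combined with choosing $m_{k_{j+1}}$ so large that $\sqrt{m_{k_j}\ln\ln m_{k_j}}=o\bigl(\sqrt{m_{k_{j+1}}\ln\ln m_{k_{j+1}}}\bigr)$. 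With these two patches your argument is complete and yields the corollary, including the stronger conclusion that the supremum equals $1$.
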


%%%%%%%%%%

  An unexpected  phenomenon about    $H_\ast$ is the existence of ``large elements". That is,  its elements admit a dichotomy  (Lemma \ref{T:norm p}), according to $$\bE\big(||f||_{H^2_\mu}\big) <\infty \quad \text{or} \quad \bE\big(||f||_{H^2_\mu}\big)=\infty.$$   The latter means that although $f \in H^2_\mu$ almost surely, its norm in mean is infinite. We call them ``large  elements'' of $H_\ast$, which exist in abundance and behave differently from small elements, i.e., $\bE\big(||f||_{H^2_\mu}\big) <\infty$. They make the problem of even how to define Carleson measures intriguing.  This leads to a natural question:

  \textbf{Problem:} Does  $H_\ast$ admit a canonical Banach space norm?

  It is clearly a vector space. The straightforward choice  of norm $||f||_{H_\ast}\doteq \bE\big(||f||_{H^2_\mu}\big) $ fails because of the existence of large elements in $H_\ast$.

  Since $T=\{T(\omega)\}_{\omega \in \Omega}$ form a new class of bounded operators, in order to better understand them, we  compare them with well known classical operators. This is achieved in the form of generalized von Neumann inequalities in Section \ref{S:vonNeumanIneq}. The comparison is performed for
  \begin{itemize}
  \item a deterministic weighted shift (Theorem \ref{T:UnilateralAUE}),
  \item a bilateral weighted shift (Theorem \ref{T:BilateralAUE}),
  \item a normal operator (Theorem \ref{T:vonNeum-Normal}).
  \end{itemize}
The results are quite satisfactory but the details are rather technical and complicated, so here we mention only two  special cases. The first is the following (Corollary \ref{C:extreme}).

\begin{cor}\label{C:01}
Let  $T\sim\{X_n\}_{n=1}^\infty$ with $\essran X_1=[0,1]$.
If $A$ is a deterministic unilateral weighted shift, then $A\lhd T$ a.s. if and only if $\|A\|\leq 1$.
\end{cor}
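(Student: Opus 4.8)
The symbol $\lhd$ here should be read as the von Neumann domination relation: $A\lhd T$ means $\|p(A)\|\le \|p(T)\|$ for every polynomial $p$, so $A\lhd T$ a.s.\ asserts that for a.e.\ $\om$ this holds for all $p$ at once. Note that $\essran X_1=[0,1]$ forces $0\le X_n\le 1$ a.s., whence $\|T(\om)\|=\max\essran X_1=1$ a.s. The easy direction is then immediate: assuming $A\lhd T$ a.s., I would test the defining inequality on $p(z)=z$, obtaining $\|A\|\le\|T(\om)\|=1$ a.s., hence $\|A\|\le 1$. (This step uses nothing about $A$ being a weighted shift.)

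The substance is the converse, so assume $\|A\|\le 1$. Since $A$ is a Hilbert-space contraction, the classical von Neumann inequality gives $\|p(A)\|\le\|p\|_\infty:=\sup_{|z|\le 1}|p(z)|$ for every polynomial $p$. Thus it suffices to establish the a.s.\ lower bound $\|p(T(\om))\|\ge\|p\|_\infty$, simultaneously for all $p$. Because for fixed $\om$ both $p\mapsto\|p(A)\|$ and $p\mapsto\|p(T(\om))\|$ are continuous in the coefficients of $p$, I would first reduce to a countable dense set of polynomials (say those with Gaussian-rational coefficients) and then intersect the resulting countably many a.s.\ events; it is therefore enough to fix a single $p=\sum_{k=0}^{d}c_kz^k$ and prove $\|p(T(\om))\|\ge\|p\|_\infty$ a.s.

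The key mechanism is that $1\in\essran X_1$ lets $T$ imitate the unweighted shift on long blocks and thereby recover the full $H^\infty$-norm. Fix $\ep>0$. Since the finite truncations of the analytic Toeplitz operator $p(S)$ converge in norm to $\|p(S)\|=\|p\|_\infty$, I can pick $N$ and a unit vector $x=(x_1,\dots,x_N)$ for which the convolution vector $\sum_i x_i\sum_k c_k\,e_{i+k}$ has norm exceeding $\|p\|_\infty-\ep/2$. Call a block of $N+d$ consecutive weights \emph{good} if they all lie in $(1-\eta,1]$; since $\bP(X_1>1-\eta)>0$, disjoint blocks are independent and equiprobable with positive probability, so by the second Borel--Cantelli lemma infinitely many good blocks occur a.s. On a good block starting at index $m$ set $v=\sum_{i=1}^{N}x_i\,e_{m+i}$; in the expansion of $p(T)v$ each product of consecutive weights differs from $1$ by at most $d\eta$, and a Young-type convolution estimate bounds the error vector by $d\eta\sum_k|c_k|$. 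Choosing $\eta$ small makes this $<\ep/2$, so $\|p(T(\om))\|\ge\|p(T(\om))v\|>\|p\|_\infty-\ep$ a.s. Letting $\ep\to 0$ along a sequence gives $\|p(T(\om))\|\ge\|p\|_\infty$ a.s., and the density reduction completes the proof.

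I expect the block-approximation step to be the main obstacle: one must verify that a sufficiently long run of weights close to $1$ recaptures essentially all of $\|p\|_\infty$, with an error controlled uniformly and driven to zero by making the run both long (large $N$) and high-quality (small $\eta$). Everything else—the trivial direction, the passage from a fixed $p$ to all $p$ by continuity and a countable intersection, and the probabilistic input (positivity of $\bP(X_1>1-\eta)$ together with Borel--Cantelli)—should be routine once this local comparison with the truncated shift is in place.
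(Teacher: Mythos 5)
There is a genuine gap, and it occurs before the proof even starts: you have misread the relation $\lhd$. In this paper $A\lhd T$ does \emph{not} mean $\|p(A)\|\leq\|p(T)\|$ for one-variable polynomials (that weaker relation is denoted $\lhdd$); immediately after Corollary \ref{C:01} the paper defines $A\lhd B$ to mean $\|p(A,A^*)\|\leq\|p(B,B^*)\|$ for all polynomials $p(x,y)$ in two \emph{free} (non-commuting) variables, i.e.\ polynomials in the operator \emph{and its adjoint}. Your argument (von Neumann's inequality for $\|A\|\leq1$, plus the a.s.\ lower bound $\|p(T)\|\geq\|p\|_\infty$) establishes only the $\lhdd$-statement, which is the paper's first, unlabelled lemma of Section \ref{S:vonNeumanIneq} and which holds for \emph{any} weight law with $R=1$; indeed for that statement your block construction is unnecessary, since $\sigma(T)=\overline{B(0,1)}$ a.s.\ (Lemma \ref{T:NormSpectral}) already gives $\|p(T)\|\geq\gamma(p(T))=\|p\|_\infty$ by the spectral mapping theorem. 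For the true relation $\lhd$ your mechanism breaks down: compressing to a block of weights near $1$ does not intertwine with polynomials involving $T^*$, because such a block spans a subspace that is far from reducing. Concretely, your sufficiency argument uses only $\bP(X_1>1-\eta)>0$, so if it were valid it would apply verbatim when $\essran X_1=\{1\}$, i.e.\ to $T=S$ a.s.; but $\tfrac12S$ is a contraction with $\tfrac12S\ntriangleleft S$, since $p(x,y)=yx-1$ gives $\|p(\tfrac12S,\tfrac12S^*)\|=\|\tfrac14I-I\|=\tfrac34>0=\|S^*S-I\|=\|p(S,S^*)\|$. This is also why the paper stresses that Corollary \ref{C:01} fails when $[0,1]$ is replaced by any smaller closed subset: the hypothesis $\essran X_1=[0,1]$, which your proof never uses in full, is essential for the two-variable statement.

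For comparison, the paper deduces Corollary \ref{C:extreme} from Theorem \ref{T:UnilateralAUE}\,(i): since $0\in\Gamma=[0,1]$, one has $A\lhd T$ a.s.\ if and only if $\Sigma_1(A)\subset\Gamma$, i.e.\ all weights of $A$ lie in $[0,1]$, i.e.\ $\|A\|\leq1$. The sufficiency there rests on two ingredients your sketch lacks. First, Lemma \ref{P:NSpectrum}: a.s.\ every finite pattern in $\Gamma^{s+1}$ is approximately realized in the weight sequence of $T$, in particular the pattern $(0,a_1,\dots,a_s)$ --- a \emph{leading zero} followed by the weights of $A$. Second, Lemma \ref{C:URSDomiUniWS}: after a perturbation $K$ of norm $<\eps$ that sets the near-zero weight exactly to $0$, the block carrying $a_1,\dots,a_s$ spans a \emph{reducing} subspace of $T+K$, and only then does the compression commute with $*$-polynomials (Lemma \ref{L:truncated}), giving $\|p(A^*,A)Q_n\|\leq\|p(T^*+K^*,T+K)\|$. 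The availability of weights near $0$ is exactly what tames the adjoint; runs of weights near $1$ cannot. Your necessity direction (testing $p(x,y)=x$) is fine, though the paper extracts the sharper necessary condition $\Sigma_1(A)\subset\Gamma$ from the unital $*$-homomorphism $C^*(T)\to C^*(A)$ of Lemma \ref{L:Homomor} and Corollary \ref{L:SpecDomina}.
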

\noindent Here $A\lhd B$ means that
$\|p(A,A^*)\|\leq \|p(B,B^*)\|$ for all polynomials $p(x,y)$ in two free variables. Roughly speaking, Corollary \ref{C:01} implies that among the set of contractive weighted shifts, $T$ is almost surely an extreme element in some sense. It is somehow unexpected that a similar result also holds for bilateral weighted shifts (Corollary \ref{C:bi-extreme}). It is perhaps somehow surprising to point out that the conclusion of Corollary \ref{C:01} fails if the interval $[0, 1]$ is replaced by any smaller closed subset of $[0, 1]$.

The second special case is that the comparison with  the standard unilateral shift  implies that if $R=1$, then almost surely
\begin{equation}\label{OpeFunc}
\qquad ||f||_{\infty}=||f(T)||_{\BH}, \qquad \forall f \in H^\infty(\bD).
\end{equation}
This implies that an $H^\infty$-functional calculus is available. Moreover, it can be viewed as a strong form of the von Neumann inequality. Section \ref{S:vonNeumanIneq} shows that much more can be said. The generalized von Neumann inequalities there are in fact some $C^*$-terminology in disguise. Arveson's notion of  ``algebraic equivalence''  (\cite{arveson}, P. 2) plays a role.  As an application, in Section \ref{S:algebra}  we describe when the $C^*$-algebra generated by $T=T(\omega)$ is GCR or simple  (Theorem \ref{T:GCR}).

   Section \ref{S:InvSubSpa} studies the lattice of invariant subspaces of $T$, a central topic in modern non-selfadjoint operator theory. For Hilbert spaces of analytic functions,  the paradigm is  the celebrated Beurling theorem  \cite{Beurling}, which concerns the case of the standard unilateral shift, with later contribution by P. Halmos \cite{Halmos61} and P. Lax \cite{Lax}, leading to what is known as the Beurling-Halmos-Lax theorem today. This has inspired a great amount of research, say, \cite{Bergman} for Bergman and \cite{1988Richter Dirichlet} for Dirichlet.
 Among other things, we show (Theorem \ref{T:ABFP} and Proposition \ref{C:Universal})
\begin{cor}\label{T:likeBergman}
Let  $T\sim\{X_n\}_{n=1}^\infty$ with  $0<r<R=1.$
\begin{enumerate}
\item[(i)]  $T$ is almost surely reflexive;
\item[(ii)]  $T$ has almost surely invariant subspaces with
an index of any finite number or infinity;
%\item[(ii)] \red{almost surely, each invariant subspace $\cM$ of $T$ coincides with the smallest invariant subspace containing $\cM\ominus T(\cM)$;} \red{omit}
\item[(iii)] If $A$ is a strict contraction (that is, $\|A\|<1$), then  there exist almost surely $\mathcal{N}, \mathcal{M}\in\Lat(T)$ with $\mathcal{N}\subset \mathcal{M}$ such that the compression of $T$ to ${\mathcal{M}\ominus \mathcal{N}}$ is unitarily equivalent to $A$.
\end{enumerate}
\end{cor}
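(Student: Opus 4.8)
The plan is to identify, almost surely, the operator $T$ as a member of the Bercovici--Foias--Pearcy class $\mathbb{A}_{\aleph_0}$ of dual algebras, and then read off (i)--(iii) from the structure theory of that class, which is precisely what Theorem \ref{T:ABFP} and Proposition \ref{C:Universal} package. Three almost-sure structural facts set this up. First, since the $X_n$ are i.i.d.\ with $\max\essran X_1=R=1$, almost surely $\sup_n X_n=1$, so $\|T\|=1$ and $T$ is a contraction; moreover $T^{*n}\to 0$ strongly (each basis vector is eventually annihilated) and $T^{n}\to 0$ strongly (the weight products tend to $0$ a.s.\ by the strong law, since $\bE(\ln X_1)<0$), so $T$ is a completely non-unitary $C_{00}$ contraction, hence absolutely continuous. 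Second, because $R=1$, the generalized von Neumann inequality (\ref{OpeFunc}) gives $\|f(T)\|=\|f\|_\infty$ for every $f\in H^\infty(\bD)$ almost surely; equivalently the Sz.-Nagy--Foias functional calculus is isometric, i.e.\ $T\in\mathbb{A}$ a.s. Third, by the spectral computation (Theorem \ref{T:SpectPic}) we have $\sigma_e(T)=\{z:\ r\le|z|\le 1\}$ a.s., which contains the unit circle $\partial\bD$.

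With these in hand I would invoke the dual-algebra criterion that an absolutely continuous contraction in $\mathbb{A}$ whose essential spectrum contains $\partial\bD$ belongs to $\mathbb{A}_{\aleph_0}$. Granting $T\in\mathbb{A}_{\aleph_0}$ a.s., parts (i) and (ii) are immediate from the Apostol--Bercovici--Foias--Pearcy theory: operators in $\mathbb{A}_{\aleph_0}$ are reflexive, and they possess invariant subspaces of every index $1,2,\dots,\infty$, measured by $\dim(\cM\ominus T\cM)$ (the ABFP index theorem). A useful auxiliary input, and an alternative source of the needed richness, is that $T^*$ has point spectrum filling the open disc $\{|\lambda|<r_0\}$ and is a Cowen--Douglas operator of index one on the inner disc $\{|\lambda|<r\}$: one checks directly that $\ker(T^*-\lambda)$ is spanned by $\sum_n \lambda^{n-1}(X_1\cdots X_{n-1})^{-1}e_n$, which is square summable exactly for $|\lambda|<r_0=e^{\bE(\ln X_1)}$ (by the strong law), and that $\lambda\mapsto$ this eigenvector is analytic with span equal to $\cH$; this furnishes the analytic family of functionals underlying the predual of $\cA_T$.

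For (iii) I would realize the given strict contraction $A$ through its Sz.-Nagy--Foias model: since $\|A\|<1$, $A$ is $C_{00}$ and is unitarily equivalent to the compression $P_{\cH(\Theta_A)}\,S|_{\cH(\Theta_A)}$ of a unilateral shift $S$ (of multiplicity equal to its defect index) to the semi-invariant model space $\cH(\Theta_A)=H^2_{\D_*}\ominus\Theta_A H^2_{\D}$. The universality property of $\mathbb{A}_{\aleph_0}$ operators (Proposition \ref{C:Universal}) then supplies a pair $\cN\subset\cM$ in $\Lat(T)$ for which the compression of $T$ to $\cM\ominus\cN$ is unitarily equivalent to this model, hence to $A$; concretely one uses part (ii) to locate inside $\Lat(T)$ an invariant subspace carrying the required shift structure and then compresses. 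All of this takes place on a single a.s.\ event, namely the intersection of the three events isolated at the start, so the conclusions are almost sure, and for (iii) the subspaces exist a.s.\ for each fixed $A$ (indeed simultaneously for all $A$).

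The main obstacle is the passage $T\in\mathbb{A}\Rightarrow T\in\mathbb{A}_{\aleph_0}$, which amounts to solving arbitrary countable systems $[L_{ij}]=[x_i\otimes y_j]$ in the predual $Q_T=\mathcal{C}_1/{}^{\perp}\cA_T$. The engine is the hypothesis $\partial\bD\subseteq\sigma_e(T)$, which provides, for boundary points $\zeta$, sequences of unit vectors tending weakly to zero that are asymptotically annihilated by $T-\zeta$; these let one approximate the evaluation functionals attached to points of $\bD$ by rank-one functionals and then run the Bercovici--Foias--Pearcy iteration to solve the systems. Carrying out this approximation and controlling the weak-star estimates is the technical heart of the matter; the probabilistic content, by contrast, is confined to the three almost-sure facts established at the outset, after which the argument is entirely deterministic.
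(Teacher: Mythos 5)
Your proposal is correct in substance but takes a genuinely different route to the key fact that $T\in\mathbb{A}_{\aleph_0}$ almost surely. The paper gets this nearly for free from Theorem 3.6 of \cite{Apostol}, a criterion tailored to weighted shifts: a weighted shift lies in $\mathbb{A}_{\aleph_0}$ as soon as it is a $C_{00}$ operator with $\gamma(T)=\|T\|=1$. Hence the paper's only probabilistic inputs are Lemma \ref{T:NormSpectral} ($\|T\|=\gamma(T)=R=1$ a.s.) and Lemma \ref{P:C00} ($T\in C_{00}$ a.s.), after which Theorem \ref{T:ABFP} yields (i)--(iii) verbatim, with Corollary \ref{C:Universal} giving the index statement (ii). You instead run the general contraction-theory argument: absolutely continuous $C_{00}$ contraction, isometric $H^\infty$-calculus (class $\mathbb{A}$, via (\ref{OpeFunc})), and essential spectrum containing $\partial\bD$ (Theorem \ref{T:SpectPic}), followed by a BCP-type theorem to conclude membership in $\mathbb{A}_{\aleph_0}$. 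This works, with two caveats. First, the passage you call ``the technical heart'' should not be carried out by hand; it is a known deep theorem. In the form you state it, it is essentially the Brown--Chevreau--Pearcy theorem (J. Funct. Anal. 76, 1988) that an absolutely continuous contraction whose spectrum contains $\partial\bD$ lies in $\mathbb{A}_{\aleph_0}$; alternatively, since in the present situation $\sigma_e(T)\cap\bD\supseteq\{z: r\le|z|<1\}$ is dominating for $\partial\bD$ (because $r<1$), the earlier and easier Bercovici--Foia\c{s}--Pearcy criterion for contractions with dominating essential spectrum applies directly. Either citation closes the gap; attempting to re-derive the weak-star approximation scheme from scratch, as your final paragraph contemplates, would be a major and unnecessary detour. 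Second, for (iii) the statement you need is exactly Theorem \ref{T:ABFP}(iii) applied to the given strict contraction $A$; the excursion through the Sz.-Nagy--Foia\c{s} model of $A$ adds nothing, and the reference to Corollary \ref{C:Universal} there is misplaced, since that result concerns indices, i.e., part (ii). What your approach buys is independence from the weighted-shift structure of $T$ in the final step---it would apply to any almost-sure $C_{00}$ contraction with the same spectral picture---at the cost of considerably heavier machinery; the paper's route exploits the shift structure to remain elementary.
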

\noindent
%Recall that $r=\min\essran X_1$ and $R=\max\essran X_1$.
These statements are characteristics of the Bergman shift. But (\ref{E:small}) suggests that the functional Hilbert spaces for $T$ are rather small, hence in interesting constrast with (iii) above. The proof relies on the powerful theory of the so-called class {$\mathbb{A}_{\aleph_0}$}, due to  Apostol-Bercovici-Foia\c{s}-Pearcy \cite{Apostol}. Our main contribution here is to discover a new class of operators in {$\mathbb{A}_{\aleph_0}$}.  It should be mentioned that the first example of an invariant subspace with codimension two is constructed by Hedenmalm in \cite{Hed93}. Then the case of arbitrary index is achieved by Hedenmalm-Richter-Seip in \cite{Hed96}.

  Subsection \ref{S:sub Beurling-type} discusses Beurling-type results, and this is the only place in this paper where we make (small) direct contribution to deterministic operator theory. Our  goal of proving a random version of Beurling's theorem is not achieved. The obstacles are on both the operator theory side and the probability theory side. We pin down two   conjectures (\ref{Con:quadratic} and \ref{con:growth})  to illustrate the difficulty. On the operator theory side, roughly speaking, we categorize most known results in deterministic operator theory as for concave operators with slow growing moment sequences (indeed linear-like growth). But
to treat the random model, one needs knowledge on convex operators with fast growing moment sequences (indeed half-exponential growth $e^{ \sqrt{n\ln \ln n}}$). In contrast, it is natural to wonder whether one can obtain Hilbert spaces of analytic functions with rapid coefficient growth via adjusting the weights in weighted Bergman spaces. For this direction, the work of Borichev-Hedenmalm-Volberg \cite{Hed04} provides some interesting examples. As for convex operators, to the best of our knowledge, there is no result in the literature so far. Also there is no result for moment sequences beyond quadratic growth.  This is indeed related to a well-known open problem in operator theory for Beurling-type theorems for Dirichlet-like spaces with index $\al >1$ in   (\ref{E:D al}). To spur further interests, we offer an easy result  (Lemma \ref{L:convex}) for convex operators with linear growth and a  conjecture (\ref{Con:quadratic}) for more general situations.
On the probability theory side,  problems of convergence of  summation of  dependent random variables are encountered. Two of them are:
\begin{equation}\label{E:powerseries}
c_1X_1+c_2X_1X_2+\cdots+c_nX_1X_2\cdots X_n+ \cdots,
\end{equation}
and for each $m$,
\begin{equation}\label{E:powerseries2}
c_1X_1\cdots X_m+c_2X_2 \cdots X_{m+1}+\cdots+c_nX_n \cdots X_{n+m-1}+ \cdots,
\end{equation}
where $X_n$'s are i.i.d. random variables. It is well known that summation of dependent random variables can be tricky and existing knowledge in probability theory, often depending on estimates of various mixing coefficients,  does not yield satisfactory answers for us, although the first one is  subject to martingale difference techniques  and the second one is the summation of a stationary series, or even orthogonal series if $\bE( X_1)=0$. But for us, the case of interests is when $X_1$ is strictly positive, and translating $X_1$ by a constant has unmanageable consequences for (\ref{E:powerseries}) and (\ref{E:powerseries2}). On the other hand, in view of their elegance,
%(\ref{E:powerseries}) and (\ref{E:powerseries2}),
%
 we plan to start a separate work to sort out their theory.

    Section \ref{S:algebra}  identifies various algebras generated by $T$, including the Banach algebra, the weakly closed algebra, the commutant, the double commutant,  the dual algebra (Theorem \ref{T:GenerAlgebra}), as well as some description of the $C^*$-algebras (Theorem \ref{T:GCR}) .
\begin{thm}\label{T:alge}
Let $T\sim\{X_n\}_{n=1}^\infty$ with $R=\max\essran X_1=1$.
\begin{enumerate}
\item[(i)] $\sA_T,$ the Banach algebra generated by $T,$ is isometrically isomorphic to the disk algebra $\mathcal{A}(\bD)$  almost surely.
\item[(ii)] $\sO_T=\sW_T$ almost surely, where $\sO_T$ is the dual algebra generated by $T$ and  $\sW_T$ is the weakly closed algebra generated by $T$ almost surely.
\item[(iii)] Moreover,  if $\bP(X_1=0)=0$, then $\sO_T=\sW_T=\{T\}'=\{T\}''$ is isometrically isomorphic to $H^\infty(\bD)$  almost surely. In particular, the multiplier algebra is isometrically isomorphic to $H^\infty(\bD)$  almost surely.
\end{enumerate}
\end{thm}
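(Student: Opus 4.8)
The plan is to establish Theorem \ref{T:alge} by reducing everything to the random Hardy space model $H^2_\mu$ and then comparing the relevant operator algebras with the classical algebras attached to $M_z$ on $H^2(\bD)$. The crucial input, available once $R=1$, is the generalized von Neumann inequality (\ref{OpeFunc}): almost surely $\|f(T)\|_{\BH}=\|f\|_\infty$ for all $f\in H^\infty(\bD)$. This isometric identity is what converts the abstract algebra-generation problem into the familiar classical picture, and I would invoke it at the outset as the foundational fact for the whole theorem.

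For part (i), the plan is to use the fact that $\sA_T$ is by definition the norm-closure of the polynomials $p(T)$. Since by (\ref{OpeFunc}) the map $p\mapsto p(T)$ is isometric with respect to the supremum norm on $\bD$, the closure of $\{p(T)\}$ is isometrically isomorphic to the closure of the polynomials in $(C(\partial\bD),\|\cdot\|_\infty)$, which is exactly the disk algebra $\mathcal{A}(\bD)$. So part (i) is essentially immediate from the von Neumann inequality once one checks the isometry is a genuine algebra isomorphism, i.e.\ is multiplicative and injective; injectivity follows from the isometry itself.

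For parts (ii) and (iii), the plan is to pass from norm-closure to weak/weak-$*$ closure. The weakly closed algebra $\sW_T$ and the dual algebra $\sO_T$ (the weak-$*$ closed algebra) should both be shown to consist of all $f(T)$ with $f\in H^\infty(\bD)$, the $H^\infty$-functional calculus being bounded by (\ref{OpeFunc}). The equality $\sO_T=\sW_T$ in (ii) I would obtain by a standard Hardy-space argument: for the unweighted shift these two topologies yield the same closure on the bounded-functional-calculus algebra, and I would transport this through the unitary equivalence $T(\om)\cong M_z$ together with the isometric identification of the functional calculi; this should hold almost surely without any positivity assumption, since only $R=1$ is needed for the functional calculus to be isometric. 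For (iii) one additionally assumes $\bP(X_1=0)=0$, i.e.\ $0\notin\essran X_1$; the plan is to show that under this hypothesis the commutant $\{T\}'$ already coincides with $\{f(T):f\in H^\infty\}$. Here I would use the weighted-shift commutant computation from Shields' survey \cite{shields}: every operator commuting with an injective weighted shift with nonzero weights is itself an ``analytic'' operator determined by its action on $e_1$, hence of the form $f(T)$ for an analytic symbol $f$; boundedness of the operator forces $f\in H^\infty$ via the isometry (\ref{OpeFunc}). Since $\{T\}'$ is automatically weakly closed and contains $\sW_T$, while the analyticity argument forces $\{T\}'\subseteq\{f(T)\}=\sW_T$, all four algebras collapse to the same thing, and the double commutant $\{T\}''$ is squeezed between them. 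The multiplier algebra statement then follows from the unitary model, as multipliers of $H^2_\mu$ correspond to the commutant of $M_z$.

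\textbf{The main obstacle} I anticipate is the almost-sure commutant computation in (iii): one must verify that the standard deterministic description of the commutant of a weighted shift (which requires the weights to be nonzero and produces analytic symbols) transfers correctly to the random setting and, crucially, that the boundedness constraint on the commuting operator matches the $H^\infty$ bound coming from the isometric functional calculus rather than some larger or smaller algebra. The positivity hypothesis $\bP(X_1=0)=0$ is exactly what guarantees all weights are almost surely nonzero, so that the weighted shift is injective with dense range and the Shields-type analyticity argument applies; I expect verifying that the resulting symbol lies in $H^\infty$ (and not merely in some weighted space with possibly different radius, given the subtlety of the four radii in (\ref{E:fourradii})) to be where the real care is needed. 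The rest of the argument is a matter of correctly invoking (\ref{OpeFunc}) and the unitary model and assembling the inclusions.
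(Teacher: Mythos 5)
Your architecture for parts (i) and (iii) is close to the paper's (von Neumann inequality plus spectral mapping for (i); Shields' commutant theory for (iii)), but the proposal contains a genuine circularity and one step that fails outright. The circularity: you invoke (\ref{OpeFunc}) ``at the outset as the foundational fact,'' but in the paper (\ref{OpeFunc}) is a \emph{consequence} of this very theorem (see the remark following Theorem \ref{T:GenerAlgebra}): the assertion $\|f(T)\|=\|f\|_\infty$ for all $f\in H^\infty(\bD)$ presupposes an $H^\infty$-functional calculus for $T$, and constructing that calculus and proving its isometry is exactly the content of part (iii). What is legitimately available at the outset is only the polynomial case: by Lemma \ref{T:NormSpectral}, almost surely $\|T\|=\gamma(T)=1$ and $\sigma(T)=\overline{B(0,1)}$, so the classical von Neumann inequality gives $\|p(T)\|\leq\|p\|_\infty$ while the spectral mapping theorem gives $\|p(T)\|\geq\gamma(p(T))=\|p\|_\infty$, all on a single full-measure event. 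This suffices for (i) and is precisely the paper's proof of (i). But it cannot be used, as you propose, to show that the symbol of an operator commuting with $T$ lies in $H^\infty(\bD)$: you would be applying (\ref{OpeFunc}) to a function not yet known to belong to $H^\infty(\bD)$, and a priori the symbol is only analytic on the disk of radius $e^{\bE(\ln X_1)}<1$ where $H^2_\mu$ lives. Closing this gap is exactly what Shields' Theorems 3, 10 and 12 supply (the paper's citation): for an injective weighted shift with $\|T\|=\gamma(T)=1$, every element of the commutant is a multiplier whose symbol extends to a bounded analytic function on all of $\bD$ with $\|\phi\|_\infty\leq\|M_\phi\|$. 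You correctly identify this as the main obstacle, but you do not resolve it, and the tool you name for resolving it is the circular one.

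Second, your route to (ii) does not work. There is no unitary equivalence between $T(\omega)$ and the unweighted shift: the model identifies $T(\omega)$ with $M_z$ on the \emph{weighted} space $H^2_\mu(\omega)$, and only when all weights are nonzero; part (ii) makes no positivity assumption, and when $\bP(X_1=0)>0$ the operator is almost surely a direct sum of truncated (nilpotent) shifts, so there is no $H^2_\mu$ model to reduce to at all. Moreover, even granting an isometric isomorphism between the Banach algebras generated by $T$ and by the unweighted shift, such an isomorphism does not transport weak-operator or weak-$*$ closures: these topologies depend on the Hilbert-space representation (the predual of trace-class operators), not on the norm structure of the algebra, so the equality $\sO_S=\sW_S$ for the classical shift cannot be ``carried over'' this way. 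The paper instead deduces $\sO_T=\sW_T$ almost surely directly from $\|T\|=\gamma(T)=1$ a.s.\ (Lemma \ref{T:NormSpectral}) together with Theorem 3.7 of Apostol--Bercovici--Foia\c{s}--Pearcy \cite{Apostol}, a result about weighted shifts acting on $\cH$ itself which requires neither injectivity nor any transport. To repair your proposal: keep (i) but justify the polynomial isometry as above; replace the transport step in (ii) by the citation of \cite{Apostol}; and in (iii) replace the appeal to (\ref{OpeFunc}) by Shields' Theorems 3, 10 and 12 applied on the almost-sure event where $T$ is injective and $\|T\|=\gamma(T)=1$, after which the collapse $\sO_T=\sW_T=\{T\}'=\{T\}''$ and the multiplier statement follow as you outline.
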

\noindent A word of caution:  a reader might feel that he/she is familiar with the form of the above results  from his/her  background in deterministic operator theory, but  this impression can be misleading. Theorem \ref{T:alge} is indeed quite different in that  the convergence radius of $H^2_\mu$ above is $$e^{\mathbb{E}(\ln X_1)},$$ hence strictly less than one (except for the degenerate case) (Lemma \ref{T:convergence radius}). This means that $H^\infty(\mathbb{D})$ above is in some sense much smaller than the familiar multiplier algebra $H^\infty(\mathbb{D})$ for the Hardy or Bergman space. From the viewpoint of function theory, the natural normalization is $\mathbb{E} ( \ln X_1)=0$, so $H^2_\mu$ can be viewed as a function space over the unit disk $\mathbb{D}$.
{In this setting the random Hardy space $H^2_\mu$ is often  contained in the disk algebra (Remark \ref{C:smallspace}). If so,  then the multiplier algebra should be a subalgebra of $A(\mathbb{D})$.} As mentioned before, the interplay between function theory and the four radii in (\ref{E:fourradii}) is intriguing and many fundamental problems are awaiting to be sorted out. This paper brings out some interesting phenomena but is probably only scratching the surface along this line.

Concerning the phenomenon that the radius of convergence for the multiplier algebra is typically larger in the random setting than that of the function space itself, a curious analogue of this appears in the setting of Dirichlet series; see the work of Hedenmalm-Lindqvist-Seip \cite{Hed97}.

On the other hand,  with the aid of results obtained in Section \ref{S:SampleClassify} and Section \ref{S:vonNeumanIneq}, we take a first step toward understanding the structure of the $C^*$-algebras generated by $T$ in Section \ref{S:algebra}. We show that the  $C^*$-algebras $C^*(T)$ are simple or GCR only for  trivial cases. This implies that their structure   is a nontrivial issue.

\begin{thm}
Let  $T\sim\{X_n\}_{n=1}^\infty.$
Then
\begin{enumerate}
\item[(i)] $\bP(C^*(T)\ \textup{is simple})=\begin{cases}
1,& \essran X_1=\{0\},\\
0,& \textup{otherwise};
\end{cases}$
\item[(ii)] $\bP(T\ \textup{is GCR})=\begin{cases}
1,& \card\big(\essran X_1\setminus\{0\}\big)\leq1,\\
0,& \textup{otherwise}.
\end{cases}$
\end{enumerate}
\end{thm}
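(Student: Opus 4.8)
I would prove the two statements separately, organizing each by the cardinality and atomic structure of $\essran X_1$ and reducing every almost-sure assertion to an almost-sure property of the i.i.d. weight sequence (recurrence of patterns, multiplicity of run lengths), together with Theorem~\ref{T:compacts}.

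\textbf{Simplicity.} First dispose of the degenerate case $\essran X_1=\{0\}$: then $X_1=0$ a.s., $T=0$ a.s., and $C^*(T)$ is the trivial algebra, which I count as simple. For $\essran X_1\ne\{0\}$ I split on $0\in\essran X_1$. If $0\notin\essran X_1$, Theorem~\ref{T:compacts} gives $\KH\subset C^*(T)$ a.s.; since the weights are a.s.\ bounded below, $T$ is not compact, so $\KH$ is a nonzero \emph{proper} closed ideal and $C^*(T)$ is not simple a.s. It remains to handle $0\in\essran X_1$, $\essran X_1\ne\{0\}$, where the idea is to exhibit two representations with distinct kernels (forcing a nontrivial ideal). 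If $\bP(X_1=0)>0$, the zeros partition $\bN$ into a.s.\ finite blocks on which $T$ acts as a finite weighted shift with nonzero weights, whose $C^*$-algebra is a full matrix algebra $M_d$; the run lengths a.s.\ attain at least two distinct values $d\ne d'$, so the block-restriction maps $C^*(T)\to M_d$ and $C^*(T)\to M_{d'}$ have distinct kernels and $C^*(T)$ is not simple. If $\bP(X_1=0)=0$, then $0$ sits in the support as a non-atomic limit, so $\essran X_1\cap(0,R]$ contains two points $c_1<c_2$; since the $X_n$ are i.i.d.\ and $c_i\in\essran X_1$, windows whose weights all lie within $\varepsilon$ of $c_i$ occur infinitely often a.s., and a translate-limit construction yields $*$-representations $\pi_i$ with $\pi_i(T)$ the constant-weight bilateral shift of weight $c_i$, whose spectra are the distinct circles $c_i\mathbb{T}$. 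Distinct spectra force $\ker\pi_1\ne\ker\pi_2$, so again $C^*(T)$ is not simple a.s.

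\textbf{GCR, positive direction} ($\card(\essran X_1\setminus\{0\})\le1$). If $\essran X_1=\{0\}$ then $T=0$ and $C^*(T)$ is trivially type I; if $\essran X_1=\{a\}$ then $T=aS$ and $C^*(T)$ is the Toeplitz algebra, an extension of $C(\mathbb{T})$ by $\KH$, hence type I. If $\essran X_1=\{0,a\}$ then $T$ is a.s.\ a direct sum of truncated (unweighted) shifts of unbounded sizes, and I would establish type I by classifying its irreducible representations: every one sends $T$ either to a truncated shift (giving $M_d$), to the unilateral shift (giving the Toeplitz algebra), or to the constant-weight bilateral shift $aU$, which is normal and gives one-dimensional representations. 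All three images contain the compacts on their representation space, so $C^*(T)$ is GCR. The decisive structural point is that a single nonzero weight value forces every translate-limit to be a \emph{constant-weight} shift, so no genuinely two-sided aperiodic operator can arise.

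\textbf{GCR, negative direction} ($\card(\essran X_1\setminus\{0\})\ge2$). Pick distinct $a,b\in\essran X_1\setminus\{0\}$ and a bilateral weight sequence $(w_k)\in\{a,b\}^{\bZ}$ drawn from a minimal, aperiodic subshift. Because the $X_n$ are i.i.d.\ and $a,b\in\essran X_1$, every finite $\{a,b\}$-word — in particular every finite subword of $(w_k)$ — occurs infinitely often a.s., so the translate-limit construction again produces a $*$-representation $\pi$ of $C^*(T)$ onto $C^*(W)$, where $W$ is the bilateral weighted shift with weights $(w_k)$. By minimality and aperiodicity $C^*(W)$ is simple and infinite-dimensional (up to the standard identification it is a crossed product $C(X)\rtimes\bZ$ of a minimal, free $\bZ$-action); since $W$ is invertible its image under the standard representation on $\ell^2(\bZ)$ is noncompact, so $C^*(W)$ is not elementary, and a simple $C^*$-algebra is type I only if it is elementary. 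As $C^*(W)$ is a quotient of $C^*(T)$ and quotients of type I algebras are type I, $C^*(T)$ is not GCR a.s.

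The routine inputs are the probabilistic zero--one / Borel--Cantelli facts above and Theorem~\ref{T:compacts}. I expect the main obstacle to be the negative GCR direction, in two parts: (i) making the translate-limit construction rigorous so that $W$ genuinely appears as a quotient of $C^*(T)$ — this is where the paper's AFHV-type approximation machinery should enter — and (ii) identifying $C^*(W)$ as a simple, non-elementary algebra for an aperiodic minimal pattern, i.e.\ the crossed-product description. A secondary technical point is the full classification of irreducible representations in the binary case $\{0,a\}$ needed for the positive direction.
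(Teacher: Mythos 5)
Your proposal is correct in substance but follows a genuinely different route from the paper's, and its key unproved step --- the ``translate-limit construction'' --- is exactly the paper's comparison machinery: Theorem \ref{T:BilateralAUE} (i) together with Lemma \ref{L:Homomor} produces, almost surely, a unital $*$-homomorphism $C^*(T)\to C^*(W)$ carrying $T$ to $W$ for \emph{any} bilateral weighted shift $W$ with $\Sigma_1(W)\subset\essran X_1$, which is all that your limit arguments require. Beyond that shared mechanism the differences are real. For simplicity the paper is much more economical: for any nonzero $\lambda\in\essran X_1$, Theorem \ref{T:vonNeum-Normal} gives $\lambda I\lhd T$ a.s., hence a character $\varphi\colon C^*(T)\to\bC$ with $\varphi(T)=\lambda$, and $\ker\varphi$ is a nonzero proper ideal; this disposes of every non-degenerate case at once, where you need three separate arguments (the ideal $\KH$ when $0\notin\essran X_1$ via Theorem \ref{T:compacts}; finite-dimensional block representations when $\bP(X_1=0)>0$; two bilateral-shift representations with distinct kernels otherwise). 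For the negative GCR direction the transfer step is identical, but the witnesses differ: the paper takes a bilateral shift whose weight sequence contains \emph{every} finite $\{\lambda_1,\lambda_2\}$-word, getting irreducibility from non-periodicity (Problem 159 in \cite{HilbertProblem}) and absence of compact operators from O'Donovan \cite{Donovan}, hence an NGCR image; you take a minimal aperiodic subshift and identify $C^*(W)$ with the simple, unital, infinite-dimensional crossed product $C(X)\rtimes\bZ$ (legitimate: since $a\ne b$ are both positive, the spectral projections of the operators $W^{*k}W^k$ recover the diagonal copy of $C(X)$, and $U=W(W^*W)^{-1/2}\in C^*(W)$), then use that a simple type I algebra must be elementary. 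Both work; yours trades the O'Donovan citation for crossed-product simplicity.

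The one place where you are materially weaker than the paper is the positive GCR direction in the case $\essran X_1=\{0,a\}$. The paper observes $T\cong \oplus_{k}(aJ_k)^{(\infty)}$ a.s. and simply cites Bunce--Deddens \cite{BunceDeddens74}; you instead assert a classification of all irreducible representations of $C^*(T)$, and as stated your list is incomplete: an irreducible representation can also send $T$ to $aS^*$. Indeed the pattern $(a,\dots,a,0,a,\dots,a)$ recurs a.s., so $C^*(T)$ maps onto $C^*(aS\oplus aS^*)$; that algebra acts block-diagonally, so the two summands are reducing for the whole image, and compressing to the second summand gives an irreducible representation with $T\mapsto aS^*$. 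This does not affect the conclusion (the image is still the Toeplitz algebra, which contains the compacts), but it shows that exhaustiveness of your list is a genuine claim requiring proof --- precisely the work that the Bunce--Deddens citation replaces. If you fill that step (or simply cite \cite{BunceDeddens74} as the paper does), your proof is complete.
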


  One may wonder whether the random model exhibits more chaotic behaviors than classical weighted shifts. This is indeed the case but at least basic questions can be neatly resolved. We clarify these dynamic theoretical issues in Section \ref{S:Dynamic} (Theorem \ref{T:Dynamics}). Topics treated include:
\begin{itemize}
\item supercyclicity,
\item  hypercyclicity,
\item Li-Yorke chaoticity,
\item topologically mixing property,
\item chaoticity, and
\item frequent hypercyclicity.
\end{itemize}
The statements are usually tidy, thanks to the well-rounded deterministic theory.  In particular, among other things, we show
\begin{thm}
Let  $T\sim\{X_n\}_{n=1}^\infty$ with $X_1$ being non-degenerate and $\bP(X_1=0)=0$.
Then\begin{enumerate}
\item[(i)] $
\bP(T^* ~\textup{is Li-Yorke chaotic})=\begin{cases}
1,& R> 1,\\
0,& R\leq 1,
\end{cases}  $ where $R=\max\essran X_1$;
\item[(ii)] $\bP(T^* ~\textup{is chaotic})=\bP(T^* ~\textup{is frequently hypercyclic})
=\begin{cases}
1,& \bE(\ln X_1)> 0,\\
0,& \bE(\ln X_1)\leq 0.
\end{cases}  $
\end{enumerate}
\end{thm}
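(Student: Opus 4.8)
The plan is to reduce both parts to the explicit action of the backward weighted shift $T^*$ together with the standard deterministic characterizations of these dynamical properties for weighted shifts, after which the probabilistic content becomes a statement about the random walk $S_n=\sum_{j=1}^n\ln X_j$. Since $Te_n=X_ne_{n+1}$, the adjoint is the backward weighted shift $T^*e_n=X_{n-1}e_{n-1}$ (with $T^*e_1=0$), so that $(T^*)^n$ sends $e_{m+n}\mapsto\big(\prod_{j=m}^{m+n-1}X_j\big)e_m$. Writing $P_n=\prod_{j=1}^nX_j$ and $S_n=\ln P_n=\sum_{j=1}^n\ln X_j$, the two relevant deterministic criteria for a unilateral backward weighted shift on $\ell^2$ are: (a) Li--Yorke chaos is equivalent to the existence of a semi-irregular vector, and is implied by having a dense set of vectors with vanishing orbit together with a single unbounded orbit; (b) chaos and frequent hypercyclicity are each equivalent to $\sum_n P_n^{-2}<\infty$. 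The hypotheses are used here: $\bP(X_1=0)=0$ makes all $X_j>0$ a.s., so $P_n>0$ and $S_n$ is an a.s.\ finite random walk, and non-degeneracy of $X_1$ makes $\ln X_1$ non-degenerate.

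For part (i) I would compute $\|(T^*)^n\|=\sup_{m\ge1}\prod_{j=m}^{m+n-1}X_j$. If $R\le1$, then $X_j\le1$ a.s., so each such product is $\le1$ and hence $\|(T^*)^nx\|^2\le\sum_{k>n}|x_k|^2\to0$ for every $x\in\ell^2$; thus no vector is semi-irregular and $T^*$ is a.s.\ not Li--Yorke chaotic. If $R>1$, choose $c\in(1,R)$ with $\bP(X_1\ge c)>0$; since arbitrarily long runs of $\{X_j\ge c\}$ occur a.s.\ in an i.i.d.\ sequence, one obtains $\|(T^*)^n\|\ge c^n$ for all $n$, so $\sup_n\|(T^*)^n\|=\infty$. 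The finitely supported vectors are dense and have orbits that vanish after finitely many steps, while the uniform boundedness principle produces a vector with unbounded orbit; the sufficient condition in (a) then yields Li--Yorke chaos a.s.

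For part (ii) I would rewrite $\sum_nP_n^{-2}=\sum_ne^{-2S_n}$ and analyze it via the strong law of large numbers, which gives $S_n/n\to\bE(\ln X_1)$ a.s. If $\bE(\ln X_1)>0$, then eventually $S_n\ge\tfrac12\bE(\ln X_1)\,n$, so the series is dominated by a convergent geometric one and $\sum_ne^{-2S_n}<\infty$ a.s., giving chaos and frequent hypercyclicity. If $\bE(\ln X_1)<0$ (including $-\infty$), then $e^{-2S_n}\to\infty$, so the series diverges a.s.\ and both properties fail.

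The main obstacle is the boundary case $\bE(\ln X_1)=0$ in part (ii), where the law of large numbers is silent. Here I would invoke recurrence of the non-degenerate mean-zero random walk $S_n$ (Chung--Fuchs), which forces $\liminf_nS_n=-\infty$ a.s.; consequently $S_n\le0$, hence $e^{-2S_n}\ge1$, for infinitely many $n$, so $\sum_ne^{-2S_n}=\infty$ a.s.\ and chaos and frequent hypercyclicity fail exactly on the threshold. A secondary point requiring care is the Li--Yorke direction when $R>1$: justifying that an unbounded orbit together with the dense family of finitely supported (hence eventually annihilated) vectors does produce a semi-irregular vector. This I would settle by the cited sufficient condition rather than by an ad hoc gliding-hump construction.
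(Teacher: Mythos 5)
Your proposal is correct, and in part (ii) it is essentially the paper's own argument: the paper likewise quotes the Grosse-Erdmann and Bayart--Ruzsa characterizations (its Lemmas \ref{L:Chaotic} and \ref{L:FreHyp}, which is where the hypothesis $\bP(X_1=0)=0$ enters, since those results require positive weights), and then settles the three regimes of $\sum_n (X_1\cdots X_n)^{-2}$ exactly as you do --- SLLN off the boundary, and at $\bE(\ln X_1)=0$ the recurrence fact $\liminf_n S_n=-\infty$ for a non-degenerate mean-zero walk (the paper cites Durrett, Theorem 4.1.2, inside the proof of Lemma \ref{P:PointSpect}, rather than Chung--Fuchs, but it is the same probabilistic input). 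In part (i) you take a genuinely different, more self-contained route. The paper reduces everything to the equivalence ``a backward unilateral weighted shift with positive weights is Li--Yorke chaotic iff $\sup_n\|A^n\|=\infty$'' (its Lemma \ref{L:Sensive}, assembled from Hou--Liao--Cao and Feldman), and feeds in the exact a.s.\ identity $\|T^n\|=R^n$ obtained from the decomposition $T^n=\oplus_i T_i$ and Lemma \ref{L:iidRV}. You instead prove the negative direction by hand (if $R\leq 1$ then all weights are $\leq 1$ a.s., so $\|(T^*)^n x\|^2\leq\sum_{k>n}|x_k|^2\to 0$ for every $x$, and no Li--Yorke pair can exist), and the positive direction by combining the dense family of finitely supported vectors, whose $T^*$-orbits are eventually zero, with an unbounded orbit supplied by Banach--Steinhaus from your run-based bound $\|(T^*)^n\|\geq c^n$; this is weaker than the paper's norm identity but entirely sufficient. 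The one point to tighten is the ``sufficient condition (a)'': it is not among the paper's references, so either cite it precisely (it is the Bermúdez--Bonilla--Martínez-Giménez--Peris criterion; the paper's Feldman citation plays this role) or note that it is elementary here --- the vectors with $\liminf_n\|(T^*)^nx\|=0$ form a residual set because the finitely supported vectors are dense, the vectors with unbounded orbit form a residual set by Banach--Steinhaus, and any vector $x$ in the intersection yields the uncountable scrambled set $\{\lambda x:\lambda\in(0,1]\}$ by linearity. With that reference or two-line argument supplied, your proof is complete.
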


   Recall that the classical unilateral shift in operator theory is not chaotic nor Li-Yorke chaotic.

  Since the weights of  $T=T(\omega)$ oscillate randomly, we seek to regularize them until they converge to the standard unilateral shift.   This is achieved in Section \ref{S:Aluthge}  with the Aluthge transform, defined as
$$\Delta(A)=|A|^{1/2}U|A|^{1/2},$$  with polar decomposition $A=U|A|$. The transform $\Delta(A)$ usually makes an operator $A$ closer to a normal operator. The convergence of iterated Aluthge transforms has received some attention \cite{Antezana,Jung,Jung3}. We show that the iteration in our case converges in the strong operator topology to the standard unilateral shift $S$ (Theorem \ref{T:AluthgeTrans}), that is, almost surely,
$$
\Delta^n(T)\overset{\sot}{\longrightarrow} e^{\bE(\ln X_1)}S\qquad \textup{as} \quad n\rightarrow\infty.
$$
The convergence is not in norm except for the degenerate case; this is related to the four radii  in (\ref{E:fourradii}).

%\begin{thm}
%Let $T$ be a random weighted shift with weights $\{X_n\}$, where $\{X_n\}_{n=1}^\infty$ be bounded, positive, \iid r.v.'s and.
%Then almost surely
%\begin{equation*}\Delta^n(T)\overset{\sot}{\longrightarrow} e^{\bE(\ln X_1)}S\qquad \textup{as} \quad n\rightarrow\infty,\end{equation*}
%where $S$ is the unilateral (unweighted) shift and $\sot$ denotes the strong operator topology.
%\end{thm}

%\newpage
  \textbf{Literature review.} Before we end this introduction we briefly survey some literature where ``randomness'' meets ``non-selfadjointness''.
First, there is a sizable literature on non-selfadjoint random Schr\"{o}dinger operators, originating from the works of Hatano-Nelson on non-hermitian Anderson model in 1990s \cite{Hat1, Hat2}.
These operators are usually unbounded and eigenvalues are still a focus of study.
Second, a natural candidate for non-selfadjoint ROT is random Toeplitz operators (as opposed to random Toeplitz matrices) for which we found only two groups of works. In \cite{CurtoMuhlyXia2, CurtoMuhlyXia1} Curto-Muhly-Xia   mentioned the term but their randomness is not in the sense of probability theory; it is just an adjective \cite{XiaPersonal}. Another group of work is due to Ke-Lai-Lee-Wong \cite{Wong1, Wong2}.
Their motivation is from problems in scientific computing (Raymond Chan's work on preconditioning Toeplitz systems). In \cite{Wong1, Wong2} they set up a very general framework for random Toeplitz operators in the sense of Skorohod's random linear operators \cite{Skorohod}, but they have no concrete models in mind \cite{WongPersonal}. Indeed \cite{Skorohod}  is one of the standard references; from its citations one can recover much existing literature on  abstract random operators. Needless to say, they usually differ in theme significantly from the present paper. A related framework for random linear functional can be found in \cite{Dudley}. Another good source for existing literature on random operator theory is Saadati's book \cite{Saadati} and its citations. Say, one can find a discussion on random compact operators there. Lastly, other relevant literature includes randomly normed spaces \cite{Randomly normed spaces},
probabilistic normed spaces \cite{Probabilstic  Normed  Spaces},
and probabilistic metric spaces \cite{Probabilistic metric spaces}.

\section{Notations and assumptions}\label{S:Section2}

%  {\red{Probably we separate the following into a section called ``Notations, Assumptions, etc''. Expand if necessary so notations are carefully explained here. In particular, $T=T(\omega)$. }}

  In this paper we consider only nonnegative  weights. This is a standard assumption in operator theory, although there is a subtlety here. Let $A,B$ be two unilateral (or bilateral) weighted shifts with weights $\{\lambda_i\}$ and $\{\mu_i\}$ respectively. A basic fact is that if $|\lambda_i|=|\mu_i|$ for all $i$, then by \cite[Proposition 1]{shields}, $A,B$ are unitarily equivalent. For the random model, the unitary operator used to implement the equivalence is by itself a random operator, hence a priori not a trivial object. This is again a good problem for further study.

   As usual, we let $\bN,\bR,\bC$ denote, respectively, the set of positive integers,  real numbers, and  complex numbers. For $z\in\bC$ and $\eps>0$, we denote $B(z,\eps)=\{\lambda\in\bC: |\lambda-z|<\eps\}$ and $\overline{B(z,\eps)}=\{\lambda\in\bC: |\lambda-z|\leq\eps\}$. $\cH$ will always denote a complex, separable, infinitely dimensional Hilbert space endowed with the inner product $\la\cdot,\cdot\ra$. We denote by $\BH$ the algebra of all bounded linear operators on $\cH$, and by $\KH$ the ideal of compact operators.

  Throughout this paper, we let $(\Omega,\mathcal{F},\bP)$ denote a probability space, although it seldom appears explicitly.
Then $\omega \in \Omega$ denotes a sample.
We write $\bE(X)=\int_\Omega Xd\bP$, if the integral exists. We let $\essran X$ denote the {\it essential range} of $X$. Then we write
$$
r=\min\essran X_1\ \ \text{and}\ \ R=\max\essran X_1.
$$
We usually assume that $R<\infty$ since we want to refrain the discussion to bounded operators only. The case $R=\infty$ is still interesting, but not treated in this paper since the questions to ask for unbounded operators are usually very different.

  We say that $X$ is   {\it non-degenerate } if $\essran X$ contains more than one point. Then all four radii in (\ref{E:fourradii}) are distinct except for the degenerate case.

Intuitively, to obtain less randomness one might consider independent random weights with less randomness further out.
More precisely, if $\essran X_1$ gets smaller, then the random weighted shift $T$ will have less randomness. In the extreme case $\card\essran X_1=1$, $T$ becomes a deterministic weighted shift. As we shall see later (Theorem \ref{T:UnilateralAUE}), if $\essran X_1=[0,1/2]$, then there will be less weighted shifts $A$ satisfying $A\lhd T$ a.s., which means that $T$ possesses less randomness. It is the same as the general case. It is worthwhile to explore further how the randomness of the weights $\{X_n\}$ effects the behaviors of $T$.

%%%%%%%%%%%%%%%%%%%%%%%%%%%%%%%%%%%%%%%
\section{The spectral picture}\label{S:spectral}

  In this section we  describe the spectral picture of the random weighted shifts and determine the fine parts. The results are summarized below. They will be repeatedly used throughout the paper. Notations, which are usually rather standard, will be explained in subsequent subsections as we proceed to the proofs.

\begin{thm}\label{T:SpectPic}
Let $T\sim \{X_n\}_{n=1}^{\infty}$ with $R>0$. Then
\begin{enumerate}
\item[(i)] $\|T\|=\|T\|_e=\gamma(T)=w(T)=R$ a.s.
\item[(ii)] $\sigma(T)= \sigma_\delta(T)=\overline{B(0,R)}$ a.s.
\item[(iii)] $\sigma_\pi(T)=\sigma_e(T)=\sigma_{lre}(T)=\{z: r\leq |z|\leq R\}$ a.s. and
$$\ind(T-\lambda)=-1, \quad\forall \lambda\in B(0,r)\quad a.s.$$
\item[(iv)] If $\bP(X_1=0)>0$, then almost surely
$$\sigma_p(T)=\sigma_p(T^*)= \{0\},  \ \text{and} \   \dim\ker T=\dim\ker T^*=\infty.$$
\item[(v)] If $\bP(X_1=0)=0$, then almost surely
$$\sigma_p(T)= \emptyset,\quad\sigma_p(T^*)=
B(0, e^{\bE(\ln X_1)}), \ \text{and} \ \dim\ker(T^*-\lambda)=1,\quad \forall \lambda\in\sigma_p(T^*).$$
\item[(vi)] $W(T)=B(0,R)$ a.s. and $W_e(T)=\overline{B(0,R)}$ a.s.
\end{enumerate}
\end{thm}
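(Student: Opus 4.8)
The plan is to build all six assertions from four ingredients: the strong law of large numbers applied to $\ln X_n$, the second Borel--Cantelli lemma applied to independent block events, the power--series (Shields) model of a weighted shift, and homotopy invariance of the Fredholm index. The genuinely hard input is a Weyl--sequence construction forcing the \emph{entire} annulus $\{r\le|z|\le R\}$ into $\sigma_e(T)$. I begin with the norms in (i). Since $X_n\le R$ a.s. while $\bP(X_1>R-\eps)>0$ for every $\eps$, the supremum $\|T\|=\sup_nX_n=R$ a.s., and the same with $\limsup$ over the tail gives $\|T\|_e=\limsup_nX_n=R$ a.s. For the spectral radius I use $\|T^n\|=\sup_kX_k\cdots X_{k+n-1}$: bounding the disjoint blocks $[1,n],[n+1,2n],\dots$ below by $(R-\eps)^n$ via the second Borel--Cantelli lemma gives $\|T^n\|^{1/n}=R$ a.s., hence $\gamma(T)=R$; since $\gamma(T)\le w(T)\le\|T\|$, the numerical radius is pinned at $R$ as well.

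Next the point spectra (iv),(v) and the inner disk of (iii). In the model $T\cong M_z$ on $H^2(\beta)$ with $\beta(n)=X_1\cdots X_n$, the eigenvectors of $T^*$ are the reproducing kernels, so $\sigma_p(T^*)=\{\lambda:|\lambda|^2<\liminf_n\beta(n)^{2/n}\}$ with one--dimensional eigenspaces, and the strong law gives $\beta(n)^{1/n}\to e^{\bE(\ln X_1)}$ a.s.; this yields $\sigma_p(T^*)=B(0,e^{\bE(\ln X_1)})$ when $\bP(X_1=0)=0$, while the component recursion shows $\sigma_p(T)=\emptyset$. When $\bP(X_1=0)>0$, Borel--Cantelli produces infinitely many zero weights a.s., whence $\dim\ker T=\dim\ker T^*=\infty$ and, again by the recursion, $\sigma_p(T)=\sigma_p(T^*)=\{0\}$. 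For the inner disk, $r=\min\essran X_1$ forces $X_n\ge r$ a.s., so $\|Tx\|\ge r\|x\|$ and $\|(T-\lambda)x\|\ge(r-|\lambda|)\|x\|$ for $|\lambda|<r$; thus $T-\lambda$ is bounded below, and since $\ran T=\overline{\operatorname{span}}\{e_2,e_3,\dots\}$ gives $\ind T=-1$, homotopy invariance on the connected set $B(0,r)$ propagates $\ind(T-\lambda)=-1$, placing $B(0,r)$ in $\sigma(T)\setminus\sigma_e(T)$.

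The crux is $\{r\le|z|\le R\}\subseteq\sigma_{le}(T)\subseteq\sigma_e(T)$, and by rotational symmetry it suffices to realize each modulus $\rho\in[r,R]$. For a vector supported on a far block $[M+1,M+N]$ with $c_n=\prod_{j=M+1}^{n-1}(X_j/\lambda)$ one computes $(T-\lambda)x=-\lambda e_{M+1}+X_{M+N}c_{M+N}e_{M+N+1}$, so the Rayleigh error is $(\rho^2+X_{M+N}^2|c_{M+N}|^2)/\sum_n|c_n|^2$ with $|c_n|^2=\exp(2\sum(\ln X_j-\ln\rho))$. This becomes a statement about the random walk with steps $\ln X_j-\ln\rho$: I need, for each $N$ and arbitrarily large $M$, a block over which the walk stays nonnegative yet returns near $0$. \textbf{This excursion step is the main obstacle.} For $\rho=e^{\bE(\ln X_1)}$ it is recurrence of a mean--zero walk; for $\rho\in(r,R)$ with nonzero drift I would use large--deviation/excursion estimates with the second Borel--Cantelli lemma over disjoint far blocks, the two signs of the steps being available exactly because $r<\rho<R$. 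The endpoints $\rho=r,R$ are easier: runs of weights within $\eps$ of $r$ (resp. $R$) reduce locally to a constant shift. Pushing the supports to infinity makes all these vectors weakly null, landing the closed annulus in $\sigma_{le}(T)$; the cleaner alternative is to identify $\sigma_e(T)$ with the union of spectra of the limit operators of $T$, noting that the i.i.d. sequence a.s. realizes periodic $\essran$--valued weight patterns of every geometric mean in $[r,R]$, each contributing the circle $|z|=\rho$.

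The rest is formal. Joining $B(0,r)$ (index $-1$, not invertible) to the annulus gives $\sigma(T)=\overline{B(0,R)}$; passing to $T^*$ and conjugating yields $\sigma_\delta(T)=\overline{B(0,R)}$ and $\sigma_{lre}(T)=\sigma_e(T)=\{r\le|z|\le R\}$, while boundedness below on $B(0,r)$ together with the Weyl sequences gives $\sigma_\pi(T)=\{r\le|z|\le R\}$. Finally, for (vi), $|\langle Tx,x\rangle|\le R\sum_n|x_n||x_{n+1}|<R$ strictly by AM--GM (equality would force all $|x_n|$ equal), so $W(T)\subseteq B(0,R)$; convexity, rotation invariance and $w(T)=R$ then force $W(T)=B(0,R)$, and the weakly null Weyl sequences for $|\lambda|=R$ give $R\in W_e(T)$, whence $W_e(T)=\overline{B(0,R)}$.
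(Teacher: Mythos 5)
Your architecture is sound and several pieces are correct and close to the paper's own route: the norm/spectral radius computation via disjoint i.i.d.\ blocks in (i), the inner-disk index argument and the upper bounds in (iii), the zero-weight dichotomy in (iv), and the AM--GM argument in (vi) (which is in fact slicker than the paper's appeal to the fact that an attained numerical radius equal to the norm forces an eigenvalue). But there are two genuine gaps. The first is the step you yourself flag as ``the main obstacle'': forcing the annulus $\{r\le|z|\le R\}$ into $\sigma_e(T)$ is never actually carried out --- for $\rho\in(r,R)$ you say you \emph{would} use large-deviation/excursion estimates, which is a plan, not a proof. The plan is salvageable, and more cheaply than you suggest: for a \emph{fixed} block length $N$ one only needs a single weight pattern of positive probability (available because $\bP(X_1>\rho)>0$ and $\bP(X_1<\rho)>0$ exactly when $r<\rho<R$) keeping the walk $\sum_j(\ln X_j-\ln\rho)$ in a bounded corridor; the second Borel--Cantelli lemma over disjoint far blocks then finishes, with no large-deviation input. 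Your ``cleaner alternative'' has its own defect: the i.i.d.\ sequence does \emph{not} a.s.\ realize periodic patterns of \emph{every} geometric mean in $[r,R]$, only those of the form $(\alpha_1\cdots\alpha_k)^{1/k}$ with $\alpha_i\in\essran X_1$, a dense subset (density plus closedness of $\sigma_e$ does suffice, but you must say so), and it leans on the limit-operator theorem for band operators, a heavy citation. Note also that your weakly null Weyl sequences only place the annulus in $\sigma_{le}(T)$; to reach $\sigma_{lre}(T)$ the clean fix is a.s.\ injectivity of $T-\lambda$: not bounded below plus injective forces non-closed range, which kills both semi-Fredholm alternatives at once. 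The paper avoids the entire construction: for $r>0$ it quotes Shields' Theorem 6 (the approximate point spectrum of an injective weighted shift is the annulus between $r_1$ and the spectral radius), so it only needs $r_1(T)=r$ a.s.\ from the block structure; for $r=0$ it observes $\liminf_n X_n=0$ a.s.\ makes $T$ quasidiagonal, so at any $\lambda\neq0$ a closed range would make $T-\lambda$ an injective semi-Fredholm operator of index zero, hence surjective, contradicting $\lambda\in\sigma(T)$; thus the range is a.s.\ non-closed and the whole closed disk lies in $\sigma_{lre}(T)$.

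The second gap is in (v): you assert $\sigma_p(T^*)=\{\lambda:|\lambda|^2<\liminf_n\beta(n)^{2/n}\}$ as if it were a standard fact about weighted shifts. It is false in general. The correct criterion is $\sum_n|\lambda|^{2n}/\beta(n)^2<\infty$, and boundary points can be eigenvalues: take $\beta(n)=n$, so $\liminf_n\beta(n)^{2/n}=1$ yet $\sum_n 1/n^2<\infty$, and the whole unit circle consists of eigenvalues of the adjoint. Since the theorem claims the \emph{open} ball $B(0,e^{\bE(\ln X_1)})$, you must rule out the boundary circle, and this requires a probabilistic input specific to the model: with $Y_n=X_n/\lambda$ at $|\lambda|=e^{\bE(\ln X_1)}$, the walk $\sum_i\ln Y_i$ is mean-zero and non-degenerate, so $\liminf_n\prod_{i=1}^nY_i=0$ a.s.\ (Durrett, Theorem 4.1.2), whence $\sum_n|\lambda|^{2n}/\beta(n)^2=\infty$ a.s. This is exactly the step the paper supplies and your proposal omits.
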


%%%%%%%%%%%%%%%%%%%%%%%%%%%%%%%%%%%%%%%%%%%%%
\subsection{The norm and spectrum}

We first introduce some notations.
For $A\in\BH$, we denote by $\sigma(A)$ the spectrum of $A$, and by $\gamma(A)$ the spectral radius of $A$, that is, $\gamma(A)=\max\{|z|: z\in\sigma(A)\}$.
Given a (finite or denumerable) uniformly bounded family $\{A_\alpha\}_{\alpha\in\Lambda}$
of operators such that $A_\alpha\in\B(\cH_\alpha)$ for $\alpha\in\Lambda$, we shall denote by
$\oplus_{\alpha\in\Lambda}A_\alpha$ the direct sum of the operators $A_\alpha$ acting on the orthogonal direct sum $\oplus_{\alpha\in\Lambda}\cH_\alpha$.
If $A\in\BH$ and $\upsilon$ is a cardinality number with $1\leq \upsilon\leq\infty$, then $A^{(\upsilon)}$ will
denote the operator $\oplus_{1\leq i\leq\upsilon} A$ acting on
$\cH^{(\upsilon)}$ (orthogonal direct
sum of $\upsilon$ copies of $\cH$).
 The following fact will be used repeatedly in this paper. We record it  for convenience without a proof.

\begin{lem}\label{L:iidRV}
Let $\{X_i\}_{i=1}^n$ be bounded, i.i.d. random  variables on $(\Omega,\mathcal{F},\bP)$. Then
$\|\Pi_{i=1}^n X_i\|_\infty=(\|X_1\|_{\infty})^n.$
\end{lem}

\begin{lem}\label{T:NormSpectral}
Let $T\sim \{X_n\}_{n=1}^{\infty}.$ Then, almost surely,
  $\|T\|=R$ and
 $\sigma(T)=\overline{B(0,R)}.$

\end{lem}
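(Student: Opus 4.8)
The plan is to treat the norm and the spectrum separately, isolating in each a deterministic structural fact about weighted shifts and a probabilistic input supplied by the i.i.d. hypothesis.

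First I would compute the norm. For a unilateral weighted shift the standard formula gives $\|T\|=\sup_n X_n$, so the claim reduces to showing $\sup_n X_n=R$ a.s. The upper bound is immediate, since $X_n\le R$ a.s. for every $n$. For the lower bound, fix $\eps>0$; because $R=\max\essran X_1$ we have $p_\eps:=\bP(X_1>R-\eps)>0$, and by independence the second Borel--Cantelli lemma forces $X_n>R-\eps$ for infinitely many $n$ almost surely, whence $\sup_n X_n\ge R-\eps$. Intersecting over $\eps=1/j$, $j\in\bN$, gives $\sup_n X_n=R$ a.s.

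Next, the spectrum. I would invoke the classical structure theorem for unilateral weighted shifts (Shields): the spectrum is the closed disk $\overline{B(0,\gamma(T))}$ centered at the origin whose radius is the spectral radius $\gamma(T)$. It then suffices to show $\gamma(T)=R$ a.s. Using $\gamma(T)=\lim_k\|T^k\|^{1/k}$ together with the identity $\|T^k\|=\sup_n\prod_{i=n}^{n+k-1}X_i$, the upper bound $\gamma(T)\le\|T\|=R$ is automatic. For the matching lower bound I would run a Borel--Cantelli argument on disjoint length-$k$ blocks: for fixed $k$ and $\eps$, the events ``all of $X_{(m-1)k+1},\dots,X_{mk}$ exceed $R-\eps$'' are independent with positive probability $p_\eps^{\,k}$, so almost surely some block occurs, giving $\|T^k\|\ge(R-\eps)^k$ and hence $\|T^k\|^{1/k}\ge R-\eps$. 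Since this holds for every $k$, we get $\gamma(T)\ge R-\eps$; letting $\eps\to 0$ through a countable sequence yields $\gamma(T)=R$ a.s., and therefore $\sigma(T)=\overline{B(0,R)}$ a.s.

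The step I expect to carry the real weight is the assertion that $\sigma(T)$ fills out the \emph{entire} closed disk rather than merely reaching radius $R$: knowing $\gamma(T)=R$ only locates the outer boundary. For injective shifts the disk theorem applies verbatim, but I would flag one case needing care. When $\bP(X_1=0)>0$, almost surely infinitely many weights vanish and $T$ splits as an orthogonal sum of finite-dimensional nilpotent blocks, each with spectrum $\{0\}$, so ``spectrum $=$ disk of radius $\gamma(T)$'' is no longer transparent and I would justify it directly. For $|z|<R$ choose $\eps$ with $R-\eps>|z|$; the arbitrarily long runs of weights exceeding $R-\eps$ produce nilpotent pieces $N$ of unbounded size $m$ on which $\|(N-z)^{-1}\|\ge\|(N-z)^{-1}e_1\|$ grows like $\big((R-\eps)/|z|\big)^{m}$, so the resolvents are unbounded across the pieces and $z\in\sigma(T)$. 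This resolvent blow-up, reconciling a direct sum of quasinilpotents with a full-disk spectrum, is the one genuinely delicate point; the remaining estimates are routine, and all the ``a.s.'' events above combine since only countably many (indexed by $k\in\bN$ and $\eps=1/j$) are involved.
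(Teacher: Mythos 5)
Your proposal is correct and follows essentially the same route as the paper: both get $\|T\|=\sup_n X_n=R$ a.s.\ from independence (your second Borel--Cantelli argument is equivalent to the paper's estimate $\bP(\|T\|\le\delta)\le\big(\bP(X_1\le\delta)\big)^n\to 0$), both reduce $\sigma(T)=\overline{B(0,R)}$ to proving $\gamma(T)=R$ via Shields' disk theorem, and both obtain $\gamma(T)=R$ by exploiting independence across disjoint length-$k$ blocks of weights (the paper phrases this as the decomposition $T^k=\oplus_{i=1}^k T_i$ together with Lemma \ref{L:iidRV}). The only place you go beyond the paper is the case $\bP(X_1=0)>0$: the paper simply cites Theorem 4 of \cite{shields}, while you verify the disk theorem directly via the resolvent blow-up $\|(N-z)^{-1}\|\geq (R-\eps)^m/|z|^{m+1}$ on nilpotent blocks of unbounded length; this is a legitimate and indeed welcome patch, since the standard formulation of Shields' theorem assumes nonzero weights, and (after restricting to a countable set of $\eps$ and block lengths, as you note) it yields a single a.s.\ event on which every $z$ with $|z|<R$ lies in $\sigma(T)$.
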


\begin{proof}
 We first observe that  $\|T\|\leq R$ a.s.
For any $0\leq \delta<R$ and  $n\geq 1$,
%\begin{equation*}
 $ \bP(\|T\|\leq \delta)=\bP(\sup_i X_i\leq \delta)
\leq\bP\big(\cap_{1\leq i\leq n} [X_i\leq \delta] \big)
  =\big(\bP(X_1\leq \delta)\big)^n.$
%\end{equation*}
So $\bP(\|T\|\leq \delta)=0$ by letting $n\to\infty.$
 In view of Theorem 4 in \cite{shields}, it suffices to prove that $\gamma(T)=R$ a.s.
 For each $n\geq 1$, observe  that $T^n=\oplus_{i=1}^n T_i$, where
 $T_i$ is a random weighted shift with weights $\{\Pi_{k=i}^{n+i-1} X_{jn+k}\}_{j=0}^\infty$ which are  \iid for  $i \ge 1$. In view of  Lemma \ref{L:iidRV},
$\|T^n\|^{1/n}=R$ a.s., hence
$\gamma(T)=R$  a.s.
\end{proof}

  For $A\in\BH$, we denote by $\sigma_{p}(A)$ the point spectrum of $A$. The kernel of $A$ and the range of $A$ are denoted, respectively, as $\ker A$ and $\ran A$.

%\begin{prop}\label{P:PointSpectAdjoint}
%Let $T\sim \{X_n\}_{n=1}^{\infty}.$
%\begin{enumerate}
%\item[(i)] If $\bP(X_1=0)>0$, then almost surely
%$\sigma_p(T^*)= \{0\},\quad \dim\ker T^*=\infty.$
%\item[(ii)] If $\bP(X_1=0)=0$, then almost surely
%
%\end{enumerate}
%\end{prop}
%
%\begin{proof}
%(i) In this case, it can be seen from the proof of Proposition \ref{P:PointSpect} (i) that $T$ is almost surely a direct sum of a family of nilpotent operators. Thus the result follows readily.
%
%  (ii) In this case, it can be seen from the proof of Proposition \ref{P:PointSpect} (ii) that $T$ is almost surely an injective weighted shift. Then,
%\end{proof}

\begin{lem}\label{P:PointSpect}
Let $T\sim \{X_n\}_{n=1}^{\infty}.$
\begin{enumerate}
\item[(i)] If $\bP(X_1=0)>0$, then
$\sigma_p(T)=\sigma_p(T^*)= \{0\}$ a.s. and $$\dim\ker T=\dim\ker T^*=\infty\ \ a.s.$$
\item[(ii)] If $\bP(X_1=0)=0$, then almost surely, $\sigma_p(T)= \emptyset,$ and
$$\sigma_p(T^*)=\{0\}\cup \{z\in\bC: |z|<e^{\bE(\ln X_1)}\},\quad \dim\ker(T^*-\lambda)=1,\quad \forall \lambda\in\sigma_p(T^*).$$
\end{enumerate}
\end{lem}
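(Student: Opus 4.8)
The plan is to pass from the operators to their coordinate descriptions and read the point spectra off a first-order linear recurrence, deferring all the randomness to two limit theorems. Writing $f=\sum_{n\ge1}a_ne_n$, the equation $Tf=\lambda f$ becomes $\lambda a_1=0$ together with $X_{n}a_{n}=\lambda a_{n+1}$ for $n\ge1$ (comparing coefficients of $e_1$ and of $e_{m}$, $m\ge2$). For $\lambda\neq0$ this forces $a_1=0$ and then $a_{n}=0$ inductively, so $\sigma_p(T)\subseteq\{0\}$ always. At $\lambda=0$ the kernel is spanned by $\{e_n:X_n=0\}$, so the behaviour splits: if $\bP(X_1=0)>0$ then the independent events $\{X_n=0\}$ occur infinitely often by the second Borel--Cantelli lemma, giving $0\in\sigma_p(T)$ with $\dim\ker T=\infty$ a.s.; if $\bP(X_1=0)=0$ then a.s. every $X_n>0$ and $\ker T=\{0\}$, so $\sigma_p(T)=\emptyset$. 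This settles the $T$-part of both (i) and (ii).

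For $T^*$ I would use $T^*e_1=0$ and $T^*e_{m}=X_{m-1}e_{m-1}$ ($m\ge2$). Writing $g=\sum b_ne_n$, the equation $T^*g=\lambda g$ is the single family $X_nb_{n+1}=\lambda b_n$, $n\ge1$. In case (ii) all $X_n>0$ a.s., so $\lambda=0$ gives $g\in\bC e_1$, while for $\lambda\neq0$ the solution is forced, $b_{n+1}=b_1\lambda^{n}/(X_1\cdots X_{n})$, unique up to the scalar $b_1$; hence every eigenspace is one-dimensional. Membership $g\in\cH$ thus reduces to convergence of $\sum_{m\ge0}|\lambda|^{2m}/(X_1\cdots X_m)^2$. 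Here the strong law of large numbers enters: a.s. $(X_1\cdots X_m)^{1/m}=\exp(\tfrac1m\sum_{k\le m}\ln X_k)\to e^{\bE(\ln X_1)}=:r_0$, so the root test gives convergence for $|\lambda|<r_0$ and divergence for $|\lambda|>r_0$. This already yields $\{0\}\cup B(0,r_0)\subseteq\sigma_p(T^*)$ and no eigenvalue of modulus exceeding $r_0$, on one almost-sure event independent of $\lambda$.

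The main obstacle is the circle $|\lambda|=r_0$, where the root test is silent. Here I would write the general term as $\exp(-2S_m)$ with $S_m=\sum_{k\le m}(\ln X_k-\bE\ln X_1)$, a mean-zero random walk, and invoke its recurrence: a.s. $\liminf_m S_m=-\infty$, so $\exp(-2S_m)$ is unbounded along a subsequence and the series diverges. Thus no point of the boundary circle is an eigenvalue, giving exactly $\sigma_p(T^*)=\{0\}\cup B(0,r_0)$ a.s. Finiteness of $\bE(\ln X_1)$ is what makes the increments integrable; if $\bE(\ln X_1)=-\infty$ then $r_0=0$, the disc is empty, and only $\lambda=0$ survives.

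Finally, in case (i) I would return to $X_nb_{n+1}=\lambda b_n$ with a.s. infinitely many zeros $Z=\{n:X_n=0\}$. At $\lambda=0$ the kernel is $\overline{\mathrm{span}}\big(\{e_1\}\cup\{e_{n+1}:n\in Z\}\big)$, so $\dim\ker T^*=\infty$. For $\lambda\neq0$ each $n\in Z$ forces $b_n=0$, and running the recurrence across each finite block between consecutive zeros (and across the initial block, using $\lambda\neq0$ and positivity of the intervening $X_k$) forces every remaining coordinate to vanish; hence $g=0$ and $\sigma_p(T^*)=\{0\}$ a.s. The only care needed throughout is that the SLLN limit, the Borel--Cantelli event, and the random-walk fluctuation each hold on a single almost-sure event, so that the conclusions concern the full sets $\sigma_p(T)$ and $\sigma_p(T^*)$ rather than individual $\lambda$.
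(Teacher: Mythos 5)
Your proposal is correct and follows essentially the same route as the paper: reduce both point spectra to the coefficient recurrences, settle case (i) via Borel--Cantelli (the paper phrases the resulting structure as a direct sum of nilpotent truncated shifts), and in case (ii) characterize $\sigma_p(T^*)$ by convergence of $\sum_n |\lambda|^{2n}/(X_1\cdots X_n)^2$, using the strong law of large numbers off the critical circle and the oscillation $\liminf_m S_m=-\infty$ of the centered random walk on it (the paper invokes the same fact, via Durrett's Theorem 4.1.2 applied to $Y_n=X_n/\lambda$). Your explicit attention to working on a single almost-sure event valid for all $\lambda$, and to the case $\bE(\ln X_1)=-\infty$, is a minor refinement of care rather than a different method.
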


\begin{proof}
(i) Observe that
%\begin{equation*}
$\bP\big(\liminf_{n\to\infty}[X_n>0]\big)=\bP\Big(\bigcup_{n=1}^{\infty}\bigcap_{k=n}^{\infty}[X_n>0]\Big)=0.$
%\end{equation*}
Thus $T$ is  almost surely a direct sum of a family of nilpotent operators.
(ii) Set $\Omega'=\cap_{n=1}^\infty[ X_n>0]$. Then $\bP(\Omega')=1$. Thus $T$ is almost surely an injective unilateral weighted shift. Then   $\sigma_p(T)= \emptyset$ follows  from
Theorem 8 in \cite{shields}.
 Also, almost surely, $0\in\sigma_p(T^*)$ and $\dim\ker T^* =1$.
Now assume that $\lambda\in\bC\setminus\{0\}$. By the circular symmetry, we  assume that $\lambda>0$.
If $x\in\ker (T^*-\lambda)$, then   $x=\alpha e_1+\sum_{n=1}^\infty \frac{\alpha\lambda^n}{X_1\cdots X_n}e_{n+1}$ for some $\alpha\in\bC$. Thus $\lambda\in\sigma_p(T^*)$ if and only if
$\sum_{n=1}^\infty\frac{\lambda^{2n}}{(X_1\cdots X_n)^2}<\infty.$
If $\lambda<e^{\bE(\ln X_1)}$, then  a straightforward   application of the law of large numbers (LLN) shows that $\sum_{n=1}^\infty\frac{\lambda^{2n}}{(X_1\cdots X_n)^2}<\infty$ a.s. Next we need only consider $\lambda= e^{\bE(\ln X_1)}.$ Set $Y_n=X_n/\lambda$.   By Theorem 4.1.2 in  \cite{Durrett},
$\liminf\limits_{n\to\infty}\big(\Pi_{i=1}^n Y_i\big)=0$ a.s.  Thus,
$\sum_{n=1}^\infty\frac{\lambda^{2n}}{(X_1\cdots X_n)^2}=\infty$  a.s.
\end{proof}

%\begin{lem}\label{L:LimInfSup}
%Let $\{X_n\}_{n=1}^\infty $ be \iid r.v.'s on $(\Omega,\mathcal{F},\bP)$. Then, almost surely,
%\[\liminf_n X_n =\min\essran X_1,\quad \limsup_nX_n=\max\essran X_1.\]
%\end{lem}
%
%\begin{proof}
%Without loss of generality, we assume that $X_1$ is bounded. The proof for the general case is similar.
%
%  Denote $r=\min \essran X_1$ and $R=\max \essran X_1$.
%By the hypothesis, for any $\eps>0$, $\bP(X_1<r+\eps)>0$ and $\bP(X_1\geq r+\eps)<1$.
%For any $n$ and any $k>n$, we have
%\begin{align*}
%  \bP(\inf_{i\geq n}X_i> r+\eps)&\leq \bP(\inf_{n\leq i\leq k}X_i> r+\eps)\\
%  &\leq \bP(X_1\geq r+\eps)^{k-n+1}.
%\end{align*} It follows that $\bP(\inf_{i\geq n}X_i> r+\eps)=0$ for each $n\geq 1$. Thus \begin{align*}
%\bP(\liminf_iX_i> r+\eps)&=\bP\big(\cup_{n\geq1}\{\inf_{i\geq n}X_i> r+\eps\}\big)\\
%&=\lim_n\bP(\inf_{i\geq n}X_i> r+\eps)=0.
%\end{align*}
%Since $\eps>0$ is arbitrary, we have $\liminf_iX_i\leq r$ a.s. Note that $X_1\geq r$ a.s. Hence we have $\liminf_iX_i=r$ a.s.
%
%  The proof for $\limsup_nX_n=R$ is similar and omitted.
%\end{proof}

%\begin{cor}\label{C:LimInfSup}
%Let $\{X_n\}_{n=1}^\infty$ be bounded r.v.'s. If there exists an \iid subsequence $\{X_{n_i}\}_{i=1}^\infty$ so that $\|X_i\|_\infty\leq\|X_{n_1}\|_\infty$ for all $i\geq 1$, then
%\[ \limsup_n|X_n|=\|X_{n_1}\|_\infty \qquad a.s.\]
%\end{cor}

  Next we consider the essential spectrum, the Wolf spectrum and the approximate point spectrum of $T$ (Lemma \ref{T:SpectralPicture}). We need two lemmas (Lemma \ref{L:r1} and Lemma \ref{L:quasiDiagonal}) with independent  interests.
For $A\in\BH$, let $m(A)=\inf_{x\in\cH,\|x\|=1}\|Ax\|$ and
$r_1(A)=\lim\limits_{n\rightarrow\infty}[m(A^n)]^{1/n}$. If $A$ is a weighted shifts with weights $\{\lambda_n\}_{n=1}^\infty$, then   $m(A)=\inf_n|\lambda_n|$. Also,  $m(A\oplus B)=\min \{m(A),m(B)\}$.
Recall that  $T^n=\oplus_{i=1}^n T_i$, where
  $T_i$ is a random weighted shift with weights $\{\Pi_{k=i}^{n+i-1} X_{jn+k}\}_{j=0}^\infty$ which are \iid random variables. So   $m(T^n)=r^n$ a.s. In particular, we have

 \begin{lem}\label{L:r1}
Let $T\sim \{X_n\}_{n=1}^{\infty}.$
Then $r_1(T)=r$ a.s.
\end{lem}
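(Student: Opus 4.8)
The plan is to reduce everything to the single identity $m(T^n)=r^n$ almost surely, for each fixed $n$. Granting this, $[m(T^n)]^{1/n}=r$ a.s.\ for every $n$, and since there are only countably many values of $n$ I can intersect these full-measure events to get $r_1(T)=\lim_{n}[m(T^n)]^{1/n}=r$ on a single event of probability one. One need not argue separately that the limit exists, since it is computed outright; if desired, existence also follows from supermultiplicativity of $m$, which makes $\log m(T^n)$ superadditive, via Fekete's lemma.

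To evaluate $m(T^n)$, I would use that $T^ne_m=c_me_{m+n}$ with $c_m=\prod_{k=m}^{m+n-1}X_k$, so $T^n$ sends the orthonormal basis to pairwise orthogonal vectors; hence $m(T^n)=\inf_{m\ge1}c_m$ (equivalently, via the decomposition $T^n=\oplus_{i=1}^nT_i$ together with $m(A\oplus B)=\min\{m(A),m(B)\}$ recorded above). The lower bound is then immediate: since $r=\min\essran X_1$ we have $X_k\ge r$ a.s.\ for each $k$, and intersecting these countably many events gives $c_m\ge r^n$ for all $m$, so $m(T^n)\ge r^n$ a.s.

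The key step is the matching upper bound, which I would obtain from the second Borel--Cantelli lemma. Fix $\eps>0$. Because $r$ lies in the support of the law of $X_1$, $p:=\bP(X_1<r+\eps)>0$. Partition $\bN$ into the disjoint length-$n$ blocks $B_j=\{jn+1,\dots,(j+1)n\}$ and let $E_j$ be the event that all weights indexed by $B_j$ fall below $r+\eps$. The $E_j$ are independent with $\bP(E_j)=p^n>0$, so $\sum_j\bP(E_j)=\infty$ and infinitely many occur a.s.; on each such block the corresponding $c_m<(r+\eps)^n$, whence $m(T^n)\le(r+\eps)^n$ a.s. Applying this with $\eps=1/\ell$ and intersecting over $\ell\in\bN$ forces $m(T^n)\le r^n$ a.s., and together with the lower bound $m(T^n)=r^n$ a.s.

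I expect the only genuine content to be this Borel--Cantelli argument; the part to watch is the \emph{bookkeeping of the almost-sure quantifiers}. The events ``$c_m\ge r^n$ for all $m$'' and ``$m(T^n)\le(r+1/\ell)^n$'', one for each pair $(n,\ell)$, must all be intersected over a countable family \emph{before} concluding that $[m(T^n)]^{1/n}=r$ holds simultaneously for every $n$ on one event of full measure. Finally, the degenerate case $r=0$ needs a word: there the lower bound is vacuous and one only checks $\inf_m c_m=0$ a.s., which the same Borel--Cantelli argument delivers with $r=0$.
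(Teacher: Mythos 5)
Your proof is correct and takes essentially the same route as the paper: both reduce the lemma to the identity $m(T^n)=r^n$ a.s.\ for each fixed $n$ (via the block structure of $T^n$ and the i.i.d.\ weights) and then intersect over the countably many $n$. The only difference is that the paper leaves implicit the step you spell out with the second Borel--Cantelli lemma, namely that the infimum of a sequence of i.i.d.\ bounded variables equals their essential infimum almost surely.
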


   The {\it essential spectrum}  and the {\it Wolf spectrum} of $A$ are defined, respectively, as
\[\sigma_{e}(A)=\{\lambda\in\bC: A-\lambda \textup{ is not Fredholm}
\}\] and \[\sigma_{lre}(A)=\{\lambda\in\bC: A-\lambda \textup{ is not semi-Fredholm}
\}.\] The reader is referred to \cite{Herr89} for more details.
 An operator $A\in\BH$ is said to be {\it quasidiagonal} if there exists a compact operator $K$ on $\cH$ such that $A+K$ is the direct sum of at most countable finite-rank operators. By Theorem 6.15 in \cite{Herr89}, if $A$ is quasidiagonal
then $\ind(A-\lambda)=0$ for $\lambda\notin\sigma_{lre}(A)$.
Observe that almost surely,
$\liminf\limits_{n\to\infty} X_n =r$ and  $\limsup\limits_{n\to\infty}X_n=R.$
If $r=0,$ then one has $\liminf\limits_{n\to\infty} X_n =0$ a.s.  It follows that

\begin{lem}\label{L:quasiDiagonal}
Let  $T\sim \{X_n\}_{n=1}^\infty.$
If $r=0$, then $T$ is almost surely quasidiagonal.
\end{lem}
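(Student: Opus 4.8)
The plan is to leverage the observation recorded just before the statement---that $r=0$ forces $\liminf_{n\to\infty}X_n=0$ almost surely---and to upgrade it to quasidiagonality by exhibiting a compact operator that severs $T$ into a countable orthogonal direct sum of finite-dimensional blocks. The whole argument runs samplewise on the full-measure event $\Omega_0=\{\omega:\liminf_n X_n(\omega)=0\}$, so I would fix $\omega\in\Omega_0$ and argue deterministically for this sample.

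Since $\liminf_n X_n(\omega)=0$, I would first extract a strictly increasing sequence of indices $n_1<n_2<\cdots$ with $X_{n_k}(\omega)\to 0$; for instance, let $n_k$ be the $k$-th index $n$ with $X_n<1/k$, which is available because for each $\varepsilon>0$ the inequality $X_n<\varepsilon$ holds for infinitely many $n$. The key feature is that this sequence is infinite, so consecutive gaps $n_{k+1}-n_k$ are finite and every basis vector $e_n$ lies in a bounded range between two severing indices. I would then define $T_0$ to be the weighted shift obtained from $T$ by replacing each weight $X_{n_k}$ by $0$, so that $T_0 e_{n_k}=0$ and $T_0 e_n=X_n e_{n+1}$ for $n\notin\{n_k\}$, and set $K:=T-T_0$.

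Two facts finish the proof. First, $K e_{n_k}=X_{n_k}e_{n_k+1}$ and $Ke_n=0$ otherwise; truncating to the first $N$ severing indices produces a finite-rank operator lying within $\sup_{k>N}X_{n_k}$ of $K$ in operator norm, and since $X_{n_k}\to 0$ this supremum tends to $0$, so $K$ is a norm limit of finite-rank operators and hence compact. Second, killing the weights at the positions $n_k$ breaks the orbit $e_1\to e_2\to\cdots$ exactly there, so $T_0$ decomposes as the orthogonal direct sum of its restrictions to the finite-dimensional invariant subspaces $\mathrm{span}\{e_{n_{k-1}+1},\dots,e_{n_k}\}$ (together with the initial block $\mathrm{span}\{e_1,\dots,e_{n_1}\}$), each of which is a finite-rank operator. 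Thus $T_0$ is a countable direct sum of finite-rank operators, and since $T=T_0+K$ with $K$ compact, $T(\omega)$ is quasidiagonal by definition. As $\bP(\Omega_0)=1$, $T$ is almost surely quasidiagonal.

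I do not anticipate a serious obstacle: the operator-theoretic construction is routine, and the only point requiring attention is that the decoupling yields genuinely finite blocks rather than a residual infinite one, which is guaranteed precisely because there are infinitely many severing indices $n_k$. The substantive content is the probabilistic input $\liminf_n X_n=0$ a.s., which is supplied in the excerpt; it itself rests on the second Borel--Cantelli lemma applied to the independent, positive-probability events $[X_n<\varepsilon]$ (positive because $0\in\essran X_1$), intersected over $\varepsilon=1/m$.
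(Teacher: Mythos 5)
Your proof is correct and follows exactly the route the paper intends: the paper merely records that $\liminf_{n\to\infty}X_n=r=0$ a.s.\ and then states the lemma with ``It follows that,'' the implicit argument being precisely your compact perturbation that kills a subsequence of weights tending to $0$ and severs $T$ into a direct sum of finite-dimensional truncated shifts. You have simply supplied the details (choice of severing indices, compactness of the perturbation, the block decomposition) that the paper leaves to the reader.
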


\noindent Recall that the {\it approximate point spectrum} of $A\in\BH$ is the set
\[\sigma_\pi(A)=\{\lambda\in\bC: \lambda-A~ \textup{is not bounded below}\}\]
and
\[\sigma_\delta(A)=\{\lambda\in\bC: \lambda-A~ \textup{is not surjective}\}\]
is the {\it approximate defect spectrum} of $A$.

\begin{lem}\label{T:SpectralPicture}
Let  $T\sim \{X_n\}_{n=1}^\infty.$ Then $ \sigma_\delta(T)=\overline{B(0,R)}$ a.s. and $$\sigma_\pi(T)=\sigma_{lre}(T)=\sigma_e(T)=\{z\in\bC: r\leq |z|\leq R\}\quad a.s.$$
Furthermore, if $r>0$, then, almost surely,
$\ind(T-\lambda)=-1$ for $\lambda\in B(0,r).$
\end{lem}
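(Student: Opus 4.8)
The plan is to fix a probability-one event on which all of Lemmas~\ref{T:NormSpectral}, \ref{L:r1}, \ref{L:quasiDiagonal} and \ref{P:PointSpect} hold simultaneously, together with $\liminf_n X_n=r$ and $\limsup_n X_n=R$, and then to argue path-by-path; in this way everything reduces to the deterministic spectral theory of weighted shifts, with the two governing radii already in hand, namely $r_1(T)=r$ (Lemma~\ref{L:r1}) and $\gamma(T)=\|T\|=R$ (Lemma~\ref{T:NormSpectral}). I would dispose of $\sigma_\delta$ first, uniformly in all cases. Since $\sigma_p(T)\subseteq\{0\}$ (Lemma~\ref{P:PointSpect}), the operator $\lambda-T$ is injective for every $\lambda\neq0$; were it also surjective it would be invertible by the open mapping theorem, contradicting $\lambda\in\sigma(T)=\overline{B(0,R)}$. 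Hence every $\lambda$ with $0<|\lambda|\le R$ lies in $\sigma_\delta(T)$, and $0\in\sigma_\delta(T)$ too, since $\ran T\neq\cH$ (the range omits $e_1$ in the injective case, while $\dim\ker T^*=\infty$ otherwise). With $\sigma_\delta(T)\subseteq\sigma(T)$ this yields $\sigma_\delta(T)=\overline{B(0,R)}$.

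For the annular statement I would split into $r>0$ and $r=0$. Assume first $r>0$; then $\bP(X_1=0)=0$, $m(T)=\inf_nX_n=r$, and $r\le r_0:=e^{\bE(\ln X_1)}$. On the inner disk $|\lambda|<r$ one has $m(T-\lambda)\ge m(T)-|\lambda|=r-|\lambda|>0$, so $T-\lambda$ is bounded below; moreover $\bar\lambda\in\sigma_p(T^*)$ with $\dim\ker(T^*-\bar\lambda)=1$ (Lemma~\ref{P:PointSpect}, since $|\lambda|<r\le r_0$), so $T-\lambda$ is Fredholm with $\nul=0$ and $\def=1$, whence $\ind(T-\lambda)=-1$ and $\lambda\notin\sigma_\pi\cup\sigma_{lre}\cup\sigma_e$. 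This settles both the \emph{Furthermore} clause and the inner boundary. As each of $\sigma_\pi,\sigma_{lre},\sigma_e$ sits inside $\sigma(T)=\overline{B(0,R)}$ and misses $B(0,r)$, and $\sigma_{lre}\subseteq\sigma_e$, it remains only to prove $\{r\le|z|\le R\}\subseteq\sigma_\pi(T)\cap\sigma_{lre}(T)$. Here I would invoke the deterministic weighted-shift theory of \cite{shields}, applied on the fixed path: the key inputs are that for each $n$ the $n$-fold tail products $\prod_{j=k}^{k+n-1}X_j$ satisfy $\liminf_k=r^n$ and $\limsup_k=R^n$ (immediate from $\liminf_nX_n=r$, $\limsup_nX_n=R$ and second Borel--Cantelli on disjoint blocks), and these pin the essential and approximate point spectra to the annulus $\{r\le|z|\le R\}$.

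When $r=0$ the inner disk is empty and the target is the full disk $\overline{B(0,R)}$; here I would exploit quasidiagonality. By Lemma~\ref{L:quasiDiagonal}, $T$ is almost surely quasidiagonal, so by Theorem~6.15 of \cite{Herr89} one has $\ind(T-\lambda)=0$ whenever $\lambda\notin\sigma_{lre}(T)$. If some $\lambda$ with $0<|\lambda|\le R$ lay outside $\sigma_{lre}(T)$, then $T-\lambda$ would be semi-Fredholm with index $0$; being injective ($\sigma_p(T)\subseteq\{0\}$) it would then be invertible, contradicting $\lambda\in\sigma(T)$. Thus $\{0<|z|\le R\}\subseteq\sigma_{lre}(T)$, and the same injectivity-versus-index dichotomy shows these $\lambda$ are not bounded below, so they also lie in $\sigma_\pi(T)$. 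Finally $0\in\sigma_\pi(T)\cap\sigma_{lre}(T)$ in either sub-case ($T$ has trivial kernel but non-closed range when $\bP(X_1=0)=0$, and infinite-dimensional kernel and cokernel when $\bP(X_1=0)>0$, by Lemma~\ref{P:PointSpect}). Together with $\sigma_{lre}\subseteq\sigma_e\subseteq\sigma\subseteq\overline{B(0,R)}$ and $\sigma_\pi\subseteq\sigma$, this forces $\sigma_\pi=\sigma_{lre}=\sigma_e=\overline{B(0,R)}$.

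The main obstacle is the reverse containment in the case $r>0$: showing that every point of the \emph{open} annulus $r<|\lambda|<R$ is simultaneously non-bounded-below and non-semi-Fredholm, since an index/continuity argument alone only forces the circle $|\lambda|=r$ into $\sigma_{lre}$. Concretely one must produce, for each modulus $\rho\in(r,R)$, approximate eigenvectors for \emph{both} $T-\lambda$ and $T^*-\bar\lambda$ (with $|\lambda|=\rho$) supported on disjoint tail blocks, so that $\lambda\in\sigma_{le}(T)\cap\sigma_{re}(T)=\sigma_{lre}(T)$ and $\lambda\in\sigma_\pi(T)$. On a block $[N,N+L]$ such a vector has coefficients $c_j\propto\prod(X/\rho)$, and its relative boundary error is controlled by the excursion of the random walk $\sum_i(\ln X_i-\ln\rho)$. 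The delicate point is that $\rho$ may fall in a gap of $\essran X_1$, so one cannot simply use runs of weights near $\rho$; instead one uses that the increments $\ln X_i-\ln\rho$ take both signs (as $\ln r\le\ln\rho\le\ln R$), which guarantees almost surely infinitely many disjoint windows on which the walk makes a tall interior excursion, rendering the endpoint terms negligible. Isolating this probabilistic input---essentially that the tail weight-products densely realize the whole range $[r^n,R^n]$---is where the genuine work lies, and it is exactly what the pathwise application of \cite{shields} packages.
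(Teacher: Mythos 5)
Your overall architecture --- settle $\sigma_\delta$ first via injectivity plus $\sigma(T)=\overline{B(0,R)}$, then split into $r>0$ and $r=0$ --- is exactly the paper's, and two of your three blocks are complete and correct. The $\sigma_\delta$ argument is the paper's. Your inner-disk computation for $r>0$ (bounded below since $m(T)\geq r>|\lambda|$, together with $\dim\ker(T^*-\bar\lambda)=1$ from Lemma \ref{P:PointSpect}, valid because $|\lambda|<r\leq e^{\bE(\ln X_1)}$) gives Fredholmness and $\ind(T-\lambda)=-1$ directly; the paper instead gets this from continuity of the index starting at $\ind T=-1$, so here you are fine and even slightly more self-contained. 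Your $r=0$ case is also correct and arguably cleaner than the paper's: by working pathwise on the single probability-one event where $T$ is quasidiagonal, $\sigma_p(T)\subset\{0\}$, and $\sigma(T)=\overline{B(0,R)}$, you handle all $\lambda$ simultaneously, whereas the paper argues for each fixed $\lambda$ that the event $[\ran(T-\lambda)~\textup{closed}]$ is null and then needs a countable dense subset plus closedness of $\sigma_{lre}$.

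The genuine gap is in the $r>0$ case, at precisely the step you flag as ``the main obstacle'': the containment $\{r\leq|z|\leq R\}\subset\sigma_{lre}(T)$. You correctly obtain $\sigma_\pi(T)=\{r\leq|z|\leq R\}$ a.s. from Shields' Theorem 6 once $r_1(T)=r$ and $\gamma(T)=R$ are known (Lemma \ref{L:r1} and Lemma \ref{T:NormSpectral}; this is the paper's citation), but that theorem computes the approximate point spectrum only --- it does not ``package'' the Wolf spectrum, so your fallback claim that the pathwise application of \cite{shields} also pins $\sigma_{lre}$ and $\sigma_e$ is unjustified, and the two-sided approximate-eigenvector, random-walk-excursion construction you sketch is left incomplete (and is unnecessary). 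The missing idea is a one-liner, and it is how the paper closes the case: since $r>0$ forces $\bP(X_1=0)=0$, Lemma \ref{P:PointSpect} gives $\sigma_p(T)=\emptyset$ a.s.; hence for $\lambda$ in the annulus, $T-\lambda$ is injective yet not bounded below (as $\lambda\in\sigma_\pi(T)$), so its range is not closed, so $T-\lambda$ is not semi-Fredholm, i.e. $\lambda\in\sigma_{lre}(T)\subset\sigma_e(T)$. No approximate eigenvectors for $T^*-\bar\lambda$ and no probabilistic input beyond what you already have are needed; note this is the same injectivity dichotomy you yourself exploit in the $r=0$ case, except that once $\lambda\in\sigma_\pi(T)$ is known it does not require quasidiagonality at all.
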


\begin{proof}
Clearly, $0\in\sigma_\delta(T)$. Fix   $\lambda\in\bC$ with $0<|\lambda|\leq R$. By Lemma \ref{P:PointSpect}, $T-\lambda$ is almost surely injective. On the other hand, by Lemma \ref{T:NormSpectral}, $\lambda\in\sigma(T)$ a.s. It follows that $T-\lambda$ is almost surely not surjective. Hence,   $\sigma_\delta(T)=\sigma(T)=\overline{B(0,R)}$ a.s.
 The rest of the proof is divided into two cases.

  {\it Case 1.} $r>0$.
 By Lemma \ref{L:r1} and Theorem 6 in \cite{shields}, we have $\sigma_\pi(T)=\{z: r\leq |z|\leq R\}$ a.s. Hence, for $\lambda$ with $|\lambda|<r$, $T-\lambda$ is almost surely a semi-Fredholm operator and, by the continuity of the index function, $\ind(T-\lambda)=\ind T=-1$. So
$\sigma_{lre}(T)\subset\sigma_e(T)\subset\{z\in\bC: r\leq |z|\leq R\}$ a.s.
Fix a $\lambda$ with $r\leq |\lambda|\leq R$. By Lemma \ref{P:PointSpect}, $T-\lambda$ is almost surely injective. It follows that $\ran(T-\lambda)$ is almost surely not closed, since $\lambda\in\sigma_\pi(T)$ a.s. Hence $\lambda\in\sigma_{lre}(T)$ a.s. Thus $\sigma_{lre}(T)=\sigma_e(T)=\{z\in\bC: r\leq |z|\leq R\}$ a.s.

  {\it Case 2.} $r=0$.
 This means that $0\in\essran X_1$. Since $\sigma_{lre}(T)\subset\sigma_{\pi}(T)\cap\sigma_e(T)$, it suffices to prove $\overline{B(0,R)}\subset\sigma_{lre}(T)$ a.s.
Fix a $\lambda\in\bC$ with $0<|\lambda|\leq R$.
Set $\Omega_1=[\ran(T-\lambda)\ \textup{is norm closed}],$\
$\Omega_2=[\dim\ker(T-\lambda)=0],$\
$\Omega_3=[ T~ \textup{is quasidiagonal}].$
 If $\omega\in\cap_{i=1}^3\Omega_i$, then $T(\omega)-\lambda$ is injective, quasidiagonal and a semi-Fredholm operator.
Then $\ind(T(\omega)-\lambda)=0$. Hence $T(\omega)-\lambda$ is surjective and $\lambda\notin\sigma_\delta(T(\omega))$.
Since $\lambda\in\sigma_\delta(T)$ a.s., it follows that $\cap_{i=1}^3\Omega_i$ is a null set. On the other hand, by Lemma \ref{P:PointSpect} and Lemma \ref{L:quasiDiagonal}, we have $\bP(\Omega_2)=1=\bP(\Omega_3)$. This implies that $\bP(\Omega_1)=0$. Hence $\ran(T-\lambda)\ne \overline{\ran(T-\lambda)}$ a.s. and $\lambda\in\sigma_{lre}(T)$ a.s.
Choose a countable dense subset $\Gamma$ of $B(0,R)\setminus\{0\}$. Then $\Gamma\subset\sigma_{lre}(T)$ a.s.
It follows that $\overline{B(0,R)}\subset\sigma_{lre}(T)$ a.s.
\end{proof}

  For $A\in\BH$, the {\it essential norm} of $A$ is
$\|A\|_e=\inf_{K\in\KH}\|A+K\|.$ Here $\KH$ is the subset of $\BH$ consisting of all compact operators. Since $\|T\|_e\leq \|T\|$, and
$\|T\|_e\geq\max_{z\in\sigma_e(T)}|z|=\max_{z\in\sigma(T)}|z|=\|T\|$  a.s., we have

\begin{cor}
Let  $T\sim \{X_n\}_{n=1}^\infty.$ Then $\|T\|_e=R$ a.s.
\end{cor}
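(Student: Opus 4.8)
The plan is to squeeze $\|T\|_e$ between matching upper and lower bounds that both equal $R$ almost surely. For the upper bound I would simply take $K=0$ in the definition $\|T\|_e=\inf_{K\in\KH}\|T+K\|$, which yields $\|T\|_e\leq\|T\|$, and then invoke Lemma \ref{T:NormSpectral}, giving $\|T\|=R$ a.s. and hence $\|T\|_e\leq R$ a.s.

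For the lower bound I would pass to the Calkin algebra $\BH/\KH$. Writing $\pi$ for the quotient map, the image $\pi(T)$ has norm exactly $\|T\|_e$ and spectrum exactly $\sigma_e(T)$. The spectral radius inequality, valid in any Banach algebra, then gives $\max\{|z|:z\in\sigma_e(T)\}\leq\|\pi(T)\|=\|T\|_e$. By Lemma \ref{T:SpectralPicture} we have $\sigma_e(T)=\{z\in\bC: r\leq|z|\leq R\}$ a.s., so the essential spectral radius equals $R$ a.s., and therefore $\|T\|_e\geq R$ a.s. Intersecting the two full-measure events (a harmless pairwise intersection) and combining the two bounds delivers $\|T\|_e=R$ a.s.

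I do not expect any genuine obstacle here: all the substantive work, namely the computations of $\|T\|=R$ and of the essential spectrum $\sigma_e(T)$, has already been carried out in Lemma \ref{T:NormSpectral} and Lemma \ref{T:SpectralPicture}. The only point worth keeping straight is the conceptual one, that $\|T\|_e$ is precisely the Calkin-algebra norm of $\pi(T)$ while $\sigma_e(T)$ is precisely its spectrum, so that the elementary inequality $\gamma(a)\leq\|a\|$ applied to $a=\pi(T)$ supplies the lower bound. Everything else is a routine assembly of earlier results.
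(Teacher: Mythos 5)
Your proof is correct and follows exactly the paper's route: the upper bound $\|T\|_e\leq\|T\|=R$ a.s.\ from Lemma \ref{T:NormSpectral}, and the lower bound via the essential spectral radius, $\|T\|_e\geq\max_{z\in\sigma_e(T)}|z|=R$ a.s.\ from Lemma \ref{T:SpectralPicture} (the paper states this inequality directly; you merely make explicit its Calkin-algebra justification). No gaps; the argument matches the paper's own.
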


%%%%%%%%%%%%%%%%%%%%%%%%%%%%%%%%%%%%%%

\subsection{The numerical range}
For $A\in\BH$, the {\it numerical range}   is the set
\[W(A)=\{\la Ax,x\ra: x\in\cH ~\textup{with}~ \|x\|=1\}.\]
 The  Toeplitz-Hausdorff theorem asserts that the numerical range of an operator is always convex \cite{Hausdorff,Toeplitz}. The convex hull of $\sigma(A)$ lies in the closure
of $W(A)$. The {\it numerical radius} of $A$ is
$w(A)=\sup_{z\in W(A)}|z|$. It is  known that $\gamma(A)\leq w(A)\leq \|A\|$ \cite[Problem 214]{HilbertProblem}. The {\it essential numerical range} of $A$ is the compact set
\[W_e(A)=\bigcap_{K\in\KH}\overline{W(A+K)}.\] It always holds that $\sigma_e(A)\subset W_e(A)\subset \overline{W(A)}$  \cite{Lancaster}. The reader is referred to \cite[Chapter 22]{HilbertProblem} for more   facts about the numerical range.

\begin{lem}\label{P:numericalRange}
Let  $T\sim \{X_n\}_{n=1}^\infty$ with $R>0$.   Then
$$W(T)=B(0,R)\   a.s. \ \ \ \text{and }\ \ \ W_e(T)=\overline{B(0,R)} \ a.s.$$
\end{lem}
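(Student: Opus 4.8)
The plan is to combine the circular symmetry of the numerical range of a weighted shift with the spectral data already in hand, treating $W(T)$ and $W_e(T)$ separately. For $W(T)$ I would first record the trivial containment $W(T)\subset\overline{B(0,R)}$ a.s., immediate from $|\langle Tx,x\rangle|\leq\|T\|=R$ (Lemma \ref{T:NormSpectral}). The structural observation that drives everything is that $W(T)$ is invariant under rotation about the origin: for each $\theta$ the \emph{deterministic} diagonal unitary $U_\theta e_n=e^{-i(n-1)\theta}e_n$ satisfies $U_\theta^* T U_\theta=e^{i\theta}T$, so $W(T)=W(e^{i\theta}T)=e^{i\theta}W(T)$ for every $\omega$ (surely, not merely a.s.). Since $W(T)$ is also convex by the Toeplitz--Hausdorff theorem, a convex rotation-invariant planar set must be an open or closed disc centered at $0$; positivity of $R$ rules out the degenerate single-point case.

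The radius of this disc is $w(T)$, which I would pin down by the sandwich $\gamma(T)\leq w(T)\leq\|T\|$ together with $\gamma(T)=\|T\|=R$ a.s. from Lemma \ref{T:NormSpectral}, giving $w(T)=R$ a.s. It then remains to decide whether the bounding circle $\{|z|=R\}$ lies in $W(T)$. Here I would show the supremum is not attained: if a unit vector $x$ gave $|\langle Tx,x\rangle|=R=\|T\|$, then the equality case of Cauchy--Schwarz forces $Tx=\mu x$ with $|\mu|=R>0$, so $\mu$ would be a nonzero point of $\sigma_p(T)$; but Lemma \ref{P:PointSpect} gives $\sigma_p(T)\subseteq\{0\}$ a.s., a contradiction. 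By rotation-invariance, excluding one boundary point excludes the whole circle, yielding $W(T)=B(0,R)$ a.s.

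For the essential numerical range I would use only soft facts: $W_e(T)$ is always compact and convex, and $\sigma_e(T)\subset W_e(T)\subset\overline{W(T)}$. Lemma \ref{T:SpectralPicture} gives $\sigma_e(T)=\{z:r\leq|z|\leq R\}$ a.s., which contains the full circle $\{|z|=R\}$; convexity then forces $W_e(T)\supseteq\conv\{z:|z|=R\}=\overline{B(0,R)}$, while $W_e(T)\subset\overline{W(T)}=\overline{B(0,R)}$ gives the reverse inclusion, so $W_e(T)=\overline{B(0,R)}$ a.s. All the almost-sure claims are intersections of finitely many full-measure events, so the conclusions hold simultaneously a.s.

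The genuinely non-formal step, and the one I expect to be the main obstacle, is the openness of $W(T)$, i.e.\ showing the boundary circle is \emph{not} attained; everything else is bookkeeping once the spectral picture is available. That openness reduces cleanly to the eigenvalue obstruction via the Cauchy--Schwarz equality case, so the only real care needed is in the reduction ``convex $+$ rotation-invariant $\Rightarrow$ disc,'' where one must also exclude degenerate shapes, handled by $R>0$ and $w(T)=R$.
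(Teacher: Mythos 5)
Your proposal is correct and follows essentially the same route as the paper: both pin down $w(T)=\|T\|=R$ a.s.\ via Lemma \ref{T:NormSpectral}, use circular symmetry to reduce to excluding a single boundary point, rule it out because attainment of the norm in the numerical range would force a nonzero eigenvalue (contradicting Lemma \ref{P:PointSpect}), and obtain $W_e(T)=\overline{B(0,R)}$ from $\sigma_e(T)\subset W_e(T)\subset\overline{W(T)}$ together with convexity. The only difference is expository: you spell out the Cauchy--Schwarz equality case and the ``convex $+$ rotation-invariant $\Rightarrow$ disc'' reduction, which the paper leaves implicit.
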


\begin{proof}
Since $\gamma(T)\leq w(T)\leq \|T\|$, it follows from Lemma \ref{T:NormSpectral} that $w(T)=\|T\|=R$ a.s. Since $W(T)$ possesses the circular symmetry, it suffices to show $R\notin W(T)$ a.s.
By the discussion above,   $\|T\|=w(T)=R$ and if   $R\in W(T)$, then $R\in\sigma_p(T)$ a.s.
By Lemma \ref{P:PointSpect},  $R\notin W(T)$ a.s.
 By Lemma \ref{T:SpectralPicture}, $\overline{B(0,R)}$ $\subset W_e(T)$ a.s., hence   $W_e(T)=\overline{B(0,R)}$ a.s.
\end{proof}

\begin{rmk}
By Lemma \ref{T:NormSpectral} and Lemma \ref{P:numericalRange}, a random weighted shift is a.s. normaloid. An operator $A\in\BH$ is {\it normaloid} if $\|A\|=w(A)$
 (\cite{HilbertProblem}, P. 117).
\end{rmk}

%%%%%%%%%%%%%%%%%%%%%%%%%%%%%%%%%%%%%%%%
\section{Classification of samples}\label{S:SampleClassify}

  This section is devoted to the classification of samples up to four typical equivalence relationships of Hilbert space operators, including similarity, asymptotical similarity, unitary equivalence, and approximate unitary equivalence.

\subsection{Similarity and asymptotical similarity}

Recall that two operators $A,B\in\BH$ are {\it similar}, denoted by $A\sim B$, if there exists an invertible $V\in\BH$ such that $AV=VB$. Here, the $``\sim"$ should be not confused with that occurring in $T\sim \{X_n\}_{n=1}^{\infty}.$
For $A\in\BH$, the {\it similarity orbit} of $A$ is defined to be the set $\mathcal{S}(A)=\{Z\in\BH: Z\sim A\}$.
Two operators $A,B\in\BH$ are said to be {\it asymptotically similar} if $\overline{\mathcal{S}(A)}=\overline{\mathcal{S}(B)}$, denoted by  $A\sim_a B$. The reader is referred to \cite{AFHV,Herr89} for more about similarity and asymptotical similarity of Hilbert space operators.

  The main result of this subsection is the following.
\begin{thm}\label{T:similarity}
Let  $T\sim \{X_n\}_{n=1}^\infty.$
\begin{enumerate}
\item[(i)] If $X_1$ is non-degenerate and $\bP(X_1=0)=0$, then $$\bP^2\big\{(\omega_1,\omega_2)\in\Omega^2: T(\omega_1)\sim T(\omega_2)\big\}=0;$$ indeed,
$$\bP(T\sim A)=0$$ for any deterministic operator $A.$
\item[(ii)] We have $$\bP^2\big\{(\omega_1,\omega_2)\in\Omega^2: T(\omega_1)\sim_a T(\omega_2)\big\}=1;$$ indeed, there exists a deterministic operator $A$ such that $$\bP(T\sim_a A)=1.$$
\item[(iii)] If $R>0$, then an operator $A$ satisfies $\bP(T\sim_a A)=1$ if and only if
\begin{itemize}
\item[(a)] $\sigma(A)=\overline{B(0,R)}$,
\item[(b)] $\sigma_{lre}(A)=\overline{B(0,R)}\setminus B(0,r)$ and
\item[(c)] $-\ind(A-\lambda)=1=\dim\ker (A-\lambda)^*\ \text{for all} \ \lambda\in B(0,r).$
\end{itemize}
\end{enumerate}
\end{thm}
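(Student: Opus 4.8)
The plan is to treat the three parts in the order (i), (iii), (ii), since (ii) will fall out of (iii) once we know that $T(\omega)$ itself almost surely satisfies conditions (a)--(c). For part~(i) I would invoke the classical similarity criterion for injective unilateral weighted shifts (as in \cite{shields}): $T(\omega_1)\sim T(\omega_2)$ if and only if the products $\prod_{k=1}^{n} X_k(\omega_1)/X_k(\omega_2)$ are bounded above and bounded away from $0$. Passing to logarithms turns this into the requirement that the random walk $S_n=\sum_{k=1}^{n}\big(\ln X_k(\omega_1)-\ln X_k(\omega_2)\big)$ be bounded above and below. Under the product measure $\bP^2$ the increments $Y_k=\ln X_k(\omega_1)-\ln X_k(\omega_2)$ are i.i.d., \emph{symmetric} (the two copies of $X_k$ are interchangeable) and non-degenerate (since $X_1$ is non-degenerate and $\bP(X_1=0)=0$). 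For a non-degenerate symmetric one-dimensional random walk one has $\limsup_n S_n=+\infty$ and $\liminf_n S_n=-\infty$ almost surely (a standard oscillation fact, cf. \cite{Durrett}), so $S_n$ is a.s. unbounded in both directions and similarity fails. This gives $\bP^2\{T(\omega_1)\sim T(\omega_2)\}=0$. The deterministic statement then follows formally: if $T(\omega_1)\sim A$ and $T(\omega_2)\sim A$ then $T(\omega_1)\sim T(\omega_2)$, so by independence $\bP(T\sim A)^2\le \bP^2\{T(\omega_1)\sim T(\omega_2)\}=0$.

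Part~(iii) is the heart of the matter, and I would deduce it from the Similarity Orbit Theorem of Apostol--Fialkow--Herrero--Voiculescu \cite{AFHV,Herr89}, which characterizes $\overline{\mathcal{S}(A)}=\overline{\mathcal{S}(B)}$ (that is, $A\sim_a B$) through a complete list of invariants: the spectrum, the Wolf spectrum $\sigma_{lre}$ (hence the semi-Fredholm domain $\bC\setminus\sigma_{lre}$ and, together with finiteness of the index, all essential spectra), the Fredholm index and minimal index on that domain, and the normal eigenvalues together with the similarity type of the associated finite-dimensional spectral restrictions. The key initial observation is that every operator satisfying (a) has $\sigma(A)=\overline{B(0,R)}$, which has no isolated points, so there are no normal eigenvalues to match and the last (and most delicate) block of invariants is vacuous. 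It then remains to read conditions (a)--(c) as exactly the assertion ``same spectrum, same Wolf spectrum, same index and minimal index as $T(\omega)$'': (a) gives the spectrum; (b) gives $\sigma_{lre}$ and hence the semi-Fredholm domain $B(0,r)\cup\{|z|>R\}$; and (c) fixes $\ind(A-\lambda)=-1$ together with $\dim\ker(A-\lambda)=0$ (forcing minimal index $0$) on the hole $B(0,r)$, while on $\{|z|>R\}$ both operators are invertible. By Theorem~\ref{T:SpectPic} (equivalently Lemmas~\ref{T:NormSpectral}, \ref{P:PointSpect}, \ref{T:SpectralPicture}), $T(\omega)$ almost surely realizes precisely this picture. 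For the forward direction, since the probability-one event $\{T\sim_a A\}$ meets the probability-one event on which $T$ realizes this picture, one may fix an $\omega$ with $A\sim_a T(\omega)$ and $T(\omega)$ as above; equality of similarity-orbit closures transfers all invariants, yielding (a)--(c) for $A$. For the converse, (a)--(c) make the full invariant lists of $A$ and of almost every $T(\omega)$ coincide, so the Similarity Orbit Theorem gives $\overline{\mathcal{S}(A)}=\overline{\mathcal{S}(T(\omega))}$ a.s.

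Part~(ii) is then immediate. Fix any sample point $\omega_0$ in the probability-one set on which $T$ realizes the spectral picture of Theorem~\ref{T:SpectPic}, and set $A=T(\omega_0)$, a genuinely deterministic operator satisfying (a)--(c). By part~(iii), $\bP(T\sim_a A)=1$. Since $\sim_a$ is an equivalence relation, almost every pair $(\omega_1,\omega_2)$ satisfies $T(\omega_1)\sim_a A\sim_a T(\omega_2)$, whence $\bP^2\{T(\omega_1)\sim_a T(\omega_2)\}=1$.

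The main obstacle lies entirely in part~(iii): invoking the Similarity Orbit Theorem correctly and verifying that its hypotheses are met. One must be certain that no invariant beyond those encoded in (a)--(c) survives in the orbit closure --- in particular that the non-isolated eigenvalue at $0$ arising when $\bP(X_1=0)>0$ contributes no normal-eigenvalue datum (it is not isolated in $\sigma(T)=\overline{B(0,R)}$, so it does not) --- and that the index and minimal-index bookkeeping on the hole $B(0,r)$ matches exactly. A smaller but genuine point in part~(i) is justifying the two-sided oscillation of $S_n$ under only the standing integrability of $\ln X_1$; here the symmetry of the increments $Y_k$ is what makes the conclusion robust, independently of finer moment conditions.
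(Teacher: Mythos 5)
Your proposal is correct, but it takes a genuinely different route from the paper at two points, and the comparison is instructive. In part (i) the paper, like you, reduces similarity of injective shifts to boundedness of the ratio products $\prod_{i=1}^n X_i(\omega_1)/X_i(\omega_2)$ (Problem 90 of \cite{HilbertProblem}), but then argues constructively: using Lemma \ref{L:constantCase} it fixes a sample $\omega_0$ whose log-weights stay below some $\delta_2<\delta_1=\max\essran \ln X_1$ along longer and longer blocks, and shows that almost every $\omega$ produces, inside those blocks, long stretches of log-weights near $\delta_1$, forcing the partial sums of $\ln X_i(\omega)-\ln X_i(\omega_0)$ to be unbounded. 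Your symmetrization argument --- under $\bP^2$ the increments $\ln X_k(\omega_1)-\ln X_k(\omega_2)$ are i.i.d., symmetric and non-degenerate, so the walk oscillates to $\pm\infty$ by Hewitt--Savage plus symmetry --- reaches the same conclusion in a few lines, avoids the block construction entirely, and (as you note) is insensitive to possible non-integrability of $\ln X_1$; Fubini and your transitivity remark then recover both forms of the statement, including the claim for arbitrary deterministic $A$, which the paper leaves implicit. In parts (ii)/(iii) the paper proceeds in the opposite order from you: it proves (ii) by exhibiting an explicit model with the required spectral picture (a normal operator with spectrum $\overline{B(0,R)}$ when $r=0$, and $D\oplus S$ with $D$ normal, $\sigma(D)=\overline{B(0,R)}\setminus B(0,r)$, when $r>0$) and invoking the Similarity Orbit Theorem, and then asserts that (iii) follows ``from a careful examination of the proof of (ii)''; you instead prove (iii) directly from the invariant-theoretic form of that theorem --- correctly observing that $\overline{B(0,R)}$ has no isolated points, hence no normal-eigenvalue data, and that mutual asymptotic similarity preserves spectrum, Wolf spectrum, index and kernel dimensions by semicontinuity and semi-Fredholm stability --- and then obtain (ii) by taking $A=T(\omega_0)$ for a good sample. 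Both routes rest on the same black box; yours makes the necessity half of (iii) explicit where the paper does not, while the paper's yields a concrete and simple representative of the asymptotic similarity class.

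Two minor loose ends in your write-up, neither fatal: your deduction of (ii) from (iii) requires $R>0$, so the degenerate case $X_1=0$ a.s. (where $T=0$ a.s. and (ii) is trivial) should be disposed of separately; and the correctness of your (iii) depends on the Similarity Orbit Theorem delivering exactly the invariant list you describe, a reliance you share with the paper rather than a gap relative to it.
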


  The following lemma is needed in the proof of  Theorem \ref{T:similarity}.

\begin{lem}\label{L:constantCase}
Let $\{X_n\}_{n=1}^\infty$ be i.i.d. random variables and $\lambda_i \in \essran X_1,\ i=1,\cdots,k$. If $\{n_j\}_{j=1}^\infty$ is a subsequence of $\bN$, then
$$\liminf_{j\to\infty} \big(\max_{1\leq i\leq k}|X_{n_j+i}-\lambda_i|\big)=0\ \ \ a.s.$$
\end{lem}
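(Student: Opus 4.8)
The plan is to reduce the statement to a single-$\eps$ infinitely-often assertion and then intersect over $\eps \to 0$. Fix $\eps > 0$ and set
$$A_j = \bigcap_{i=1}^k \big\{\, |X_{n_j+i}-\lambda_i| < \eps \,\big\}, \qquad j \ge 1.$$
If I can show $\bP(\limsup_j A_j) = 1$ for every $\eps > 0$, then applying this with $\eps = 1/m$ and intersecting the resulting probability-one events over $m \in \bN$ produces a set of full measure on which, for each $m$, the quantity $\max_{1\le i\le k}|X_{n_j+i}-\lambda_i|$ is $< 1/m$ for infinitely many $j$. This is exactly the statement that $\liminf_{j\to\infty}\big(\max_{1\le i\le k}|X_{n_j+i}-\lambda_i|\big)=0$, so the reduction is lossless.

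Next I would compute $\bP(A_j)$. Within a single block the indices $n_j+1,\dots,n_j+k$ are distinct, so by independence of the $X_n$ the $k$ events defining $A_j$ are independent, whence
$$\bP(A_j) = \prod_{i=1}^k \bP\big(|X_1-\lambda_i|<\eps\big) =: p.$$
Because each $\lambda_i$ lies in $\essran X_1$, i.e. in the support of the law of $X_1$, every neighborhood of $\lambda_i$ carries positive mass, so each factor is strictly positive and hence $p > 0$, uniformly in $j$.

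The main obstacle is that the blocks $\{n_j+1,\dots,n_j+k\}$ attached to different $j$ may overlap: since $\{n_j\}$ is an arbitrary subsequence, consecutive windows need not be disjoint, so the family $\{A_j\}$ is not independent and the second Borel--Cantelli lemma does not apply directly. I would circumvent this by thinning. Since $\{n_j\}$ is a subsequence of $\bN$ it increases to infinity, so I can greedily extract indices $j_1 < j_2 < \cdots$ with $n_{j_{l+1}} \ge n_{j_l}+k$, which makes the windows $\{n_{j_l}+1,\dots,n_{j_l}+k\}$ pairwise disjoint. The events $A_{j_l}$ then depend on disjoint collections of the i.i.d. variables $X_n$ and are therefore independent, while $\sum_l \bP(A_{j_l}) = \sum_l p = \infty$. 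The second Borel--Cantelli lemma gives $\bP(\limsup_l A_{j_l}) = 1$, and since $\limsup_l A_{j_l} \subset \limsup_j A_j$ this forces $\bP(\limsup_j A_j)=1$, which is what the reduction above requires.
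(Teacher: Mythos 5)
Your proposal is correct and follows essentially the same route as the paper: thin the subsequence so the windows $\{n_j+1,\dots,n_j+k\}$ become disjoint (hence the block events are i.i.d.), use $\lambda_i\in\essran X_1$ together with within-block independence to get a uniform positive lower bound $p$ on each block probability, and conclude an infinitely-often statement for each $\eps$ before intersecting over $\eps=1/m$. The only cosmetic difference is that you invoke the second Borel--Cantelli lemma by name, whereas the paper proves the same conclusion directly by noting that $\bP\big(\bigcap_{j\geq N}[Y_{n_j}>\eps]\big)=\prod_{j\geq N}(1-p)=0$ for every $N$.
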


%\begin{proof}
%For simplicity, assume that $k=2.$ Obviously,
%\begin{equation*}
%\max_{i=1, 2}|X_{n+i}-\lambda_i|\leq |X_{n+1}-\lambda_1|+|X_{n+2}-\lambda_2|.
%\end{equation*}
%Let $Y_n=|X_{n+1}-\lambda_1|$ and $\widetilde{Y_n}=|X_{n+2}-\lambda_2|,$ then $\{Y_n\}$ and $\{\widetilde{Y_n}\}$ are both \iid r.v.'s. By Lemma \ref{L:LimInfSup}, $\liminf_nY_n=0$ and $\liminf_n\widetilde{Y_n}=0.$ Suppose that $\lim_{k}Y_{n_k}=0.$ Also by Lemma \ref{L:LimInfSup}, $\liminf_{k}\widetilde{Y}_{n_k}=0.$ Suppose that
%\begin{equation*}
%\lim_{i}\widetilde{Y}_{n_{k_i}}=0.
%\end{equation*}
%Now, $Y_{n_{k_i}}+\widetilde{Y}_{n_{k_i}}\rightarrow 0,$ as desired.
%\end{proof}

\begin{proof}
For $n\geq 1$, set $Y_n=\max\limits_{1\leq i\leq k}|X_{n+i}-\lambda_i|$. Then $\{Y_n\}_{n=1}^{\infty}$ are random variables  with the same distribution. By passing to a subsequence if necessary, we assume $n_j+k<n_{j+1}$ for  $j\geq 1,$
so $\{Y_{n_j}\}_{i=1}^\infty$ are \iid
For  $\eps>0$, note that
$\liminf\limits_{j\to\infty} [Y_{n_j}>\ep]=\bigcup_{N=1}^{\infty}\bigcap_{j=N}^{\infty}[Y_{n_j}>\ep].$ On the other hand,
since $\lambda_i \in \essran X_1$, we have $$\bP\left( Y_{1} <\eps \right)=\bP\Big(  \max_{1\leq i\leq k}|X_{i}-\lambda_i| <\eps\Big)=\prod_{1\leq i\leq k}\bP\left(|X_{i}-\lambda_i| <\eps\right)>0.$$
It follows   that
$\bP\big(\liminf\limits_{j\to\infty}[Y_{n_j}>\ep]\big)=0$ and the proof is complete.
\end{proof}

\begin{proof}[Proof of Theorem \ref{T:similarity}]
(i)  Since $\bP(X_1=0)=0$, we  set $Y_n=\ln X_n$.
Choose an at most countable dense subset $\{\lambda_j: j\in\Lambda\}$ of $\essran Y_1$.
Set \begin{equation}\label{(3.1)}
\Omega'=\bigcap_{s=1}^{\infty}\bigcap_{(j_1,\cdots,j_s)\subset\Lambda}\big[\liminf_{n\to\infty}\max_{1\leq i\leq s}|Y_{n+i}-\lambda_{j_i}| =0\big].
\end{equation}
By Lemma \ref{L:constantCase}, $\bP(\Omega')=1$.
 Fixing   $\omega_0\in \Omega'$,  we shall prove that $T\nsim T(\omega_0)$ a.s.
 Denote $\delta_1=\max\essran Y_1$. Since $Y_1$ is  non-degenerate,  we  choose $\delta_2 $ with $\delta_2<\delta_1$ such that $\bP(Y_1<\delta_2)>0$.
 For $i\geq 1$, let $r_i=Y_i(\omega_0)$. In view of (\ref{(3.1)}), we  choose $\{n_k\}_{k=1}^\infty$ with $n_k+2^k<n_{k+1}$ such that
\begin{equation}\label{(3.2)}r_{n_k+i}\leq \delta_2, \ \ \ \forall\ k\geq1,\ 1\leq i\leq 2^k.\end{equation}

\noindent  For each $s\geq 1$, set $\Omega_s=\big[\liminf\limits_{k\to\infty}\max\limits_{1\leq i\leq s}|Y_{n_k+i}-\delta_1| =0\big].$
Let $\Omega''=\cap_{s=1}^{\infty}\Omega_s$. Note that $\{Y_{n}\}_{n\geq 1}$ are \iid random variables and $\delta_1\in\essran Y_1$. By Lemma \ref{L:constantCase}, we have $\bP(\Omega'')=1$.
Now we show that $T(\omega)\nsim T(\omega_0)$ for any $\omega \in \Omega''$. Note that $T(\omega)$ and $T(\omega_0)$ are both injective. By Problem 90 in \cite{HilbertProblem}, it suffices to check that
either $\sup_{n\geq 1} \prod_{i=1}^n\frac{ X_i(\omega)}{ X_i(\omega_0)} =\infty$ or $\inf_{n\geq 1} \prod_{i=1}^n\frac{ X_i(\omega)}{ X_i(\omega_0)} =0$.
For each $k\geq 1$, there exists $i_0\geq 1$ with $2^{i_0}>k$ such that  $\max\limits_{1\leq j\leq k}|Y_{n_{i_0}+j}(\omega)-\delta_1|<\frac{\delta_1-\delta_2}{2}.$ It follows that $Y_{n_{i_0}+j}(\omega)>\frac{\delta_1+\delta_2}{2}$ for $1\leq j\leq k$. Then in view of (\ref{(3.2)}), we get
 \begin{eqnarray*}
\Big|\sum_{j=1}^{n_{i_0}+k} (Y_{j}(\omega)-r_j)\Big|+\Big|\sum_{j=1}^{n_{i_0}} (Y_{j}(\omega)-r_j)\Big|
&\geq&\Big|\sum_{j=n_{i_0}+1}^{n_{i_0}+k} (Y_{j}(\omega)-r_j)\Big|\\
 &>& \sum_{j=n_{i_0}+1}^{n_{i_0}+k} \big(\frac{\delta_1+\delta_2}{2}-\delta_2\big)=\frac{k(\delta_1-\delta_2)}{2}.
 \end{eqnarray*}
Then $\sup_{n\geq 1}|\sum_{i=1}^n (Y_i(\omega)-r_i)|\geq \frac{k(\delta_2-\delta_1)}{4}$. So $\sup_{n\geq 1}|\sum_{i=1}^n (Y_i(\omega)-r_i)|=\infty$.
Note that
$\sum_{i=1}^n (Y_i(\omega)-r_i)=\ln\Big(\frac{\prod_{i=1}^n X_i(\omega)}{\prod_{i=1}^n X_i(\omega_0)}\Big)$
and
$\exp\big(\sum_{i=1}^n (Y_i(\omega)-r_i)\big)= \prod_{i=1}^n\frac{ X_i(\omega)}{ X_i(\omega_0)}.$
So we have either $\sup_n\prod_{i=1}^n\frac{ X_i(\omega)}{ X_i(\omega_0)}=\infty$ or $\inf_n\prod_{i=1}^n\frac{ X_i(\omega)}{ X_i(\omega_0)}=0.$

  (ii) We assume that $X_1$ is non-degenerate, hence  $R>r$.
In view of Theorem \ref{T:SpectPic}, almost surely,
\begin{equation}\label{E:B}
\sigma(T)=\overline{B(0,R)},\ \ \ \sigma_{lre}(T)=\overline{B(0,R)}\setminus B(0,r)
\end{equation}
 and
 \begin{equation}\label{E:ind}
  -\ind(T-\lambda)=1=\dim\ker (T-\lambda)^*, \quad   \lambda\in B(0,r).
  \end{equation}
We need only construct an operator $A$ which has the spectral property as in (\ref{E:B}) and (\ref{E:ind}).
Then the so-called Similarity Orbit Theorem (\cite{AFHV}, Theorem 9.1) implies that  $T\sim_a A$ a.s.
If $r=0$, then let $A$ be a normal operator with $\sigma(A)=\overline{B(0,R)}$. It follows that  $\sigma_{lre}(A)=\overline{B(0,R)}$ and $A$ satisfies the requirements.
 If $r>0$, then choose a normal operator $D$ with $\sigma(D)=\overline{B(0,R)}\setminus B(0,r)$ and set $A=D\oplus S$, where $S$ is the unilateral shift.
It is easy to verify that $A$ satisfies the desired properties.

  (iii) This follows from a careful examination of the proof of (ii).
\end{proof}

\begin{rmk}
Let  $T\sim \{X_n\}_{n=1}^\infty.$
If $\bP(X_1=0)>0$, then the classification of samples by similarity is somewhat complicated. In the case that $X_1$ is discrete, as we shall see in Theorem \ref{T:ApprUnitEqui} (i), there exists $A$ such that $T\cong A$ a.s., which implies that
$T\sim A$ a.s.
\end{rmk}

\begin{ques}
 If $X_1$ is not discrete, then does it follow that $\bP(T\sim A)=0$ for any operator $A$?
\end{ques}

%%%%%%%%%%%%%%%%%%%%%%
\subsection{Unitary equivalence and approximate unitary equivalence}

Given two operators $A$ and $B$, we use $A\cong B$ to denote that $A,B$ are unitarily equivalent, that is, $AV=VB$ for some unitary operator $V$.
$A$, $B$ are {\it approximately unitarily equivalent} (denoted $A\cong_a B$) if there exist a sequence of unitary operators $U_n$ such that $\lim_nU_n^*AU_n=B$ (\cite {Herr89}, Chapter 4). Given a $C^*$-algebra $\cA$ and two representations $\rho_1,\rho_2$ of $\cA$ on $\cH_1$ and $\cH_2$ respectively, if there exists a sequence of unitary operators $U_n:\cH_2\rightarrow\cH_1$ such that $\lim\limits_{n\to\infty}U_n^*\rho_1(Z)U_n=\rho_2(Z)$ for  $Z\in \cA$, then $\rho_1$ and $\rho_2$ are said to be {\it approximately unitarily equivalent}, denoted again as $\rho_1\cong_a\rho_2$.
 The main results of this subsection are summarized in the following.

\begin{thm}\label{T:ApprUnitEqui}
Let $T\sim\{X_n\}_{n=1}^{\infty}$. Then
\begin{enumerate}
\item[(i)] $\bP^2\big\{(\omega_1,\omega_2)\in\Omega^2: T(\omega_1)\cong T(\omega_2)\big\}=\begin{cases}
1,& \textup{if}\ X_1 \ \textup{is discrete and } \\
&\bP(X_1=0)>0,\\
1,& \textup{if} \ X_1\  \textup{is degenerate},\\
0,& \textup{otherwise}.
\end{cases}$
\item[(ii)] If $X_1$ is non-degenerate and $0\notin\essran X_1$, then %$\bP^2\{(\omega_1,w_2)\in\Omega^2: T(\omega_1)\cong_a T(\omega_2))=0$; indeed,
$\bP(T\cong_a A)=0$ for any operator $A$.
\item[(iii)] If $0\in\essran X_1$, then there exists a unilateral weighted shift $W$ such that $T\cong_a W$ a.s.
\item[(iv)] If $0\in\essran X_1$, then an operator $A$ satisfies $T\cong_a A$ a.s. if and only if \begin{itemize}
\item[(a)] $T\approx A$ a.s.
and
\item[(b)] $C^*(A)$ contains no nonzero compact operators.
\end{itemize}
\end{enumerate}
\end{thm}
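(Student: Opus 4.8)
The plan is to make (iv) the engine of the whole theorem: part (iii) follows from it, part (ii) is reduced to (i) by a $C^*$-rigidity argument, and (i) is pure weighted-shift bookkeeping. The one external input I will lean on throughout is the compact-operator dichotomy of Theorem~\ref{T:compacts}: almost surely $\KH\subseteq C^*(T)$ when $0\notin\essran X_1$, and $C^*(T)\cap\KH=\{0\}$ when $0\in\essran X_1$. I read the relation ``$T\approx A$'' in (iv) as (almost sure) \emph{algebraic equivalence}, namely $\|p(T,T^*)\|=\|p(A,A^*)\|$ for every $*$-polynomial $p$ in two free variables---equivalently, $T\mapsto A$ extends to an isometric $*$-isomorphism $C^*(T)\to C^*(A)$; in the notation of the introduction this is $T\lhd A$ together with $A\lhd T$.

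For (i), I will use that two injective unilateral weighted shifts with nonnegative weights are unitarily equivalent precisely when their weight sequences coincide termwise (\cite{shields}), while for shifts with zero weights unitary equivalence amounts to equality of the multisets of the finite nilpotent blocks cut out by the zeros. The degenerate case is immediate since $T$ is deterministic. When $X_1$ is non-degenerate with $\bP(X_1=0)=0$, for independent $\omega_1,\omega_2$ each event $\{X_n(\omega_1)=X_n(\omega_2)\}$ has probability strictly below $1$, so $\bP^2\big(\forall n\colon X_n(\omega_1)=X_n(\omega_2)\big)=0$ and $\bP^2(\cong)=0$; the same conclusion holds whenever $X_1$ has a nonatomic part, since then the block weights almost surely fail to match. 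The substantive case is $X_1$ discrete with $\bP(X_1=0)>0$: then zeros recur almost surely, so $T$ is a.s.\ a direct sum of finite blocks, and by the second Borel--Cantelli lemma every finite word over $\mathrm{supp}(\mu)\setminus\{0\}$ appears as a block infinitely often. Hence both samples realize the very same multiset of block types, each with infinite multiplicity, and are a.s.\ unitarily equivalent, giving probability $1$.

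The core is (iv). For the forward direction, $T\cong_a A$ produces unitaries $U_n$ with $U_n^*AU_n\to T$; pushing $*$-polynomials through the limit gives $\|p(T,T^*)\|=\|p(A,A^*)\|$, which is (a), and for any compact $K\in C^*(A)$ the norm-limit image $\lim_nU_n^*KU_n$ is compact and lies in $C^*(T)\cap\KH=\{0\}$ a.s.\ (Theorem~\ref{T:compacts}, as $0\in\essran X_1$), forcing $C^*(A)\cap\KH=\{0\}$, which is (b). For the converse, after rescaling to $\|T\|=\|A\|=R$ I let $\cA$ be the universal unital $C^*$-algebra generated by a single element of norm at most $R$, with representations $\rho_T,\rho_A\colon\cA\to\BH$ sending the generator to $T$ and $A$. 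Condition (a) says exactly $\ker\rho_T=\ker\rho_A$, while $C^*(T)\cap\KH=C^*(A)\cap\KH=\{0\}$ a.s.\ by (b) and Theorem~\ref{T:compacts}; Voiculescu's non-commutative Weyl--von Neumann theorem (\cite{Herr89,AFHV}) then yields $\rho_T\cong_a\rho_A$, i.e.\ $T\cong_a A$. Part (iii) is then a corollary: it suffices to exhibit one deterministic weighted shift $W$ with $C^*(W)\cap\KH=\{0\}$ that is a.s.\ algebraically equivalent to $T$, whereupon (iv) gives $T\cong_a W$ a.s.

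Finally, (ii) follows from (i) by rigidity. Assume $0\notin\essran X_1$ and $X_1$ non-degenerate; then a.s.\ $T$ is irreducible (positive weights) and $\KH\subseteq C^*(T)$ (Theorem~\ref{T:compacts}). The key sub-lemma is that for irreducible operators with $\KH\subseteq C^*(\cdot)$, approximate unitary equivalence already forces unitary equivalence: an $\cong_a$ between $B_1,B_2$ gives a $*$-isomorphism $\phi\colon C^*(B_1)\to C^*(B_2)$ with $\phi(B_1)=B_2$; since $\KH$ is the unique minimal ideal of each, $\phi(\KH)=\KH$ and $\phi|_{\KH}=\mathrm{Ad}\,V$ for a unitary $V$; then $\mathrm{Ad}\,V^*\circ\phi$ fixes $\KH$ pointwise, and an automorphism of an irreducible $C^*$-algebra fixing $\KH$ pointwise is the identity, so $B_2=VB_1V^*$. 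If some fixed $A$ had $\bP(T\cong_a A)>0$, then taking two independent copies gives $\bP^2(T(\omega_1)\cong_a T(\omega_2))>0$, and the sub-lemma upgrades this to $\bP^2(T(\omega_1)\cong T(\omega_2))>0$, contradicting part (i). I expect the \textbf{main obstacle} to be the deterministic model needed for (iii): proving that $T$ is almost surely algebraically equivalent to a single fixed weighted shift $W$ is precisely the hard, $C^*$-algebraic content that the generalized von Neumann inequalities are designed to supply, and it is the step where the almost-sure constancy of all the polynomial norms $\|p(T,T^*)\|$ must be established.
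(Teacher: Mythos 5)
Your parts (i) and (ii) are correct and essentially the paper's own arguments: (i) is the same case analysis (degenerate; injective non-degenerate via termwise weight matching; discrete with atoms at $0$ via Borel--Cantelli recurrence of blocks; non-discrete via a non-atomic weight value that the other sample almost surely never attains), and your sub-lemma in (ii) --- that for irreducible operators whose $C^*$-algebras contain $\KH$, approximate unitary equivalence forces unitary equivalence --- is a correct proof of the Exercise I.37 of \cite{David} that the paper cites, combined with the same Fubini/transitivity step. The problem lies in the core of your plan, making (iv) the engine and deriving (iii) from it, and it is a genuine gap rather than a stylistic difference. Both directions of your (iv) invoke Theorem \ref{T:compacts} in the case $0\in\essran X_1$, i.e.\ that $C^*(T)\cap\KH=\{0\}$ a.s. But that half of Theorem \ref{T:compacts} is itself deduced in the paper from Theorem \ref{T:ApprUnitEqui}(iii): its proof begins ``by the proof of Theorem \ref{T:ApprUnitEqui} (iii), there exists a weighted shift $W$ such that $T\cong_a W\cong W^{(\infty)}$ a.s.'' So you are using a statement whose only available proof passes through the very item you propose to deduce from it. (The $0\notin\essran X_1$ half used in your (ii) is proved independently, via left-invertibility, so (ii) is safe.)

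Second, even granting Theorem \ref{T:compacts}, your reduction of (iii) to (iv) still requires exhibiting a deterministic shift $W$ with $C^*(W)\cap\KH=\{0\}$ and $T\approx W$ a.s., and you explicitly defer exactly this step (your ``main obstacle''), pointing to the generalized von Neumann inequalities of Section \ref{S:vonNeumanIneq}; but those are also proved in the paper by citing the proof of (iii) ($T\cong_a E^{(\infty)}$ a.s.), so the deferral is circular a second time. What closes the loop in the paper is elementary and self-contained: Lemma \ref{L:WSAppUniEqu}, a purely combinatorial criterion for approximate unitary equivalence of two unilateral weighted shifts --- if each shift's initial segments, prefixed by a zero, occur as $\liminf$-limits of windows in the other's weight sequence, then $A\cong_a B$ --- proved by an explicit interleaving and small-perturbation construction, with no $C^*$-theory at all. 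Combined with the recurrence Lemma \ref{L:constantCase} and the canonical model $W=\oplus_i W_i^{(\infty)}$ built from all finite tuples over a countable dense subset of $\essran X_1$, this gives (iii) directly; Theorem \ref{T:compacts} and then (iv) follow, and your universal-algebra/Voiculescu argument for the converse of (iv) is indeed the same as the paper's appeal to Theorem II.5.8 of \cite{David}. So the missing idea is precisely that approximation lemma (or, alternatively, an independent proof that $C^*(T)\cap\KH=\{0\}$ a.s.\ when $0\in\essran X_1$, e.g.\ via the recurrence criterion of \cite{Donovan}); without it, your (iii) and (iv) do not close.
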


  In the above (iv), $\approx$ denotes the algebraical equivalence.
Two operators $A,B$ on Hilbert spaces are said to be {\it algebraically equivalent} (\cite{arveson}, P. 2) if there exists a $*$-isomorphism (that is, an isometric $*$-preserving isomorphism) from $C^*(A)$ onto $C^*(B)$ which carries
$A$ into $B$. Observe that  $A\cong_a B$ implies $A\approx B$. For $T$ a random weighted shift, more preparation in Section \ref{S:vonNeumanIneq} is needed before we can give a classification of the samples up to algebraical equivalence (Theorem \ref{T:AlgebraEquivalent}).

%$\|p(A^*,A)\|=\|p(B^*,B)\|$ for any polynomials $p(\cdot,\cdot)$ in two free variables.

%If $0$ is an accumulation point of $\essran X_1$, then a unilateral weighted shift $W$ satisfies $T\cong_a W$ a.s. if and only if $\Sigma_n(\OMEGA)=\Gamma^n$ for all $n\geq 1$.
%\item[(iv)]
% $C^*(\OMEGA)$ contains no nonzero compact operator, $\Sigma_1(W)\subset\Gamma$ and at least one of the following holds: \textup{(a)}  $\{0\}\times\Gamma_1^n\times\{0\}\subset\Sigma_{n+2}(W)$ for all $n\geq 0;$
%    \textup{(b)} $a_1=0$ and $\{0\}\times\Gamma_1^n\times\{0\}\subset\Sigma_{n+2}(W)$ for all $n\geq 1;$
%    \textup{(c)} $\exists m\geq 1$ s.t. $(a_1,\cdots, a_{m+1})\in\Gamma_1^{m}\times\{0\}$, $$\{0\}\times\Big[\Gamma_1^m\setminus\{(a_1,\cdots,a_m)\}\Big]\times\{0\}\subset\Sigma_{m+2}(W)$$ and
%    \[\{0\}\times\Gamma_1^n\times\{0\}\subset\Sigma_{n+2}(W) \quad  \textup{for all}\quad n\geq 0 \quad\textup{with}\quad n\ne m.\]

  Before we   give the proof of  Theorem    \ref{T:ApprUnitEqui}, we point out that its proof will  help us solve another fundamental problem: Describe the   compact operators in the $C^*$-algebra $C^*(T)$. That is, the following

\begin{thm}\label{T:compacts}
Let $T\sim\{X_n\}_{n=1}^{\infty}$. Then
$$\bP\big(\KH\subset C^*(T)\big)=\bP\big(C^*(T)\cap\KH \ne\{0\}\big)=\begin{cases}
1,& 0\notin\essran X_1,\\
0,& 0\in\essran X_1.
\end{cases}$$
\end{thm}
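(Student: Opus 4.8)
The plan is to prove the two chained equalities by exploiting the elementary containment of events
\[
\{\KH\subseteq C^*(T)\}\ \subseteq\ \{C^*(T)\cap\KH\ne\{0\}\},
\]
so that it suffices to prove $\bP(\KH\subseteq C^*(T))=1$ in the case $0\notin\essran X_1$ (which forces the second probability to be $1$ as well) and $\bP(C^*(T)\cap\KH\ne\{0\})=0$ in the case $0\in\essran X_1$ (which forces the first probability to be $0$). The first case is the easy one. If $0\notin\essran X_1$ then $r=\min\essran X_1>0$, so there is $\eps>0$ with $X_n\geq\eps$ a.s.\ for every $n$; hence $T^*T=\mathrm{diag}(X_n^2)\geq\eps^2 I$ is a positive, bounded-below diagonal element of $C^*(T)$ with $\sigma(T^*T)\subseteq[\eps^2,R^2]$. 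Extending $t\mapsto t^{-1/2}$ from $[\eps^2,R^2]$ to a continuous function vanishing at $0$ and applying continuous functional calculus places $(T^*T)^{-1/2}$ in $C^*(T)$. Then $V:=T(T^*T)^{-1/2}$ lies in $C^*(T)$ and a direct computation gives $Ve_n=e_{n+1}$, i.e.\ $V$ is the (unweighted) unilateral shift $S$. Since the classical Toeplitz extension gives $\KH\subseteq C^*(S)$, we conclude $\KH\subseteq C^*(S)\subseteq C^*(T)$ a.s.

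For the hard case $0\in\essran X_1$ (so $r=0$) the goal is to show that almost surely $C^*(T)$ contains no nonzero compact operator. I would recast this as the statement that the Calkin projection $\pi$ is isometric on $C^*(T)$: since $\pi$ restricted to a $C^*$-subalgebra is injective exactly when it is isometric, and $\ker\pi|_{C^*(T)}=C^*(T)\cap\KH$, it is enough to prove that almost surely
\[
\|S\|_e=\|S\|\qquad\text{for every }S\in C^*(T).
\]
Because $C^*(T)$ is separable and both $\|\cdot\|$ and $\|\cdot\|_e$ are norm-continuous, it suffices to verify this on a countable norm-dense family of operators, namely the $*$-polynomials in $T$ and $T^*$ with coefficients from a fixed countable dense subfield, and then pass to the closure. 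So I would fix one such operator $S_0=p(T,T^*)$, which is a band operator of some finite bandwidth $w$ whose entries are fixed noncommutative polynomials in finitely many consecutive weights.

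The core estimate is $\|S_0\|_e\geq\|S_0\|$, and this is exactly where the i.i.d.\ recurrence encoded in Lemma \ref{L:constantCase} enters. Choose a finitely supported unit vector $u$, supported on a window of length $L$, with $\|S_0u\|>\|S_0\|-\eps$; the quantity $\|S_0u\|$ is a continuous function of the finitely many weights $X_1,\dots,X_{L+w}$ appearing in that window. A.s.\ these realized weights lie in $\essran X_1$, so I may approximate the window configuration, within $\delta$, by a tuple from a fixed countable dense subset of $(\essran X_1)^{L+w}$. Applying Lemma \ref{L:constantCase} to each such tuple (a countable intersection of full-probability events) produces infinitely many, arbitrarily widely separated positions $N_k\to\infty$ at which the local weight configuration recurs up to $\delta$. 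Translating $u$ to the window based at $N_k$ yields unit vectors $u_k$ with pairwise far-apart supports; by continuity of the local computation, $\|S_0u_k\|>\|S_0\|-2\eps$ for all large $k$, while $u_k\to0$ weakly. Hence $\|S_0\|_e\geq\limsup_k\|S_0u_k\|\geq\|S_0\|-2\eps$, and letting $\eps\to0$ gives $\|S_0\|_e=\|S_0\|$. Running this over the countable dense family and closing up delivers $\|S\|_e=\|S\|$ for all $S\in C^*(T)$ a.s., hence $C^*(T)\cap\KH=\{0\}$ a.s.

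The main obstacle is precisely this last case: making the recurrence argument fully rigorous. Two points require care. First, the passage from a countable dense set of polynomials to \emph{all} of $C^*(T)$ must be organized so that a single almost-sure event works simultaneously for every element, which is why I handle the dense family first and only then invoke continuity of the two norms. Second, one must ensure that the near-optimal local configuration is genuinely approximable inside $\essran X_1$ and that the separated windows produce an honestly weakly null sequence; this is the step that leverages the independence and stationarity of $\{X_n\}$ through Lemma \ref{L:constantCase}, and it is the technical heart of the argument. (Alternatively, one could route the conclusion through Theorem \ref{T:ApprUnitEqui}(iii), deducing $T\cong_a W$ for an explicit weighted shift $W$ and using that $\cong_a$ preserves the property of having trivial compact part; but the essential-norm argument above is self-contained and avoids any circularity with Theorem \ref{T:ApprUnitEqui}.)
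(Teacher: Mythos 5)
Your treatment of the case $0\notin\essran X_1$ is correct and is essentially the paper's: the paper simply asserts that every left-invertible unilateral weighted shift has $\KH$ inside its $C^*$-algebra, and your construction $V=T(T^*T)^{-1/2}=S$ together with the Toeplitz algebra fact is a clean verification of exactly that assertion. The hard case, however, contains a genuine gap, located in the sentence ``by continuity of the local computation, $\|S_0u_k\|>\|S_0\|-2\eps$''. The operator $S_0=p(T,T^*)$ is not translation-equivariant near the origin: its matrix entries at columns $n\leq w$ are governed by the boundary relation $T^*e_1=0$, whereas its entries at columns near $N_k$ also involve the $w$ weights $X_{N_k-w+1},\dots,X_{N_k}$ sitting to the \emph{left} of the translated window, which have no counterpart in the original window. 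Matching $(X_{N_k+i})_i$ to $(X_i)_i$ therefore does not make $\|S_0u_k\|$ close to $\|S_0u\|$. Concretely, for $S_0=T^*T-TT^*$ and $u=e_1$ one has $\|S_0u\|=X_1^2$, while $\|S_0u_k\|=|X_{N_k+1}^2-X_{N_k}^2|$, which is near $X_1^2$ only if $X_{N_k}\approx 0$. The telltale sign is that your hard-case argument never uses the hypothesis $0\in\essran X_1$: run verbatim in the degenerate case $X_n\equiv c>0$ (where the recurrence is trivial), it would yield $C^*(cS)\cap\KH=\{0\}$, contradicting $c^2(I-SS^*)=c^2\,e_1\otimes e_1\in C^*(cS)$.

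The repair is precisely where the case hypothesis must enter: apply Lemma \ref{L:constantCase} not to the realized window tuple but to the tuple with a zero prepended, $(0,X_1,\dots,X_{L+w})$ — legitimate exactly because $0\in\essran X_1$ — so that the recurrence sites also satisfy $X_{N_k}<\delta$. Since every word in $T,T^*$ whose ``path'' starting in the translated window reaches a position $\leq N_k$ must traverse the edge $(N_k,N_k+1)$ and hence picks up the factor $X_{N_k}$, a near-zero weight there decouples the window from its past up to an $O(\delta)$ error and restores the boundary behavior; with this modification your essential-norm argument goes through. This prepended zero is exactly the mechanism the paper uses throughout: Lemma \ref{C:URSDomiUniWS} and the proof of Theorem \ref{T:ApprUnitEqui} (iii) always insert tuples of the form $(0,b_1,\dots,b_s)$. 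Note also that the paper's own proof of the hard case is different in packaging: it quotes the proof of Theorem \ref{T:ApprUnitEqui} (iii) to produce a deterministic shift $W$ with $T\cong_a W\cong W^{(\infty)}$ a.s., and then uses the elementary facts that $\cong_a$ preserves triviality of the compact part and that $A\cong A^{(\infty)}$ forces $C^*(A)\cap\KH=\{0\}$; your route, once fixed, is a self-contained alternative that avoids the heavy interlacing Lemma \ref{L:WSAppUniEqu} behind Theorem \ref{T:ApprUnitEqui} (iii), as you intended.
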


  The following lemma is not only needed in the proof of Theorem \ref{T:ApprUnitEqui}, but also has  significance in operator theory. Given $(\lambda_i)_{i=1}^n,(\mu_i)_{i=1}^n\in\bC^n$, we define
\[\|(\lambda_i)_{i=1}^n-(\mu_i)_{i=1}^n\|_{\max}=\max_{1\leq i\leq n}|\lambda_i-\mu_i|.\]

\begin{lem}\label{L:WSAppUniEqu}
Let $A$ be a unilateral weighted shift with nonnegative weights $\{\lambda_n\}_{n=1}^\infty$ and $B$ be a unilateral weighted shift with nonnegative weights $\{\mu_n\}_{n=1}^\infty$. Set $\lambda_0=\mu_0=0$. If for each $k\geq 1$
\[\liminf_{n\to\infty}\|(\lambda_i)_{i=n}^{n+k}-(\mu_i)_{i=0}^{k}\|_{\max}=0
=\liminf_{n\to\infty}\|(\mu_i)_{i=n}^{n+k}-(\lambda_i)_{i=0}^{k}\|_{\max},\]%
then $A\cong_a B$.
\end{lem}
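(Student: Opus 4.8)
The plan is to prove the stronger symmetric statement that $A \cong_a A \oplus B$ and $B \cong_a A \oplus B$; since approximate unitary equivalence is an equivalence relation and $A \oplus B \cong_a B \oplus A$ trivially, transitivity then yields $A \cong_a B$. By the symmetry of the hypothesis under interchanging $\{\lambda_n\}$ and $\{\mu_n\}$, it suffices to establish the first absorption $A \cong_a A \oplus B$. Throughout I will use the standard reduction that $C \cong_a D$ holds as soon as, for every $\varepsilon > 0$, there is a unitary $U$ with $\|U^* C U - D\| < \varepsilon$.

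First I would record the two consequences of the hypothesis that drive everything. Taking $k=1$ shows $\liminf_n \lambda_n = 0$ and $\liminf_n \mu_n = 0$, so both shifts have weights returning to $0$ infinitely often. The geometric payoff is an approximate-reducing-subspace principle: if a unilateral weighted shift $C$ (with basis $\{e_m\}$, $Ce_m=w_m e_{m+1}$) has a weight $w_n < \delta$, then $\cM = \operatorname{span}\{e_1,\dots,e_n\}$ satisfies $\|(I-P_\cM)C P_\cM\| = w_n < \delta$ and $\|P_\cM C (I-P_\cM)\| = 0$, so $C$ is within $\delta$ in norm of the orthogonal direct sum $C|_\cM \oplus C|_{\cM^\perp}$, where $C|_\cM$ is a finite nilpotent shift and $C|_{\cM^\perp}$ is again a unilateral shift. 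Cutting $C$ at a sparse subsequence of near-zero weights thus exhibits it, up to arbitrarily small norm perturbation, as an orthogonal direct sum of finite weighted shifts.

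Next, using a diagonal extraction against the first half of the hypothesis (for each $j$ choose, inductively and far out, a starting index with block-error below $1/j$), I would produce disjoint blocks $I_j = [n_j, n_j + k_j]$ with $k_j \to \infty$, $n_{j+1} > n_j + k_j$, and $\|(\lambda_i)_{i=n_j}^{n_j+k_j} - (\mu_i)_{i=0}^{k_j}\|_{\max} \to 0$. Because $\mu_0 = 0$, the left endpoint weight $\lambda_{n_j}$ is small, and because the block tracks the full initial segment of $\{\mu_i\}$, every near-zero weight of $B$ occurring before position $k_j$ appears approximately as a near-zero weight of $A$ inside $I_j$. Cutting inside $I_j$ at these positions therefore realizes, up to small perturbation, approximate orthogonal copies of all the finite blocks into which $B$ itself decomposes (apply the previous paragraph to $B$), sitting inside $\cH$ arbitrarily far out and in pairwise orthogonal subspaces as $j$ grows. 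Assembling one finite $B$-block from each sufficiently large $I_j$ and letting the unused tail of $A$ serve as an absorbing reservoir, I would build, for each $\varepsilon$, a unitary $W$ exhibiting a full copy of $A$ together with an orthogonal approximate copy of $B$ whose combined span is all of $\cH$, i.e. $\|W^* A W - (A\oplus B)\| < \varepsilon$. This is the standard absorption mechanism underlying Voiculescu-type arguments, and it is why only asymptotic (``at infinity'') containment of $B$'s blocks in $A$ is needed.

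The step I expect to be the main obstacle is exactly the surjectivity of the limiting intertwiner in the absorption — ensuring that the orthonormal system built from far-out $A$-blocks, together with a retained full copy of $A$, actually exhausts $\cH$, so that $W$ is unitary and not merely an isometry with large cokernel. This is where the symmetric hypothesis is essential: the second half (blocks of $B$ approximating initial segments of $A$, together with $\liminf_n\lambda_n=0$) is what lets the companion absorption $B \cong_a A \oplus B$ close up, so the two one-sided constructions balance and the composite $A \cong_a A\oplus B \cong_a B \oplus A \cong_a B$ is genuinely implemented by unitaries. As an alternative packaging that sidesteps the explicit back-and-forth, I would instead verify Hadwin's criterion — that $A \cong_a B$ iff $\operatorname{rank} p(A,A^*) = \operatorname{rank} p(B,B^*)$ for every $*$-polynomial $p$ — reducing each rank to a combinatorial count on finite weight blocks and matching the counts through the good blocks $I_j$; there the difficulty migrates into controlling ranks of polynomials in the non-commuting variables $A,A^*$ uniformly along the blocks.
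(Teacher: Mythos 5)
Your preliminaries are fine (taking $k=1$ to get $\liminf_n\lambda_n=\liminf_n\mu_n=0$; cutting a shift at a weight $<\delta$ to split it, up to a perturbation of norm $<\delta$, into a finite block plus another shift; extracting good blocks $I_j$ with $k_j\to\infty$), and the reduction of $A\cong_a B$ to the two absorptions $A\cong_a A\oplus B$ and $B\cong_a A\oplus B$ is logically valid. The genuine gap is that the absorption itself is never proved: the ``absorbing reservoir'' step is circular. You cannot simultaneously retain ``a full copy of $A$'' and carve out infinitely many finite-dimensional blocks to host the pieces of $B$; once the blocks $I_j$ are removed, what is left of $A$ is a shift with infinitely many weight-blocks deleted, and showing that this mutilated operator is still approximately unitarily equivalent to $A$ is a problem of exactly the same nature and difficulty as Lemma \ref{L:WSAppUniEqu} itself. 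Your diagnosis that the obstacle is ``surjectivity of the limiting intertwiner'' names the symptom, but the underlying point is that absorption is not a soft consequence of finding $B$'s blocks far out in $A$'s weight sequence.

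The absorption statement is in fact true under the hypothesis, but the only non-circular route to it is Voiculescu-type machinery, and that needs an ingredient your proposal never identifies: one must show (a) $B\lhd A$, i.e.\ there is a unital $*$-homomorphism $C^*(A)\to C^*(B)$ sending $A$ to $B$ (this does follow from one half of the hypothesis, cf.\ Lemma \ref{C:URSDomiUniWS}), and, crucially, (b) $C^*(A)\cap\KH=\{0\}$. Item (b) is a real theorem here: combining the two halves of the hypothesis, every finite window of $A$'s weights, preceded by a near-zero weight, recurs approximately in $A$'s own tail, and from this one can show every nonzero $*$-polynomial $p(A,A^*)$ has infinite rank; without such an argument the absorption can fail for weighted shifts in general (finite nonzero ranks obstruct it). The same lacuna blocks your Hadwin-criterion fallback, since its entire content is precisely that all ranks are $0$ or $\infty$ and that $p(A,A^*)=0$ iff $p(B,B^*)=0$. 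By contrast, the paper's proof is elementary and sidesteps all $C^*$-algebraic input: it runs a back-and-forth construction, choosing interleaved cut points $\{m_i\}$ and $\{n_i\}$ at near-zero weights with geometrically decaying matching errors, so that after perturbations of total norm $<10\eps$ both $A$ and $B$ become unitarily equivalent to the \emph{same} direct sum of finite truncated shifts. To salvage your approach you must supply the no-compacts argument and invoke Voiculescu's theorem; as written, the central step is asserted rather than proved.
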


\begin{proof}
  Assume that $\{e_i\}_{i=1}^\infty$ is an orthonormal basis of $\cH$, $\{f_i\}_{i=1}^\infty$ is an orthonormal basis of $\cK$ and
$Ae_i=\lambda_i e_{i+1},\quad Bf_i=\mu_i f_{i+1},$ for  $i\geq 1$.
Set $m_0=n_0=0$. For convenience, for nonnegative integers $i,j$ with $0\leq i<j<\infty$, we write
$\eta_{(i,j)}=(\lambda_i,\lambda_{i+1},\cdots,\lambda_j), \quad\xi_{(i,j)}=(\mu_i,\mu_{i+1},\cdots,\mu_j).$
By the hypothesis, for any $t>s\geq 0$, we have
\begin{equation}\label{(3.3)}
\liminf_{n\to\infty}\|\xi_{(s, t)}-\eta_{(s+n, t+n)}\|_{\max}=0
\end{equation}  and
\begin{equation}\label{(3.4)}\liminf_{n\to\infty}\|\eta_{(s, t)}-\xi_{(s+n, t+n)}\|_{\max}=0. \end{equation}

  Fix   $\eps>0$. In the following we shall choose two strictly increasing sequences $\{m_i\}_{i=1}^\infty$ and $\{n_i\}_{i=1}^\infty$ such that
\begin{enumerate}
\item[(a)] $ m_1-m_0=n_2-n_1$ and
$$m_{2k+2}-m_{2k+1}=n_{2k+4}-n_{2k+3}, \quad m_{2k+3}-m_{2k+2}=n_{2k+1}-n_{2k}, \quad \forall k\geq 0; $$
\item[(b)] $\lambda_{m_1}<\frac{\eps}{8}$, $\|\eta_{(m_0,m_1)}-\xi_{(n_1,n_2)}\|_{\max}<\frac{\eps}{8}$ and
$$\|\eta_{(m_{2k+1},m_{2k+2})}-\xi_{(n_{2k+3},n_{2k+4})}\|_{\max}<\frac{\eps}{4^k},\quad \forall k\geq 0,$$
$$\|\xi_{(n_{2k},n_{2k+1})}-\eta_{(m_{2k+2},m_{2k+3})}\|_{\max}<\frac{\eps}{4^k},\quad \forall k\geq 0.$$
\end{enumerate}

  {\it Step 1.} The choice of $m_1, n_1$ and $n_2$.
By the hypothesis, $\liminf\limits_{n\to\infty}\lambda_n=0$. Then there exists $m_1>1$ such that $\lambda_{m_1}<\frac{\eps}{8}.$
Applying (\ref{(3.4)}) to $\eta_{(m_0,m_1)}$, we  choose $n_1,n_2$ with $n_2>n_1> 1$ and $n_2-n_1=m_1-m_0$ such that
\begin{equation}\label{(3.6)}\|\eta_{(m_0,m_1)}-\xi_{(n_1,n_2)}\|_{\max}< \frac{\eps}{8}.\end{equation}

  {\it Step 2.} The choice of $m_2$ and $m_3$.
Applying (\ref{(3.3)}) to $\xi_{(n_0,n_1)}$, we  choose $m_2,m_3$ with $m_3> m_2>m_1$ and $m_3-m_2=n_1-n_0$ such that
$\|\xi_{(n_0,n_1)}-\eta_{(m_2,m_3)}\|_{\max}< \frac{\eps}{4^0}.$

  {\it Step 3.} The choice of $n_3$ and $n_4$.
Applying (\ref{(3.4)}) to $\eta_{(m_1,m_2)}$, we  choose $n_3,n_4$ with $n_4>n_3> n_2$ and $n_4-n_3=m_2-m_1$ such that
$\|\eta_{(m_1,m_2)}-\xi_{(n_3,n_4)}\|_{\max}< \frac{\eps}{4^0}.$

  {\it Step 4.}  The choice of $m_4$ and $m_5$.
Applying (\ref{(3.3)}) to $\xi_{(n_2,n_3)}$, we  choose $m_4,m_5$ with $m_5> m_4>m_3 $ and $m_5-m_4=n_3-n_2$ such that
$\|\xi_{(n_2,n_3)}-\eta_{(m_4,m_5)}\|_{\max}< \frac{\eps}{4}.$

  {\it Step 5.}  The choice of $n_5$ and $n_6$.
 Applying (\ref{(3.4)}) to $\eta_{(m_3,m_4)}$, we  choose $n_5,n_6$ with $n_6>n_5> n_4$ and $n_6-n_5=m_4-m_3$ such that
$\|\eta_{(m_3,m_4)}-\xi_{(n_5,n_6)}\|_{\max}< \frac{\eps}{4}.$

  We proceed as above and  choose recursively strictly increasing sequences $\{m_i\}_{i=1}^\infty$ and $\{n_i\}_{i=1}^\infty$ satisfying (a) and (b).

  Define $K_1\in\BH$ and $K_1'\in\B(\cK)$ as
\begin{equation*}K_1=-\sum_{i=1}^\infty \lambda_{m_i}e_{m_i+1}\otimes e_{m_i},\quad K_2=-\sum_{i=1}^\infty \mu_{n_i}f_{n_i+1}\otimes f_{n_i}.
\end{equation*}

  {\it Claim.} $\|K_1\|<4\eps$ and $\|K_2\|<4\eps$.

  In view of (\ref{(3.6)}) and (b), we have $\lambda_{m_1}<\frac{\eps}{8}$,
\[ \lambda_{m_2}<\mu_{n_0}+\frac{\eps}{4^0}=\eps,\quad
\mu_{n_1}<\lambda_{m_0}+\frac{\eps}{8}=\frac{\eps}{8},\quad\lambda_{m_3}<\mu_{n_1}+\frac{\eps}{4^0}<2\eps \] and
\[\mu_{n_2}<\lambda_{m_1}+\frac{\eps}{8}<\frac{\eps}{4},\quad\mu_{n_3}<
\lambda_{m_1}+\frac{\eps}{4^0}<2\eps, \quad\lambda_{m_4}<\mu_{n_2}+\frac{\eps}{4}<\frac{\eps}{2}; \] moreover,
\[|\lambda_{m_{2k+2}}-\mu_{n_{2k+4}}|<\frac{\eps}{4^k},\quad |\mu_{n_{2k}}-\lambda_{m_{2k+2}}|<\frac{\eps}{4^k},\quad\forall k\geq 0 \] and
\[|\lambda_{m_{2k+1}}-\mu_{n_{2k+3}}|<\frac{\eps}{4^k},\quad |\mu_{n_{2k+1}}-\lambda_{m_{2k+3}}|<\frac{\eps}{4^k} ,\quad\forall k\geq 0.\]
It follows that
\[ |\mu_{n_{2k}}-\mu_{n_{2k+4}}|\leq|\mu_{n_{2k}}-\lambda_{m_{2k+2}}|+|\lambda_{m_{2k+2}}-\mu_{n_{2k+4}}| <\frac{2\eps}{4^k},\quad \forall k\geq 0.\]
Likewise, we have for each $k\geq 0$ that
\[ |\mu_{n_{2k+1}}-\mu_{n_{2k+5}}|<\frac{2\eps}{4^k},\quad |\lambda_{m_{2k+2}}-\lambda_{m_{2k+6}}|<\frac{2\eps}{4^k},\quad |\lambda_{m_{2k+1}}-\lambda_{m_{2k+5}}|<\frac{2\eps}{4^k}.\]
Thus we conclude that
$\sup_{i\geq 1} \lambda_{m_i} <4\eps$ and $\sup_{i\geq 1} \mu_{n_i} <4\eps.$
Then $\|K_1\|<4\eps$, $\|K_2\|<4\eps$, $A+K_1$ is a unilateral weighted shift with weights $\{\lambda_k'\}$ and $B+K_2$ is a unilateral weighted shift with weights $\{\mu_k'\}$, where
\[\lambda_k'=\begin{cases}
 \lambda_k,& k\notin\{m_i:i\geq 1\},\\
 0,&  k\in\{m_i:i\geq 1\},
\end{cases}\quad \mu_k'=\begin{cases}
 \mu_k,& k\notin\{n_i:i\geq 1\},\\
 0,&  k\in\{n_i:i\geq 1\}.
\end{cases}\]
Denote $\cH_0=\vee\{e_i: m_0<i\leq m_1\}$, $\cH_0'=\vee\{f_i: n_1< i\leq n_2\}$ and
\[\cH_k=\vee\{e_i: m_{2k-1}< i\leq m_{2k}\} , \quad \cH_k'=\vee\{f_i: n_{2k+1}< i\leq n_{2k+2}\},\quad\forall k\geq 1,\]
\[\cK_k'=\vee\{e_i: m_{2k+2}< i\leq m_{2k+3}\}, \quad \cK_k=\vee\{f_i: n_{2k}< i\leq n_{2k+1}\},\quad\forall k\geq 0, \] where $\vee$ denotes closed linear span.
Then
\begin{enumerate}
\item[(c)]  $\dim\cH_k=\dim\cH_k'$, $\dim\cK_k=\dim\cK_k'$ for each $k\geq 0$
\item[(d)]  both $\cH_k$ and $\cK_k'$ reduce $\widetilde{A}:=A+K_1$ for each $k\geq 0$,
\item[(e)]   both $\cH_k'$ and $\cK_k$ reduce $\widetilde{B}:=B+K_2$ for each $k\geq 0$,
\item[(f)] $$A_0:=\widetilde{A}|_{\cH_0}=\sum_{i=1}^{m_1-1} \lambda_i e_{i+1}\otimes e_i, \quad A_0':=\widetilde{B}|_{\cH_0'}=\sum_{i=n_1+1}^{n_2-1} \mu_i f_{i+1}\otimes f_i,$$
$$A_k:=\widetilde{A}|_{\cH_k}=\sum_{i=m_{2k-1}+1}^{m_{2k}-1} \lambda_i e_{i+1}\otimes e_i, \quad A_k':=\widetilde{B}|_{\cH_k'}=\sum_{i=n_{2k+1}+1}^{n_{2k+2}-1} \mu_i f_{i+1}\otimes f_i,\quad\forall k\geq 1.$$
$$B_k':=\widetilde{A}|_{\cK_k'}=\sum_{i=m_{2k+2}+1}^{m_{2k+3}-1} \lambda_i e_{i+1}\otimes e_i,\quad B_k:=\widetilde{B}|_{\cK_k}=\sum_{i=n_{2k}+1}^{n_{2k+1}-1} \mu_i f_{i+1}\otimes f_i, \quad\forall k\geq 0.$$
\end{enumerate}
Thus, $$A+K_1=(\oplus_{i=0}^\infty A_i) \oplus(\oplus_{i=0}^\infty B_i'),\ \ \ \
B+K_2=(\oplus_{i=0}^\infty B_i) \oplus(\oplus_{i=0}^\infty A_i').$$

  By (b), we have $\|(\lambda_1,\cdots, \lambda_{m_1-1})-(\xi_{n_1+1},\cdots, \xi_{n_2-1})\|_{\max}<\eps/4.$
We can find $E_0$ on $\cH_0'$ with $\|E_0\|<\eps/8$ such that $A_0'+E_0\cong A_0$.
Likewise, using (b) again, for each $k\geq 1$, we can find $E_k$ on $\cH_k'$ with   $\|E_k\|<\frac{\eps}{4}$ so that $A_k'+E_k\cong A_k$.
Furthermore, for each $k\geq 0$, we can find $F_k$ on $\cK_k'$ with   $\|F_k\|<\frac{\eps}{4}$ so that $B_k'+F_k\cong B_k$. It follows that we can find an operator $K_3$ on $\cH$ and  $K_4$ on $\cK$ with $\|K_3\|<\frac{\eps}{4}$ and $\|K_4\|<\frac{\eps}{4}$ such that
$A+K_1+K_3\cong B+K_2+K_4.$
Note that $\|K_1+K_3\|<5\eps$ and $\|K_2+K_4\|<5\eps$.
It follows that there exists $K$ on $\cH$ with $\|K\|<10\eps$ such that $A+K\cong B$.
Since $\eps>0$ is arbitrary, we deduce that $A\cong_a B$.
\end{proof}

%\begin{lem}\label{L:IrreUnitEq}
%Let $A=\oplus_{i\in\Lambda_1} A_i$ and $B=\oplus_{j\in\Lambda_2} B_j$, where all $A_i$'s and all $B_j$'s are irreducible.
%If $A\cong B$, then for each $i\in\Lambda_1$ there exists $j\in\Lambda_2$ such that $A_i\cong B_j$.
%\end{lem}
%
%\begin{proof}
%Since $A\cong B$, there exists a unitary operator $V$ such that $AV=VB$. Thus $V$ maps each minimal reducing subspace $\cM$ of $B$ to a minimal reducing subspace $\cN$ of $A$, and $B|_{\cM}\cong A|_{\cN}$. In view of \cite[Corollary 2.4 \& Lemma 2.6 ]{GuoZhu}, one can see the result.
%\end{proof}

  For $e,f\in\cH$, let $e\otimes f$ denote the operator on $\cH$ defined by
$(e\otimes f)(x)=\la x,f\ra e$ for $x\in\cH.$ If $Z\in\B(\bC^n)$ and
$Z=\sum_{i=1}^{n-1}\lambda_i e_{i+1}\otimes e_i,$ where
$\{e_i\}_{i=1}^n$ is an orthonormal basis of $\bC^n$ and $\lambda_i\in\bC$ for $1\leq i\leq n-1$, then $Z$ is called a {\it truncated weighted shift} with weights $\{\lambda_i\}_{i=1}^{n-1}$ of order $n$. In the degenerate case that $n=1$, the truncated weighted shift of order $1$ means the zero operator on $\bC$.

%\begin{lem}\label{L:TrunUnitEq}
%Let $A$ be a truncated weighted shift with positive weights $(\lambda_1,\cdots,\lambda_n)$ and $B$ be a truncated weighted shift with positive weights $(\mu_1,\cdots,\mu_n)$, then $A\cong B$ if and only if $\lambda_i=\mu_i$ for each $i=1,2,\cdots,n$.
%\end{lem}

%\begin{proof}
%This is an easy exercise in linear algebra.
%\end{proof}

\begin{proof}[Proof of Theorem \ref{T:ApprUnitEqui} ]

  (i) The proof  is divided into four cases.

  {\it Case 1.} $X_1$ is degenerate. Then $T\cong\lambda S$ a.s. for some $\lambda \in \mathbb{R}$, where $S$ is the unilateral shift. This case is trivial.

  {\it Case 2.} $ X_1$ is non-degenerate and $\bP(X_1=0)=0$. The conclusion follows from  Theorem \ref{T:similarity} (i).

  {\it Case 3.} $ X_1$ is non-degenerate, $\bP(X_1=0)>0$ and $X_1$ is discrete.
 Note that $\Gamma=\{a\in\bR: \bP(X_1=a)>0\}$ is at most countable.
For convenience, we  assume that $\Gamma=\{a_i:i=0,1,2,3,\cdots\}$ and $a_0=0$. Thus $\sum_{i=0}^\infty\bP(X_1=a_i)=1$.
We  choose $\Omega'\subset\Omega$ with $\bP(\Omega')=1$ such that $X_n(\omega)\in\{a_i: i\geq 0\}$ for  $n\geq 1$ and  $\omega\in\Omega'$. Then $T(\omega)$ is a unilateral weighted shift with weights in $\{a_i: i\geq 0\}$ for each $\omega\in\Omega'$.
 Suppose that $\eta_1,\eta_2,\eta_3,\cdots$ are all finite tuples with entries in $\{a_i: i\geq 1\}$. For each $i\geq 1$, assume that $\eta_i=(b_{i,1},\cdots, b_{i,n_i})$ and $A_i$ is a truncated weighted shift with weights $(b_{i,1},\cdots, b_{i,n_i})$. Set $A=0^{(\infty)}\oplus\Big(\bigoplus_{j=1}^\infty A_j^{(\infty)}\Big)$.
We shall prove that $T\cong A$ a.s.

  {\it Claim.} For each $k\geq 1$ and $k$-tuple $(i_1,i_2,\cdots,i_k)$ with entries in $\bN$,
$$\bP\Big( \limsup_{n\to\infty}\big[(X_{n},X_{n+1},\cdots, X_{n+k+1})=(0, a_{i_1},\cdots, a_{i_k}, 0)\big]\Big)=1.$$

  For each $n\geq 1$, set $E_n=[(X_{n},X_{n+1},\cdots, X_{n+k+1})=(0, a_{i_1},\cdots, a_{i_k}, 0)]$.
 For each $s\geq 0$, set $n_s=s(k+2)$. Then $\{E_{n_s}\}_{s=1}^\infty$ is a subsequence of $\{E_n\}_{n=1}^\infty$ and $\limsup\limits_{s\to\infty} E_{n_s}\subset\limsup\limits_{n\to\infty} E_n$. Note that $\{E_{n_s}\}_{s=1}^\infty$ are independent and, for each $s\geq 1$,
\begin{align*}
\bP(E_{n_s})&=\bP\Big((X_{s(k+2)+j})^{k+1}_{j=0}=(0, a_{i_1},\cdots, a_{i_k}, 0)\Big)\\
&=\bP\Big(X_{s(k+2)}=0\Big)\cdot\left(\prod_{j=1}^{k}\bP\Big(X_{s(k+2)+j}=a_{i_j}\Big)\right)\cdot\bP\Big(X_{s(k+2)+k+1}=0\Big)\\
&=\bP\Big(X_1=0\Big)\cdot\left(\prod_{j=1}^{k}\bP\Big(X_1=a_{i_j}\Big)\right)\cdot\bP\Big(X_{1}=0\Big)>0.\end{align*}
By Borel-Cantelli, we have $1=\bP\big(\limsup\limits_{s\to\infty} E_{n_s}\big)\leq\bP\big(\limsup\limits_{n\to\infty} E_n\big)$. This proves the claim.

  Similarly, one can prove
$\bP\big( \limsup\limits_{n\to\infty}[(X_{n},X_{n+1})=(0, 0) ]\big)=1.$ Then there exists $\Omega''\subset\Omega'$ with $\bP(\Omega'')=1$ such that, for each $\omega\in\Omega''$,  $k\geq 0$ and  $k$-tuple $(i_1,\cdots,i_k)$,
$$\bP\Big( \limsup\limits_{n\to\infty}\big[(X_{n},X_{n+1},\cdots, X_{n+k+1})=(0, a_{i_1},\cdots, a_{i_k}, 0)\big]\Big)=1.$$
Then for each $\omega\in\Omega''$,  $T(\omega)\cong A$.

  {\it Case 4.} $ X_1$ is non-degenerate, $\bP(X_1=0)>0$ and $X_1$ is not discrete.
 In this case,  by the proof of Lemma \ref{P:PointSpect} (i),  we  choose $\Omega_1\subset\Omega$ with $\bP(\Omega_1)=1$ such that there exists infinitely many $n$'s such that  $X_n(\omega)=0$ for each $\omega\in\Omega_1$.
Thus $T(\omega)$ is the direct sum of a sequence of irreducible truncated weighted shifts.
 Assume that $\Delta=\{a\in\bR: \bP(X_1=a)>0\}$. Then $\sum_{a\in\Delta}\bP(X_1=a)<1$
and $\bP(X_i\in\Delta)=\bP(X_1\in\Delta)<1$ for each $i\geq 1$. So
\[\bP(X_i\in\Delta,\forall i\geq 1)=\prod_{i\geq 1}\bP(X_i\in\Delta)=\prod_{i\geq 1}\bP(X_1\in\Delta)=0.\]
Thus we  choose $\Omega_2\subset\Omega_1$ with $\bP(\Omega_2)=1$ such that $\{X_i(\omega):i\geq 1\}\nsubseteq\Delta$ for each $\omega\in\Omega_2$.
Fix  $\omega_0\in\Omega_2$. We shall prove that $\bP(T\cong T(\omega_0))=0$. Since $\omega_0\in\Omega_2$, we  choose $i_0\geq 1$ such that $X_{i_0}(\omega_0)\notin\Delta$.
Assume that $\omega\in\Omega_2$ and $T(\omega)\cong T(\omega_0)$.
It follows that $|T(\omega)|\cong |T(\omega_0)|$. Since
$$|T(\omega)|=\textup{diag}\{X_1(\omega),X_2(\omega),X_3(\omega),\cdots\}$$ and $$|T(\omega_0)|=\textup{diag}\{X_1(\omega_0),X_2(\omega_0),X_3(\omega_0),\cdots\},$$ we deduce that there exists $i\geq 1$ such that $X_i(\omega)=X_{i_0}(\omega_0)$.

  For each $j$, note that $\bP(X_j=X_{i_0}(\omega_0))=\bP(X_1=X_{i_0}(\omega_0))=0$, since $X_{i_0}(\omega_0)\notin\Delta$.
This implies that
\begin{equation*}
\bP(T\cong T(\omega_0))=\bP\big( [T\cong T(\omega_0)]\cap \Omega_2\big)
\leq\bP\big( [\exists i\geq 1\ \textup{s.t.}\ X_i(\omega)=X_{i_0}(\omega_0)]\cap \Omega_2\big)=0.
\end{equation*}
Therefore we conclude that $\bP^2\big\{(\omega_1,\omega_2)\in\Omega^2: T(\omega_1)\cong T(\omega_2)\big\}=0$.

  (ii)
Since $0\notin \essran X_1$, it follows that $T$ is almost surely an irreducible, unilateral weighted shift and, by Theorem \ref{T:compacts}, we have $\KH\subset C^*(T)$ a.s. Then there exists $\Omega'\subset\Omega$ with $\bP(\Omega')=1$ such that $\KH\subset C^*(T(\omega))$ and $T(\omega)$ is irreducible for $\omega\in\Omega'$. In view of Exercise I.37 in \cite{David}, for any $\omega_1,\omega_2\in\Omega'$,
$T(\omega_1)\cong_a T(\omega_2)$ if and only if $T(\omega_1)\cong T(\omega_2).$
By Theorem \ref{T:ApprUnitEqui} (i), we have
\begin{equation*}
\bP^2\big\{(\omega_1,\omega_2)\in\Omega^2: T(\omega_1)\cong_a T(\omega_2)\big\}
=\bP^2\big\{(\omega_1,\omega_2)\in\Omega^2: T(\omega_1)\cong T(\omega_2)\big\}=0.
\end{equation*}
In particular, $\bP(T\cong_a A)=0$ for any operator $A$.

  (iii) Without loss of generality, we may assume that $\ran X_n$ $\subset$ $\essran X_1$ for $n\geq 1$.
Choose an at most countable dense subset $\Gamma=\{\alpha_i: i\in\Lambda\}$ of $\essran X_1$.
Then there are countably many finite tuples with entries in $\Gamma$, denoted as $\eta_1,\eta_2,\eta_3,\cdots$.
If $\eta_i=(\alpha_{i,1}, \cdots, \alpha_{i,k_i})$, then we write $W_{i}$ to denote the truncated weighted shift on $\bC^{k+1}$  with weights $(\alpha_{i,1}, \cdots, \alpha_{i,k_i})$. Set $W=\oplus_{i=1}^\infty W_{i}^{(\infty)}$. Note that $W\cong W^{(\infty)}$. It follows that $C^*(W)$
contains no  nonzero compact operator. Next we shall prove that $T\cong_a W$ a.s.

  Set \begin{equation}\label{(3.7)}\Omega'=\bigcap_{k\geq 1}\bigcap_{(m_1,\cdots,m_k)}\Big[\liminf_{n\to\infty} \|(X_{n+i})_{i=1}^k-(\alpha_{m_i})_{i=1}^k\|_{\max}=0\Big], \end{equation} where the second intersection is taken over all finite tuples with entries in $\Lambda$.  By Lemma \ref{L:constantCase}, we have $\bP(\Omega')=1$.
Choose $\omega_0\in\Omega'$. For $n\geq 1$, denote $\mu_n=X_n(\omega_0)$.
By the hypothesis, $\mu_n\in\essran X_1$ for $n\geq 1$. Denote $\mu_0=0$. So $\mu_0\in\essran X_1$.
 Note that $W$ is in fact a unilateral weighted shift. We let $\{\beta_j\}_{j=1}^\infty$ denote its weight sequence.
Then for any $i\geq 1$, the finite tuple $\eta_i$ appears infinitely many times in the weight sequence of $W$.
Denote $\beta_0=0$.
 Fix a $k\geq 1$. Since $\Gamma$ is dense in $\essran X_1$,
for any $\eps>0$, we  choose $m_0, m_1,\cdots, m_k\in\Lambda$ such that
$\|(\alpha_{m_i})_{i=0}^k-(\mu_{i})_{i=0}^k\|_{\max}<\eps.$
Since $(\alpha_{m_0},\alpha_{m_1},\cdots,\alpha_{m_k})$ appears infinitely many times in the weight sequence $\{\beta_i\}$ of $W$, we have
$\liminf\limits_{n\to\infty} \|(\beta_{n+i})_{i=0}^k-(\mu_{i})_{i=0}^k\|_{\max}<\eps.$
Since $\eps>0$ is arbitrary, we deduce that
\begin{equation}\label{(3.8)}\liminf\limits_{n\to\infty} \|(\beta_{n+i})_{i=0}^k-(\mu_{i})_{i=0}^k\|_{\max}=0. \end{equation}
On the other hand, since $\beta_{0},\beta_{1},\cdots,\beta_{k}\in\Gamma$, by (\ref{(3.7)}), we have
$\liminf\limits_{n\to\infty} \|(\mu_{n+i})_{i=0}^k-(\beta_{i})_{i=0}^k\|_{\max}=0. $
In view of Lemma \ref{L:WSAppUniEqu}, this coupled with (\ref{(3.8)}) implies that $T(\omega_0)\cong_a W$.
Noting that $\omega_0\in\Omega'$ is arbitrary, we obtain $T\cong_a W$ a.s.

  (iv) For the proof of this part, we need to mention an elementary fact, whose proof is an easy exercise: Let $A,B\in\BH$. \begin{enumerate}
\item[(a)] If $A\cong_a B$, then $C^*(A)\cap\KH=\{0\}$ if and only if $C^*(B)\cap\KH=\{0\}$.
\item[(b)] If $A\cong A^{(\infty)}$, then $C^*(A)\cap\KH=\{0\}$.
\end{enumerate}

%\begin{lem}\label{L:Compact}
%Let $A,B\in\BH$. \begin{enumerate}
%\item[(a)] If $A\cong_a B$, then $C^*(A)\cap\KH=\{0\}$ if and only if $C^*(B)\cap\KH=\{0\}$.
%\item[(b)] If $A\cong A^{(\infty)}$, then $C^*(A)\cap\KH=\{0\}$.
%\end{enumerate}
%\end{lem}

%\begin{proof}
%(a) Assume that $\{U_n\}_{n=1}^\infty$ are unitary operators on $\cH$ such that $U_n^*AU_n\rightarrow B$ as $n\rightarrow\infty$.
%Thus, for any $Z\in C^*(A)\cap\KH$, $\{U_n^*ZU_n\}$ converges to an operator $W$ in $C^*(B)$, which is also compact and $\|W\|=\|Z\|$.
%This shows that $C^*(A)\cap\KH\ne\{0\}$ implies $C^*(B)\cap\KH\ne\{0\}$. Similarly, $C^*(B)\cap\KH\ne\{0\}$ implies $C^*(A)\cap\KH\ne\{0\}$.
%Thus one can see the conclusion.
%
%  (b) Note that each operator $W$ in $C^*(A^{(\infty)})$ has the form of $Z^{(\infty)}$, where $Z\in C^*(A)$. Then the conclusion follows readily.
%\end{proof}

  We resume the proof of (iv). Since $0\in\essran X_1$, by the proof of Theorem \ref{T:ApprUnitEqui} (iii), there exists a weighted shift $W$ such that
$T\cong_a W\cong W^{(\infty)}$ a.s. By the above fact, $C^*(W)$ contains no nonzero compact operators.
Assume that $A$ is an operator acting on some Hilbert space. %Thus $T\cong_a A\Longleftrightarrow W\cong_a A$ and
% $T\approx A\Longleftrightarrow W\approx A$.
If $T\cong_a A$ a.s., then $W\cong_a A$, which implies $W\approx A$ and,
by the fact again, $C^*(A)$ contains no nonzero compact operators.

  Conversely,  since $T\approx A$ a.s., we obtain  $W\approx A$. Then there exists a $*$-isomorphism $\varphi:C^*(W)\rightarrow C^*(A)$ such that $\varphi(W)=A$. It follows that $\|\varphi(Z)\|=\|Z\|$ for $Z\in C^*(W)$. Since neither $C^*(W)$ nor $C^*(A)$ contains nonzero compact operators, we deduce that $\rank~ \varphi(Z)=\rank~ Z=\rank~\textup{id}(Z)$ for  $Z\in C^*(W)$, where $\textup{id}$ is the identity representation of $C^*(W)$. By Theorem II.5.8 in \cite{David}, we get $\varphi\cong_a\textup{id}$. It follows that $A=\varphi(W)\cong_a\textup{id}(W)=W$. Hence $T\cong_a A$ a.s.
\end{proof}

  Next we give the proof of Theorem \ref{T:compacts}.
\begin{proof}[Proof of Theorem \ref{T:compacts}]
If $0\in\essran X_1$, then, by the proof of Theorem \ref{T:ApprUnitEqui} (iii),  there exists a weighted shift $W$ such that
$T\cong_a W\cong W^{(\infty)}$ a.s. In view of the fact in the proof of Theorem \ref{T:ApprUnitEqui} (iv), $C^*(T)$ almost surely contains no nonzero compact operators.
 If $0\notin\essran X_1$, then $\inf_i X_i>0$ a.s. So $T$ is almost surely a left-invertible unilateral weighted shift.
It is not hard to verify that each left-invertible unilateral weighted shift $A$ on $\cH$ has $\KH$ contained in $ C^*(A)$. So we conclude the proof.\end{proof}

%
%  Suppose $Ae_i=\lambda_ie_{i+1}$ for $i\geq 1$, where $\lambda_i\geq 0$ for $i\geq 1$ and $\{e_i\}$ is an orthonormal basis of $\cH$. Then $|A|e_i=\lambda_ie_{i}$ for $i\geq 1$. Since $A$ is left-invertible, it follows that $\inf_i\lambda_i>0$ and $|A|$ is invertible. Then $A|A|^{-1}\in C^*(A)$ and
%$A|A|^{-1}e_i=e_{i+1}$ for $i\geq 1$. So $A|A|^{-1}$ is the unilateral shift on $\cH$. It is well known that $\KH\subset C^*(A|A|^{-1})$,
%which implies $\KH\subset C^*(A)$.

\noindent The next result is for a curious reader. Let $W$ be the deterministic unilateral weighted shift defined in the proof of Theorem \ref{T:ApprUnitEqui} (iii).
Then $W\cong W^*$. Thus it follows that

\begin{cor}\label{T:Kakutani}
Let  $T\sim\{X_n\}_{n=1}^{\infty}.$
If $0\in\essran X_1$, then $T\cong_a T^*$ a.s.
\end{cor}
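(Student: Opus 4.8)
The plan is to transport the symmetry from the deterministic limiting model $W$ produced in the proof of Theorem \ref{T:ApprUnitEqui}(iii), exploiting that approximate unitary equivalence behaves well under adjoints. First I would recall that, since $0\in\essran X_1$, that proof furnishes a fixed unilateral weighted shift $W=\oplus_{i=1}^\infty W_i^{(\infty)}$ with $T\cong_a W$ a.s., where the $W_i$ are the truncated weighted shifts whose weight tuples $\eta_i$ enumerate \emph{all} finite tuples with entries in a fixed at most countable dense subset $\Gamma\subset\essran X_1$. The whole argument then reduces to the single deterministic claim $W\cong W^*$, together with the elementary observation that $A\cong_a B$ implies $A^*\cong_a B^*$: if $\lim_n U_n^*AU_n=B$ in norm, then $U_n^*A^*U_n=(U_n^*AU_n)^*\to B^*$ by norm-continuity of the adjoint.

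To prove $W\cong W^*$ I would work blockwise. For a truncated weighted shift $W_i$ with weight tuple $\eta_i=(\alpha_{i,1},\dots,\alpha_{i,k_i})$ acting on a finite-dimensional space, conjugating $W_i^*$ by the order-reversing flip $J$ (which sends the basis vector $e_j$ to the last-minus-$j$ basis vector) shows that $W_i^*$ is unitarily equivalent to the truncated weighted shift $\widetilde{W_i}$ with the reversed tuple $(\alpha_{i,k_i},\dots,\alpha_{i,1})$; this flip computation is routine. The decisive point is that the family $\{\eta_i\}$ consists of \emph{all} finite tuples over $\Gamma$, hence is closed under reversal, so $i\mapsto\widetilde{W_i}$ is merely a reindexing of the same multiset of blocks, each occurring with infinite multiplicity. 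Consequently
$$W^*=\bigoplus_{i=1}^\infty (W_i^*)^{(\infty)}\cong\bigoplus_{i=1}^\infty \widetilde{W_i}^{(\infty)}\cong\bigoplus_{i=1}^\infty W_i^{(\infty)}=W,$$
where the first equivalence is blockwise via $J$ and the second is a rearrangement of direct summands.

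Finally I would assemble the chain. From $T\cong_a W$ a.s. and stability of $\cong_a$ under adjoints we obtain $T^*\cong_a W^*$ a.s., while $W\cong W^*$ gives $W^*\cong_a W$ as a norm-exact special case. Using that $\cong_a$ is symmetric and transitive, the chain $T\cong_a W\cong_a W^*\cong_a T^*$ yields $T\cong_a T^*$ a.s. The one genuinely content-bearing step is the deterministic identity $W\cong W^*$, and I expect its crux to be not the flip computation but the structural feature—already built into the construction in Theorem \ref{T:ApprUnitEqui}(iii)—that the blocks exhaust all tuples over $\Gamma$ and are therefore reversal-closed; everything else is the soft functoriality of approximate unitary equivalence.
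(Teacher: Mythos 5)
Your proposal is correct and follows essentially the same route as the paper: the paper's proof consists precisely of invoking the shift $W$ from Theorem \ref{T:ApprUnitEqui}(iii) with $T\cong_a W$ a.s., asserting $W\cong W^*$, and concluding. You have simply filled in the details the paper leaves implicit — the order-reversing flip identifying each $W_i^*$ with the truncated shift on the reversed tuple, the reversal-closure of the family of all tuples over $\Gamma$, and the stability of $\cong_a$ under adjoints, symmetry, and transitivity — all of which are exactly the intended justifications.
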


\begin{rmk}
The random weighted shift in Corollary \ref{T:Kakutani} is almost surely an approximate Kakutani shift.
A unilateral weighted shift $A$ with nonnegative weights $\{\lambda_i\}_{i=0}^\infty$ is said to be {\it approximate Kakutani} if for each $n\geq 1$ and any $\eps>0$ there exists an index $n_0\geq n$ such that
$\lambda_{n_0}<\eps$ and $1\leq k\leq n\Longrightarrow |\lambda_k-\lambda_{n_0-k}|<\eps.$ It was proved that an irreducible unilateral weighted shift $A$ is approximate Kakutani if and only if $A\cong_a A^*$ (Theorem 2.4 in \cite{GuoJiZhu}). Approximate Kakutani shifts, as a generalization of the Kakutani shift (\cite{Rickart}, P. 282), originally arose in the study of complex symmetric operators \cite{2013Gar}. There are several other equivalent characterizations (\cite{GuoJiZhu}, Theorem 2.4).
\end{rmk}

\section{Random Hardy spaces associated with $T$}\label{S:random Hardy}
  For each $\om\in\Om,$   $T(\om)$ is the weighted shift with weight sequence $\{X_n(\om)\}.$ By \cite{shields}, $T(\om)$ on $\cH$ is unitarily equivalent to $M_z$, the multiplication by the coordinate function on $H_{\mu}^2(\om)$ which is given by
\begin{equation}\label{E:H mu 2}
H^2_{\mu}(\om)=\Big\{f(z)=\sum_{n=0}^{\infty}a_nz^n:\ \|f\|^2(\om)=\sum_{n=0}^{\infty}|a_n|^2w_n^2(\om)<\infty\Big\},
\end{equation}
where $w_0=1,$  $w_n=X_1\cdots X_{n}\ (n\geq 1),$ and  $\mu$ denotes the law of $X_1.$ Then $H^2_{\mu}$ is what we call the random Hardy space associated with $T$. A starting point for analyzing elements in $H^2_{\mu}$ follows from the Hewitt-Savage zero-one law: For any analytic function $f,$
\begin{equation}\label{E:two case}
\bP(\|f\|<\infty)\in \{0, 1\}.
\end{equation}
Here we adopt the convention that $\|f\|=\infty$ if $f$ is not in the Hilbert space. Moreover, in this section, we assume that $X_1$ is non-degenerate so the four radii in (\ref{E:fourradii}) are all different.
Let $\gamma(H^2_{\mu})$  be the convergence radius of $H^2_{\mu}.$
By Hewitt-Savage,
 $\gamma(H^2_{\mu})=\liminf\limits_{n\to\infty}\sqrt[n]{w_n(\om)}$ is a constant a.s.  Then a direct application of  the strong law of large numbers yields  the following constant.

\begin{lem}\label{T:convergence radius}
 We have $
\gamma(H^2_{\mu})=e^{\bE(\ln X_1)}
$ a.s.
\end{lem}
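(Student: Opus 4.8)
The plan is to compute the radius of convergence of a generic element of $H^2_\mu(\om)$ directly from the Cauchy--Hadamard formula applied to the weight sequence $\{w_n(\om)\}$. Recall that $H^2_\mu(\om)$ consists of those power series $\sum a_n z^n$ with $\sum |a_n|^2 w_n^2(\om) < \infty$, where $w_n = X_1 \cdots X_n$. The key observation, already recorded in the excerpt, is that by the Hewitt--Savage zero-one law the quantity $\gamma(H^2_\mu) = \liminf_{n\to\infty} \sqrt[n]{w_n(\om)}$ is almost surely a constant. So the entire problem reduces to evaluating this almost-sure constant.

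First I would take logarithms to linearize the product. Writing $Y_i = \ln X_i$, we have
\begin{equation*}
\frac{1}{n}\ln w_n(\om) = \frac{1}{n}\sum_{i=1}^n \ln X_i(\om) = \frac{1}{n}\sum_{i=1}^n Y_i(\om).
\end{equation*}
Since $\{X_n\}$ are i.i.d., so are $\{Y_n\}$, and the strong law of large numbers applies: almost surely $\frac{1}{n}\sum_{i=1}^n Y_i \to \bE(Y_1) = \bE(\ln X_1)$. Exponentiating, $\sqrt[n]{w_n(\om)} = \exp\big(\frac{1}{n}\sum_{i=1}^n Y_i\big) \to e^{\bE(\ln X_1)}$ almost surely. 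Because the sequence $\sqrt[n]{w_n}$ actually converges almost surely, its $\liminf$ equals its limit, giving $\gamma(H^2_\mu) = e^{\bE(\ln X_1)}$ a.s.

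The one genuine subtlety concerns integrability of $\ln X_1$. The hypotheses guarantee $X_1$ is nonnegative and bounded (so $R < \infty$ and $Y_1 \le \ln R < \infty$), but nothing prevents $X_1$ from taking values arbitrarily close to $0$, which could force $\bE(\ln X_1) = -\infty$. I would handle this by invoking the strong law of large numbers in the general form valid for i.i.d. variables with $\bE(X_1^+) < \infty$ (here $\bE(Y_1^+) \le (\ln R)^+ < \infty$): the partial averages converge almost surely to $\bE(Y_1) \in [-\infty, \infty)$, with the convention that the limit may be $-\infty$. In the case $\bE(\ln X_1) = -\infty$ the conclusion reads $\gamma(H^2_\mu) = e^{-\infty} = 0$, which the statement covers under the natural convention; otherwise $Y_1$ is genuinely integrable and the standard strong law delivers a finite limit. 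The main (and only) obstacle is thus bookkeeping the degenerate and non-integrable boundary cases rather than any substantive difficulty — the heart of the argument is simply SLLN after a logarithm.
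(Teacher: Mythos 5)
Your proposal is correct and follows essentially the same route as the paper: the paper likewise identifies $\gamma(H^2_\mu)$ with $\liminf_n \sqrt[n]{w_n(\om)}$ (constant a.s.\ by Hewitt--Savage) and then cites the strong law of large numbers applied to $\ln X_i$ to obtain $e^{\bE(\ln X_1)}$. Your additional bookkeeping for the case $\bE(\ln X_1) = -\infty$ is sound and in fact more careful than the paper, which passes over this boundary case in silence.
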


\subsection{Membership without moment conditions}

  This section is devoted to the study  the membership of  $f\in H_{\mu}^2$ a.s.  By (\ref{E:two case}), it is reasonable to introduce a deterministic space $H_{\ast}$ as below:
$$
H_{\ast}=\Big\{f(z)=\sum_{n=0}^{\infty}a_nz^n:\ f\in H_{\mu}^2\ \mbox{a.s.}\Big\}.
$$

  As a first step, we have the following two lemmas.

\begin{lem}\label{L:expect to series}
Let $f(z)=\sum_{n=0}^{\infty}a_nz^n$ be an analytic function.  If
 one of the following conditions holds,
\begin{itemize}
\item[(i)] for $0<p<2,$
$
\sum_{n=0}^{\infty}|a_n|^p\|X_1\|_p^{pn}<\infty;
$
\item[(ii)] for $2\leq p<\infty,$
$
\sum_{n=0}^{\infty}|a_n|^2\|X_1\|_p^{2n}<\infty,
$
\end{itemize}
then $f\in H_{\ast}.$
\end{lem}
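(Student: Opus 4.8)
The plan is to unwind the definition of $H_\ast$ and reduce the whole statement to an almost-sure convergence claim about the single nonnegative random series $\sum_n |a_n|^2 w_n^2$, and then to run a first-moment (Tonelli) argument in each regime. Recall that $f\in H_\ast$ holds precisely when $\sum_{n=0}^\infty |a_n|^2 w_n^2(\om)<\infty$ almost surely, where $w_0=1$ and $w_n=X_1\cdots X_n$. Since the $X_i$ are i.i.d. and bounded, all moments are finite, and independence furnishes the key bridge between the deterministic hypotheses and the random norm: for any $s>0$,
$$\bE(w_n^s)=\prod_{i=1}^n\bE(X_i^s)=\big(\bE(X_1^s)\big)^n=\|X_1\|_s^{sn}.$$

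For case (ii) I would work directly at the level of second moments. By Tonelli applied to the nonnegative summands and the moment identity above,
$$\bE\Big(\sum_{n=0}^\infty |a_n|^2 w_n^2\Big)=\sum_{n=0}^\infty |a_n|^2\,\bE(w_n^2)=\sum_{n=0}^\infty |a_n|^2\|X_1\|_2^{2n}.$$
Because $\|X_1\|_2\le\|X_1\|_p$ for $p\ge 2$ on a probability space, this last sum is dominated by $\sum_n |a_n|^2\|X_1\|_p^{2n}$, which is finite by hypothesis. A nonnegative random variable with finite expectation is finite almost surely, whence $\sum_n |a_n|^2 w_n^2<\infty$ a.s., i.e. $f\in H_\ast$.

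For case (i) the second moment of $w_n$ cannot be controlled under a mere $p$-th moment hypothesis with $p<2$, so the idea is to pass to the exponent $p$ first. Set $q=p/2\in(0,1)$ and consider the auxiliary series $\sum_n |a_n|^p w_n^p$. Tonelli and the moment identity give $\bE\big(\sum_n |a_n|^p w_n^p\big)=\sum_n |a_n|^p\|X_1\|_p^{pn}<\infty$, so this series converges almost surely. Writing $c_n=|a_n|^p w_n^p$, one has $|a_n|^2 w_n^2=c_n^{2/p}$ with $2/p>1$, and the elementary inequality $\sum_n c_n^{2/p}\le\big(\sum_n c_n\big)^{2/p}$ (equivalently the embedding $\ell^1\hookrightarrow\ell^{2/p}$) yields
$$\sum_{n=0}^\infty |a_n|^2 w_n^2=\sum_{n=0}^\infty c_n^{2/p}\le\Big(\sum_{n=0}^\infty c_n\Big)^{2/p}<\infty\quad\text{a.s.},$$
so $f\in H_\ast$ in this regime as well.

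The computation is essentially routine once the moment identity $\bE(w_n^s)=\|X_1\|_s^{sn}$ is recorded, and the only genuine point is the reduction in case (i): the subadditivity/embedding trick is what converts almost-sure convergence of the $p$-power series into almost-sure convergence of the defining $2$-power series. I would also flag at the outset that boundedness of $X_1$ guarantees $\|X_1\|_p<\infty$ for every $p$, so no integrability caveats intervene.
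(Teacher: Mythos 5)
Your proof is correct. For case (i) it is in substance the paper's own argument: the embedding inequality $\sum_n c_n^{2/p}\le\big(\sum_n c_n\big)^{2/p}$ is exactly the subadditivity bound behind the right-hand inequality of (\ref{E:48}), and combining it with Tonelli and the moment identity $\bE(w_n^p)=\|X_1\|_p^{pn}$ is the same computation, merely performed pointwise almost surely rather than under the expectation sign. Case (ii) is where you genuinely diverge: the paper proves $\bE(\|f\|^p)\le\big(\sum_n|a_n|^2\|X_1\|_p^{2n}\big)^{p/2}$ via Minkowski's inequality in $L^{p/2}$, whereas you reduce to exponent $2$ using the monotonicity $\|X_1\|_2\le\|X_1\|_p$ of norms on a probability space and then apply plain Tonelli to the second moment. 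Your route is more elementary (no integral Minkowski inequality is needed), but it also yields less: you only obtain finiteness of $\bE(\|f\|^2)$, while the paper's two-sided estimates (\ref{E:48})--(\ref{E:49}) control $\bE(\|f\|^p)$ from both sides, and those sharper bounds are reused elsewhere, e.g.\ in part (c) of Lemma \ref{T:norm p} and in the subsequent discussion of ``large'' versus ``small'' elements of $H_\ast$. For the lemma as stated, both arguments are complete.
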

\begin{proof}
It is sufficient to observe  that
  if $0<p<2,$ then
\begin{equation}\label{E:48}
\Big(\sum_{n=0}^{\infty}|a_n|^2\|X_1\|_p^{2n}\Big)^{\frac{p}{2}}\leq \bE(\|f\|^p)\leq \sum_{n=0}^{\infty}|a_n|^p\|X_1\|_p^{pn};
\end{equation}
 and if $2\leq p<\infty,$ then
\begin{equation}\label{E:49}
\sum_{n=0}^{\infty}|a_n|^p\|X_1\|_p^{pn}\leq \bE(\|f\|^p)\leq \Big(\sum_{n=0}^{\infty}|a_n|^2\|X_1\|_p^{2n}\Big)^{\frac{p}{2}} .
\end{equation}
They   follow from the Minkowski  inequality.
\end{proof}

  Now we discuss the membership in terms of the convergence radius.
\begin{lem}\label{T:norm p}
Suppose that   $f(z)=\sum_{n=0}^{\infty}a_nz^n$ is an analytic function with $\gamma(f)$ as its convergence radius.
\begin{itemize}
\item[(a)]
 If $\gamma (f)<e^{\bE(\ln X_1)}, $ then $f\notin H_{\ast};$
\item[(b)] If $\gamma (f)>e^{\bE(\ln X_1)}, $ then $f \in H_{\ast}.$
\item[(c)] Let $1\leq p< \infty.$ Then
   $\bE(\|f\|^p)=\infty$ if $\gamma(f)<\|X_1\|_p,$ and
   $\bE(\|f\|^p)<\infty$ if $\|X_1\|_p<\gamma(f).$
\end{itemize}
\end{lem}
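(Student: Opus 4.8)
The plan is to reduce all three statements to the Cauchy--Hadamard root test, applied either to the random nonnegative series $\sum_n |a_n|^2 w_n^2$ (for (a) and (b)) or to the deterministic majorants/minorants furnished by Lemma~\ref{L:expect to series} (for (c)).

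First I would record the asymptotics of the weights. Since $\ln w_n = \sum_{k=1}^n \ln X_k$ with the $\ln X_k$ i.i.d., the strong law of large numbers gives
\[
w_n^{1/n} = \exp\!\Big(\tfrac1n \sum_{k=1}^n \ln X_k\Big) \longrightarrow e^{\bE(\ln X_1)} \qquad \textup{a.s.},
\]
with the convention $e^{-\infty}=0$ when $\bP(X_1=0)>0$; this is the computation underlying Lemma~\ref{T:convergence radius}. Writing $c_n = |a_n|^2 w_n^2$ and using $\limsup_n |a_n|^{1/n} = 1/\gamma(f)$, multiplicativity of the limsup by the convergent positive sequence $w_n^{2/n}$ yields, almost surely,
\[
\limsup_{n\to\infty} c_n^{1/n} = \frac{e^{2\bE(\ln X_1)}}{\gamma(f)^2}.
\]
For (a), the hypothesis $\gamma(f) < e^{\bE(\ln X_1)}$ makes this strictly larger than $1$, so $c_n\to\infty$ along a subsequence and $\|f\|^2 = \sum_n c_n = \infty$ a.s.; hence $f\notin H_\ast$. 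For (b), the hypothesis $\gamma(f) > e^{\bE(\ln X_1)}$ makes it strictly less than $1$, so $\sum_n c_n < \infty$ a.s. and $f\in H_\ast$. By the Hewitt--Savage law \eqref{E:two case} both events are already $0$--$1$, consistent with these conclusions.

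For (c) I would feed the two-sided estimates \eqref{E:48} and \eqref{E:49} of Lemma~\ref{L:expect to series} into the same root test, now applied to deterministic series (recall $\bE(w_n^p) = \|X_1\|_p^{pn}$ by independence). The two bounding series $\sum_n |a_n|^2 \|X_1\|_p^{2n}$ and $\sum_n |a_n|^p \|X_1\|_p^{pn}$ have root-test values $(\|X_1\|_p/\gamma(f))^2$ and $(\|X_1\|_p/\gamma(f))^p$ respectively, each crossing $1$ precisely at $\gamma(f)=\|X_1\|_p$; thus both converge when $\gamma(f)>\|X_1\|_p$ and both diverge when $\gamma(f)<\|X_1\|_p$. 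When $\gamma(f)<\|X_1\|_p$ I would invoke the lower bound (the left side of \eqref{E:48} for $1\le p<2$, of \eqref{E:49} for $2\le p<\infty$) to get $\bE(\|f\|^p)=\infty$; when $\gamma(f)>\|X_1\|_p$ I would invoke the matching upper bound to get $\bE(\|f\|^p)<\infty$.

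The computations are routine, so the only points requiring care are bookkeeping: justifying that $\limsup_n c_n^{1/n}$ factors through the a.s.\ limit of $w_n^{2/n}$ (valid because that limit is a positive constant, or $0$ in the degenerate case $\bE(\ln X_1)=-\infty$, where (a) is vacuous and (b) reduces to the root test with weights tending to $0$, using $\limsup_n|a_n|^{1/n}=1/\gamma(f)<\infty$), and noting that the statement deliberately says nothing at the boundary cases $\gamma(f)=e^{\bE(\ln X_1)}$ and $\gamma(f)=\|X_1\|_p$, so no borderline analysis is needed. I do not expect any substantial obstacle; the real content is the clean separation of the random behaviour (governed by $e^{\bE(\ln X_1)}$ via the law of large numbers) from the moment behaviour (governed by $\|X_1\|_p$ via Minkowski's inequality).
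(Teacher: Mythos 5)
Your proof is correct and takes essentially the same route as the paper: parts (a) and (b) are the strong law of large numbers combined with the root test (the paper phrases (a) as a citation of Lemma \ref{T:convergence radius}, which encodes exactly your computation of $\liminf_n w_n^{1/n}$), and part (c) is read off from the two-sided bounds (\ref{E:48}) and (\ref{E:49}) of Lemma \ref{L:expect to series} exactly as you do. Your explicit handling of the degenerate case $\bE(\ln X_1)=-\infty$ is a small point of extra care that the paper leaves implicit.
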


\begin{rmk}  If we choose $p=1$ and $\gamma(f)=\liminf\limits_{n\to\infty}\frac{1}{\sqrt[n]{|a_n|}}$ strictly between $e^{\bE(\ln X_1)}$ and $\|X_1\|_1,$  then this proposition implies that  that $f\in H_{\ast}$ is a ``large" element in the following sense: $$\bE(\|f\|)=\infty.$$
\end{rmk}

\smallskip

\begin{proof}[Proof of Lemma \ref{T:norm p}]
By Lemma  \ref{T:convergence radius},  $f\notin H_{\ast}$ if $\gamma (f)<e^{\bE(\ln X_1)}.$  If $\gamma (f)>e^{\bE(\ln X_1)},$
%Firstly, we note that
%\begin{equation}\label{E:18}
%\gamma^2=e^{E(\ln X_1^2)},\ \ \gamma^2(f)=\liminf_{n\to\infty}\frac{1}{\sqrt[n]{|a_n|^2}}.
%\end{equation}
by the strong law of large numbers, almost surely,
$
\lim\limits_{n\to\infty}(X_1^2\cdots X_{n}^2)^{\frac{1}{n}}=e^{\bE(\ln X_1^2)},
$
which,  together with $\gamma (f)>e^{\bE(\ln X_1)},$ implies that
$
\|f\|^2=\sum_{n=1}^{\infty}|a_n|^2(X_1\cdots X_{n})^2<\infty$ \ a.s. The proof of part (c) follows from   (\ref{E:48}) and (\ref{E:49}).
\end{proof}

  Some quick examples follow.
Suppose that $\|X_1\|_1=1$  and
   $f(z)=\sum_{n=0}^{\infty}a_nz^n$ with
%\begin{equation}\label{E:52}
$\gamma(f)=1,\ \ \sum_{n=0}^{\infty}|a_n|^2=\infty.$
%\end{equation}
By  (\ref{E:48}) we have
$
 \bE(\|f\|)=\infty.
$
If we  require that
%\begin{equation}\label{E: E finte}
$\gamma(f)=1,\ \ \sum_{n=0}^{\infty}|a_n|<\infty,$
%\end{equation}
then also by (\ref{E:48}),
$ \bE(\|f\|)<\infty.$
In particular, when $\alpha >1$,
\begin{equation}\label{E:confusing}
f(z)=\sum_{n=1}^{\infty}\frac{1}{n^{\al}}z^n\in H_{\ast}\end{equation} is a small element, that is, $ \bE(\|f\|)<\infty.$ (Please note the normalization  $\|X_1\|_1=1$.)
Next comes an  interesting example  to illustrate the critical values in Lemma \ref{T:norm p}.

\begin{lem}\label{L:Example 1/2}
Let $\bE(\ln X_1)=0$, $\bE\big((\ln X_1)^2\big) \in (0, \infty)$ and
\begin{equation}\label{E:n n al}
f(z)=\sum_{n=1}^{\infty}\frac{1}{n^{n^{\al}}}z^n,\quad \al>0.
\end{equation}
 Then $f\in H_{\ast}$ if and only if $\al\geq \frac{1}{2}.$
\end{lem}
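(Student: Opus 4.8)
The plan is to reduce everything to a single exponential sum governed by the law of the iterated logarithm (LIL). Writing $S_n=\sum_{k=1}^n\ln X_k$ and $a_n=n^{-n^\al}$, the norm in $H^2_\mu$ is
\[
\|f\|^2(\om)=\sum_{n=1}^\infty |a_n|^2\,(X_1\cdots X_n)^2=\sum_{n=1}^\infty \exp\bigl(2S_n-2n^\al\ln n\bigr).
\]
Under the hypotheses $\bE(\ln X_1)=0$ and $\mathrm{Var}(\ln X_1)=\bE\bigl((\ln X_1)^2\bigr)=\sigma^2\in(0,\infty)$ (note $\ln X_1$ is finite a.s., so $X_1>0$ a.s.), the Hartman--Wintner LIL applies to the i.i.d. sequence $\{\ln X_k\}$ and gives, almost surely,
\[
\limsup_{n\to\infty}\frac{S_n}{\sqrt{2\sigma^2 n\ln\ln n}}=1.
\]
Thus the entire problem becomes a race between the deterministic decay rate $n^\al\ln n$ coming from the coefficients and the random fluctuation $\sqrt{n\ln\ln n}$ of the partial sums, and the critical exponent is exactly $\al=\tfrac12$, where the polynomial factors $n^\al$ and $n^{1/2}$ coincide.

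For the convergence half ($\al\ge\tfrac12$), I would use the upper envelope of the LIL: almost surely there is an $N$ so that $S_n\le 2\sigma\sqrt{n\ln\ln n}$ for all $n\ge N$. Then the exponent is at most $4\sigma\sqrt{n\ln\ln n}-2n^\al\ln n$, and since $n^\al\ln n\ge n^{1/2}\ln n$ while $\sqrt{n\ln\ln n}=n^{1/2}\sqrt{\ln\ln n}$, the ratio $\ln n/\sqrt{\ln\ln n}\to\infty$ forces the exponent to be at most $-n^{1/2}\ln n$ for $n$ large. As $\sum_n n^{-n^{1/2}}<\infty$, we get $\|f\|^2<\infty$ a.s., i.e. $f\in H_\ast$.

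For the divergence half ($\al<\tfrac12$), I would instead use the recurrence of large values encoded in the $\limsup$: almost surely there is a (random) subsequence $n_j\uparrow\infty$ with $S_{n_j}\ge \sigma\sqrt{n_j\ln\ln n_j}$. Along it the exponent is at least $2\sigma\sqrt{n_j\ln\ln n_j}-2n_j^\al\ln n_j$; now $\al<\tfrac12$ makes $n^{1/2}\sqrt{\ln\ln n}$ dominate $n^\al\ln n$ (since $n^{1/2-\al}\sqrt{\ln\ln n}/\ln n\to\infty$), so the exponent tends to $+\infty$ and the general term $\exp(2S_{n_j}-2n_j^\al\ln n_j)\to\infty$. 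The terms of a convergent series must tend to $0$, so the series diverges a.s.\ and $f\notin H_\ast$. Together with the Hewitt--Savage zero-one law (\ref{E:two case}), the two cases are mutually exclusive and exhaustive, proving the stated equivalence.

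The main obstacle is the boundary case $\al=\tfrac12$: there the leading polynomial rates are equal, so convergence is decided entirely by the secondary logarithmic comparison $\ln n\gg\sqrt{\ln\ln n}$, which is precisely what tips $\al=\tfrac12$ into the convergent regime. The essential probabilistic input is the full two-sided strength of the LIL: the almost-sure upper envelope yields convergence, whereas the almost-sure recurrence of near-extremal values of $S_n$ (a genuine $\limsup$ statement, not a bound in probability or in mean) is what is needed to force divergence. No sharper fluctuation estimates are required.
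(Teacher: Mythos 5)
Your proof is correct, and its skeleton is the same as the paper's: everything reduces to the exponential sum $\sum_n\exp(2S_n-2n^{\al}\ln n)$ and to the race between the fluctuation scale $\sqrt{n\ln\ln n}$ of $S_n=\sum_{k\le n}\ln X_k$ and the deterministic decay $n^{\al}\ln n$, with the boundary case $\al=\tfrac12$ settled by $\ln n\gg\sqrt{\ln\ln n}$. The routes differ in the tools invoked. For the divergence half ($\al<\tfrac12$) the paper does not argue directly: it notes that $|a_n|^{\sqrt{2}/(\sig\sqrt{n\ln\ln n})}\to 1>e^{-1}$ and quotes its general membership criterion, Theorem \ref{T:LIL our case}\,(ii) --- whose own proof is precisely your Hartman--Wintner subsequence argument, so you have in effect inlined that theorem. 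For the convergence half ($\al\ge\tfrac12$) the paper avoids the LIL altogether and instead uses a variant of the strong law (quoted from Resnick): $S_n=o\big(\sqrt{n(\ln n)^{\delta}}\big)$ a.s.\ for $\delta>1$; taking $1<\delta<2$, this envelope is coarser than your LIL bound $2\sig\sqrt{n\ln\ln n}$ but still $o(\sqrt{n}\,\ln n)$, hence sufficient against $n^{\al}\ln n\ge\sqrt{n}\,\ln n$. What each buys: your argument is self-contained and runs both directions off a single theorem (the two-sided LIL), and is sharper in the convergence direction; the paper's version gets by with a weaker law for convergence and obtains divergence for free as a corollary of Theorem \ref{T:LIL our case}, which is the organizing result of that subsection.
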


\begin{rmk}\label{R:sum al}
Observe that
$
\gamma(f)=\liminf\limits_{n\to\infty}\frac{1}{\sqrt[n]{|a_n|}}=1
$
if and only if $\al<1.$
Corollary \ref{C:n al} will tell us that $f(z)=\sum_{n=1}^{\infty}\frac{1}{n^{\al}}z^n\notin H_{\ast}$ for  $\al>0$ under the same conditions as in Lemma \ref{L:Example 1/2}. This is not to be confused with (\ref{E:confusing}).
\end{rmk}

\begin{proof}[Proof of Lemma \ref{L:Example 1/2}]
We recall a well-known variant of the strong law of large numbers \cite{Resnick}:
{\it
If $\{Y_n\}_{n\geq 1}$ are i.i.d. random variables  with mean $\mu$ and finite variance. Then, almost surely,
$
\lim\limits_{n\to\infty}\frac{S_n-n\mu}{\sqrt{n(\ln n)^{\delta}}}=0
$
for any $\delta>1,$ where $S_n=Y_1+\cdots+Y_n.$}
So    $
\lim\limits_{n\to\infty}\frac{\ln X_1^2+\cdots+\ln X_n^2}{\sqrt{n(\ln n)^{\delta}}}\to 0\ \text{a.s.}
$
Hence,
$
\frac{1}{n^{2n^{\al}}}X_1^2\cdots X_n^2\leq \frac{e^{\ep\sqrt{n(\ln n)^{\delta}}}}{n^{2n^{\al}}}
$
for any $\ep>0$ when $n$ is large enough.
If $\al\geq \frac{1}{2}$ and $1<\delta<2,$  then $\sum_{n=1}^{\infty}\frac{e^{\ep\sqrt{n(\ln n)^{\delta}}}}{n^{2n^{\al}}}<\infty.$
This implies that
$
\|f\|^2=\sum_{n=0}^{\infty}|a_n|^2X_1^2\cdots X_n^2<\infty,
$
as desired.
If $\al<\frac{1}{2},$ as a direct application  of Theorem \ref{T:LIL our case} below, $f\notin H_{\ast}.$
\end{proof}

%  In fact, Lemma \ref{L:Example 1/2} is a direct corollary of Theorem \ref{T:LIL our case}.  But the proof of sufficiency is presented here because its tool seems so elementary.

\subsection{Membership with moment conditions}
In this subsection we   consider the membership of $H_{\ast}$ under a moment condition.
\begin{thm}\label{T:LIL our case}
Let $\bE(\ln X_1)=0$, $\bE\big((\ln X_1^2)^2\big)=\sig^2 \in (0, \infty),$ and  $f(z)=\sum_{n=0}^{\infty}a_nz^n$ be an analytic function.
\begin{itemize}
\item[(i)] If
%\begin{equation}
%\limsup_n\frac{\log a_n}{\sqrt{n\log\log n}}< -\frac{\sigma}{\sqrt{2}}
%\end{equation}
%i.e.,
$\limsup\limits_{n\to\infty}|a_n|^{\frac{\sqrt{2}}{\sig \sqrt{n\ln\ln n}}}< \frac{1}{e},$
then
$f\in H_{\ast}.$
\item[(ii)] If
$\liminf\limits_{n\to\infty}|a_n|^{\frac{\sqrt{2}}{\sig\sqrt{n\ln\ln n}}}> \frac{1}{e}$ \ or \
$\limsup\limits_{n\to\infty}|a_n|^{\frac{\sqrt{2}}{\sig\sqrt{n\ln\ln n}}}> 1,$
then
$f\notin H_{\ast}.$
\item[(iii)] We have
\begin{equation}\label{E:eachasup}\sup_{f \in H_\ast} \limsup\limits_{n\to\infty}|a_n|^{\frac{\sqrt{2}}{\sig \sqrt{n\ln\ln n}}} \in [e^{-1}, 1]
\end{equation} and each $a \in [e^{-1}, 1]$ may be achieved.
\item[(iv)] If $f \in H_\ast$, then \begin{equation}\label{E:eachainf} \liminf\limits_{n\to\infty}|a_n|^{\frac{\sqrt{2}}{\sig \sqrt{n\ln\ln n}}} \in [0, e^{-1}]\end{equation} and each $a \in [0, e^{-1}]$ may be achieved.
\end{itemize}
\end{thm}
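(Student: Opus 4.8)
The engine behind all four parts is the Hartman--Wintner law of the iterated logarithm (LIL) applied to the additive cocycle generated by the weights. Writing $S_n=\sum_{k=1}^n\ln X_k^2$, the variables $\ln X_k^2=2\ln X_k$ are i.i.d.\ with $\bE(\ln X_1^2)=0$ and variance $\bE\big((\ln X_1^2)^2\big)=\sig^2\in(0,\infty)$, so almost surely
\begin{equation*}
\limsup_{n\to\infty}\frac{S_n}{\sig\sqrt{2n\ln\ln n}}=1,\qquad \liminf_{n\to\infty}\frac{S_n}{\sig\sqrt{2n\ln\ln n}}=-1.
\end{equation*}
Since $w_n^2=e^{S_n}$, the general term of $\|f\|^2=\sum_n|a_n|^2w_n^2$ satisfies, with $b_n=|a_n|^{\sqrt2/(\sig\sqrt{n\ln\ln n})}$ and $c_n=S_n/(\sig\sqrt{2n\ln\ln n})$,
\begin{equation*}
\ln\big(|a_n|^2w_n^2\big)=\sig\sqrt{2n\ln\ln n}\,\big(\ln b_n+c_n\big).
\end{equation*}
Thus membership in $H_*$ is governed by the sign and magnitude of $\ln b_n+c_n$, with the LIL pinning $c_n$ asymptotically into $[-1,1]$. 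This identity is the object on which everything rests.

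For part (i), if $\limsup b_n<e^{-1}$ then $\ln b_n\le -1-\eps$ eventually for some $\eps>0$, while $c_n\le 1+\eps/2$ eventually by the LIL; hence $\ln b_n+c_n\le-\eps/2$ and each term is dominated by $\exp(-\tfrac\eps2\sig\sqrt{2n\ln\ln n})$, which is summable, so $f\in H_*$. For the first alternative of part (ii), if $\liminf b_n>e^{-1}$ then $\ln b_n\ge -1+\eps$ eventually; along the (random) subsequence supplied by $\limsup c_n=1$ one gets $\ln b_n+c_n\ge \eps/2$ infinitely often, so the general term tends to $+\infty$ along it and $\|f\|=\infty$. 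Both directions use only the two LIL envelopes.

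The second alternative of part (ii) is the genuinely delicate point, and I expect it to be the main obstacle. The hypothesis $\limsup b_n>1$ only forces $\ln b_n>\eps$ on a \emph{deterministic} subsequence $N_0$, and along a fixed sparse set the crude envelope $c_n\ge-1-o(1)$ is far too weak to force divergence. The plan is to replace the envelope by the central-limit size of $S_n$: setting $A_n=\{S_n>-\tfrac\eps2\sig\sqrt{2n\ln\ln n}\}$ for $n\in N_0$, the central limit theorem ($S_n/\sqrt n\Rightarrow N(0,\sig^2)$) gives $\bP(A_n)\to1$, whence the elementary inequality $\bP(\limsup_{n\in N_0}A_n)\ge\limsup_{n\in N_0}\bP(A_n)=1$. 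On this full-measure event the general term exceeds $\exp(\tfrac\eps2\sig\sqrt{2n\ln\ln n})\to\infty$ for infinitely many $n\in N_0$, so $f\notin H_*$. The same CLT-versus-envelope tension resurfaces in the constructions below.

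Parts (iii) and (iv) split into a soft half and a hard half. The bounds $\sup_{f\in H_*}\limsup_n b_n\le1$ and $\liminf_n b_n\le e^{-1}$ are contrapositives of part (ii), $\liminf_n b_n\ge0$ is trivial, and every $a\in[0,e^{-1})$ is realized by a constant profile $b_n\to a$ (in $H_*$ by part (i)), which already forces $\sup\ge e^{-1}$. The substantive claim is realizing prescribed oscillation. For $a\in(e^{-1},1)$ one builds $f\in H_*$ with $\limsup_n b_n=a$ and $\liminf_n b_n=0$: place the large coefficients on a very sparse set $\{n_k\}$ with $b_{n_k}=a$ and $a_n=0$ off it, then secure $\sum_k|a_{n_k}|^2w_{n_k}^2<\infty$ a.s.\ via the Gaussian tail estimate $\bP(c_{n_k}>\theta)\asymp(\ln n_k)^{-\theta^2}$ with $\theta=-\ln a>0$, choosing $\ln n_k\sim k^{2/\theta^2}$ so that $\sum_k\bP\big(|a_{n_k}|^2w_{n_k}^2>2^{-k}\big)<\infty$ and invoking Borel--Cantelli. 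The boundary values are the most demanding. For $a=1$ in (iii) the exponent $\theta$ degenerates, so one takes $b_{n_k}\to1$ from the summable side at a rate $\eta_k=-\ln b_{n_k}\to0$ tuned to beat the typical size $c_{n_k}\asymp(\ln\ln n_k)^{-1/2}$, again along a sufficiently sparse $\{n_k\}$. For $a=e^{-1}$ in (iv) the target $\liminf_n b_n=e^{-1}$ cannot be sparse, forcing a near-critical profile $b_n=e^{-1-\rho_n}$ with $\rho_n\downarrow0$; here a mere LIL envelope is insufficient and one must invoke an upper-class (Erd\H{o}s--Feller--Kolmogorov--Petrowsky-type) refinement to guarantee $c_n\le 1-\rho_n$ eventually while keeping $\rho_n$ slow enough that $b_n\to e^{-1}$, so that the term bound $\exp(-\,\delta_n\sig\sqrt{2n\ln\ln n})$ remains summable.
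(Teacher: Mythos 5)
Your proposal is correct in substance and covers all four parts, but it takes a partly different route from the paper's. For (i) and the first alternative of (ii) you do exactly what the paper does (the two Hartman--Wintner envelopes). For the second alternative of (ii) -- which you rightly single out as the delicate point -- the paper argues by combining the recurrence of the random walk ($\liminf S_n=-\infty<\limsup S_n=+\infty$ a.s.) with the LIL to conclude $\limsup_k S_{n_k}/\sqrt{2\sig^2 n_k\ln\ln n_k}\geq 0$ along the deterministic subsequence; your CLT-plus-reverse-Fatou argument ($\bP(\limsup_n A_n)\geq\limsup_n\bP(A_n)$) reaches the same conclusion and is, if anything, the cleaner justification, since transferring a full-sequence $\limsup$ bound to an arbitrary deterministic subsequence is precisely the step the paper leaves implicit. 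For (iii)/(iv) the paper does not argue abstractly: it constructs two explicit profiles (Examples \ref{Example:Divergence} and \ref{Example:Convergence}), verifies them with Feller's upper--lower class test (Theorem \ref{T:upper and lower}) and Gut's LIL along sparse subsequences (Theorems \ref{T:sup<1} and \ref{T:nlog k}), and then says the general $a$ follows by the same scheme. Your sparse-support construction with $b_{n_k}=a$, Borel--Cantelli, a vanishing rate $\eta_k$ at $a=1$, and an EFKP upper-class function at the critical value $a=e^{-1}$ is the same architecture, with Gut's subsequence LIL replaced by direct tail estimates; what this buys is a more self-contained argument, at the price of having to justify those estimates yourself.

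That justification is the one point you must shore up. The asserted tail bound $\bP(c_{n_k}>\theta)\asymp(\ln n_k)^{-\theta^2}$ does \emph{not} follow from the theorem's stated hypotheses (mean zero, finite variance): at the scale $\sqrt{n\ln\ln n}$, finite variance alone allows one-big-jump behavior that destroys the Gaussian upper bound, and plain Chebyshev gives only $O(1/\ln\ln n_k)$, which is not summable along your $\{n_k\}$ (there $\ln\ln n_k\sim \tfrac{2}{\theta^2}\ln k$). What rescues the estimate is the paper's standing assumption (Section \ref{S:Section2}) that the weights $X_n$ are bounded: then $\ln X_1^2$ is bounded above, $\bE(X_1^{2t})<\infty$ for all $t\geq 0$, and $\ln\bE\, e^{t\ln X_1^2}=\tfrac{\sig^2t^2}{2}(1+o(1))$ as $t\downarrow 0$, so exponential Chebyshev yields the one-sided bound $\bP\big(S_n>\theta\sig\sqrt{2n\ln\ln n}\big)\leq(\ln n)^{-\theta^2(1+o(1))}$, which is the only direction you ever use. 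The same repair upgrades your $a=1$ construction, where "typical size" of $c_{n_k}$ must become an almost-sure eventual bound (via this tail estimate and Borel--Cantelli along a doubly exponential $\{n_k\}$, or by citing Gut's theorem as the paper does). With that lemma inserted, your proof is complete.
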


\begin{rmk}\label{R:both possible}
 Theorem \ref{T:LIL our case} is the best possible in its form in the following sense: at the end of the proof of Theorem \ref{T:LIL our case} we shall use examples (\ref{Ex:Divergence}, \ref{Ex:Convergence}) to illustrate that for
$0\leq \liminf\limits_{n\to\infty}|a_n|^{\frac{\sqrt{2}}{\sig \sqrt{n\ln\ln n}}}\leq \frac{1}{e}$
and
$\frac{1}{e}\leq \limsup\limits_{n\to\infty}|a_n|^{\frac{\sqrt{2}}{\sig \sqrt{n\ln\ln n}}}\leq 1,$
  both $f\in H_{\ast}$ and  $f\notin H_{\ast}$ are possible.  Hence, for this range, one needs finer tests instead of mere $\limsup$ and $\liminf$ as above. Moreover, a moment of thought of examples (\ref{Ex:Divergence}, \ref{Ex:Convergence})  reveals (iii) and (iv).
\end{rmk}

  In essence Theorem \ref{T:LIL our case} concerns the convergence of   \begin{equation}\|f\|^2=|a_0|^2+\sum_{n=1}^{\infty}|a_n|^2X_1^2\cdots X_n^2,\end{equation} which    is an infinite summation of  dependent random varaibles. Its general theory poses  an interesting yet elegant problem in   probability theory.

%  Note that (\ref{E:138}) is equivalent to
%\begin{equation}\label{E:143}
%\limsup_n\frac{\ln |a_n|}{\sqrt{n\ln\ln n}}\leq -\frac{\sigma}{\sqrt{2}},
%\end{equation}
%and (\ref{E:140}) and (\ref{E:142}) has similar equivalent forms.
%
%  Since
%\begin{equation}
%0\leq \frac{\ln n^{\al}}{\sqrt{2n\ln\ln n}}\leq \frac{\ln n}{\sqrt{2n}}\to 0,\ \ n\to\infty.
%\end{equation}
%for any $\al>0,$ the following one is a direct consequence of (ii) in Theorem \ref{T:LIL our case}
  Now the promised example in  Remark \ref{R:sum al} is an easy corollary.
\begin{cor}\label{C:n al}
Under the conditions in Theorem \ref{T:LIL our case},  for each $\al>0,$
\begin{equation}
f(z)=\sum_{n=1}^{\infty}\frac{1}{n^{\al}}z^n\notin H_{\ast}.
\end{equation}
\end{cor}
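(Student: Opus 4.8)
The plan is to read off the conclusion directly from Theorem~\ref{T:LIL our case}(ii), so the only real work is to evaluate the root-growth quantity appearing in that theorem for the coefficients $a_n = n^{-\al}$. First I would put the test quantity into exponential form: for each fixed $\al>0$,
$$|a_n|^{\frac{\sqrt{2}}{\sig\sqrt{n\ln\ln n}}}=\exp\Big(-\frac{\al\sqrt{2}}{\sig}\cdot\frac{\ln n}{\sqrt{n\ln\ln n}}\Big).$$

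Next I would observe that the fraction in the exponent tends to $0$: since $\ln\ln n\geq 1$ for all sufficiently large $n$, we have $\frac{\ln n}{\sqrt{n\ln\ln n}}\leq \frac{\ln n}{\sqrt{n}}\to 0$. Consequently the exponent tends to $0$ and the whole expression tends to $e^{0}=1$. Thus the limit exists (so $\liminf$ and $\limsup$ coincide) and
$$\liminf_{n\to\infty}|a_n|^{\frac{\sqrt{2}}{\sig\sqrt{n\ln\ln n}}}=1>\frac{1}{e}.$$

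Finally, this inequality is precisely the first hypothesis of Theorem~\ref{T:LIL our case}(ii), which immediately gives $f\notin H_{\ast}$. Note that one cannot invoke the $\limsup>1$ alternative of (ii), since here the $\limsup$ equals $1$ exactly; it is the $\liminf>1/e$ clause that does the job.

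There is essentially no obstacle here: the entire content is the elementary limit $\ln n = o(\sqrt{n})$. The only conceptual point worth flagging is that the conclusion is uniform in $\al>0$, which reflects the heuristic stated in the introduction, namely that polynomial decay $n^{-\al}$ is far too slow to reach the half-exponential threshold $e^{\sqrt{n\ln\ln n}}$ dictated by the law of the iterated logarithm; one should take care not to confuse this with the earlier example~(\ref{E:confusing}), which lives under the different normalization $\|X_1\|_1=1$ rather than $\bE(\ln X_1)=0$.
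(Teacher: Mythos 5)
Your proof is correct and is exactly the argument the paper intends: the corollary is stated as an immediate consequence of Theorem \ref{T:LIL our case}, and your computation that $|a_n|^{\frac{\sqrt{2}}{\sig\sqrt{n\ln\ln n}}}=\exp\bigl(-\tfrac{\al\sqrt{2}}{\sig}\cdot\tfrac{\ln n}{\sqrt{n\ln\ln n}}\bigr)\to 1>\tfrac{1}{e}$ invokes precisely the $\liminf>1/e$ clause of part (ii). Your side remarks — that the $\limsup>1$ clause is unavailable since the limit equals $1$ exactly, and that this example should not be conflated with (\ref{E:confusing}) under the $\|X_1\|_1=1$ normalization — match the paper's Remark \ref{R:sum al}.
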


%\begin{cor}\label{C:eacha}
%Under the conditions in Theorem \ref{T:LIL our case},  we have
%$$\sup_{f \in H_\ast} \limsup\limits_{n\to\infty}|a_n|^{\frac{\sqrt{2}}{\sig \sqrt{n\ln\ln n}}} \in [e^{-1}, 1].$$
%\end{cor}
%\noindent Moreover, a careful thought of examples  (\ref{Ex:Divergence}, \ref{Ex:Convergence}) reveals that each $a \in [e^{-1}, 1]$ may be achieved.

  Recall that $\D_{\al}$  consists of the analytic functions $f(z)=\sum_{n=0}^{\infty}a_nz^n$ over the unit disk satisfying
$
\|f\|_{\al}^2=\sum_{n=0}^{\infty}(n+1)^{\al}|a_n|^2<\infty
$ \cite{shields}.
Theorem \ref{T:LIL our case} suggests that $H_*$ is a rather small space when compared with the standard Hardy space $H^2(\mathbb{D})$ or its cousins.

\begin{rmk}\label{C:smallspace}
{Under the conditions in Theorem \ref{T:LIL our case},
if $ \limsup\limits_{n\to\infty}|a_n|^{\frac{\sqrt{2}}{\sig \sqrt{n\ln\ln n}}} <1
$,
then
$
f \in \D_{\al}
$
 for each $\al>1.$ In particular, $f \in A(\mathbb{D})$. } Recall that $\D_\al$ is an algebra contained in the disk algebra $\D_\al \subset A(\mathbb{D})$  if $\al >1$ \cite{shields}. We conjecture that the condition $ \limsup\limits_{n\to\infty}|a_n|^{\frac{\sqrt{2}}{\sig \sqrt{n\ln\ln n}}} <1
$ is unnecessary.
\end{rmk}

\begin{proof}[Proof of Theorem \ref{T:LIL our case}]
We prove (i) and (ii) here only. To alleviate the notations,  we let    $Y_n=X_n^2$ and $c_n=|a_n|^2$.
(i) Applying the law of iterated logarithm \cite{Hartman-Wintner}, we have
%\begin{equation*}
$\limsup_{n\to\infty}(Y_1\cdots Y_{n})^{\frac{1}{\sqrt{2\sig^2n\ln\ln n }}}=e
$
almost surely.
% \ \ \text{a.s.}
%\end{equation*}
If
$\limsup\limits_{n\to\infty}c_n^{\frac{1}{\sqrt{2\sig^2n\ln\ln n}}}< \frac{1}{e},$
then
we easily have
$
\sum_{n=1}^{\infty}c_nY_1\cdots Y_n<\infty$ a.s.
as desired.
(ii)
If   $\liminf\limits_{n\to\infty}|a_n|^{\frac{\sqrt{2}}{\sqrt{\sig^2n\ln\ln n}}}> \frac{1}{e}$, then the law of iterated logarithm  yields that, almost surely,  there exists a subsequence $\{n_k\}$
such that
%\begin{equation}\label{E:76}
$\lim_{k\to\infty}(Y_{1}\cdots Y_{n_{k}})^{\frac{1}{\sqrt{2\sigma^2n_{k}\ln\ln n_{k}}}}=e.$
%\end{equation}
So we easily have
$
\sum_{n=1}^{\infty}c_nY_1\cdots Y_n=\infty
$
a.s.
as desired.
If $\limsup\limits_{n\to\infty}|a_n|^{\frac{\sqrt{2}}{\sqrt{\sig^2n\ln\ln n}}}> 1,$
then there is a subsequence $\{n_k\}$ such that
%\begin{equation}\label{E:n_k c}
$c_{n_k}^{\frac{1}{\sqrt{2\sig^2n_k\ln\ln n_k}}}>1.$
%\end{equation}
%
Recall that the random walk $S_n=\ln Y_1+\cdots+\ln Y_n$ satisfies  that
$
-\infty=\liminf\limits_{n\to\infty}S_n
<\limsup\limits_{n\to\infty}S_n=\infty$ a.s. (\cite{Durrett}, Proposition 4.1.2)
Together with the law of the iterated logarithm, one has that
 $0\leq \limsup\limits_{n\to\infty}\frac{S_n}{\sqrt{2\sig^2n\ln\ln n}}\leq 1$ a.s.
 For the subsequence $\{n_k\}$, this yields that
\begin{equation*}
\limsup_{k\to\infty} (Y_1\cdots Y_{n_k})^{\frac{1}{\sqrt{2\sig^2n_k\ln\ln n_k}}}=e^{\limsup\limits_{k\to\infty} \frac{\ln Y_1+\cdots+\ln Y_{n_k}}{\sqrt{2\sig^2n_k\ln\ln n_k}}}\geq 1\ \text{a.s.}
\end{equation*}
So almost surely there exists a subsequence $\{n_{k_j}\}$ such that
$
c_{n_{k_j}}Y_1\cdots Y_{n_{k_j}}
\geq  \lam^{\sqrt{2\sig^2n_{k_j}\ln\ln n_{k_j}}}
$
for some $\lam>1$ when $j$ is large enough, which ensures that $
\sum_{n=1}^{\infty}c_nY_1\cdots Y_n=\infty $ a.s.

\end{proof}

  The rest of this subsection is devoted to constructing examples as promised in Remark \ref{R:both possible}. This is indeed the main technical challenge we face in the present subsection.   Recall that now we have
\begin{equation*}
0\leq \liminf\limits_{n\to\infty}c_n^{\frac{1}{\sqrt{2\sig^2n\ln\ln n}}}\leq \frac{1}{e}
\ \text{and}\
\frac{1}{e}\leq \limsup\limits_{n\to\infty}c_n^{\frac{1}{\sqrt{2\sig^2n\ln\ln n}}}\leq 1.
\end{equation*}
In fact, for the divergence example, we need only to consider the ``smallest" case, i.e.,
\begin{equation*}
 \liminf\limits_{n\to\infty}c_n^{\frac{1}{\sqrt{2\sig^2n\ln\ln n}}}= 0
\ \text{and}\
\limsup\limits_{n\to\infty}c_n^{\frac{1}{\sqrt{2\sig^2n\ln\ln n}}}=\frac{1}{e};
\end{equation*}
and for  convergence, it is enough to treat the ``largest" case, i.e.,
\begin{equation*}
 \liminf\limits_{n\to\infty}c_n^{\frac{1}{\sqrt{2\sig^2n\ln\ln n}}}= \frac{1}{e}
\ \text{and}\
\limsup\limits_{n\to\infty}c_n^{\frac{1}{\sqrt{2\sig^2n\ln\ln n}}}=1.
\end{equation*}

\begin{eg}[Divergence]\label{Example:Divergence}
 Let
$
\bP(Y_1=e)=\bP(Y_1=\frac{1}{e})=\frac{1}{2}
$
and
\begin{equation}\label{Ex:Divergence}
c_n=
\begin{cases}
0,& n=n_k=2^{2^k};\\
\frac{1}{e^{\sqrt{2n\ln\ln n}}},& \text{other} \ n\geq 3.
\end{cases}
\end{equation}
Then
$
\sum_{n=3}^{\infty}c_nY_1\cdots Y_n=\infty$ a.s.

  To verify that this is the example we want, we need two results which might not be familiar to all, so we record them below for the reader's convenience.

\begin{thm}[\cite{1946Feller}]\label{T:upper and lower}
Assume that $\{X_n\}$ are i.i.d. random variables with mean 0,  variance 1,  distribution function $F,$ and
\begin{equation}\label{E:moment}
\int_{|t|>x}t^2dF(t)=O\Big(\frac{1}{\ln\ln x}\Big),\quad x\to\infty.
\end{equation}
For any increasing (unbounded) sequence $\{\varphi_n\},$ the divergence (convergence) of the series
$
\sum_n\frac{\varphi_n}{n}e^{-\frac{\varphi_n^2}{2}},
$
is a necessary and sufficient condition that, with probability one, the inequality
$
S_n>n^{\frac{1}{2}}\varphi_n
$
be satisfied for infinitely (only finitely) many $n.$
\end{thm}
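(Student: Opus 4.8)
The plan is to reduce Feller's test to its Gaussian prototype, the Kolmogorov--Erd\H{o}s integral test for Brownian motion, and to isolate the genuinely new content, which turns out to be purely a tail estimate. The Borel--Cantelli and blocking architecture that converts the integral test into a statement about $\{S_n>\sqrt n\,\varphi_n\ \text{i.o.}\}$ is insensitive to the underlying distribution once one knows that the tails of $S_n$ behave like Gaussian tails; the role of the moment hypothesis \eqref{E:moment} is exactly to guarantee this. Throughout one restricts to the boundary regime $\varphi_n\asymp\sqrt{2\ln\ln n}$, since if $\varphi_n$ grows essentially faster the series $\sum_n\frac{\varphi_n}{n}e^{-\varphi_n^2/2}$ converges and $S_n>\sqrt n\,\varphi_n$ fails eventually by the LIL upper bound, while if it grows essentially slower the series diverges and the inequality holds infinitely often by the LIL lower bound. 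In this regime $\varphi_n^2\asymp\ln\ln n$, and in particular $\varphi_n=o(\sqrt n)$, so one is squarely in the moderate-deviation range.

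The analytic heart of the argument, and the step I expect to be the main obstacle, is the uniform moderate-deviation estimate
$$\bP\big(S_n>\sqrt n\,\varphi_n\big)=\frac{1+o(1)}{\varphi_n\sqrt{2\pi}}\,e^{-\varphi_n^2/2}$$
valid throughout the boundary regime. I would attack it by truncation at level $c_n=\sqrt n/\varphi_n$, replacing $X_i$ by $X_i\mathbf 1_{\{|X_i|\le c_n\}}$, applying Cram\'er's exponential change of measure at the tilt $\varphi_n/\sqrt n$ to the truncated sum, and extracting the Gaussian tail with its $1/\varphi_n$ prefactor by a local-limit/Edgeworth correction. The delicate point, and the reason \eqref{E:moment} is the precise hypothesis, is that this truncation is borderline: using \eqref{E:moment} one computes $n\,\bP(|X_1|>c_n)=O(1)$, so the expected number of ``large'' summands per time scale is bounded but does not tend to zero, and the discarded values cannot simply be ignored. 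Controlling their joint effect on the tail (for instance by a second, coarser truncation, or by a direct characteristic-function estimate that tracks the error in \eqref{E:moment}) is where Feller's original analysis concentrates, and is what distinguishes this theorem from the Brownian case.

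Granting the tail estimate, the convergence half (upper class: convergence of the series forces $S_n>\sqrt n\,\varphi_n$ only finitely often) follows the Kolmogorov--Erd\H{o}s scheme. One partitions $\bN$ into blocks adapted to $\varphi$ (not into a fixed geometric subsequence, which would only recover the cruder LIL constant), and on each block dominates the moving boundary by a constant, converting the path event into a single-time tail via a L\'evy--Ottaviani maximal inequality. The resulting block probabilities, estimated by the moderate-deviation formula, must then be summed and shown to be comparable to $\sum_n\frac{\varphi_n}{n}e^{-\varphi_n^2/2}$; matching the $\varphi$-adapted blocking to this series is the combinatorially delicate part, after which the first Borel--Cantelli lemma closes the case.

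For the divergence half (lower class: divergence of the series forces $S_n>\sqrt n\,\varphi_n$ infinitely often) the obstruction is that the events $\{S_n>\sqrt n\,\varphi_n\}$ are dependent. I would pass to a rapidly growing subsequence $n_k$ with $n_k/n_{k-1}\to\infty$ and work with the independent increments $D_k=S_{n_k}-S_{n_{k-1}}$, each a sum of $\sim n_k$ \iid copies. Using the moderate-deviation estimate for $D_k$ against a boundary slightly above $\sqrt{n_k}\,\varphi_{n_k}$, one arranges $\sum_k\bP(D_k>\text{boundary})=\infty$, so the second Borel--Cantelli lemma (now applicable by independence) yields $D_k>\text{boundary}$ for infinitely many $k$. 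A crude lower bound $S_{n_{k-1}}\ge-2\sqrt{n_{k-1}\ln\ln n_{k-1}}$ from the Hartman--Wintner law of the iterated logarithm (which needs only finite variance, hence is not circular), together with $n_{k-1}=o(n_k)$, then shows that on these infinitely many $k$ one has $S_{n_k}=S_{n_{k-1}}+D_k>\sqrt{n_k}\,\varphi_{n_k}$, completing the proof.
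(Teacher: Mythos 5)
First, a remark on the comparison itself: the paper does not prove this statement — it is quoted verbatim from Feller \cite{1946Feller} and recorded only as an ingredient for Example \ref{Example:Divergence} and Lemma \ref{L:b_n con} — so your proposal can only be judged on its own merits. On those merits, your reduction to the boundary regime $\varphi_n\asymp\sqrt{2\ln\ln n}$, your identification of the uniform moderate-deviation tail estimate as the analytic heart (with the correct observation that \eqref{E:moment} makes the truncation at $c_n=\sqrt n/\varphi_n$ borderline, $n\,\bP(|X_1|>c_n)=O(1)$, and that perturbations of $\varphi_n$ by $O(1/\varphi_n)$ are absorbable because they change the summand only by bounded factors), and your convergence half (blocks of ratio $1+\varphi_n^{-2}$, a median-corrected L\'evy-type maximal inequality, first Borel--Cantelli) are all sound in outline and consistent with Feller's actual strategy.

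The genuine gap is in the divergence half, and it is fatal as stated. Endpoint sampling along a subsequence with $n_k/n_{k-1}\to\infty$ loses a factor $\varphi_{n_k}^2\asymp\ln\ln n_k$ relative to the series, and that factor is exactly what separates the integral test from the crude LIL your scheme does prove. Concretely, take $\varphi_n^2=2\ln\ln n+3\ln\ln\ln n$: then $\frac{\varphi_n}{n}e^{-\varphi_n^2/2}\asymp\frac{1}{n\,\ln n\,\ln\ln n}$, so the series diverges and the theorem demands $S_n>\sqrt n\,\varphi_n$ infinitely often; but for \emph{any} subsequence with ratios bounded below by some $\theta>1$ (a fortiori with $n_k/n_{k-1}\to\infty$) one has $\ln n_k\gtrsim k$, and since $D_k$ has variance $n_k-n_{k-1}\le n_k$, the moderate-deviation estimate gives
$\bP\big(D_k>\sqrt{n_k}\,\varphi_{n_k}\big)\lesssim \varphi_{n_k}^{-1}e^{-\varphi_{n_k}^2/2}\asymp\frac{1}{\ln n_k\,(\ln\ln n_k)^2}\lesssim\frac{1}{k(\ln k)^2},$
which is summable — the second Borel--Cantelli lemma never fires, and no choice of subsequence can ``arrange'' the divergence you assert. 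Nor can you take denser blocks: if $n_k-n_{k-1}=o(n_k)$, the variance of $D_k$ drops and $\bP(D_k>\sqrt{n_k}\,\varphi_{n_k})$ becomes exponentially smaller, while the Hartman--Wintner patch breaks because the starting fluctuation $\sqrt{n_{k-1}\ln\ln n_{k-1}}$ is then comparable to the boundary itself. The missing idea is that the per-block event must see \emph{many} times inside each geometric block — a maximum, or a union over roughly $\varphi^2$ nearly independent attempts spaced at ratio $1+c/\ln\ln n$ — which raises the block probability to $\asymp\varphi_{n_k}e^{-\varphi_{n_k}^2/2}$, comparable to the block sum of the series (for the Brownian prototype this is exactly Motoo's upcrossing argument; for sums it is the intricate core of the Erd\H{o}s--Feller divergence analysis, including the preliminary reduction that discards the set where $\varphi_n^2>2\ln\ln n+4\ln\ln\ln n$, whose series contribution converges). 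Without that mechanism, your architecture proves only $\limsup S_n/\sqrt{2n\ln\ln n}\ge 1$, not the integral test.
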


\begin{thm}[\cite{Gut subsequence}]\label{T:sup<1}
Let $\{n_k\}$ be a strictly increasing subsequence of the positive integers such that
$
\limsup\limits_{k\to\infty}\frac{n_k}{n_{k+1}}<1
$
and let
\begin{equation}\label{E:ep ast}
\ep^{\ast}=\inf\Big\{\ep>0:\ \sum_{k=3}^{\infty}\frac{1}{(\ln n_k)^{\frac{\ep^2}{2}}}<\infty\Big\}.
\end{equation}
Furthermore, let $\{X_n\}_{n=1}^{\infty}$ be i.i.d. random variables. Set $S_n=\sum_{k=1}^nX_k$
and suppose that $\bE (X_1)=0$ and $\bE (X_1^2)=\sig^2<\infty.$ Then
\begin{equation}\label{E:39}
\limsup_{k\to\infty}\frac{S_{n_k}}{\sqrt{\sig^2n_k\ln\ln n_k}}=\ep^{\ast}\ a.s.\
\text{and}\
\liminf_{k\to\infty}\frac{S_{n_k}}{\sqrt{\sig^2n_k\ln\ln n_k}}=-\ep^{\ast}\  a.s.
\end{equation}
In particular, if $\ep^{\ast}=0,$ then
$
\lim\limits_{k\to\infty}\frac{S_{n_k}}{\sqrt{n_k\ln\ln n_k}}=0 \ a.s.
$
For the converse, suppose that $\ep^{\ast}>0.$ If
$
\bP\left(\limsup\limits_{k\to\infty}\frac{S_{n_k}}{\sqrt{n_k\ln\ln n_k}}<\infty\right)>0,
$
then $\bE \big(X_1^2\big)<\infty$ and $\bE\big(X_1\big)=0.$
\end{thm}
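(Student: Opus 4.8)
The plan is to establish the two-sided identity \eqref{E:39} by a Borel--Cantelli argument, the crucial analytic input being a moderate-deviation (normal approximation) estimate for the tail of $S_{n_k}$. The exponent $\ep^{\ast}$ in \eqref{E:ep ast} is defined precisely so that the series of these tail probabilities passes from convergence to divergence as $\ep$ crosses $\ep^{\ast}$. Writing $\bar{\Phi}(x)=\frac{1}{\sqrt{2\pi}}\int_x^{\infty}e^{-t^2/2}\,dt$, the estimate I would target is
\begin{equation*}
\bP\big(S_{n_k}>\ep\,\sig\sqrt{n_k\ln\ln n_k}\big)=\bar{\Phi}\big(\ep\sqrt{\ln\ln n_k}\big)(1+o(1)),
\end{equation*}
uniformly for $\ep$ in a compact subset of $(0,\infty)$, so that the left side is of the order $(\ln n_k)^{-\ep^2/2}(\ln\ln n_k)^{-1/2}$. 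Since the extra factor $(\ln\ln n_k)^{-1/2}$ cannot alter the convergence/divergence dichotomy of $\sum_k(\ln n_k)^{-\ep^2/2}$ (a strictly larger power of $\ln n_k$ always dominates a power of $\ln\ln n_k$), the critical value read off from the tail series coincides with the $\ep^{\ast}$ of \eqref{E:ep ast}.

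\textbf{Upper bound $\limsup\leq\ep^{\ast}$.} Fix $\ep>\ep^{\ast}$. By the definition of $\ep^{\ast}$ and the remark just made, $\sum_k\bP(S_{n_k}>\ep\,\sig\sqrt{n_k\ln\ln n_k})<\infty$, so the first Borel--Cantelli lemma gives $S_{n_k}\leq\ep\,\sig\sqrt{n_k\ln\ln n_k}$ for all large $k$ almost surely; letting $\ep\downarrow\ep^{\ast}$ yields $\limsup_k S_{n_k}/\sqrt{\sig^2 n_k\ln\ln n_k}\leq\ep^{\ast}$ a.s. Applying the same reasoning to $\{-X_n\}_{n=1}^{\infty}$ gives $\liminf_k\geq-\ep^{\ast}$.

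\textbf{Lower bound $\limsup\geq\ep^{\ast}$.} Fix $\ep<\ep^{\ast}$ and work with the independent blocks $D_k=S_{n_{k+1}}-S_{n_k}$. The hypothesis $\limsup_k n_k/n_{k+1}<1$ furnishes $\theta<1$ with $n_k\leq\theta n_{k+1}$ eventually, so that $\mathrm{Var}(D_k)=\sig^2(n_{k+1}-n_k)$ is comparable to $\sig^2 n_{k+1}$. The same moderate-deviation estimate applied to the $D_k$ shows $\sum_k\bP(D_k>\ep'\,\sig\sqrt{n_{k+1}\ln\ln n_{k+1}})=\infty$ for a suitable $\ep'$ just below $\ep^{\ast}$; since the $D_k$ are independent, the second Borel--Cantelli lemma gives $D_k>\ep'\,\sig\sqrt{n_{k+1}\ln\ln n_{k+1}}$ for infinitely many $k$ a.s. Because the overlap $S_{n_k}$ is already controlled by the upper bound just proved, I would absorb its contribution to conclude that $S_{n_{k+1}}>\ep\,\sig\sqrt{n_{k+1}\ln\ln n_{k+1}}$ infinitely often; letting $\ep\uparrow\ep^{\ast}$ finishes the lower bound, and symmetry gives the matching statement for the $\liminf$.

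\textbf{Main obstacle and remaining items.} The delicate point is the uniform moderate-deviation estimate under only two moments: in the window $x=\ep\sqrt{\ln\ln n_k}=o(\sqrt{n_k})$ the Gaussian approximation of $\bP(S_n>x\,\sig\sqrt{n})$ must hold, and this is exactly where the assumption $\bE(X_1^2)<\infty$ is sharp; proving it with the required uniformity is the technical heart of the argument. A second subtlety is that in the lower bound the overlap $S_{n_k}$ is of the same order $\sqrt{n_{k+1}\ln\ln n_{k+1}}$ as the quantity being bounded, so the parameters $\ep,\ep',\theta$ must be tuned so that $S_{n_k}$ is absorbed rather than dominant. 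The special case $\ep^{\ast}=0$ is immediate from \eqref{E:39}, forcing both the $\limsup$ and the $\liminf$ to vanish. Finally, the converse (necessity of $\bE(X_1)=0$ and $\bE(X_1^2)<\infty$ once the normalized $\limsup$ is finite with positive probability and $\ep^{\ast}>0$) I would obtain by symmetrization followed by a converse Borel--Cantelli argument: a divergent second moment forces the independent increments $D_k$ to exceed any fixed multiple of $\sqrt{n_{k+1}\ln\ln n_{k+1}}$ infinitely often, contradicting finiteness of the $\limsup$.
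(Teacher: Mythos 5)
You should first be aware that the paper does not prove this statement at all: it is imported verbatim from Gut's paper \cite{Gut subsequence} and recorded ``for the reader's convenience,'' so the comparison is really with Gut's proof, which proceeds by truncation and Kolmogorov-type exponential inequalities, not by normal approximation. This matters because the central analytic input of your outline --- the uniform equivalence $\bP\big(S_{n_k}>\ep\,\sig\sqrt{n_k\ln\ln n_k}\big)=\bar{\Phi}\big(\ep\sqrt{\ln\ln n_k}\big)(1+o(1))$ --- is \emph{not} available under only $\bE(X_1^2)<\infty$, and your claim that finite second moment is ``exactly where the assumption is sharp'' for this estimate is wrong. At the iterated-logarithm scale $x\asymp\sqrt{\ln\ln n}$, Gaussian tail equivalence genuinely requires more than two moments; this is precisely why Feller's theorem, quoted immediately before this statement in the paper (Theorem \ref{T:upper and lower}), carries the extra tail condition (\ref{E:moment}), namely $\int_{|t|>x}t^2\,dF(t)=O(1/\ln\ln x)$. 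Finite variance is sharp for the \emph{conclusion} (\ref{E:39}) (by the converse part), not for your tail estimate. The standard repair, and the route Gut takes, is to truncate at level $\eta\sqrt{n_k/\ln\ln n_k}$, control the truncated-away part using $\bE(X_1^2)<\infty$, and apply Kolmogorov's exponential upper and lower bounds to the bounded summands; these yield the exponent $\ep^2/2$ only up to factors $1\pm\delta$, but that suffices here because $\ep^{\ast}$ in (\ref{E:ep ast}) is an infimum and one only ever tests $\ep$ strictly above or below it. Alternatively, Strassen's almost sure invariance principle, $S_n=W(\sig^2 n)+o(\sqrt{n\ln\ln n})$ a.s.\ under exactly the stated moment hypotheses, reduces the whole theorem to Brownian motion, where your Gaussian computation becomes legitimate. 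As stated, however, your proposal rests on an estimate that fails in general.

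There are two further gaps. In the lower bound, the loss from the overlap cannot be fixed by ``tuning $\ep,\ep',\theta$'': the ratio $\theta$ is dictated by the given sequence, and for a near-geometric subsequence (where $\ep^{\ast}=\sqrt{2}$) the block-plus-absorption bound yields only $\limsup\geq\sqrt{2(1-\theta)}-\ep^{\ast}\sqrt{\theta}$, strictly below $\ep^{\ast}$ for fixed $\theta$. The missing idea is to thin the subsequence by a constant factor, $k\mapsto km$: since the terms $(\ln n_k)^{-\ep^2/2}$ are decreasing, $\sum_k(\ln n_k)^{-\ep^2/2}=\infty$ forces $\sum_j(\ln n_{jm})^{-\ep^2/2}=\infty$ for every fixed $m$, while the effective ratio along the thinned sequence drops to $\theta^m$; letting $m\to\infty$ recovers the sharp constant. (Note that non-arithmetic thinning is dangerous --- it can strictly decrease the critical exponent --- so the monotonicity argument is essential.) Finally, the converse is only gestured at: with $\ep^{\ast}>0$ one symmetrizes and runs a second Borel--Cantelli argument on single big jumps within the independent blocks, but deducing divergence of $\sum_k(n_{k+1}-n_k)\,\bP\big(|X_1^{s}|>M\sqrt{n_{k+1}\ln\ln n_{k+1}}\big)$ for every $M$ from $\bE(X_1^2)=\infty$ genuinely uses the growth restrictions on $\{n_k\}$ encoded by $\ep^{\ast}>0$ (which bounds how fast $\ln n_k$ may grow) together with the ratio condition, and recovering $\bE(X_1)=0$ afterwards requires a desymmetrization step; none of this is routine, and the theorem's hypothesis $\ep^{\ast}>0$ is exactly what makes it work, so your one-sentence sketch leaves the hardest part of the converse unaddressed.
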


  Next we point out how to use the above results to verify our example. We apply  Theorem \ref{T:upper and lower} to   $\{\ln Y_n\}_{n=1}^{\infty}$ with
$
\varphi_n=\sqrt{2\ln\ln n}.
$
Hence, almost surely, there exists a subsequence $\{n_i\}$ such that
%\begin{equation}\label{E:right to}
$\ln Y_1+\cdots+\ln Y_{n_i}>\sqrt{2n_i\ln\ln n_i}.$
%\end{equation}
%
On the other hand, Theorem \ref{T:sup<1} implies that
$
\limsup\limits_{k\to\infty}\frac{\ln Y_1+\cdots+\ln Y_{n_k}}{\sqrt{2n_k\ln\ln n_k}}=0$ a.s.
So the subsequence
$\{n_i\}$ is different from  $\{n_k\}$ except for finite terms, which yields that
$
a_{n_i}=\frac{1}{e^{\sqrt{2n_i\ln\ln n_i}}}
$
when $i$ is large enough.
Hence
%\begin{equation*}
$a_{n_i}Y_1\cdots Y_{n_i}=\frac{1}{e^{\sqrt{2n_i\ln\ln n_i}}}Y_1\cdots Y_{n_i}
>1,$
%\end{equation*}
which yields the desired result.
\end{eg}

\begin{eg}[Convergence]\label{Example:Convergence}
Let  $\bP(Y_1=e)=\bP(Y_1=\frac{1}{e})=\frac{1}{2},$ and
\begin{equation}\label{Ex:Convergence}
c_n=
\begin{cases}
\frac{1}{(e+1)^{\sqrt{2n\ln\ln\ln n}}},& n=n_k=3^{3^{3^k}};\\
\frac{1}{e^{b_n\sqrt{2n\ln\ln n}}},& \text{other}\ n\geq 16,
\end{cases}
\end{equation}
where
$
b_n=\sqrt{1+\frac{a\ln\ln\ln n}{2\ln\ln n}} \ (a>3).
$
Then
$
\sum_{n=16}^{\infty}c_nY_1\cdots Y_n<\infty$ a.s.

  To verify that this example is sufficient for the purpose, we need a  (well-known) variant  of the law of the iterated logarithm. Again we record it  below for the convenience of the reader.

\begin{thm}[\cite{Gut subsequence}]\label{T:nlog k}
Let $\{n_k\}$ be a strictly increasing subsequence of the positive integers such that
$
\limsup\limits_{k\to\infty}\frac{n_k}{n_{k+1}}<1.
$
Furthermore, let $\{X_n\}_{n=1}^{\infty}$ be i.i.d. random variables. Set $S_n=\sum_{k=1}^nX_k$
and suppose that $\bE (Y_1)=0$ and $\bE (Y_1^2)=\sig^2<\infty.$ Then
\begin{equation*}
\limsup_{k\to\infty}\frac{S_{n_k}}{\sqrt{2\sig^2n_k\ln k}}=1 \  \
\textup{and} \
\liminf_{k\to\infty}\frac{S_{n_k}}{\sqrt{2\sig^2n_k\ln k}}=-1 \ \text{a.s.}
\end{equation*}
For the converse, if
$
\bP\left(\limsup\limits_{k\to\infty}\frac{|S_{n_k}|}{\sqrt{n_k\ln\ln k}}<\infty\right)>0,
$
then $\bE (Y_1^2)<\infty$ and $\bE (Y_1)=0.$
\end{thm}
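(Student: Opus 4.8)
The plan is to follow the classical route to the Hartman--Wintner law of the iterated logarithm, adapted to the sparse index set $\{n_k\}$, where the lacunarity hypothesis $\limsup_k n_k/n_{k+1}<1$ plays the role that geometric spacing plays in the standard proof. First I would observe that altering finitely many of the $X_i$ changes each $S_{n_k}$ by a bounded amount which is $o(\sqrt{n_k\ln k})$, so that $\limsup_k S_{n_k}/\sqrt{2\sig^2 n_k\ln k}$ and $\liminf_k S_{n_k}/\sqrt{2\sig^2 n_k\ln k}$ are tail functionals of the i.i.d. sequence, hence almost surely constant by Kolmogorov's zero--one law. This reduces the direct part to identifying those constants, and it simultaneously lets me upgrade ``positive probability'' to ``probability one'' in the converse. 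I read the moment hypotheses as $\bE(X_1)=0$ and $\bE(X_1^2)=\sig^2\in(0,\infty)$, the $Y_1$'s in the displayed line being a typo for $X_1$.

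For the upper bound $\limsup\le 1$, I would fix $\eps>0$ and set $\lambda_k=(1+\eps)\sqrt{2\sig^2 n_k\ln k}$. Because $n_k$ grows geometrically we have $\sqrt{n_k}\ll\lambda_k\ll n_k$, so the relevant deviations sit squarely in the moderate-deviation regime, where finite variance alone already forces the Gaussian tail asymptotic. After truncating each summand at a level $o(\sqrt{n_k}/\ln k)$ and controlling the discarded mass using $\bE(X_1^2)<\infty$, a Bernstein/Fuk--Nagaev inequality gives $\bP(S_{n_k}>\lambda_k)\le k^{-(1+\eps)^2+o(1)}$, which is summable. The first Borel--Cantelli lemma then yields $S_{n_k}\le\lambda_k$ eventually, and letting $\eps\downarrow 0$ gives $\limsup_k S_{n_k}/\sqrt{2\sig^2 n_k\ln k}\le 1$ almost surely.

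For the lower bound I would exploit independence of the blocks $D_k=S_{n_{k+1}}-S_{n_k}$. Lacunarity gives $n_{k+1}-n_k\asymp n_{k+1}$, so a matching moderate-deviation lower estimate produces $\bP(D_k>(1-\eps)\sqrt{2\sig^2 n_{k+1}\ln k})\ge k^{-(1-\eps)^2-o(1)}$; since $(1-\eps)^2<1$ these are not summable, and by independence the second Borel--Cantelli lemma makes the event occur infinitely often. Combining this with the already-proved upper bound on $|S_{n_k}|$, which by lacunarity is $o(\sqrt{n_{k+1}\ln k})$, I obtain $S_{n_{k+1}}\ge (1-\eps)\sqrt{2\sig^2 n_{k+1}\ln k}-o(\sqrt{n_{k+1}\ln k})$ infinitely often, whence $\limsup\ge 1-\eps$; letting $\eps\downarrow 0$ closes $\limsup=1$. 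The claim for $\liminf$ follows by applying the $\limsup$ result to $\{-X_n\}$.

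For the converse I would argue by contraposition through the independent increments $U_k=S_{n_k}-S_{n_{k-1}}$. If the normalized $\limsup$ is finite on an event of positive (hence, by the zero--one law, full) probability, then lacunarity lets me bound $|U_k|$ by a constant multiple of the same normalizing sequence for large $k$. A converse-LIL/maximal-inequality argument on the blocks $(n_{k-1},n_k]$, in the spirit of Strassen's converse to Hartman--Wintner, then forces $\bE(X_1^2)<\infty$: were the variance infinite, a block-maximum second Borel--Cantelli estimate would produce increments violating the bound infinitely often. Finally, if $\bE(X_1)\ne 0$ the strong law gives $|S_{n_k}|\sim|\bE(X_1)|\,n_k$, which dwarfs any normalization of the form $\sqrt{n_k\cdot(\text{slowly varying})}$, so the $\limsup$ would be infinite; hence $\bE(X_1)=0$. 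I expect the converse to be the main obstacle: the sharp second-moment necessity requires a delicate block-maximum analysis rather than the clean moderate-deviation bookkeeping that suffices for the two-sided LIL, and some care is needed to match the exact normalization appearing in the statement.
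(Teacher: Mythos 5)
You should know at the outset that the paper does not prove this statement at all: it is imported verbatim from Gut's paper \cite{Gut subsequence} purely as a tool (and you are right that the $Y_1$'s in the hypotheses are a typo for $X_1$), so your proposal can only be measured against the known proof route, not against anything in the text. Measured that way, your architecture (zero--one law, first Borel--Cantelli along $\{n_k\}$ for the upper bound, independent blocks plus second Borel--Cantelli for the lower bound) is the right skeleton, but two of your quantitative steps fail as stated. First, the lower bound: for the blocks $D_k=S_{n_{k+1}}-S_{n_k}$ the deviation must be measured against the block's own standard deviation $\sqrt{n_{k+1}-n_k}$, so the correct exponent is $(1-\eps)^2\,\frac{n_{k+1}}{n_{k+1}-n_k}\,\ln k$, not $(1-\eps)^2\ln k$. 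Lacunarity gives only $n_{k+1}-n_k\geq(1-c)n_{k+1}$, not $n_k=o(n_{k+1})$; e.g.\ for $n_k=2^k$ one gets $\bP\big(D_k>(1-\eps)\sqrt{2\sig^2n_{k+1}\ln k}\big)\approx k^{-2(1-\eps)^2}$, which is summable for small $\eps$, and the second Borel--Cantelli lemma yields nothing. The standard repair, which your sketch must include, is to sparsify: choose $k_j$ with $k_{j+1}-k_j\to\infty$ slowly (say $k_j\sim j\ln j$), so that $n_{k_j}/n_{k_{j+1}}\to 0$ while $\ln k_{j+1}\sim\ln k_j$ and $\sum_j k_{j+1}^{-(1-\eps)^2}=\infty$, and run the block argument over $(n_{k_j},n_{k_{j+1}}]$.

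Second, the upper bound: your phrase ``controlling the discarded mass using $\bE(X_1^2)<\infty$'' conceals the classical Hartman--Wintner obstruction, which is exactly the hard point here. With truncation at $y_k\asymp\sqrt{n_k/\ln k}$ (the largest level Bernstein tolerates), the discarded-mass union bound gives $n_k\,\bP(|X|>y_k)\leq \ln k\cdot\bE\big(X^2\mathbf{1}[|X|>y_k]\big)/\delta^2$, and summing over $k$ multi-counts each $X_i$ in every later block: for lacunary $n_k$ the total is comparable to $\bE\big(X^2\ln|X|\,\ln\ln|X|\big)$, which can be infinite while $\sig^2<\infty$. Raising the truncation level to kill this term destroys the Bernstein exponent (the regime $Mx\gg n\sig^2$ gives only a constant exponent), so no single truncation closes the argument; one needs either per-index truncation at $\beta_i\sqrt{i}$ with $\beta_i\downarrow 0$ chosen so that $\sum_i\bP(|X|>\beta_i\sqrt{i})<\infty$ (making the big-jump contribution a.s.\ eventually constant) together with a separate middle-range estimate in the style of Feller/de Acosta, or, cleanest for this subsequence setting, Skorokhod embedding: write $S_n=B(T_n)$ with $T_n/n\to\sig^2$ a.s.\ and control the replacement error along $\{n_k\}$ by $\bP\big(\sup_{|t-\sig^2n_k|\leq\delta n_k}|B_t-B_{\sig^2n_k}|>\eps\sqrt{n_k\ln k}\big)\leq 4k^{-\eps^2/(2\delta)}$, summable for $\delta$ small. (Note that Strassen's strong approximation with error $o(\sqrt{n\ln\ln n})$ is useless here, since $\ln k$ can be much smaller than $\ln\ln n_k$ for super-geometric $\{n_k\}$.) Finally, your converse is honestly flagged as a sketch, but be aware that the second Borel--Cantelli on single summands only yields $\bE\big(X^2/\ln\ln|X|\big)<\infty$; getting the exact conclusion $\bE(X_1^2)<\infty$ from the $\sqrt{n_k\ln\ln k}$ hypothesis requires the block increments plus a degenerate-convergence argument (tail summability forces stochastic boundedness of normalized block sums, hence membership in the normal domain of attraction with $\sqrt{m}$ norming, hence finite variance), which is the substance of Strassen's converse rather than a routine maximal inequality.
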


  Now we explain how the example can be verified. By Theorem \ref{T:nlog k},
$
\limsup\limits_{k\to\infty}(Y_1\cdots Y_{n_{k}})^{\frac{1}{\sqrt{2n_{k}\ln k}}}=e$ almost surely,
which yields  that for small $\ep>0$, almost surely,
$
c_{n_k}Y_1\cdots Y_{n_{k}}
\leq c^{\sqrt{2n_k\ln\ln\ln n_k}}
$ for some $c \in (0, 1).$
Next we shall show that $\sum\limits_{n\neq n_k}c_{n}Y_1\cdots Y_{n}<\infty$ a.s., which follows from  Lemma \ref{L:b_n con} below.

\begin{lem}\label{L:b_n con}
Let $\{X_n\}_{n=1}^{\infty}$ be nonnegative i.i.d. random variables  and $\{\ln X_n\}_{n=1}^{\infty}$ satisfy the conditions in Theorem \ref{T:upper and lower}. Let
$
a_n=\frac{1}{e^{b_n\sqrt{2n\ln\ln n}}} (n\geq 16)
$
with
$$
b_n=\sqrt{1+\frac{a\ln\ln\ln n}{2\ln\ln n}}\ (a>3).
$$
Then
$
\sum_{n=16}^{\infty}a_nX_1\cdots X_n<\infty$ a.s.

\end{lem}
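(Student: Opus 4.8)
The plan is to pass to logarithms and reduce the almost-sure convergence of the random series to an \emph{eventual deterministic upper bound} on the partial sums $S_n=\sum_{i=1}^n\ln X_i$, supplied by Feller's test (Theorem \ref{T:upper and lower}). Writing $X_1\cdots X_n=e^{S_n}$, the general term becomes
\[
a_nX_1\cdots X_n=\exp\!\big(S_n-b_n\sqrt{2n\ln\ln n}\big),
\]
and since $b_n^2\cdot 2n\ln\ln n=2n\ln\ln n+a\,n\ln\ln\ln n$, the exponent is governed by the competition between $S_n$ and the barrier $\sqrt{2n\ln\ln n+a\,n\ln\ln\ln n}$.

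First I would fix an auxiliary exponent $a'$ with $3<a'<a$ (possible precisely because $a>3$) and set $\varphi_n=\sqrt{2\ln\ln n+a'\ln\ln\ln n}$, so that $\sqrt{n}\,\varphi_n=\sqrt{2n\ln\ln n+a'n\ln\ln\ln n}$ sits strictly below the barrier for large $n$. The key computation is that $\tfrac{\varphi_n^2}{2}=\ln\ln n+\tfrac{a'}{2}\ln\ln\ln n$, whence $e^{-\varphi_n^2/2}=(\ln n)^{-1}(\ln\ln n)^{-a'/2}$ and
\[
\frac{\varphi_n}{n}\,e^{-\varphi_n^2/2}\asymp \frac{1}{n\ln n}\,(\ln\ln n)^{(1-a')/2}.
\]
Because $a'>3$ forces the exponent $(1-a')/2<-1$, the substitution $t=\ln\ln n$ in the integral test shows $\sum_n\frac{\varphi_n}{n}e^{-\varphi_n^2/2}<\infty$. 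Applying Theorem \ref{T:upper and lower} to the i.i.d. sequence $\{\ln X_n\}$ (which satisfies its hypotheses by assumption), the convergence of this series yields that almost surely $S_n>\sqrt{n}\,\varphi_n$ for only finitely many $n$; that is, there is a random $N$ with $S_n\le\sqrt{2n\ln\ln n+a'n\ln\ln\ln n}$ for all $n\ge N$.

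On this almost-sure event I would estimate the exponent directly. For $n\ge N$, writing $u_n=2n\ln\ln n+a'n\ln\ln\ln n$ and $v_n=2n\ln\ln n+a\,n\ln\ln\ln n$,
\[
S_n-b_n\sqrt{2n\ln\ln n}\le \sqrt{u_n}-\sqrt{v_n}=\frac{u_n-v_n}{\sqrt{u_n}+\sqrt{v_n}}=\frac{-(a-a')\,n\ln\ln\ln n}{\sqrt{u_n}+\sqrt{v_n}}.
\]
Both radicands are asymptotic to $2n\ln\ln n$, so the right-hand side is negative and of order $-\sqrt{n}\,\ln\ln\ln n/\sqrt{\ln\ln n}$; although the logarithmic factor tends to $0$, the polynomial factor $\sqrt{n}$ overwhelms it, so the exponent tends to $-\infty$ and in fact eventually falls below $-2\ln n$. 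Consequently $a_nX_1\cdots X_n\le n^{-2}$ for all large $n$, the deterministic tail $\sum n^{-2}$ converges, and the finitely many remaining terms are almost surely finite (the $X_i$ being bounded). This yields $\sum_{n\ge 16}a_nX_1\cdots X_n<\infty$ a.s.

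The main obstacle is the calibration of the three rates. One must place the Feller threshold $\sqrt{n}\,\varphi_n$ \emph{strictly} between the pure law-of-the-iterated-logarithm rate $\sqrt{2n\ln\ln n}$, below which $S_n$ cannot be controlled at all, and the slightly larger barrier $b_n\sqrt{2n\ln\ln n}$: too close to the former and Feller's series diverges, giving no eventual bound on $S_n$; too close to the latter and the residual gap $\sqrt{u_n}-\sqrt{v_n}$ fails to drive the terms to a summable size. The hypothesis $a>3$ is exactly what opens the window $3<a'<a$ making both requirements compatible, with $3$ appearing as the critical exponent for $\sum (n\ln n)^{-1}(\ln\ln n)^{(1-a')/2}$.
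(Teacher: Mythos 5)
Your proof is correct, and it follows the same overall strategy as the paper's: use Theorem \ref{T:upper and lower} to obtain an almost-sure eventual bound $S_n\le\sqrt{n}\,\varphi_n$ with $\sqrt{n}\,\varphi_n$ strictly below the barrier $b_n\sqrt{2n\ln\ln n}$, then sum the exponentials of the resulting negative gap. The execution, however, differs in a genuine and instructive way. The paper does not verify the convergence of any Feller series by hand: it first cites L\'{e}vy's theorem to assert that $\bP[S_n\geq b_n\sqrt{2n\ln\ln n}\ \textup{i.o.}]=0$, uses the \emph{necessity} direction of Theorem \ref{T:upper and lower} to deduce that the series for $\varphi_n=b_n\sqrt{2\ln\ln n}$ converges, perturbs to $\varphi_n'=(b_n-n^{-\alpha})\sqrt{2\ln\ln n}$ with $0<\alpha<\tfrac12$ (a perturbation small enough to preserve convergence), and finally applies the \emph{sufficiency} direction to get $S_n<(b_n-n^{-\alpha})\sqrt{2n\ln\ln n}$ eventually; the gap then gives terms bounded by $e^{-n^{1/2-\alpha}\sqrt{2\ln\ln n}}$. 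You instead interpose the barrier $\varphi_n=\sqrt{2\ln\ln n+a'\ln\ln\ln n}$ with $3<a'<a$, check convergence of $\sum_n\frac{\varphi_n}{n}e^{-\varphi_n^2/2}\asymp\sum_n\frac{(\ln\ln n)^{(1-a')/2}}{n\ln n}$ directly by the integral test, and apply the theorem only once, in the sufficiency direction; your gap $\sqrt{u_n}-\sqrt{v_n}\asymp-\sqrt{n}\,\ln\ln\ln n/\sqrt{\ln\ln n}$ is smaller than the paper's but still comfortably dominates $\ln n$. What your route buys is self-containment and transparency: it dispenses with the citation of L\'{e}vy, uses only one direction of Feller's test, makes completely explicit that $a>3$ is exactly the condition opening the window $a'\in(3,a)$ (with $3$ the critical exponent of the series), and your $\varphi_n$ is manifestly increasing, whereas the monotonicity of the paper's perturbed $\varphi_n'$ --- formally required by Theorem \ref{T:upper and lower} --- is left unexamined there.
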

\begin{proof}
Let $S_n=\ln X_1+\cdots+\ln X_n.$ By Levy \cite{Levy},
%\begin{equation*}
$\bP[S_n\geq b_n\sqrt{2\ln\ln n},\ \mbox{i.o.}]=0.$
%\end{equation*}
So Theorem \ref{T:upper and lower} implies that
$\sum_n\frac{\varphi_n}{n}e^{-\frac{\varphi_n^2}{2}}<\infty$
with
$
\varphi_n=b_n\sqrt{2\ln\ln n},
$
which yields that
$
\sum_{n=16}^{\infty}\frac{\varphi_n'}{n}e^{-\frac{\varphi_n'^2}{2}}<\infty
$
with
$
\varphi'_n=\Big(b_n-\frac{1}{n^{\al}}\Big)\sqrt{2\ln\ln n}
$
for any $ \al>0.$ By Theorem \ref{T:upper and lower} again,  we see that
$
\bP\Big(\Big[S_n\geq \Big(b_n-\frac{1}{n^{\al}}\Big)\sqrt{2\ln\ln n} \ \text{i.o.}\Big]\Big)=0,
$
that is, almost surely,
\begin{equation}\label{E:S_n-b_n<}
S_n<\Big(b_n-\frac{1}{n^{\al}}\Big)\sqrt{2\ln\ln n}
\end{equation}
 when $n$ is large enough.
 In particular, if we choose $0<\al<\frac{1}{2},$ then
\begin{equation}\label{E:sum e^..}
\sum_{n=16}^{\infty}\big(e^{-\frac{1}{n^{\al}}}\big)^{\sqrt{2n\ln\ln n}}<\infty.
\end{equation}
Now
\begin{eqnarray*}
a_nX_1\cdots X_n&=&\Big(a_n^{\frac{1}{\sqrt{2n\ln\ln n}}} (X_1\cdots X_n)^{\frac{1}{\sqrt{2n\ln\ln n}}}\Big)^{\sqrt{2n\ln\ln n}}\\
&=&\Big(e^{\frac{S_n}{\sqrt{2n\ln\ln n}}-b_n} \Big)^{\sqrt{2n\ln\ln n}}
< \big(e^{-\frac{1}{n^{\al}}}\big)^{\sqrt{2n\ln\ln n}},
\end{eqnarray*}
where $``<"$ follows from (\ref{E:S_n-b_n<}).
So (\ref{E:sum e^..}) yields
$
\sum_{n=16}^{\infty}a_nX_1\cdots X_n<\infty
$
 a.s., as desired.
\end{proof}
\end{eg}

\section{ von Neumann-type inequalities}\label{S:vonNeumanIneq}

  This section seeks to understand the random operator $T=T(\omega)$ by comparing it with classical, deterministic operators. This is done for (1)   weighted shifts, (2)   bilateral weighted shifts, and (3) normal operators. We expand (\ref{OpeFunc}) and offer comparison
 in terms of von Neumann-type inequalities. In a somehow narrow sense, only comparision for (3) should be called the von Neumann-type, and it enriches our understanding of the operator-function relationship. On the other hand, interests in (1) is obvious, and our results suggest that the random shifts are often extremal elements among the collection of all weighted shifts. Interests in (2) is mostly out of curiosity, hence the treatment will be brief. It is indeed curious to us such a comparison as for (2) can be performed quite satisfactorily, although with considerable technicality.

 As an application of the above comparison, the classification problem for algebraical equivalence, left open in Section \ref{S:SampleClassify}, is accomplished in Subsection \ref{Subs:Al}.

  For a Hilbert space operator $A$, the von Neumann inequality asserts that
\[\|p(A)\|\leq \max_{|z|\leq\|A\|}|p(z)|\] for all polynomials $p(z)$ in one variable. If $N$ is a normal operator with $\sigma(N)=\overline{B(0,\|A\|)}$, then the above inequality can be re-written as
\begin{equation}\label{(4.1)}\|p(A)\|\leq \|p(N)\|\end{equation}for all polynomials $p(z)$ in one variable.
We show that the normal operator $N$ in (\ref{(4.1)}) can be replaced by the random weighted shifts.
For convenience, if $A,B$ are two operators, we write $A \lhdd B$ to denote that
$\|p(A)\|\leq \|p(B)\|$ for all polynomials $p(z)$ in one variable.

\begin{lem}
Let $T\sim \{X_n\}_{n=1}^\infty.$  Then a Hilbert space operator $A$ satisfies $A\lhdd T$ a.s. if and only if $\|A\|\leq R$.
\end{lem}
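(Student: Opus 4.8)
The plan is to prove the two implications separately, exploiting the fact that $\lhdd$ compares operator norms of polynomials, and that by Lemma \ref{T:NormSpectral} we know $\|T\|=\gamma(T)=R$ almost surely. The forward direction is the easy half: if $A\lhdd T$ a.s., then taking the polynomial $p(z)=z$ gives $\|A\|=\|p(A)\|\leq\|p(T)\|=\|T\|=R$ almost surely, so $\|A\|\leq R$ on a set of full measure, hence $\|A\|\leq R$ outright since $\|A\|$ is a deterministic quantity.

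For the converse, suppose $\|A\|\leq R$; I want to show $\|p(A)\|\leq\|p(T)\|$ a.s. for every polynomial $p$. The natural idea is to invoke the classical von Neumann inequality for $A$: since $\|A\|\leq R$, we have $\|p(A)\|\leq\max_{|z|\leq R}|p(z)|$. So it suffices to show that almost surely $\|p(T)\|\geq\max_{|z|\leq R}|p(z)|$ for all polynomials $p$ simultaneously. By the maximum modulus principle, $\max_{|z|\leq R}|p(z)|=\max_{|z|=R}|p(z)|$, and this maximum is attained at some point $\zeta$ with $|\zeta|=R$. The key geometric input is Theorem \ref{T:SpectPic}: almost surely $\sigma(T)=\overline{B(0,R)}$, so in particular the full circle $|z|=R$ lies in $\sigma(T)$. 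Since the spectrum of $p(T)$ is $p(\sigma(T))$ by the spectral mapping theorem, we get $\gamma(p(T))=\max_{z\in\sigma(T)}|p(z)|=\max_{|z|\leq R}|p(z)|$, and therefore $\|p(T)\|\geq\gamma(p(T))=\max_{|z|\leq R}|p(z)|\geq\|p(A)\|$.

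The one subtlety to handle carefully is the passage from ``for each fixed $p$, a.s.'' to ``a.s., for all $p$.'' The clean way is to fix a single full-measure event $\Omega'$ on which $\sigma(T)=\overline{B(0,R)}$ holds (such an event exists by Theorem \ref{T:SpectPic}). On this event the spectral mapping argument is purely deterministic and works uniformly over all polynomials at once, so no countable-intersection bookkeeping over a dense set of polynomials is even needed. For every $\omega\in\Omega'$ and every polynomial $p$ we then have $\|p(A)\|\leq\max_{|z|\leq R}|p(z)|=\gamma(p(T(\omega)))\leq\|p(T(\omega))\|$, which is exactly $A\lhdd T(\omega)$. Thus $A\lhdd T$ a.s., completing the converse.

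I expect essentially no real obstacle here, as the result reduces to the classical von Neumann inequality combined with the already-established spectral picture; the only thing requiring minor care is isolating a single almost-sure event before running the spectral mapping computation, so that the quantifier over polynomials sits inside the almost-sure statement rather than outside it.
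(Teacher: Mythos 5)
Your proof is correct and takes essentially the same route as the paper's: necessity by applying $p(z)=z$ together with $\|T\|=R$ a.s.\ (Lemma \ref{T:NormSpectral}), and sufficiency by combining the classical von Neumann inequality $\|p(A)\|\leq\max_{|z|\leq R}|p(z)|$ with the spectral mapping theorem applied to $\sigma(T)=\overline{B(0,R)}$ a.s., so that $\|p(T)\|\geq\gamma(p(T))=\max_{|z|\leq R}|p(z)|$. Your explicit step of fixing a single full-measure event before quantifying over all polynomials is a small point of care that the paper's proof leaves implicit, but it is the same underlying argument.
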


\begin{proof}
By Lemma \ref{T:NormSpectral},   the necessity is obvious.
Conversely,
fix a polynomial $p(z)$ in one variable. Using the von Neumann inequality, we obtain
$\|p(A)\| \leq \sup\{|p(z)|: |z|\leq R\}.$
In view of  Lemma \ref{T:NormSpectral} again and the spectral mapping theorem, we obtain $\|p(A)\|\leq\|p(T)\|$ a.s.
\end{proof}

  We shall show in this section that much more can be achieved for the random model. In fact, we shall prove some inequalities like (\ref{(4.1)}) for polynomials in two free variables. Given two operators $A,B$, we write $A\lhd B$ to denote that
$\|p(A,A^*)\|\leq \|p(B,B^*)\|$ for all polynomials $p(x,y)$ in two free variables.
The aim of this section is to determine for a random weighted shift $T$ when an operator $A$ satisfies $A\lhd T$ or $T\lhd A$.
When $A$ lies in the class of weighted shifts or the class of normal operators, we give a complete answer.
As an application, we are able to classify the samples of $T$ up to algebraical equivalence.

  Now we introduce some notations.
 Let $A$ be a bilateral weighted shift with weights $\{\lambda_n\}_{n\in\bZ}$. For each $n\geq 1$,
the {\it $n$-spectrum} of $A$, denoted by $\Sigma_n(A)$, is defined to be the closure (in the usual
topology on $\bR^n$) of the set
$$\{(|\lambda_{i+1}|, |\lambda_{i+2}|, \cdots, |\lambda_{i+n}|) : i\in\bZ\}.$$
Similarly, if $B$ is a unilateral weighted shift with weights $\{\mu_n\}_{n=1}^{\infty}$, then the {\it $n$-spectrum} of $B$ is defined to be
$ \Sigma_n(B)=\text{closure of} \ \{(|\mu_{i}|, |\mu_{i+1}|, \cdots, |\mu_{i+n-1}|) : i\geq 1\}.$

\begin{rmk}\label{R:Modulus}
If $A$ is a (unilateral or bilateral) weighted shift with weights  $\{\lambda_n\}$, then
$|A|$ is a diagonal operator with its diagonal entries being $\{|\lambda_n|: n\}$. Thus $\sigma(|A|)=\overline{\{|\lambda_n|: n\}}=\Sigma_1(A)$.
\end{rmk}

\subsection{Unilateral weighted shifts}
The main result of this subsection is the following theorem.

\begin{thm}\label{T:UnilateralAUE}
Let $T\sim \{X_n\}_{n=1}^\infty$,  $\Gamma=\essran X_1,$  $\Gamma_1=\Gamma\setminus\{0\}$ and $A$ be a unilateral weighted shift with nonnegative weights $\{a_i\}_{i=1}^\infty$.
 \begin{enumerate}
\item[(i)] If $0\in\Gamma$, then $\bP(A \lhd T)=\begin{cases}
1,& \Sigma_1(A)\subset\Gamma,\\
0,& \Sigma_1(A)\nsubseteq\Gamma.
\end{cases}$
\item[(ii)] If $0\notin\Gamma$, then $\bP(A \lhd T)=\begin{cases}
1,& \textup{if}~ \card~\Gamma=1\ \textup{and}\ \Sigma_1(A)=\Gamma,\\
0,& \textup{otherwise}.
\end{cases}$
\item[(iii)] If $0\notin\Gamma$ or $0$ is an accumulation point of $\Gamma$, then
$T\lhd A $ a.s. if and only if $ \{0\}\times\Gamma^n\subset\Sigma_{n+1}(A)$ for $n\geq 1$;
\item[(iv)] If $0$ is an isolated point of $\Gamma$, then $T \lhd A$ a.s. if and only if one of the following three holds:
\begin{itemize}
    \item[(a)] $a_1=0$ and $\{0\}\times\Gamma_1^n\times\{0\}\subset\Sigma_{n+2}(A)$ for $n\geq 1;$
  \item[(b)]   $a_1\ne0$ and $\{0\}\times\Gamma_1^n\times\{0\}\subset\Sigma_{n+2}(A)$ for  $n\geq 0;$
     \item[(c)]  $\exists m\geq 1$ s.t. $(a_1,\cdots, a_{m+1})\in\Gamma_1^{m}\times\{0\}$, $$\{0\}\times\Big[\Gamma_1^m\setminus\{(a_1,\cdots,a_m)\}\Big]\times\{0\}\subset\Sigma_{m+2}(A),\ \ \textup{and}$$
    \[\{0\}\times\Gamma_1^n\times\{0\}\subset\Sigma_{n+2}(A), \ \ \forall n\geq 0, n\ne m.\]
   \end{itemize}
    \end{enumerate}
\end{thm}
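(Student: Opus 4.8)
\section*{Proof proposal}

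The plan is to reformulate both relations $A\lhd T$ and $T\lhd A$ as comparisons between the families of \emph{local models} of the two shifts, and then to read off the stated $n$-spectrum conditions. Throughout write $\Gamma=\essran X_1$, $\Gamma_1=\Gamma\setminus\{0\}$, and $\|\cdot\|_e$ for the essential norm. The first step is the basic dictionary: for a free polynomial $p$ and any (uni- or bi-lateral) weighted shift $B$, the operator $p(B,B^*)$ is band-limited, so its norm and essential norm are governed by the windows of the weight sequence (the sets $\Sigma_n(B)$ of Remark \ref{R:Modulus}); limits of shifted windows produce bilateral weighted shifts whose weights are exactly the accumulation patterns of $B$, and these compute $\|p(B,B^*)\|_e$. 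Applying this to $T$ and invoking Lemma \ref{L:constantCase} (every finite pattern with entries in $\Gamma$ recurs a.s.), the limit operators of $T$ are almost surely exactly the bilateral weighted shifts with weights in $\Gamma$, so $\|p(T,T^*)\|_e=\sup\{\|p(W,W^*)\|:\ W\text{ bilateral, weights in }\Gamma\}$ a.s., a \emph{deterministic} quantity. This is what turns almost-sure assertions into deterministic ones and delivers the $0$--$1$ dichotomy. Finally, $A\lhd B$ says precisely that $p(B,B^*)\mapsto p(A,A^*)$ extends to a unital $*$-homomorphism $C^*(B)\to C^*(A)$ sending $B$ to $A$; this is the lens through which Theorem \ref{T:compacts} enters.

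For the direction $A\lhd T$ with $0\in\Gamma$ (part (i)), Theorem \ref{T:compacts} gives that $C^*(T)$ contains no nonzero compacts and that a.s.\ $\|p(T,T^*)\|=\|p(T,T^*)\|_e$ (the interior zeros reproduce the unilateral edge). Since the limit operators of $T$ realise every $\Gamma$-pattern, and since $0\in\Gamma$ lets one prepend a zero so that $A$ itself is a summand of a bilateral $\Gamma$-shift, I would show $\|p(A,A^*)\|\le\|p(T,T^*)\|$ for all $p$ exactly when every weight value of $A$ lies in $\Gamma$, i.e. $\Sigma_1(A)\subseteq\Gamma$; higher windows are then automatically admissible by independence. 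Necessity uses a separating polynomial in $xy$ detecting any weight of $A$ lying outside $\Gamma$.

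For $A\lhd T$ with $0\notin\Gamma$ (part (ii)) the mechanism is the compacts. Here $\KH\subseteq C^*(T)$ a.s. (Theorem \ref{T:compacts}), $T$ is a.s.\ irreducible, and any $\psi\colon C^*(T)\to C^*(A)$ implementing $A\lhd T$ must annihilate $\KH$: otherwise $\psi|_{\KH}$ is injective, forcing $\psi$ to be unitarily implemented and hence $A\cong T$, which has probability $0$ by Theorem \ref{T:ApprUnitEqui} (\cite{David}). Thus $A\lhd T$ already forces $\|p(A,A^*)\|\le\|p(T,T^*)\|_e$ for all $p$. But the unilateral $A$ carries the edge weight $a_0=0$, so for $p(x,y)=c-xy$ the $(1,1)$-entry of $cI-AA^*$ is $c$, giving $\|p(A,A^*)\|\ge c$, whereas $\|p(T,T^*)\|_e=\sup_{\lambda\in\Gamma}|c-\lambda^2|\le c-r^2<c$ once $c>R^2$ (recall $\Gamma\subseteq[r,R]$, $r>0$). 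This contradiction eliminates $A\lhd T$ unless $\Gamma$ is a single point, in which case $T=rS$ is deterministic and $A\lhd T\iff\Sigma_1(A)=\Gamma$.

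For the direction $T\lhd A$ (parts (iii)--(iv)) I would force every model of $T$ to be dominated by $A$. The essential part needs $\Gamma^n\subseteq\Sigma_n(A)$, and the unilateral edge of $T$ (a genuine zero followed by a $\Gamma$-pattern) must be matched as well. When $0\notin\Gamma$ or $0$ is an accumulation point, the single family $\{0\}\times\Gamma^n\subseteq\Sigma_{n+1}(A)$ captures both: deleting the first coordinate gives $\Gamma^n\subseteq\Sigma_n(A)$, and in the accumulation case letting an interior entry tend to $0$ through $\Gamma$ produces the two-sided patterns in the closure; sufficiency then comes from matching windows via Lemma \ref{L:WSAppUniEqu} and necessity from separating a missing pattern by a polynomial. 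This is (iii). When $0$ is isolated in $\Gamma$ (equivalently an atom with a spectral gap), $T$ a.s.\ splits as a direct sum of truncated shifts with $\Gamma_1$-weights bounded by zeros on both sides (as in Theorem \ref{T:ApprUnitEqui}), so its irreducible models are precisely these truncated shifts, with two-sided windows $\{0\}\times\Gamma_1^{n}\times\{0\}$; requiring each to be $\lhd A$ gives $\{0\}\times\Gamma_1^{n}\times\{0\}\subseteq\Sigma_{n+2}(A)$, save that $A$'s own boundary block can absorb at most one such piece for free, and (a)--(c) simply record whether that block is empty ($a_1=0$), non-terminating or non-matching ($a_1\ne0$), or a finite $\Gamma_1$-block of length $m$ (covering exactly the pattern $(a_1,\dots,a_m)$ at level $m$). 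The main obstacle I anticipate is exactly this boundary bookkeeping: handling the artificial edge weight $a_0=0$ and its interplay, on one side, with the compacts of $C^*(T)$ (what makes (ii) collapse for $\card\Gamma\ge2$), and on the other, with the two-sided zero-bounded blocks when $0$ is isolated (what forces the trichotomy in (iv)); making the ``at most one boundary block is absorbed'' heuristic precise, and checking that every remaining required pattern genuinely forces a window of $A$, is the technical heart of the argument.
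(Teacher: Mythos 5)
Your top-level architecture mirrors the paper's: the $*$-homomorphism dictionary (Lemma \ref{L:Homomor}), Theorem \ref{T:compacts} as the dividing line between $0\in\Gamma$ and $0\notin\Gamma$, and recurrence of $\Gamma$-patterns (Lemma \ref{L:constantCase}, Lemma \ref{P:NSpectrum}) turning a.s.\ statements into deterministic window conditions. Parts (i) and (ii) of your plan are essentially workable, and your route to (ii) is genuinely different from the paper's: you factor $\psi$ through the Calkin algebra when it kills $\KH$ and contradict with $p(x,y)=c-xy$ via the edge entry of $AA^*$, whereas the paper uses Lemma \ref{L:AUE} (proved via Voiculescu-type arguments) plus Fredholm index bookkeeping (Lemma \ref{P:UnilLeftInvert}); your version is more elementary. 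Even there, though, you have a hole: ``$\psi|_{\KH}\ne 0$ forces $\psi$ to be unitarily implemented, hence $A\cong T$'' presupposes that $C^*(A)$ acts irreducibly; for a general nonnegative-weight shift $A$ the representation theory of $\KH$ only yields $A\cong T^{(\alpha)}\oplus B$, $\alpha\geq 1$. This is fixable with your own tools (by Corollary \ref{L:SpecDomina}, $A\lhd T$ forces $\Sigma_1(A)\subset\Gamma$, and $0\notin\Gamma$ then makes all weights positive, hence $A$ irreducible), but it has to be said.

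The genuine gap is the necessity half of (iii) and (iv), which you compress into ``separating a missing pattern by a polynomial'' and then explicitly defer as ``the technical heart.'' Taken literally, that heuristic proves too much and is false. Norms of free polynomials in $(A,A^*)$ see the weight sequence only through the commuting positive diagonals $AA^*,\ A^*A,\ (A^2)^*A^2,\ (A^3)^*A^3,\dots$, i.e.\ through \emph{cumulative products} of weights; an $n$-window is not an invariant you can separate directly. Concretely, when $0$ is isolated in $\Gamma$ one has $\Sigma_3(T)=\Gamma^3$ a.s.\ (Lemma \ref{P:NSpectrum}), so $(\gamma_1,0,\gamma_2)$ with $\gamma_i\in\Gamma_1$ is a.s.\ a pattern of $T$; yet a shift $A$ built by separating consecutive $\Gamma_1$-blocks with \emph{two} zeros satisfies condition (b) of (iv) --- hence $T\lhd A$ a.s.\ by the theorem --- while $(\gamma_1,0,\gamma_2)\notin\Sigma_3(A)$. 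The reason is that a zero weight splits $A$ into a direct sum, and no polynomial in $(A,A^*)$ detects adjacency across it; deciding exactly which patterns \emph{are} forced is the content of the theorem. The paper supplies this mechanism through joint spectra of the commuting tuples (Lemma \ref{L:JointSpec}) followed by the division estimates that recover individual weights from cumulative products (the $\delta$-computation in Lemma \ref{P:TruncUniWeigh}, and Lemma \ref{P:URSDomiUniWS}); strict positivity of the interior entries is what makes the division work, and is precisely why the forced patterns in (iv) are the zero-flanked $\Gamma_1$-patterns. Your proposal contains no substitute for this step. Relatedly, in (iii) with $0\notin\Gamma$ you omit the second alternative of Lemma \ref{P:URSDomiUniWS}: $T(\omega)\lhd A$ can hold because the weights of $T(\omega)$ coincide exactly with those of $A$, with no window condition at all; this must be ruled out on a set of full measure (the paper's Claim 1 in the proof of (iii)) before the density argument --- it is the same absorption phenomenon you correctly flag in (iv), and it has to be handled in (iii) as well.
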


\begin{rmk}
Let  $T\sim\{X_n\}_{n=1}^\infty$ with $0\in\essran X_1$. By Theorem \ref{T:ApprUnitEqui} (iii), there exists $W$ such that $W\cong_a T$ a.s.
It follows that $$\bP(T\lhd A)=\begin{cases}
1,& W\lhd A,\\
0,& W\ntriangleleft A,
\end{cases}\ \ \textup{and} \ \ \ \bP(A\lhd T)=\begin{cases}
1,& A\lhd W,\\
0,& A\ntriangleleft W.
\end{cases}$$
On the other hand, if $0\notin\essran X_1$, then it is possible that there exists an operator $A$ such that (Example \ref{E:CounterEg})
\begin{equation}\label{E:inzeroone}0<\bP(T\lhd A)<1.
\end{equation}
\end{rmk}

  To prove Theorem \ref{T:UnilateralAUE}, we need to make some preparation.

\begin{lem}\label{L:Homomor}
If $A,B$ are two operators on Hilbert spaces, then $A\lhd B$ if and only if there exists a unital $*$-homomorphism $\varphi: C^*(B)\rightarrow C^*(A)$ such that $\varphi(B)=A$.
\end{lem}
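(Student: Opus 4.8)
The plan is to prove both implications directly from standard $C^*$-algebra facts, with the domination hypothesis entering at exactly one point. For the ``if'' direction, suppose a unital $*$-homomorphism $\varphi:C^*(B)\to C^*(A)$ with $\varphi(B)=A$ exists. Being a $*$-homomorphism, $\varphi$ satisfies $\varphi(B^*)=\varphi(B)^*=A^*$, and being unital and multiplicative it sends $p(B,B^*)$ to $p(A,A^*)$ for every polynomial $p(x,y)$ in two free variables. Since every $*$-homomorphism between $C^*$-algebras is norm-decreasing, $\|p(A,A^*)\|=\|\varphi(p(B,B^*))\|\le\|p(B,B^*)\|$, which is precisely $A\lhd B$. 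This direction requires no real work.

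For the ``only if'' direction, I would build $\varphi$ by first defining it on the dense $*$-subalgebra of noncommutative polynomials and then extending by continuity. Let $\mathcal{A}_0(B)$ denote the (non-closed) $*$-algebra of all operators $p(B,B^*)$; it is norm-dense in $C^*(B)$, and likewise $\mathcal{A}_0(A)$ is dense in $C^*(A)$. Define $\varphi_0:\mathcal{A}_0(B)\to\mathcal{A}_0(A)$ on generators by $\varphi_0(p(B,B^*))=p(A,A^*)$.

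The crux is the well-definedness of $\varphi_0$, and this is the only place the hypothesis $A\lhd B$ is used. If two free polynomials satisfy $p(B,B^*)=q(B,B^*)$, then $(p-q)(B,B^*)=0$, so the domination inequality gives $\|(p-q)(A,A^*)\|\le\|(p-q)(B,B^*)\|=0$, forcing $p(A,A^*)=q(A,A^*)$. Thus $\varphi_0$ is well defined; it is visibly a unital $*$-homomorphism, being induced by the two evaluation $*$-homomorphisms out of the free $*$-algebra on one generator (the constant polynomial $1$ gives $\varphi_0(I)=I$, so unitality is automatic). The same inequality shows $\varphi_0$ is contractive, $\|\varphi_0(a)\|\le\|a\|$ for $a\in\mathcal{A}_0(B)$. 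A contractive homomorphism on a dense subalgebra extends uniquely to a contractive map $\varphi$ on all of $C^*(B)$, and by continuity of the algebraic operations $\varphi$ remains a unital $*$-homomorphism into $C^*(A)$; finally $\varphi(B)=\varphi_0(B)=A$.

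The main (and essentially only) obstacle is the well-definedness of $\varphi_0$, which must be settled before one can even speak of a homomorphism on $\mathcal{A}_0(B)$, since the evaluation map $p\mapsto p(B,B^*)$ need not be injective. Once the domination inequality is invoked there, everything else is routine and rests only on the standard facts that $*$-homomorphisms of $C^*$-algebras are contractive and that the noncommutative polynomial $*$-subalgebra is dense.
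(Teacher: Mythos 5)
Your proof is correct and follows essentially the same route as the paper: the easy direction uses that unital $*$-homomorphisms preserve free polynomials in $(B,B^*)$ and are contractive, and the converse defines the map $p(B,B^*)\mapsto p(A,A^*)$ on the dense polynomial $*$-subalgebra and extends by continuity. Your explicit treatment of well-definedness (via $p(B,B^*)=q(B,B^*)\Rightarrow (p-q)(A,A^*)=0$) is a point the paper's one-line converse leaves implicit, but it is the same underlying argument.
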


\begin{proof}
If $\varphi: C^*(B)\rightarrow C^*(A)$ is a $*$-homomorphism such that $\varphi(B)=A$, then for each polynomial $p(\cdot,\cdot)$ in two free variables, $\varphi(p(B^*,B))=p(A^*,A)$. Since a $*$-homomorphism is always a contraction, we have $\|p(A^*,A)\|\leq \|p(B^*,B)\|$. This proves the sufficiency.
Conversely, since  operators of the form $p(B^*,B)$ are dense in $C^*(B)$,  if $A\lhd B$, then   the map $p(B^*,B)\mapsto p(A^*,A)$ extends to a desired unital $*$-homomorphism from $ C^*(B)$ to $C^*(A)$.
\end{proof}

\noindent In particular, if $A\lhd B$, then $\varphi(|B|)=|A|$, where $\varphi$ is the map from above.   Moreover, $\varphi$ maps invertible operators to invertible ones. Thus we have the following corollary which will be used in the proof of Theorem \ref{T:UnilateralAUE}.

\begin{cor}\label{L:SpecDomina}
If $A,B$ are two operators and $A\lhd B$, then $\sigma(A)\subset\sigma(B)$ and $\sigma(|A|)\subset\sigma(|B|)$.
\end{cor}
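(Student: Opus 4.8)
The plan is to derive both inclusions from the single structural fact supplied by Lemma \ref{L:Homomor}: the hypothesis $A\lhd B$ yields a \emph{unital} $*$-homomorphism $\varphi\colon C^*(B)\to C^*(A)$ with $\varphi(B)=A$. The whole argument then rests on one elementary property of unital $*$-homomorphisms, namely that they carry invertible elements to invertible elements (if $x$ is invertible then $\varphi(x)\varphi(x^{-1})=\varphi(xx^{-1})=\varphi(1)=1$, and symmetrically). Everything reduces to transporting invertibility of $B-\lambda$ and $|B|-\lambda$ across $\varphi$.

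For the inclusion $\sigma(A)\subset\sigma(B)$, I would fix $\lambda\notin\sigma(B)$ and argue as follows. Since $C^*(B)$ is a unital $C^*$-subalgebra of $\BH$ sharing its identity, spectral permanence for $C^*$-algebras guarantees that the spectrum of $B$ relative to $C^*(B)$ coincides with its spectrum as an operator; hence $B-\lambda$ is invertible \emph{inside} $C^*(B)$. Applying $\varphi$ and using $\varphi(B)=A$ together with unitality gives that $A-\lambda=\varphi(B-\lambda)$ is invertible in $C^*(A)$, so $\lambda\notin\sigma(A)$. This yields $\sigma(A)\subset\sigma(B)$.

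For $\sigma(|A|)\subset\sigma(|B|)$, the key preliminary observation is that $\varphi(|B|)=|A|$. This is because a $*$-homomorphism intertwines the continuous functional calculus of positive elements: from $\varphi(B^*B)=\varphi(B)^*\varphi(B)=A^*A$ and $\varphi\big((B^*B)^{1/2}\big)=\big(\varphi(B^*B)\big)^{1/2}$ one obtains $\varphi(|B|)=(A^*A)^{1/2}=|A|$, which is exactly the remark recorded immediately after Lemma \ref{L:Homomor}. With this identity in hand, I would repeat the invertibility argument of the previous paragraph verbatim, replacing the pair $(B,A)$ by $(|B|,|A|)$: if $\lambda\notin\sigma(|B|)$ then $|B|-\lambda$ is invertible in $C^*(B)$, so $|A|-\lambda=\varphi(|B|-\lambda)$ is invertible in $C^*(A)$, giving $\lambda\notin\sigma(|A|)$.

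The argument is short and I expect no real obstacle; the one point deserving care is the spectral permanence step, i.e. the assertion that invertibility of $B-\lambda$ in $\BH$ already forces invertibility inside the smaller algebra $C^*(B)$. This is a standard feature of $C^*$-algebras (in contrast to general Banach algebras, where spectra can enlarge in subalgebras), and it is precisely what allows the purely algebraic homomorphism $\varphi$ to detect the operator spectrum. I would flag this point rather than reprove it.
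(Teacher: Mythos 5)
Your proposal is correct and follows essentially the same route as the paper: the paper derives this corollary directly from Lemma \ref{L:Homomor} by noting that the unital $*$-homomorphism $\varphi$ satisfies $\varphi(|B|)=|A|$ and carries invertible elements to invertible ones. Your explicit attention to spectral permanence (invertibility in $\mathcal{B}(\mathcal{H})$ versus inside $C^*(B)$) is a point the paper leaves implicit, but it is the same argument.
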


\begin{rmk}
The relation $\lhd$ has something to do with the notion of operator-valued spectrum \cite{Hadwin, Herr89}. The {\it operator-valued spectrum} $\Sigma(A)$ of $A\in\BH$
is the set of all those operators $Z$ acting on some separable Hilbert space such that $A$ is the norm limit of a
sequence $\{A_n\}$  in $\BH$ with $A_n\cong Z\oplus B_n$ (for suitable operators $B_n$).
In terms of the operator-valued spectrum, Hadwin \cite{Hadwin} characterized approximate unitary equivalence of operators. Note that if $Z\in\Sigma(A)$, then
$Z\lhd A$. In general, the converse does not hold. However, if $C^*(A)\cap\KH=\{0\}$, then by Voiculescu's theorem (\cite{Voicu} or Theorem 4.21 in \cite{Herr89}), an operator $Z$ lies in $\Sigma(A)$ if and only if $Z\lhd A$. Thus, by Theorem \ref{T:compacts}, if $T\sim\{X_n\}$ with $0\in\essran X_1$, then almost surely an operator $Z$ satisfies $Z\lhd T$ if and only if $Z\in\Sigma(T)$.
\end{rmk}

\begin{lem}\label{L:truncated}
Let $\{e_i\}_{i=1}^\infty$ be an orthonormal basis of $\cH$ and $A,B\in\BH$ be defined by
$$Ae_i=\lambda_i e_{i+1}, \quad  Be_i=\mu_i e_{i+1},\qquad \forall i\geq 1,$$
where $\{\lambda_i\}_{i=1}^\infty,$ $\{\mu_i\}_{i=1}^\infty$ are bounded sequences of nonnegative real numbers.
Assume that $n\geq 1$ and $p(x,y)$ is a polynomial in two free variables $x,y$ with degree $l$.
If $\lambda_i=\mu_i$ for $1\leq i\leq n+l$, then $p(A^*,A)Q_n= p(B^*,B)Q_n$, where $Q_n$ is the orthogonal
projection of $\cH$ onto $\vee\{e_i: 1\leq i\leq n\}$.
\end{lem}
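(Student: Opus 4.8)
The plan is to prove the identity directly on basis vectors and then extend by linearity, exploiting the local (band) structure of weighted shifts. First I would record how the four operators act on the orthonormal basis: $Ae_i=\lambda_i e_{i+1}$ and $A^*e_i=\lambda_{i-1}e_{i-1}$ for $i\geq 2$ with $A^*e_1=0$, and likewise $Be_i=\mu_i e_{i+1}$, $B^*e_i=\mu_{i-1}e_{i-1}$, $B^*e_1=0$. Since $p(A^*,A)Q_n=p(B^*,B)Q_n$ is equivalent to $p(A^*,A)e_j=p(B^*,B)e_j$ for every $1\leq j\leq n$, and since both sides are linear in $p$, it suffices to treat a single monomial, that is, a word $w=c_m\cdots c_1$ of length $m\leq l$ with each $c_k\in\{A,A^*\}$ (respectively $\{B,B^*\}$).

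The key step is an index-tracking argument. Applying $w$ to $e_j$ produces, at each stage, a scalar multiple of a single basis vector $e_{j_k}$, where $j_0=j$ and $j_k=j_{k-1}\pm1$ (with the convention that the output is $0$ if the process ever applies $A^*$ or $B^*$ at index $1$). The scalar picked up at step $k$ is the weight $\lambda_{j_{k-1}}$ when $c_k=A$, and $\lambda_{j_{k-1}-1}$ when $c_k=A^*$; the analogous statement holds for $B$ with $\mu$. I would then bound the weight indices: since $j_{k-1}\leq j+(k-1)\leq n+(l-1)$, every weight index that appears is at most $n+l-1$ and at least $1$, hence lies in $\{1,\dots,n+l\}$. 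Because $\lambda_i=\mu_i$ throughout this range by hypothesis, the word $w$ produces exactly the same scalar and the same terminal basis vector whether built from $A,A^*$ or from $B,B^*$; thus $w(A^*,A)e_j=w(B^*,B)e_j$.

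Summing over the monomials of $p$ gives $p(A^*,A)e_j=p(B^*,B)e_j$ for each $1\leq j\leq n$, and applying this to $Q_nx=\sum_{j=1}^n\langle x,e_j\rangle e_j$ yields $p(A^*,A)Q_n=p(B^*,B)Q_n$, as desired.

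I expect the only genuine obstacle to be the bookkeeping in the index bound: one must check that the walk $j_0,j_1,\dots$ on the indices never forces a weight with index exceeding $n+l$ nor a weight with index $0$, and that the ``death at index $1$'' convention is triggered identically for $A$ and $B$ (it is, since it depends on the position $j_{k-1}=1$ rather than on any weight value, and the positions evolve identically in both cases). Everything else is routine linearity together with the explicit formulas for the adjoints.
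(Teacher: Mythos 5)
Your proof is correct: reducing to a single monomial (word) of length at most $l$, tracking the walk of basis indices starting from $e_j$ with $j\leq n$, and observing that every weight index consumed lies in $\{1,\dots,n+l-1\}$ (with the vanishing at index $1$ determined by position alone, identically for $A$ and $B$) is exactly the verification needed. The paper omits the proof of this lemma, declaring it "an easy verification," and your argument is precisely that verification, so there is nothing to reconcile.
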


\noindent The proof of the above lemma, although seemingly technical, is an easy verification, hence omitted.

\begin{lem}\label{C:URSDomiUniWS}
Let $A$ be a unilateral weighted shift with nonnegative weights and $B$ be a unilateral weighted shift with nonnegative weights $\{b_i\}_{i=1}^\infty$. If
 $(0,b_1,b_2,\cdots,b_s)$ $\in$ $\Sigma_{s+1}(A)$ for  $s\geq 1$, then $B\lhd A$ .
\end{lem}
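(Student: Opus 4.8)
The plan is to prove the equivalent estimate $\|p(B^*,B)\|\le\|p(A^*,A)\|$ for every fixed polynomial $p(x,y)$ of degree $l$ in two free variables; since $p$ ranges over all such polynomials, the collections $\{p(B,B^*)\}$ and $\{p(B^*,B)\}$ coincide, so by the definition of $\lhd$ this yields $B\lhd A$. Denote the weights of $A$ by $\{a_i\}_{i=1}^\infty$, and let $\{e_i\}$, $\{f_i\}$ be the orthonormal bases implementing $A$ and $B$. Since finitely supported vectors are dense, for any $\eps>0$ I would first choose $n$ and a unit vector $x\in\vee\{f_1,\dots,f_n\}$ with $\|p(B^*,B)x\|>\|p(B^*,B)\|-\eps$.

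The key step is to locate inside $A$ a block of consecutive weights that looks like $(0,b_1,\dots,b_s)$ with $s=n+l$. The hypothesis $(0,b_1,\dots,b_s)\in\Sigma_{s+1}(A)$ guarantees, for any prescribed $\delta>0$, an index $i$ with $a_i<\delta$ and $|a_{i+j}-b_j|<\delta$ for $1\le j\le s$. I would then introduce the auxiliary weighted shift $A'$ obtained from $A$ by replacing $a_i$ with $0$ and $a_{i+j}$ with $b_j$ for $1\le j\le s$. Because the entering weight $a_i$ has been set to $0$, the subspace $\vee\{e_{i+1},e_{i+2},\dots\}$ reduces $A'$, and the restriction $A_2':=A'|_{\vee\{e_{i+1},\dots\}}$ is a unilateral weighted shift whose first $s=n+l$ weights are exactly $b_1,\dots,b_s$. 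Identifying $e_{i+j}\leftrightarrow f_j$, Lemma \ref{L:truncated} gives $p(A_2'^*,A_2')Q_n=p(B^*,B)Q_n$, so for the unit vector $x'\in\vee\{e_{i+1},\dots,e_{i+n}\}$ corresponding to $x$ one has $p(A'^*,A')x'=p(A_2'^*,A_2')x'=p(B^*,B)x$, whence $\|p(A'^*,A')x'\|>\|p(B^*,B)\|-\eps$.

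It remains to pass from $A'$ back to $A$. The vector $x'$ is supported on $e_{i+1},\dots,e_{i+n}$, so $p(A^*,A)x'$ and $p(A'^*,A')x'$ are determined by the finitely many, uniformly bounded weights of $A$ and $A'$ in a fixed window around this support, and $A,A'$ differ on that window by at most $\delta$ in each entry. Since the coordinates of $p(\cdot^*,\cdot)x'$ are fixed non-commutative polynomial expressions in those weights, choosing $\delta$ small enough (after $n$ and $p$ have been fixed) forces $\|p(A^*,A)x'-p(A'^*,A')x'\|<\eps$. Therefore $\|p(A^*,A)\|\ge\|p(A^*,A)x'\|>\|p(B^*,B)\|-2\eps$, and letting $\eps\to0$ gives $\|p(A^*,A)\|\ge\|p(B^*,B)\|$. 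As $p$ was arbitrary, $B\lhd A$.

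The only genuine obstacle is this last continuity step: because $\Sigma_{s+1}(A)$ is defined as a closure, the block of weights of $A$ matches $(0,b_1,\dots,b_s)$ only approximately, so exact agreement and a direct application of Lemma \ref{L:truncated} to $A$ itself are unavailable. Routing through the exactly matching comparison operator $A'$, and then invoking the continuous dependence of the finite data $Q_{n+l}\,p(A^*,A)\,Q_n$ on finitely many weights, is what makes the argument go through; everything else is bookkeeping with the reducing subspace and the index shift $e_{i+j}\leftrightarrow f_j$. (Setting the entering weight to $0$ is essential here, since it is exactly what decouples the tail and lets it imitate the leading block of $B$, whose first basis vector also has no incoming weight.)
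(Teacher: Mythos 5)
Your proof is correct and follows essentially the same route as the paper's: both reduce to the compression $p(B^*,B)Q_n$, use the hypothesis to find a block of weights of $A$ approximately matching $(0,b_1,\dots,b_{n+l})$, pass to the finite-rank perturbation of $A$ that matches exactly (your $A'$ is precisely the paper's $A+K$, with the zero weight decoupling the tail as a reducing subspace), invoke Lemma \ref{L:truncated}, and finish by letting the perturbation tend to zero. The only difference is presentational: the paper estimates $\|p(B^*,B)Q_n\|\le\|p(A^*+K^*,A+K)\|$ directly at the operator-norm level, whereas you transfer a near-maximizing vector, which amounts to the same continuity argument.
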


\begin{proof}
Assume that $\{e_i\}_{i=1}^{\infty}$ is an orthonormal basis of $\cH$ and $Be_i=b_i e_i$ for $i\geq 1$.
Assume that $\{f_i\}_{i=1}^{\infty}$ is an orthonormal basis of $\cK$ and $Af_i=a_i f_{i+1}$ for $i\geq 1$.
Fix a polynomial $p(x,y)$ in two free variables and assume that degree $p(x,y)=l$.
It suffices to prove
$\|p(B^*,B)\|\leq \|p(A^*,A)\|.$
For each $n\geq 1$, denote by $Q_n$ the orthogonal projection of $\cK$ onto $\vee\{e_i: 1\leq i\leq n\}$.
 It suffices to prove that
$\|p(B^*,B)Q_n\|\leq \|p(A^*,A)\|$ for each $n\geq 1$.
 Fix  $n\geq 1$ and set $s=n+l$. By the hypothesis, $(0,b_1,b_2,\cdots,b_s)\in\Sigma_{s+1}(A)$. For any $\eps>0$, we can find $m_0\in\bN$ such that
\[\|(0,b_1,\cdots,b_s)-(a_j)_{j=m_0}^{m_0+s}\|_{\max}<\eps.\]
Then there exists a finite-rank operator $K$ on $\cK$ with $\|K\|<\eps$ such that $A+K$ is a
unilateral weighted shift satisfying $(A+K)f_j=a_j'f_{j+1}$ for $j\geq 1$, where
\[a_j'=\begin{cases}
0,& j=m_0,\\
b_{j-m_0},& m_0+1\leq j\leq m_0+s,\\
a_j,& j<m_0\ \textup{or}\ j>m_0+s.
\end{cases}\]
Then $A+K=A_1\oplus A_2$, where $A_1$ is a truncated weighted shift and $A_2$ is a unilateral weighted shift on $\widetilde{\cK}=\vee\{f_j: j\geq m_0+1\}$ with weights
$$(b_1,\cdots,b_{s}, a_{m_0+s+1},  a_{m_0+s+2},\cdots ). $$ By Lemma \ref{L:truncated}, we have
$p(B^*,B)Q_n\cong p(A_2^*,A_2)Q_n'$, where $Q_n'$ is the orthogonal projection of $\widetilde{\cK}$ onto $\vee\{f_j: m_0+1\leq j\leq m_0+n\}$. It follows that
\begin{align*}
\|p(B^*,B)Q_n\|&=\|p(A_2^*,A_2)Q_n'\|\leq \|p(A_2^*,A_2)\| \\
&\leq \|p(A^*+K^*,A+K)\|.
\end{align*}
Since $\eps>0$ is arbitrary, the proof is complete.
\end{proof}

\begin{lem}\label{P:NSpectrum}
Let  $T\sim\{X_n\}_{n=1}^\infty$ and $\Gamma=\essran X_1$. Then, almost surely, $\Sigma_n(T)=\Gamma^n$ for  $n\geq 1$.
\end{lem}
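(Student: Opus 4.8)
The plan is to prove the two inclusions $\Sigma_n(T)\subseteq\Gamma^n$ and $\Gamma^n\subseteq\Sigma_n(T)$ separately, each holding almost surely for a fixed $n$, and then to intersect over the countably many values of $n$. Recall that, since the weights are nonnegative, $\Sigma_n(T)$ is by definition the closure of the family of tuples $\{(X_i,X_{i+1},\dots,X_{i+n-1}):i\geq 1\}$, so both inclusions amount to comparing this closure with $\Gamma^n$.

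For the inclusion $\Sigma_n(T)\subseteq\Gamma^n$, I would first note that $\bP(X_1\in\Gamma)=1$, since $\Gamma=\essran X_1$ carries the full mass of the law of $X_1$. Taking a countable union, almost surely $X_i\in\Gamma$ for every $i\geq 1$, so every tuple $(X_i,\dots,X_{i+n-1})$ lies in the closed set $\Gamma^n$; hence the closure $\Sigma_n(T)$ is contained in $\Gamma^n$ almost surely.

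The reverse inclusion is where Lemma \ref{L:constantCase} does the work. Fix a countable dense subset $D$ of $\Gamma$; then $D^n$ is a countable dense subset of $\Gamma^n$. For a fixed tuple $(\lambda_1,\dots,\lambda_n)\in D^n\subseteq\Gamma^n$, applying Lemma \ref{L:constantCase} with the full index sequence $n_j=j$ gives $\liminf_{j\to\infty}\max_{1\leq i\leq n}|X_{j+i}-\lambda_i|=0$ almost surely, which exhibits a subsequence along which $(X_{j+1},\dots,X_{j+n})\to(\lambda_1,\dots,\lambda_n)$; thus $(\lambda_1,\dots,\lambda_n)\in\Sigma_n(T)$ almost surely. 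Intersecting over the countable set $D^n$, almost surely every point of $D^n$ lies in $\Sigma_n(T)$; since $\Sigma_n(T)$ is closed and $D^n$ is dense in $\Gamma^n$, this yields $\Gamma^n\subseteq\Sigma_n(T)$ almost surely.

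Combining the two inclusions gives $\Sigma_n(T)=\Gamma^n$ almost surely for each fixed $n$, and a final countable intersection over $n\geq 1$ gives the statement for all $n$ simultaneously. The argument is essentially routine once Lemma \ref{L:constantCase} is in hand; the only points requiring care are the reduction to a countable dense family of target tuples (so that the relevant almost-sure event is a countable intersection) and the use of the closedness of $\Sigma_n(T)$ to upgrade density to containment. I do not expect a genuine obstacle here: the substantive probabilistic content, namely that every prescribed finite pattern drawn from $\essran X_1$ recurs (approximately) infinitely often in the weight sequence, is already packaged in Lemma \ref{L:constantCase}.
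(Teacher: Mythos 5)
Your proposal is correct and follows essentially the same route as the paper's own proof: the easy inclusion $\Sigma_n(T)\subset\Gamma^n$ from the weights lying in $\Gamma$ almost surely, and the reverse inclusion by applying Lemma \ref{L:constantCase} to each tuple from a countable dense subset of $\Gamma$, intersecting over the countably many tuples and over $n$, then upgrading density to containment via closedness of $\Sigma_n(T)$. No gaps; this matches the paper's argument in both structure and detail.
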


\begin{proof}
For each $n\geq 1$ and  $\omega\in\Omega$, note that
$$\Sigma_n(T(\omega))=\text{closure of}\ \{(X_{k+i}(\omega))_{i=1}^n: k\geq 0 \}.$$ For each $i\geq 1$,   $\ran X_i\subset\Gamma$ a.s.
Thus $\Sigma_n(T)\subset\Gamma^n$ a.s.
Conversely,
choose a denumerable dense subset $\{\alpha_i: i\in\Lambda\}$ of $\essran X_1$ and set $$\Omega'=\bigcap_{n\in\bN}\bigcap_{(i_1,\cdots,i_n)\subset
\Lambda}\Big[\liminf_{k\to\infty}\|(X_{k+j})_{j=1}^n-(\alpha_{i_j})_{j=1}^n\|_{\max}=0\Big].$$  By Lemma \ref{L:constantCase}, we have $\bP(\Omega')=1$.
 Fix  $n\geq 1$ and  $\omega\in\Omega'$. It suffices to prove that $\Gamma^n\subset\Sigma_n(T(\omega))$. Denote $\lambda_j=X_j(\omega)$ for $j\geq 1$. Thus $T(\omega)$ is a unilateral weighted shift with weights $\{\lambda_j\}$ and
$\overline{\{(\lambda_{k+j})_{j=1}^n: k\geq 0\}}=\Sigma_n(T(\omega))$.
Choose $i_1,\cdots,i_n\in\Lambda$. By the definition of $\Omega'$, we have
\[\liminf_{k\to\infty}\|(\lambda_{k+j})_{j=1}^n-(\alpha_{i_j})_{j=1}^n\|_{\max}=0.\]
Thus $(\alpha_{i_j})_{j=1}^n\in\text{closure of}\ \{(\lambda_{k+j})_{j=1}^n: k\geq 0\}$. It follows that  $(\alpha_{i_j})_{j=1}^n\in\Sigma_n(T(\omega))$. Since $\{\alpha_i:i\in\Lambda\}$ is dense, we obtain $\Gamma^n\subset\Sigma_n(T(\omega))$. This ends the proof.
\end{proof}

\begin{proof}[Proof of Theorem \ref{T:UnilateralAUE} (i)]
If $\Sigma_1(A)\subset\Gamma$, it follows that $a_i\in\Gamma$ for $i\geq 1$. Note that $0\in\Gamma$. Then $(0, a_1,\cdots,a_s)\in\Gamma^{s+1}$ for  $s\geq 1$.
In view of Lemma \ref{P:NSpectrum}, for each $s\geq 1$, we have $(0, a_1,\cdots,a_s)\in\Sigma_{s+1}(T)$ a.s.
By Lemma \ref{C:URSDomiUniWS}, we obtain $A\lhd T$ a.s.
Assume that $\Sigma_1(A)\nsubseteq\Gamma$. In view of Lemma \ref{P:NSpectrum}, there exists $\Omega'\subset\Omega$ with $\bP(\Omega')=1$ such that $\Sigma_1(T(\omega))=\Gamma$ for  $\omega\in\Omega'$.
Thus  $\Sigma_1(A)\nsubseteq\Sigma_1(T(\omega))$ for  $\omega\in\Omega'$.  By Corollary \ref{L:SpecDomina} and Remark \ref{R:Modulus}, $A\ntriangleleft T(\omega)$ for $\omega\in\Omega'$. This ends the proof.
\end{proof}

\begin{cor}
Let  $T\sim\{X_n\}_{n=1}^\infty$ and
$A$ be a truncated weighted shift.
If $0\in\essran X_{1}$,
then $A\lhd T$ a.s. if and only if $\sigma(|A|)\subset\essran X_{1}$.
\end{cor}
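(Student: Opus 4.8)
The plan is to prove both directions from the identity $\sigma(|A|)=\Sigma_1(A)$ of Remark \ref{R:Modulus}, together with Lemma \ref{P:NSpectrum}, which gives $\Sigma_k(T)=\Gamma^k$ almost surely for every $k$ (here $\Gamma=\essran X_1$), and the spectral monotonicity of Corollary \ref{L:SpecDomina}.

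For necessity, suppose $A\lhd T$ a.s. Since $|T|=\operatorname{diag}\{X_1,X_2,\dots\}$, Remark \ref{R:Modulus} and Lemma \ref{P:NSpectrum} (with $k=1$) give $\sigma(|T|)=\Sigma_1(T)=\Gamma$ a.s. Fix $\omega$ in the probability-one set on which both $A\lhd T(\omega)$ and $\sigma(|T(\omega)|)=\Gamma$ hold. Then Corollary \ref{L:SpecDomina} yields $\sigma(|A|)\subset\sigma(|T(\omega)|)=\Gamma$. As $\sigma(|A|)$ and $\Gamma$ are deterministic, this establishes $\sigma(|A|)\subset\Gamma$.

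For sufficiency, write $A$ as a truncated weighted shift of order $n$ with weights $(\lambda_1,\dots,\lambda_{n-1})$, so that $|A|=\operatorname{diag}\{|\lambda_1|,\dots,|\lambda_{n-1}|,0\}$ and $\sigma(|A|)=\{0,|\lambda_1|,\dots,|\lambda_{n-1}|\}$. Assuming $\sigma(|A|)\subset\Gamma$ and recalling $0\in\Gamma$, each $|\lambda_i|\in\Gamma$, so the $(n+1)$-tuple $(0,|\lambda_1|,\dots,|\lambda_{n-1}|,0)$ lies in $\Gamma^{n+1}$. By Lemma \ref{P:NSpectrum}, $\Sigma_{n+1}(T)=\Gamma^{n+1}$ a.s., hence this tuple belongs to $\Sigma_{n+1}(T(\omega))$ for almost every $\omega$. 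On that event I would run the perturbation argument of Lemma \ref{C:URSDomiUniWS}: given $\eps>0$, choose $m_0$ with $\|(X_{m_0}(\omega),\dots,X_{m_0+n}(\omega))-(0,|\lambda_1|,\dots,|\lambda_{n-1}|,0)\|_{\max}<\eps$, and add a finite-rank $K$ with $\|K\|<\eps$ that zeroes the weights at positions $m_0$ and $m_0+n$ and sets the intermediate weights equal to $|\lambda_1|,\dots,|\lambda_{n-1}|$. Because both the incoming weight (at $m_0$) and the outgoing weight (at $m_0+n$) now vanish, the subspace $\mathcal{M}=\vee\{e_{m_0+1},\dots,e_{m_0+n}\}$ reduces $T(\omega)+K$, and the restriction of $T(\omega)+K$ to $\mathcal{M}$ is a truncated weighted shift of order $n$ with weights $(|\lambda_1|,\dots,|\lambda_{n-1}|)$, hence unitarily equivalent to $A$ (weighted shifts with equal weight moduli are unitarily equivalent; cf.\ Section \ref{S:Section2}). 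Since $\mathcal{M}$ reduces $T(\omega)+K$, for every polynomial $p(x,y)$ the norm of $p$ applied to this restriction is at most $\|p(T(\omega)+K,(T(\omega)+K)^*)\|$; combined with the unitary equivalence this gives $\|p(A,A^*)\|\le\|p(T(\omega)+K,(T(\omega)+K)^*)\|$. Letting $\eps\to0$, so $K\to0$ in norm, yields $\|p(A,A^*)\|\le\|p(T(\omega),T(\omega)^*)\|$, i.e.\ $A\lhd T(\omega)$. Thus $A\lhd T$ a.s.

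The only genuinely non-routine step is the sufficiency embedding. Unlike Lemma \ref{C:URSDomiUniWS}, where the dominated operator is an infinite unilateral shift and one can match only finitely many weights (forcing the compression $Q_n$ and a degree restriction as in Lemma \ref{L:truncated}), here the finiteness of $A$ lets me cut the chain on both sides using the trailing zero, producing an \emph{exact} reducing copy of $A$ inside the perturbed operator. This removes all degree bookkeeping and makes the polynomial inequality hold for every $p$ simultaneously; everything else reduces to facts already recorded in Lemma \ref{P:NSpectrum} and Corollary \ref{L:SpecDomina}.
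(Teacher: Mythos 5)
Your proof is correct, but it follows a genuinely different route from the paper's. The paper disposes of this corollary in three lines by a reduction trick: since $0\in\essran X_1$, the zero operator on an infinite-dimensional space is itself a unilateral weighted shift and satisfies $0\lhd T$ a.s.\ by Theorem \ref{T:UnilateralAUE} (i); hence $A\lhd T$ a.s.\ if and only if $E=A\oplus 0\lhd T$ a.s., and since $E$ is again a unilateral weighted shift with $\Sigma_1(E)=\sigma(|A|)$, a second application of Theorem \ref{T:UnilateralAUE} (i) finishes the proof. You bypass Theorem \ref{T:UnilateralAUE} (i) entirely and re-derive both directions from the primitive ingredients: your necessity argument (Remark \ref{R:Modulus}, Lemma \ref{P:NSpectrum}, Corollary \ref{L:SpecDomina}) is exactly the one used for the necessity half of Theorem \ref{T:UnilateralAUE} (i), while your sufficiency argument specializes the perturbation technique of Lemmas \ref{C:URSDomiUniWS} and \ref{P:TruncUniWeigh} to the finite-order case. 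Your key observation is sound: because the tuple $(0,|\lambda_1|,\dots,|\lambda_{n-1}|,0)$ lies in $\Sigma_{n+1}(T(\omega))$ a.s., the perturbed operator $T(\omega)+K$ with $\|K\|<\eps$ has an honest reducing subspace on which it is unitarily equivalent to $A$, so $\|p(A,A^*)\|\le\|p(T(\omega)+K,(T(\omega)+K)^*)\|$ holds for every polynomial simultaneously, with no compressions or degree bookkeeping (Lemma \ref{L:truncated}) needed; letting $\eps\to 0$ for each fixed $p$ is legitimate since $X\mapsto p(X,X^*)$ is norm-continuous on bounded sets. As to what each approach buys: the paper's reduction is shorter, reuses already-proven machinery, and the $A\oplus 0$ trick is worth internalizing; your direct argument is self-contained, isolates exactly why finite order makes sufficiency easier than the infinite case treated in Lemma \ref{C:URSDomiUniWS}, and in effect reproves the sufficiency half of Lemma \ref{P:TruncUniWeigh} without the positivity assumption imposed on the weights there.
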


\begin{proof}
Note that the zero operator  on an infinite dimensional Hilbert space is a unilateral weighted shift and, by Theorem \ref{T:UnilateralAUE} (i), $0\lhd T$ a.s. Then $A\lhd T$ a.s. if and only if $E=A\oplus 0\lhd T$ a.s.
Since $E$ is also a unilateral weighted shift, using Theorem \ref{T:UnilateralAUE} (i) again, we deduce that
$A\lhd T$ a.s. if and only if $\Sigma_1(E)\subset\essran X_1$, that is, $\sigma(|A|)\subset\essran X_1$.
\end{proof}

%\begin{cor}
%Let $T$ be a random weighted shift with weights $\{X_n\}_{n=1}^\infty$, where $\{X_n\}$ are nonnegative, bounded, \iid r.v.'s and $0\in\essran X_1$.  If $A$  is quasinormal, then $A\lhd T$ a.s. if and only if $\sigma(|A|)\subset\essran X_1$.
%\end{cor}

\begin{cor}\label{C:extreme}
Let  $T\sim\{X_n\}_{n=1}^\infty$ with $\essran X_1=[0,1]$.
If $A$ is a deterministic unilateral weighted shift, then $A\lhd T$ a.s. if and only if $\|A\|\leq 1$.
\end{cor}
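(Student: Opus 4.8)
The plan is to read the conclusion directly off of Theorem \ref{T:UnilateralAUE} (i), since the hypothesis $\essran X_1=[0,1]$ places us squarely in the case $0\in\Gamma$ with $\Gamma=[0,1]$. First I would observe that $A$ is a unilateral weighted shift with nonnegative weights, so the cited theorem gives the clean dichotomy
\[
\bP(A\lhd T)=\begin{cases}1,& \Sigma_1(A)\subset[0,1],\\ 0,& \Sigma_1(A)\nsubseteq[0,1].\end{cases}
\]
Thus the entire content of the corollary is the translation of the condition $\Sigma_1(A)\subset[0,1]$ into the norm condition $\|A\|\leq1$; the almost-sure zero-one behaviour is already supplied for free by Theorem \ref{T:UnilateralAUE} (i).

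The key step is then the identity $\|A\|=\max\Sigma_1(A)$ for a unilateral weighted shift with nonnegative weights $\{a_i\}_{i=1}^\infty$. Recall that $\Sigma_1(A)$ is by definition the closure of $\{a_i:i\geq1\}$, which (by Remark \ref{R:Modulus}) coincides with $\sigma(|A|)$, and that the norm of a weighted shift equals the supremum of the moduli of its weights, i.e. $\|A\|=\sup_i a_i$. Since the weights are nonnegative, $\Sigma_1(A)\subset[0,\infty)$ automatically, so $\Sigma_1(A)\subset[0,1]$ holds precisely when $a_i\leq1$ for every $i$, which in turn holds precisely when $\sup_i a_i=\|A\|\leq1$. (The passage to the closure is harmless here because $[0,1]$ is closed: $\{a_i\}\subset[0,1]$ forces $\overline{\{a_i\}}\subset[0,1]$, and conversely $\{a_i\}\subset\Sigma_1(A)$.) Combining this equivalence with the displayed dichotomy yields $\bP(A\lhd T)=1$ exactly when $\|A\|\leq1$, and $\bP(A\lhd T)=0$ otherwise, which is the assertion.

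There is no genuine obstacle to overcome; the corollary is an immediate specialization, and the only point requiring a word of care is the elementary observation that $\Sigma_1(A)\subset[0,1]$ is equivalent to $\|A\|\leq 1$ via $\|A\|=\max\Sigma_1(A)$. All of the substantive work—both the construction of the approximating compressions and the verification that the complementary event has probability zero—is already carried out in the proof of Theorem \ref{T:UnilateralAUE} (i) together with Lemma \ref{P:NSpectrum} and Lemma \ref{C:URSDomiUniWS}.
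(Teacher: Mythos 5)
Your proof is correct and takes the same route the paper intends: Corollary \ref{C:extreme} is stated there without proof, as an immediate specialization of Theorem \ref{T:UnilateralAUE} (i), and your translation of the condition $\Sigma_1(A)\subset[0,1]$ into $\|A\|\leq 1$ via $\|A\|=\sup_i a_i=\max\Sigma_1(A)$ (using that $[0,1]$ is closed and the paper's standing convention of nonnegative weights) is exactly the routine step being left to the reader. Nothing further is needed.
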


\noindent The above corollary should be  compared with Corollary \ref{C:bi-extreme}.

\begin{lem}\label{L:AUE}
Let $A_i\in\B(\cH_i)$ be irreducible, $i=1,2$. If $\cK(\cH_2)\subset C^*(A_2)$, then $A_1\lhd A_2$ if and only if
either $A_1\cong A_2$ or $A_1\oplus A_2\cong_a A_2$.
\end{lem}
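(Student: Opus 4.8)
The plan is to convert the domination relation into an algebraic one via Lemma \ref{L:Homomor}: $A_1\lhd A_2$ holds precisely when there is a unital $*$-homomorphism $\varphi\colon C^*(A_2)\to C^*(A_1)$ with $\varphi(A_2)=A_1$, and since polynomials in $A_1,A_1^*$ are dense in $C^*(A_1)$ this $\varphi$ is automatically surjective. The two reverse implications are immediate and I would dispose of them first: if $A_1\cong A_2$ then $\|p(A_1,A_1^*)\|=\|p(A_2,A_2^*)\|$ for every $p$; and if $A_1\oplus A_2\cong_a A_2$ then $\|p(A_1,A_1^*)\|\le\|p(A_1\oplus A_2,(A_1\oplus A_2)^*)\|=\|p(A_2,A_2^*)\|$. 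So the real content is the forward direction, and here I would examine the closed ideal $J=\ker\varphi$. Because $\cK(\cH_2)$ is simple, $J\cap\cK(\cH_2)$ is either $0$ or all of $\cK(\cH_2)$, which splits the proof into two cases.

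In the case $\cK(\cH_2)\subseteq J$, the homomorphism $\varphi$ annihilates the compacts, and I would invoke Voiculescu's theorem in the form of Theorem II.5.8 in \cite{David}. Writing $\iota$ for the identity representation of $C^*(A_2)$ on $\cH_2$, I compare $\iota$ with $\varphi\oplus\iota$. Their rank functions agree: for $c\in C^*(A_2)$ one needs $\rank\varphi(c)+\rank c=\rank c$, which holds because any $c$ of finite rank is a finite-rank, hence compact, operator lying in $\cK(\cH_2)\subseteq\ker\varphi$ (so $\varphi(c)=0$), while for $\rank c=\infty$ the identity is automatic in a separable space. Thus $\varphi\oplus\iota\cong_a\iota$ as representations, and evaluating the implementing unitaries at $A_2$ yields $A_1\oplus A_2\cong_a A_2$.

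In the case $J\cap\cK(\cH_2)=0$, the restriction $\rho=\varphi|_{\cK(\cH_2)}$ is a faithful representation of $\cK(\cH_2)$ on $\cH_1$. Its essential subspace $\overline{\rho(\cK(\cH_2))\cH_1}$ reduces $C^*(A_1)$ and is nonzero, so irreducibility of $A_1$ forces $\rho$ to be nondegenerate; the structure theory of representations of the compacts then gives $\rho\cong\mathrm{id}^{(\kappa)}$ for some multiplicity $\kappa$. To force $\kappa=1$ I would take a minimal projection $e\in\cK(\cH_2)$: minimality inside the ideal $\cK(\cH_2)$ gives $eC^*(A_2)e=\bC e$, and surjectivity of $\varphi$ yields $\varphi(e)C^*(A_1)\varphi(e)=\varphi(eC^*(A_2)e)=\bC\varphi(e)$. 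Kadison's transitivity theorem then forces the nonzero projection $\varphi(e)$ to have rank one in $\B(\cH_1)$ (if $\varphi(e)\cH_1$ had dimension $\ge 2$, pick orthonormal $u,v$ there and $a\in C^*(A_1)$ with $au=u$, $av=0$; then $\varphi(e)a\varphi(e)\notin\bC\varphi(e)$). Since $\varphi(e)$ has rank $\kappa$ under $\rho\cong\mathrm{id}^{(\kappa)}$, this gives $\kappa=1$, so $\rho=\mathrm{Ad}_U$ for a unitary $U\colon\cH_2\to\cH_1$. Finally, for arbitrary $c\in C^*(A_2)$ and $k\in\cK(\cH_2)$ the identities $\varphi(ck)=\varphi(c)UkU^*$ and $\varphi(ck)=U(ck)U^*=UcU^*\,UkU^*$ give $(\varphi(c)-UcU^*)UkU^*=0$; as $UkU^*$ ranges over $\cK(\cH_1)$, which acts nondegenerately on $\cH_1$, this upgrades to $\varphi=\mathrm{Ad}_U$ on all of $C^*(A_2)$, whence $A_1=\varphi(A_2)=UA_2U^*$, i.e. $A_1\cong A_2$.

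The main obstacle is the multiplicity-one argument in Case B. The delicate point is that the abstract ideal $\varphi(\cK(\cH_2))$ need not consist of compact operators on $\cH_1$ a priori, so one cannot simply argue that an irreducible algebra containing a compact contains all of $\cK(\cH_1)$; the detour through a minimal projection and Kadison transitivity is precisely what rules out $\kappa\ge 2$ and pins down the spatial implementation. The rank bookkeeping in Case A and both reverse implications are routine by comparison.
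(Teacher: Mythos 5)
Your proof is correct and takes essentially the same route as the paper: the same reduction to a unital $*$-homomorphism via Lemma \ref{L:Homomor}, the same dichotomy according to whether $\varphi$ annihilates $\cK(\cH_2)$, and the same appeal to Voiculescu's theorem (Theorem II.5.8 in \cite{David}) with the identical rank bookkeeping in the annihilating case. The only difference is that in the faithful-on-compacts case the paper simply cites Corollary 5.41 of \cite{Douglas} for the spatial implementation, whereas you reprove that fact from scratch (nondegeneracy of $\varphi|_{\cK(\cH_2)}$, multiplicity one via a minimal projection and Kadison transitivity, then propagation from the ideal to all of $C^*(A_2)$).
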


\begin{proof}
The sufficiency is obvious. For the necessity,
we choose a unital $*$-homomorphism $\varphi: C^*(A_2)\rightarrow C^*(A_1)$ such that $\varphi(A_2)=A_1$.
 If $\varphi|_{\cK(\cH_2)}\ne 0$, then by Corollary 5.41 in \cite{Douglas}, there exists unitary $V:\cH_1\rightarrow\cH_2$ such that $\varphi(Z)=V^*ZV$ for $Z\in C^*(A_2)$. Thus $V^*A_2V=A_1$, that is, $A_1\cong A_2$.
 If $\varphi|_{\cK(\cH_2)}=0$, then $\rank~\big(\varphi(Z)\oplus Z\big)=\rank~ Z$ for  $Z\in C^*(A_2)$. By Theorem II.5.8 in \cite{David}, this implies that
$\varphi\oplus \textup{id}\cong_a\textup{id}$, where $\textup{id}(\cdot)$ is the identity representation of $C^*(A_2)$. It follows that $A_1 \oplus A_2\cong_a A_2$.
\end{proof}

\begin{lem}\label{P:UnilLeftInvert}
Let  $T\sim\{X_n\}_{n=1}^\infty$  with $0<r<R.$ If $A$ is an irreducible operator with $0\in\sigma_p(A)\cup\sigma_p(A^*)$, then $A\ntriangleleft T$ a.s.
\end{lem}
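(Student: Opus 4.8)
The plan is to recast the domination relation $A\lhd T$ as a representation-theoretic statement via Lemma~\ref{L:Homomor}, and then exploit the ideal of compact operators that $C^*(T)$ almost surely contains. On the event $\{A\lhd T\}$ there is a unital $*$-homomorphism $\varphi\colon C^*(T)\to C^*(A)$ with $\varphi(T)=A$; since $\varphi(T),\varphi(T^*)$ generate $C^*(A)$, the map is onto $C^*(A)$, and because $A$ is irreducible, $C^*(A)$ acts irreducibly on its underlying space $\cH_A$. Hence $\varphi$ is an \emph{irreducible} representation of $C^*(T)$. I would work on the almost sure event on which simultaneously $\KH\subset C^*(T)$ (Theorem~\ref{T:compacts}, applicable because $0<r$ forces $0\notin\essran X_1$) and $\sigma_e(T)=\{z:r\le|z|\le R\}$ (Theorem~\ref{T:SpectPic}(iii)); since $r>0$, this annulus omits the origin.

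The key step is the standard dichotomy for an irreducible representation restricted to a closed ideal: either $\varphi|_{\KH}=0$, or $\varphi|_{\KH}$ is itself irreducible. In the first case $\varphi$ factors through the Calkin quotient as $\varphi=\psi\circ q$, with $q\colon C^*(T)\to C^*(T)/\KH$ the quotient map and $\psi$ a unital $*$-homomorphism. Spectra can only shrink under unital $*$-homomorphisms, so
\[
\sigma(A)=\sigma\big(\psi(q(T))\big)\subseteq \sigma\big(q(T)\big)=\sigma_e(T)=\{z:r\le|z|\le R\},
\]
which does not contain $0$. Thus $A$ is invertible, forcing $\ker A=\ker A^*=\{0\}$, i.e.\ $0\notin\sigma_p(A)\cup\sigma_p(A^*)$, contradicting the hypothesis. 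Consequently this branch cannot occur on the good event; the assumption $0\in\sigma_p(A)\cup\sigma_p(A^*)$ is precisely what closes it off.

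In the second case $\varphi|_{\KH}$ is an irreducible representation of $\KH$, hence unitarily equivalent to the identity representation: there is a unitary $V\colon\cH\to\cH_A$ with $\varphi(K)=VKV^*$ for all $K\in\KH$. Testing $\varphi$ against rank-one operators $e\otimes f$, and using that $\KH$ is an ideal so that $S(e\otimes f)\in\KH$ for every $S\in C^*(T)$, one gets $\varphi(S)(Ve\otimes Vf)=(VSe)\otimes(Vf)$ and therefore $\varphi(S)=VSV^*$ on all of $C^*(T)$. In particular $A=\varphi(T)=VTV^*$, so $A\cong T$. But $0<r<R$ means $X_1$ is non-degenerate with $\bP(X_1=0)=0$, so Theorem~\ref{T:similarity}(i) yields $\bP(T\cong A)\le\bP(T\sim A)=0$.

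Combining the two branches, on the almost sure good event the relation $A\lhd T$ can only arise through the second branch, so $\{A\lhd T\}$ is contained, up to a null set, in the null event $\{T\cong A\}$; hence $\bP(A\lhd T)=0$, i.e.\ $A\ntriangleleft T$ a.s. The main obstacle I anticipate is the bookkeeping: one must apply the purely deterministic $C^*$-theory (the ideal dichotomy and the uniqueness of the irreducible representation of $\KH$) sample-by-sample on a \emph{single} almost sure event where the compactness statement $\KH\subset C^*(T)$, the essential-spectrum computation, and the non-similarity conclusion of Theorem~\ref{T:similarity}(i) all hold at once. Once that is arranged, both branches close cleanly and no delicate estimate is needed.
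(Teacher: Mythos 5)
Your proof is correct, and its decisive step is genuinely different from the paper's. The paper also starts from Theorem \ref{T:compacts} (so that $\KH\subset C^*(T(\omega))$ a.s., hence $T(\omega)$ is irreducible, with $\ind T(\omega)=-1$ by Lemma \ref{T:SpectralPicture}), but instead of running the representation-theoretic dichotomy by hand it invokes Lemma \ref{L:AUE}: $A\lhd T(\omega)$ forces either $A\cong T(\omega)$ or $T(\omega)\oplus A\cong_a T(\omega)$. The first branch is excluded almost surely exactly as in your argument (the paper quotes Theorem \ref{T:ApprUnitEqui}(i), which in this regime rests on Theorem \ref{T:similarity}(i), the result you cite directly); the second branch is excluded by Fredholm bookkeeping: $T(\omega)\oplus A$ and $T(\omega)$ are approximately unitarily equivalent Fredholm operators, so by Theorem 1.13(iv) of \cite{Herr89} one gets $\dim\ker A+\dim\ker T(\omega)=\dim\ker T(\omega)$ and likewise for adjoints, whence $\ker A=\ker A^*=\{0\}$, contradicting $0\in\sigma_p(A)\cup\sigma_p(A^*)$. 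You never touch approximate unitary equivalence: in your branch $\varphi|_{\KH}=0$ you factor $\varphi$ through $C^*(T)/\KH$, use spectral permanence (the quotient embeds unitally into the Calkin algebra) to get $\sigma(A)\subseteq\sigma_e(T)=\{z\in\bC: r\le|z|\le R\}$, and conclude that $A$ is invertible --- the same contradiction, reached with much lighter tools. What your route buys: it avoids the machinery hidden inside Lemma \ref{L:AUE} (Voiculescu's theorem, via Theorem II.5.8 of \cite{David}) and Herrero's index theory, and it makes transparent where each hypothesis acts ($r>0$ makes the essential spectrum miss the origin; $0\in\sigma_p(A)\cup\sigma_p(A^*)$ rules out invertibility). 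What the paper's route buys: it reuses Lemma \ref{L:AUE}, which it needs elsewhere anyway, so the dichotomy comes for free, whereas your second case re-derives the Douglas Corollary 5.41 argument (uniqueness of the irreducible representation of $\KH$ plus the rank-one ideal trick) by hand. Both proofs exploit $r>0$ through the same underlying fact --- $0\notin\sigma_e(T(\omega))$, i.e. $T(\omega)$ is Fredholm --- and both terminate in the null event $\{A\cong T\}$.
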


\begin{proof}
In view of Theorem \ref{T:compacts}, there exists $\Omega'\subset\Omega$ with $\bP(\Omega')=1$ such that
$\KH\subset C^*(T(\omega))$ for $\omega\in\Omega'$. Hence $T(\omega)$ is irreducible for $\omega\in\Omega'$.
By Lemma \ref{T:SpectralPicture}, we assume that $\ind T(\omega)=-1$ for $\omega\in\Omega'$.
In view of Theorem \ref{T:ApprUnitEqui} (i), we have $A\ncong T$ a.s. Thus we  assume that $A\ncong T(\omega)$ for  $\omega\in\Omega'$.
Fix  $\omega\in\Omega'$. If $A\lhd T(\omega)$, then by Lemma \ref{L:AUE}, we have either $A\cong T(\omega)$ or $T(\omega)\oplus A\cong_a T(\omega)$. By what we have just assumed, the former does not hold. So $T(\omega)\oplus A\cong_a T(\omega)$.
Thus $T(\omega)\oplus A, T(\omega)$ are both Fredholm and, by Theorem 1.13 (iv) of \cite{Herr89}, we have
$\dim\ker A+\dim\ker T(\omega)=\dim\ker T(\omega)$ and $\dim\ker A^*+\dim\ker T(\omega)^*=\dim\ker T(\omega)^*.$ Since $\max\{\dim\ker T(\omega),\dim\ker T(\omega)^*\}<\infty$, it follows that $\dim\ker A=\dim\ker A^*=0,$ a contradiction. Thus we obtain
$A \ntriangleleft T$ a.s.\end{proof}

\begin{proof}[Proof of Theorem \ref{T:UnilateralAUE} (ii)] The proof is divided into two cases.
 {\it Case 1.} $\card ~\Gamma=1$.
 Assume that $\Gamma=\{\lambda\}$. Thus $T$ is almost surely a unilateral weighted shift with weights $(\lambda,\lambda,\lambda,\cdots)$.
If $\Sigma_1(A)=\Gamma$, then the weights of $A$ are $(\lambda,\lambda,\lambda,\cdots)$, which implies that $A\lhd T$ a.s.
If $\Sigma_1(A)\ne \Gamma$, then it follows that $\sigma(|A|)\nsubseteq \Gamma=\sigma(|T|)$ a.s. By Corollary \ref{L:SpecDomina}, we obtain $A\ntriangleleft T$ a.s.
 {\it Case 2.} $\card ~\Gamma>1$.
 Note that $0\in\sigma_p(A^*)$. If $A$ is irreducible, then the result follows  from Lemma \ref{P:UnilLeftInvert},
It remains to deal with the case that $A$ is reducible.
In this case, $A$ is the direct sum of an irreducible truncated weighted shift $A_1$ and a unilateral weighted shift.
Thus $0\in\sigma_p(A_1)$ and $A_1\lhd A$. By Lemma \ref{P:UnilLeftInvert}, $A_1 \ntriangleleft T$ a.s. Therefore we conclude that $A \ntriangleleft T$ a.s.
\end{proof}

  Let $\{A_i: 1\leq i\leq n\}$ be a commuting family of normal operators on $\cH$.
Denote by $C^*(A_1,A_2,\cdots,A_n)$ the $C^*$-algebra generated by
$A_1,A_2,\cdots, A_n$ and the identity $I$. The {\it joint spectrum} of the $n$-tuple
$(A_1,A_2,\cdots, A_n)$ is defined as the set of $n$-tuples of scalars
$(\lambda_1,\lambda_2,\cdots,\lambda_n)$ such that the ideal of
$C^*(A_1,A_2,\cdots,A_n)$ generated by $A_1-\lambda_1, A_2-\lambda_2,
A_n-\lambda_n$ is different from $C^*(A_1,A_2,\cdots,A_n)$ (Definition
3.1.13, \cite{Hormander90}). We let $\sigma(A_1,A_2,\cdots,A_n)$ denote the joint spectrum
of $(A_1,A_2,\cdots,A_n)$.

\begin{lem}\label{L:JointSpec}
For each $1\leq k\leq n$, we assume that
$$A_k=\textup{diag}\{a_1^{(k)}, a_2^{(k)}, a_3^{(k)},\cdots \} \text{~~~and~~~}
B_k=\textup{diag}\{b_1^{(k)}, b_2^{(k)}, b_3^{(k)},\cdots \}$$
with respect to some orthonormal basis $\{e_i\}_{i=1}^\infty$ of $\cH$.
\begin{enumerate}
\item[(i)] $\sigma(A_1,\cdots,A_n)=\text{closure of}\ \{(a_i^{(1)}, \cdots, a_i^{(n)}): i\geq 1 \}$, where the closure is taken
in the usual topology on $\bC^n$.
\item[(ii)] If there exists a unital $*$-homomorphism $$\varphi: C^*(A_1,\cdots,A_n)\rightarrow C^*(B_1,\cdots,B_n)$$ such that $\varphi(A_i)=B_i$ for $1\leq i\leq n$, then $\sigma(B_1,\cdots,B_n)\subset\sigma(A_1,\cdots,A_n)$.
\end{enumerate}
\end{lem}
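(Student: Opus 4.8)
The plan is to handle the two parts separately, deriving (i) from Gelfand theory for the commutative $C^*$-algebra generated by the diagonal operators, and (ii) from a formal push-forward of ideals under the $*$-homomorphism.

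For part (i), I would set $\cA=C^*(A_1,\dots,A_n,I)$ and let $K$ denote the compact set $\overline{\{(a_i^{(1)},\dots,a_i^{(n)}):i\geq 1\}}\subset\bC^n$ (compactness is clear since each $A_k$ is bounded). Since $A_1,\dots,A_n$ are simultaneously diagonal, $\cA$ is commutative, so Gelfand--Naimark applies. The key computation is that the map sending $g\in C(K)$ to the diagonal operator $\text{diag}\{g(a_i^{(1)},\dots,a_i^{(n)})\}_{i}$ is a unital $*$-isomorphism of $C(K)$ onto $\cA$: surjectivity follows because polynomials in the coordinates $z_k,\bar z_k$ are dense in $C(K)$ by Stone--Weierstrass and map onto polynomials in $A_k,A_k^*$; injectivity follows because such a $g$ induces the zero operator exactly when $g$ vanishes on the dense point set, hence on all of $K$. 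Under this identification each $A_k$ becomes the $k$-th coordinate function, so for $\lambda=(\lambda_1,\dots,\lambda_n)$ the ideal generated by $A_1-\lambda_1,\dots,A_n-\lambda_n$ corresponds to the ideal generated by $z_1-\lambda_1,\dots,z_n-\lambda_n$ in $C(K)$. This ideal is proper exactly when the $z_k-\lambda_k$ have a common zero in $K$, i.e.\ when $\lambda\in K$: if $\lambda\notin K$ then $\sum_k|z_k-\lambda_k|^2$ is bounded below on the compact set $K$ and lies in the ideal, forcing the ideal to be all of $C(K)$, whereas if $\lambda\in K$ every ideal element vanishes at $\lambda$. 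Hence $\sigma(A_1,\dots,A_n)=K$, which is the asserted description.

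For part (ii), I would argue purely algebraically with ideals. Fix $\lambda=(\lambda_1,\dots,\lambda_n)\in\sigma(B_1,\dots,B_n)$ and let $\cI_B$ be the ideal of $C^*(B_1,\dots,B_n)$ generated by $B_1-\lambda_1,\dots,B_n-\lambda_n$, which is proper by the definition of the joint spectrum. Because $\varphi$ is unital and $\varphi(A_k)=B_k$, we have $\varphi(A_k-\lambda_k)=B_k-\lambda_k$, so $\varphi$ carries the ideal $\cI_A$ generated by $A_1-\lambda_1,\dots,A_n-\lambda_n$ into $\cI_B$ (for the closed-ideal reading one also invokes that $*$-homomorphisms of $C^*$-algebras are contractive, hence continuous, to pass to closures). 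If $\cI_A$ were all of $C^*(A_1,\dots,A_n)$ then $I\in\cI_A$, whence $I=\varphi(I)\in\cI_B$, contradicting properness of $\cI_B$. Therefore $\cI_A$ is proper and $\lambda\in\sigma(A_1,\dots,A_n)$, giving $\sigma(B_1,\dots,B_n)\subset\sigma(A_1,\dots,A_n)$.

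The genuinely delicate step is the converse inclusion in part (i)---verifying that $\cA$ has no characters beyond those coming from the basis vectors, equivalently that its maximal ideal space is exactly $K$. Everything hinges on the Stone--Weierstrass density argument establishing $\cA\cong C(K)$; once that isomorphism is in hand, both the spectral description in (i) and the ideal push-forward in (ii) become routine.
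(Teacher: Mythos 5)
Your proof is correct, but it takes a genuinely different route from the paper's. The paper works entirely on the character side: it invokes H\"ormander's Theorem 3.1.14, which identifies $\sigma(A_1,\dots,A_n)$ with the set of tuples $(\nu(A_1),\dots,\nu(A_n))$ as $\nu$ ranges over the multiplicative linear functionals on $C^*(A_1,\dots,A_n)$. Part (i) is then obtained by introducing the evaluation functionals $\nu_i(E)=\alpha_i$ (for $E$ diagonal with entries $\{\alpha_j\}$) and \emph{asserting} that $\{\nu_i: i\geq 1\}$ is dense in the maximal ideal space; part (ii) is obtained by pulling characters back along $\varphi$, i.e.\ $\nu\mapsto\nu\circ\varphi$. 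You instead argue directly from the ideal-theoretic definition of the joint spectrum: you construct the Gelfand identification $C^*(A_1,\dots,A_n,I)\cong C(K)$ explicitly via Stone--Weierstrass, where $K$ is the closure of $\{(a_i^{(1)},\dots,a_i^{(n)}):i\geq 1\}$, and then decide properness of the ideal generated by $z_k-\lambda_k$ by hand; your part (ii) is a purely algebraic push-forward of ideals that uses nothing beyond unitality of $\varphi$ and its continuity (no commutativity, no characters, and no need for $\varphi$ to be surjective). What your route buys: it is self-contained, and it actually supplies the one nontrivial ingredient the paper leaves unproved---your isomorphism shows the maximal ideal space of the algebra \emph{is} $K$, with the $\nu_i$ corresponding to the dense set of diagonal tuples, which is exactly the density claim asserted in the paper. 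What the paper's route buys: brevity, since once H\"ormander's character description is quoted, both parts reduce to one-line verifications.
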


\begin{proof}
By Theorem 3.1.14 in \cite{Hormander90}, we have
\begin{align*}
\sigma(A_1,\cdots, A_n)=\{(\nu(A_1),\cdots, \nu(A_n)): &~\nu ~\mbox{is a multiplicative linear}\\
 &~~~\mbox{functional on}~ C^*(A_1,\cdots, A_n)\}.
\end{align*}

  (i) For $i\geq 0$, define $\nu_i(E)=\alpha_i$ if $E\in C^*(A_1,\cdots, A_n)$ and
$E=\textrm{diag}\{\alpha_1, \alpha_2, \alpha_3,\cdots \}$ with respect to $\{e_i\}$. Then
each $\nu_i$ is a multiplicative linear functional on the $C^*$-algebra $C^*(A_1,\cdots, A_n)$.
Moreover, $\{\nu_i: i\geq 1\}$ is dense in the maximal ideal space of
$C^*(A_1,\cdots, A_n)$. Then we obtain
\begin{eqnarray*}
\sigma(A_1,\cdots, A_n)&=&\text{closure of}\ \{(\nu_i(A_1),\cdots, \nu_i(A_n)): i\geq 1\}\\
&=&\text{closure of}\ \{( a_i^{(1)},\cdots,a_i^{(n)}): i\geq
1\}.
\end{eqnarray*}

  (ii) Note that if $\nu$ is a multiplicative linear functional on $C^*(B_1,\cdots, B_n)$, then $\nu\circ\varphi$ is a multiplicative linear functional on $C^*(A_1,\cdots, A_n)$ and $\nu(B_i)=\nu\circ\varphi(A_i)$ for $1\leq i\leq n$.
Then it follows  that $\sigma(B_1,\cdots,B_n)\subset\sigma(A_1,\cdots,A_n)$.
\end{proof}

\begin{lem}\label{P:TruncUniWeigh}
Let $A$ be a unilateral weighted shift with nonnegative weights $\{a_i\}_{i=1}^{\infty}$ and $B$ be a truncated weighted shift with positive weights $\{b_i\}_{i=1}^s$. Then $B\lhd A$ if and only if either $(b_1,b_2,\cdots,b_s,0)=(a_1,a_2,\cdots, a_s,a_{s+1})$ or
$(0,b_1,b_2,\cdots,b_s,0)\in\Sigma_{s+2}(A).$
\end{lem}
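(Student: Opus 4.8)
The plan is to use Lemma~\ref{L:Homomor}, which recasts $B\lhd A$ as the existence of a unital $*$-homomorphism $\varphi\colon C^*(A)\to C^*(B)$ with $\varphi(A)=B$. Fix orthonormal bases $\{f_i\}_{i\ge1}$ of $\cH$ with $Af_i=a_if_{i+1}$ and $\{e_j\}_{j=1}^{s+1}$ of $\bC^{s+1}$ with $Be_j=b_je_{j+1}$ for $1\le j\le s$ and $Be_{s+1}=0$; set $a_0=b_0=0$. I would prove the two implications separately, with the perturbation technique of Lemma~\ref{C:URSDomiUniWS} handling sufficiency and a joint-spectrum computation via Lemma~\ref{L:JointSpec} handling necessity.

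For sufficiency, in the case $(b_1,\dots,b_s,0)=(a_1,\dots,a_{s+1})$ I observe that $A^*f_1=0$ and $Af_{s+1}=a_{s+1}f_{s+2}=0$, so $\vee\{f_1,\dots,f_{s+1}\}$ reduces $A$ and the restriction is unitarily equivalent to $B$; thus $A=B\oplus A'$ and $B\lhd A$ is immediate. In the case $(0,b_1,\dots,b_s,0)\in\Sigma_{s+2}(A)$, I would mimic Lemma~\ref{C:URSDomiUniWS}: for every $\eps>0$ pick $m_0$ with $\|(a_{m_0},\dots,a_{m_0+s+1})-(0,b_1,\dots,b_s,0)\|_{\max}<\eps$ and a weighted-shift perturbation $K$ with $\|K\|<\eps$ resetting those $s+2$ weights to $0,b_1,\dots,b_s,0$. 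The two forced zeros split off a reducing block, so $A+K\cong A_1\oplus B\oplus A_2$, whence $\|p(B^*,B)\|\le\|p((A+K)^*,A+K)\|$ for every polynomial $p$; letting $\eps\to0$ and using norm-continuity of $p((\cdot)^*,\cdot)$ yields $B\lhd A$.

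For necessity, assume $\varphi$ exists. The core idea is to recover the entire window $(0,b_1,\dots,b_s,0)$ from the joint spectrum of a commuting family of diagonal operators. Consider $Z_k=(A^*)^kA^k$ for $1\le k\le s+1$ together with $W=AA^*$, all diagonal in $\{f_i\}$ with $i$th entries $(a_i\cdots a_{i+k-1})^2$ and $a_{i-1}^2$ respectively; their images $\varphi(Z_k)=(B^*)^kB^k$ and $\varphi(W)=BB^*$ are diagonal in $\{e_j\}$. Reading off the $e_1$-entries gives the point $P=\big((b_1)^2,(b_1b_2)^2,\dots,(b_1\cdots b_s)^2,0;0\big)$, which by Lemma~\ref{L:JointSpec}(i) lies in $\sigma(\varphi(Z_1),\dots,\varphi(Z_{s+1}),\varphi(W))$; by Lemma~\ref{L:JointSpec}(ii) this joint spectrum is contained in $\sigma(Z_1,\dots,Z_{s+1},W)$, the closure of $\{((a_i)^2,\dots,(a_i\cdots a_{i+s})^2,a_{i-1}^2):i\ge1\}$. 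Hence there are indices $i_m$ with $(a_{i_m}\cdots a_{i_m+k-1})^2\to(b_1\cdots b_k)^2$ for $k\le s$, with $(a_{i_m}\cdots a_{i_m+s})^2\to0$, and with $a_{i_m-1}^2\to0$; since every $b_j>0$, successive ratios give $a_{i_m+t}\to b_{t+1}$ for $0\le t\le s-1$, then $a_{i_m+s}\to0$ and $a_{i_m-1}\to0$.

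The last step is a dichotomy on $\{i_m\}$. If some subsequence has $i_m\to\infty$, then eventually $i_m-1\ge1$ and $(a_{i_m-1},\dots,a_{i_m+s})\to(0,b_1,\dots,b_s,0)$, so $(0,b_1,\dots,b_s,0)\in\Sigma_{s+2}(A)$. Otherwise $\{i_m\}$ is bounded, so $P$ equals an exact window value $v_{i^*}$; when $i^*\ge2$ the exact window $(0,b_1,\dots,b_s,0)$ occurs at position $i^*-1\ge1$, again giving membership in $\Sigma_{s+2}(A)$, while $i^*=1$ forces $(a_1,\dots,a_{s+1})=(b_1,\dots,b_s,0)$. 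I expect the main obstacle to be exactly this necessity half: one must isolate the right diagonal family — in particular the backward operator $W=AA^*$ is indispensable for capturing the leading zero $a_{i_m-1}\to0$, since $\{Z_k\}$ alone only recovers $(b_1,\dots,b_s,0)\in\Sigma_{s+1}(A)$ — and then treat the closure carefully so that the interior-accumulation and boundary alternatives match precisely the two cases in the statement. Corollary~\ref{L:SpecDomina} and Remark~\ref{R:Modulus} already give the crude containment $\{0,b_1,\dots,b_s\}\subset\Sigma_1(A)$, but they are far too weak to detect the window.
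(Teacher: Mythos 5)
Your proof is correct and takes essentially the same route as the paper's: sufficiency via the exact-match/perturbation dichotomy, and necessity by pushing a commuting diagonal family through the $*$-homomorphism of Lemma \ref{L:Homomor} and comparing joint spectra via Lemma \ref{L:JointSpec} (you use $(A^*)^kA^k$ and $AA^*$ where the paper uses $|A^k|$ and $|A^*|$, which is the same family up to squares, with $AA^*$ playing exactly the role of the paper's $|A^*|$ in capturing the leading zero), then recovering the raw window from the products by dividing out the positive $b_j$'s. The only differences are cosmetic: you argue by limits and a bounded/unbounded index dichotomy where the paper makes an explicit $\delta$--$\eps$ estimate and splits off the $e_1$-point, and you treat general $s$ while the paper writes out only $s=2$.
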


\begin{proof}
For convenience, we only deal with the case that $s=2$. The proof for the general case is similar.
We assume that $A=\sum_{i=1}^\infty a_i e_{i+1}\otimes e_i$ and $ B=b_1f_2\otimes f_1+ b_2f_3\otimes f_2,$
where $\{e_i\}_{i=1}^\infty$ is an orthonormal basis of $\cH$ and $\{f_1,f_2,f_3\}$ is an orthonormal basis of $\bC^3$.
For sufficiency, we fix a polynomial $p(z,w)$ in free variables $z,w$.
 If $(b_1,b_2,0)=(a_1,a_{2}, a_{3})$, then $\vee\{e_{i}: 1\leq i\leq 3\}$ reduces $A$ and the restriction of $A$ to it is unitarily equivalent to $B$. Thus $\|p(B,B^*)\|\leq \|p(A,A^*)\|$.
 We assume that $(0,b_1,b_2,0)\in\Sigma_{4}(A).$
Then for any $\eps>0$, there exists $i\in\bN$ such that
$$\max\{|a_i-0|, |a_{i+1}-b_1|, |a_{i+2}-b_2|, |a_{i+3}-0|\}<\eps.$$
Set \begin{align*}
K_\eps&=-a_i e_{i+1}\otimes e_i-(a_{i+1}-b_1)e_{i+2}\otimes e_{i+1}\\
&\quad -(a_{i+2}-b_2)e_{i+3}\otimes e_{i+2}-a_{i+3} e_{i+4}\otimes e_{i+3}.
\end{align*} Then $\|K_\eps\|<\eps$, $\cM_\eps=\vee\{e_{i+1}, e_{i+2}, e_{i+3}\}$ reduces $A+K_\eps$ and
$(A+K_\eps)|_{\cM_\eps}\cong B.$ Then $\|p(B,B^*)\|\leq\|p(A+K_\eps,A^*+K_\eps^*)\|.$
Since $\eps>0$ is arbitrary, we conclude that $\|p(B,B^*)\|\leq\|p(A,A^*)\|$.

  Now we show the necessity. We choose   $\varphi: C^*(A)\rightarrow C^*(B)$ according to Lemma \ref{L:Homomor}
Then $\varphi(|A^*|)=|B^*|$ and $\varphi(|A^k|)=|B^k|$ for $k=1,2,3.$
Note that $(|B^*|,|B|,|B^2|,|B^3|)$ and $(|A^*|,|A|,|A^2|,|A^3|)$ are two commuting families of diagonal operators. By Lemma \ref{L:JointSpec} (ii), $\sigma(|B^*|,|B|,|B^2|,|B^3|)\subset \sigma(|A^*|,|A|,|A^2|,|A^3|)$.
In view of Lemma \ref{L:JointSpec} (i), a calculation shows that $$(0,b_1,b_1b_2,0)\in\sigma(|B^*|,|B|,|B^2|,|B^3|)$$ and $\sigma(|A^*|,|A|,|A^2|,|A^3|)$ is the closure of
\[\{(0,a_1,a_1a_2, a_1a_2a_3)\}\cup\{(a_i, a_{i+1},a_{i+1}a_{i+2},a_{i+1}a_{i+2}a_{i+3}): i\geq 1\}.\] Then either
 $(0,b_1,b_1b_2,0)= (0,a_1,a_1a_2, a_1a_2a_3)$ or $$(0,b_1,b_1b_2,0)\in\text{closure of}\ \{(a_i, a_{i+1},a_{i+1}a_{i+2},a_{i+1}a_{i+2}a_{i+3}): i\geq 1\}.$$
If the former holds, then $(b_1,b_2,0)=(a_1,a_2,a_{3})$. Next we assume the latter holds.

  For any $\eps>0$, set
$$\delta=\frac{\eps\cdot\min\{1,b_1,b_1b_2,b_1^2b_2\}}{3(1+\|A\|)(1+\|A\|^2)(1+\|A\|\cdot\|B\|)}.$$
Thus there exists $i\in\bZ$ such that
$$\max\{|a_i-0|, |a_{i+1}-b_1|, |a_{i+1}a_{i+2}-b_1b_2|, |a_{i+1}a_{i+2}a_{i+3}-0|  \}<\delta.$$
We shall prove that
\begin{equation}\label{(4.2)}\max\{|a_i-0|, |a_{i+1}-b_1|, |a_{i+2}-b_2|, |a_{i+3}-0|\}<\eps. \end{equation}
Compute to see
\begin{align*}
|a_{i+2}-b_2|&=\frac{|b_1a_{i+2}-b_1b_2|}{b_1}\\
&\leq \frac{|b_1a_{i+2}-a_{i+1}a_{i+2}|}{b_1}+\frac{|a_{i+1}a_{i+2}-b_1b_2|}{b_1}\\
&\leq \frac{ \|A\|\delta}{b_1}+\frac{\delta}{b_1}=\frac{(\|A\|+1)\delta}{b_1}<\eps
\end{align*} and
\begin{align*}
|a_{i+3}|&=\frac{b_1b_2a_{i+3}}{b_1b_2}\\
&\leq \frac{|b_1b_2a_{i+3}-b_1a_{i+2}a_{i+3}|}{b_1b_2}+\frac{|b_1a_{i+2}a_{i+3}-a_{i+1}a_{i+2}a_{i+3}|}{b_1b_2}\\
&\quad+\frac{a_{i+1}a_{i+2}a_{i+3}}{b_1b_2}\\
&\leq\frac{|b_2-a_{i+2}|\cdot\|A\|\cdot\|B\|}{b_1b_2}+\frac{|b_1-a_{i+1}|\cdot\|A\|^2}{b_1b_2}+\frac{\delta}{b_1b_2} \\
&<\frac{(\|A\|+1)\delta\cdot\|A\|\cdot\|B\|}{b_1^2b_2}+\frac{\delta\|A\|^2}{b_1b_2}+\frac{\delta}{b_1b_2}\\
&\leq \frac{\eps}{3}+\frac{\eps}{3}+\frac{\eps}{3}=\eps.\end{align*}
Therefore we obtain (\ref{(4.2)}).
Since $\eps>0$ is arbitrary, we deduce that $(0,b_1,b_2,0)\in\Sigma_{4}(A)$.
\end{proof}

  The following is straightforward now; it is needed for the proof of Theorem \ref{T:UnilateralAUE}.

\begin{cor}\label{C:TruncUniWeigh}
Let $A$ be a unilateral weighted shift with nonnegative weights $\{a_i\}_{i=1}^{\infty}$. Then $0\lhd A$ if and only if either $a_1=0$ or $(0,0)\in\Sigma_{2}(A).$
\end{cor}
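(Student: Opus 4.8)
The plan is to treat this as the degenerate $s=0$ instance of Lemma~\ref{P:TruncUniWeigh}: the zero operator on $\bC$ is, by the stated convention, the truncated weighted shift of order $1$, and it carries no weights, so the two alternatives $(b_1,\dots,b_s,0)=(a_1,\dots,a_{s+1})$ and $(0,b_1,\dots,b_s,0)\in\Sigma_{s+2}(A)$ collapse to $a_1=0$ and $(0,0)\in\Sigma_2(A)$ respectively. Since Lemma~\ref{P:TruncUniWeigh} is stated only for \emph{positive} weights (that is, $s\ge 1$), I would not quote it verbatim but instead re-run its two short arguments in this degenerate case. First I would invoke Lemma~\ref{L:Homomor} to restate $0\lhd A$ as the existence of a unital $*$-homomorphism $\varphi:C^*(A)\to C^*(0)=\bC$ with $\varphi(A)=0$.

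For necessity, suppose such a $\varphi$ exists. A direct computation gives $|A|=\textup{diag}\{a_1,a_2,a_3,\dots\}$ and $|A^*|=\textup{diag}\{0,a_1,a_2,\dots\}$ with respect to $\{e_i\}_{i=1}^\infty$, so the commuting pair $(|A^*|,|A|)$ has simultaneous eigenvalue pairs $(0,a_1),(a_1,a_2),(a_2,a_3),\dots$. By Lemma~\ref{L:JointSpec}(i) this yields $\sigma(|A^*|,|A|)=\{(0,a_1)\}\cup\Sigma_2(A)$. Because $\varphi(|A^*|)=|\varphi(A)^*|=0$ and $\varphi(|A|)=|\varphi(A)|=0$, restricting $\varphi$ to $C^*(|A^*|,|A|)$ and applying Lemma~\ref{L:JointSpec}(ii) forces $\{(0,0)\}=\sigma(|0^*|,|0|)\subset\sigma(|A^*|,|A|)$, that is, $(0,0)\in\{(0,a_1)\}\cup\Sigma_2(A)$. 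Hence either $a_1=0$ or $(0,0)\in\Sigma_2(A)$, which is exactly the asserted conclusion.

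For sufficiency I would produce a one-dimensional reducing subspace on which $A$, or a small norm perturbation of it, acts as $0$. If $a_1=0$, then $Ae_1=A^*e_1=0$, so $\bC e_1$ reduces $A$ and $A|_{\bC e_1}=0$; restricting any $p(A,A^*)$ to this subspace gives $\|p(0,0)\|\le\|p(A,A^*)\|$. If instead $(0,0)\in\Sigma_2(A)$, then for each $\eps>0$ I choose $i$ with $\max\{a_i,a_{i+1}\}<\eps$ and set $K=-a_i\,e_{i+1}\otimes e_i-a_{i+1}\,e_{i+2}\otimes e_{i+1}$, so that $\|K\|<\eps$ and $A+K$ is a unilateral weighted shift whose $i$-th and $(i+1)$-th weights vanish; then $\bC e_{i+1}$ reduces $A+K$ with $(A+K)|_{\bC e_{i+1}}=0$, giving $\|p(0,0)\|\le\|p(A+K,A^*+K^*)\|$, and letting $\eps\to 0$ and using norm-continuity of the polynomial functional calculus yields $0\lhd A$.

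I do not expect a serious obstacle here; the only point requiring care is the degenerate convention that the order-$1$ truncated shift has no weights, which is precisely why Lemma~\ref{P:TruncUniWeigh} must be adapted rather than cited directly, and why the extra joint-spectrum component $(0,a_1)$ surfaces in $\sigma(|A^*|,|A|)$ — it is exactly this component that accounts for the alternative $a_1=0$ in the statement.
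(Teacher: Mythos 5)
Your proof is correct and is essentially the paper's intended argument: the paper presents this corollary as ``straightforward now'' following Lemma~\ref{P:TruncUniWeigh}, meaning precisely the degenerate order-one adaptation you carry out, with necessity via Lemma~\ref{L:Homomor} and Lemma~\ref{L:JointSpec} applied to the commuting pair $(|A^*|,|A|)$, and sufficiency via the small finite-rank perturbation creating a one-dimensional reducing subspace. Both directions check out, including your correct identification of the extra joint-spectrum point $(0,a_1)$ as the source of the alternative $a_1=0$.
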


\begin{lem}\label{P:URSDomiUniWS}
Let $A$ be a unilateral weighted shift with nonnegative weights $\{a_i\}_{i=1}^\infty$ and $B$ be a unilateral weighted shift with positive weights $\{b_i\}_{i=1}^\infty$. Then $B\lhd A$ if and only if either $a_i=b_i$ for  $i\geq 1$ or
 $(0,b_1,b_2,\cdots,b_s)\in\Sigma_{s+1}(A)$ for  $s\geq 1$.
\end{lem}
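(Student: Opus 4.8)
The plan is to prove both implications, leaning on the structural lemmas already established. Sufficiency is immediate: if $a_i=b_i$ for all $i\geq 1$ then $A=B$, so trivially $B\lhd A$; and if instead $(0,b_1,\dots,b_s)\in\Sigma_{s+1}(A)$ for every $s\geq 1$, then $B\lhd A$ is precisely the conclusion of Lemma~\ref{C:URSDomiUniWS}. Thus the entire content lies in the necessity, and that is where I would concentrate.

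For necessity, suppose $B\lhd A$. By Lemma~\ref{L:Homomor} there is a unital $*$-homomorphism $\varphi:C^*(A)\to C^*(B)$ with $\varphi(A)=B$, hence $\varphi(A^*)=B^*$, and therefore $\varphi(|A^k|)=|B^k|$ for every $k\geq 1$ and $\varphi(|A^*|)=|B^*|$ (each of $|A^k|$ and $|A^*|$ is a continuous function of $A^{*k}A^k$, respectively $AA^*$, and a $*$-homomorphism respects the continuous functional calculus). Fix $n\geq 1$ and consider the commuting families of diagonal operators $(|A^*|,|A|,|A^2|,\dots,|A^n|)$ and $(|B^*|,|B|,\dots,|B^n|)$. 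Writing out diagonal entries, with the convention $a_0=b_0=0$, the $i$-th diagonal point of the first family is $(a_{i-1},a_i,a_ia_{i+1},\dots,a_i\cdots a_{i+n-1})$, and likewise for $B$. By Lemma~\ref{L:JointSpec}(ii) the joint spectrum of the $B$-family sits inside that of the $A$-family, so taking $i=1$ on the $B$-side yields
$$
P_n := (0,\,b_1,\,b_1b_2,\,\dots,\,b_1\cdots b_n)\ \in\ \sigma(|A^*|,|A|,\dots,|A^n|),
$$
which by Lemma~\ref{L:JointSpec}(i) equals the closure of $\{(a_{i-1},a_i,\dots,a_i\cdots a_{i+n-1}):i\geq 1\}$.

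Next I would run a dichotomy for each fixed $n$. Choose indices $i_k$ with $(a_{i_k-1},a_{i_k},\dots,a_{i_k}\cdots a_{i_k+n-1})\to P_n$. If $P_n$ equals the $i=1$ point $(0,a_1,a_1a_2,\dots,a_1\cdots a_n)$, then comparing coordinates and repeatedly dividing by the positive quantities $b_1,b_1b_2,\dots$ (here positivity of the weights of $B$ is essential) forces $a_j=b_j$ for $1\leq j\leq n$. Otherwise the approximating indices must eventually satisfy $i_k\geq 2$, so $a_{i_k-1}\to 0$; since $a_{i_k}\to b_1>0$ I can divide the successive product-limits to get $a_{i_k+j-1}\to b_j$ for $1\leq j\leq n$, whence $(a_{i_k-1},a_{i_k},\dots,a_{i_k+n-1})\to(0,b_1,\dots,b_n)$ and therefore $(0,b_1,\dots,b_n)\in\Sigma_{n+1}(A)$. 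To globalize, suppose $a_j=b_j$ fails, and let $m$ be the least index with $a_m\neq b_m$; then the first alternative is impossible for every $n\geq m$, so $(0,b_1,\dots,b_n)\in\Sigma_{n+1}(A)$ for all $n\geq m$. Projecting onto the first $s+1$ coordinates (a continuous map, hence carrying the closure into $\Sigma_{s+1}(A)$) then gives $(0,b_1,\dots,b_s)\in\Sigma_{s+1}(A)$ for every $s\geq 1$, which is the second alternative.

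The main obstacle is the passage from joint-spectrum membership of the \emph{product}-tuples $P_n$ back to the \emph{weight}-tuples: the joint spectrum records only the moments $a_i\cdots a_{i+j-1}$, so recovering individual weights $b_j$ and certifying that the limiting weight-tuple is a genuine point of $\Sigma_{n+1}(A)$ requires the uniform lower bound $b_1>0$ to justify the divisions, together with careful bookkeeping of which approximating index $i_k$ equals $1$ versus exceeds $1$. This is exactly the point at which positivity of the weights of $B$ cannot be dispensed with.
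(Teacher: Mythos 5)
Your proof is correct and follows essentially the same route as the paper: sufficiency via Lemma~\ref{C:URSDomiUniWS} (and $A\cong B$ in the trivial case), and necessity via the joint-spectrum technique of Lemmas~\ref{L:Homomor} and~\ref{L:JointSpec} applied to the commuting diagonal families $(|A^*|,|A|,\dots,|A^n|)$, recovering individual weights from the moment tuples by division by the positive $b_j$'s --- which is exactly what the paper means when it says the necessity is ``similar to that of Lemma~\ref{P:TruncUniWeigh}.'' Your sequential-limit dichotomy and final projection step are a faithful, fully written-out version of that sketch.
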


\begin{proof}
 If $a_i=b_i$ for  $i\geq 1$, then  $A\cong B$ and $B\lhd A$.
If $(0,b_1,b_2,\cdots,b_s)\in\Sigma_{s+1}(A)$ for  $s\geq 1$, then by Lemma \ref{C:URSDomiUniWS}, $B\lhd A$.
 On the other hand, the proof of necessity is similar to that of Lemma \ref{P:TruncUniWeigh} and is just somehow notationally more complicated.
\end{proof}

\begin{proof}[Proof of Theorem \ref{T:UnilateralAUE} (iii)]
By Lemma \ref{P:NSpectrum}, there exists $\Omega'\subset\Omega$ with $\bP(\Omega')=1$ such that  $\Sigma_n(T(\omega))=\Gamma^n$ for   $n\geq 1$ and $\omega\in\Omega'$.
If $\{0\}\times\Gamma^n\subset\Sigma_{n+1}(A)$ for  $n\geq 1$,
then for each $\omega\in\Omega'$,
$$(0,X_1(\omega),\cdots, X_n(\omega))\subset\{0\}\times\Gamma^n\subset\Sigma_{n+1}(A),\ \forall n\geq 1.$$ By Lemma \ref{C:URSDomiUniWS}, we have $T(\omega)\lhd A$. Furthermore, $T\lhd A$ a.s.
Since $T\lhd A$ a.s., we can choose $\Omega''\subset\Omega'$ with $\bP(\Omega'')=1$ such that $T(\omega)\lhd A$ for $\omega\in\Omega''$.
Assume that $A$ is a unilateral weighted shift with weights $\{a_i\}_{i\in\bN}$. The rest of the proof is divided into two cases.

  {\it Case 1.} $0\notin\Gamma$.
 For each $\omega\in\Omega''$, since $0\notin\Gamma$, $T(\omega)$ is a unilateral weighted shift with positive weights;
 since $T(\omega)\lhd A$, by Lemma \ref{P:URSDomiUniWS}, we have either $X_i(\omega)=a_i$ for  $i\geq1$ or $(0,X_1(\omega),\cdots,X_s(\omega))\in\Sigma_{s+1}(A)$ for  $s\geq 1$.

  {\it Claim 1.} There exists no $\omega\in\Omega''$ such that $X_i(\omega)=a_i$ for  $i\geq 1$.

  If not, we assume that $\omega_0\in\Omega''$ and $X_i(\omega_0)=a_i$ for  $i\geq 1$. Then $a_i\geq\min\Gamma>0$ for  $i$.
So $A$ is left-invertible and $(0,X_1(\omega),\cdots,X_n(\omega))\notin\Sigma_{n+1}(A)$ for  $n\geq 1$ and  $\omega\in\Omega''$. Hence $X_i(\omega)=a_i$ for  $i\geq 1$ and  $\omega\in\Omega''$. Noting that $\{X_i\}$ are \iid random variables, it follows that $\Gamma=\essran X_1=\{a_1\}$, contradicting the hypothesis that $\card~\Gamma>1$.
This proves Claim 1, which implies that $(0,X_1(\omega),\cdots,X_n(\omega))\in\Sigma_{n+1}(A)$ for  $n\geq 1$ and  $\omega\in\Omega''$. Fix  $n\geq 1$. Arbitrarily choose $(\alpha_{i})_{i=1}^n\subset\Gamma$. Then, given $\eps>0$, we have
\[\bP(|X_i-\alpha_i|<\eps, 1\leq i\leq n)=\Pi_{i=1}^n\bP(|X_i-\alpha_i|<\eps)>0.\]
Thus there exists $\omega\in\Omega''$ such that $ |X_i(\omega)-\alpha_i|<\eps$ for $1\leq i\leq n$. Since $\eps$ is arbitrary, we deduce that $(0,\alpha_1,\cdots\alpha_n)\in\Sigma_{n+1}(A)$.
We conclude that $\{0\}\times\Gamma^n\subset\Sigma_{n+1}(A)$ for  $n\geq 1$.

  {\it Case 2.} $0$ is an accumulation point of $\Gamma$.
Choose a countable dense subset $\Delta=\{\alpha_i: i\in\Lambda\}$ of $\Gamma\setminus\{0\}$. Since $0$ is an accumulation point of $\Gamma$, $\Delta$ is also dense in $\Gamma$. There are countably many finite tuples with entries in $\Delta$, denoted as $\eta_1,\eta_2,\eta_3,\cdots$.
For each $i\geq 1$, we assume that $\eta_i=(\alpha_{i,1}, \cdots, \alpha_{i,{n_i}})$, and write $E_{i}$ to denote the truncated weighted shift on $\bC^{n_i+1}$ with weights $(\alpha_{i,1}, \cdots, \alpha_{i,{n_i}})$.
Set $E=\oplus_{i=1}^\infty E_{i}$. By the proof of Theorem \ref{T:ApprUnitEqui} (iii),  $T\cong_a E^{(\infty)}$ a.s. It follows that $E\lhd T$ a.s. Thus $E\lhd A$.
 Note that $E=\oplus_{i=1}^\infty E_{i}$, so we obtain $E_i\lhd A$ for  $i\geq 1$.
In view of Lemma \ref{P:TruncUniWeigh}, for each $i\geq 1$, we have either $(\alpha_{i,1}, \cdots, \alpha_{i, {n_i}},0)=(a_1,\cdots,a_{n_i+1})$ or
$(0, \alpha_{i,1}, \cdots, \alpha_{i,{n_i}},0)\in\Sigma_{n_i+2}(A).$

  {\it Claim 2.} $(0, \alpha_{i,1}, \cdots, \alpha_{i,{n_i}},0)\in\Sigma_{n_i+2}(A)$ for  $i\geq 1$.

  If there exists some $j$ such that  $(\alpha_{j,1}, \cdots, \alpha_{j,{n_j}},0)=(a_1,\cdots,a_{n_j+1})$, then $$(\alpha_{i,1}, \cdots, \alpha_{i,{n_i}},0)\ne (a_1,\cdots,a_{n_i+1})$$ for any $i$ with $i\ne j$, which implies that
$(0, \alpha_{i,1}, \cdots, \alpha_{i,{n_i}},0)\in\Sigma_{n_i+2}(A)$ for $i\ne j.$
We choose $\{\eps_{k}\}_{k=1}^\infty\subset\Delta$ with $\eps_k\rightarrow 0$.
Thus $(0, \alpha_{j,1}, \cdots, \alpha_{j,{n_j}},\eps_k, 0)\in\Sigma_{n_j+3}(A)$, which implies that
$(0, \alpha_{j,1}, \cdots, \alpha_{j,{n_j}},\eps_k)\in\Sigma_{n_j+2}(A)$.
Letting $k\rightarrow\infty$  proves the claim.
Since $\{\eta_i: i\geq 1\}$ are all finite tuples with entries in $\Delta$ which is dense in $\Gamma$, it follows from Claim 2 that $\{0\}\times\Gamma^n\subset\Sigma_{n+1}(A)$ for  $n\geq 1$.\end{proof}

\begin{proof}[Proof of Theorem \ref{T:UnilateralAUE} (iv)]
When $\Gamma=\{0\}$, by Corollary \ref{C:TruncUniWeigh}, the result is clear. Next we assume $\card~\Gamma>1$.
 Choose an at most countable dense subset $\Delta=\{\alpha_i: i\in\Lambda\}$ of $\essran X_1$.
Since $0$ is an isolated point of $\Gamma$, it follows that $0\in\Delta$ and $\Delta\setminus\{0\}$ is dense in $\Gamma_1$.
 Suppose $\eta_1,\eta_2,\eta_3,\cdots$ are all finite tuples with entries in $\Delta$.
For each $i\geq 1$, we assume that $\eta_i=(\alpha_{i,1}, \cdots, \alpha_{i,{n_i}})$, and write $E_{i}$ to denote the truncated weighted shift on $\bC^{n_i+1}$ with weights $(\alpha_{i,1}, \cdots, \alpha_{i,{n_i}})$.
Set $E=\oplus_{i=1}^\infty E_{i}$. By the proof of Theorem \ref{T:ApprUnitEqui} (iii), $T\cong_a E^{(\infty)}$ a.s. Thus $T\lhd A$ a.s. if and only if $E\lhd A$.
 There are infinitely many $i$'s such that $(0,0)=(a_{i,s},a_{i,s+1})$ for some
$s$ with $1\leq s \leq n_i-1$. Thus $E$ can be rewritten as
\[E=0^{(\infty)}\oplus (\oplus_{j=1}^\infty F_j^{(k_j)}),\]
where each $F_j$ is a truncated weighted shift of order $\geq 2$ with weights in $\Delta\setminus\{0\}$.
For each $j\geq 1$, we assume that $F_j$'s weights are $\xi_j=(\beta_{j,1}, \cdots, \beta_{j,{m_j}})$.
Then   $\xi_1,\xi_2,\xi_3,\cdots$ are all finite tuples with entries in $\Delta\setminus\{0\}$.
 Set $F=0\oplus(\oplus_{i=1}^\infty F_i)$.
Thus   $T\lhd A$ a.s. if and only if $F\lhd A$. Note that the latter
means that
$0\lhd A $ and $F_j\lhd A$ for $ j\geq 1.$
In view of Lemma \ref{P:TruncUniWeigh} and Corollary \ref{C:TruncUniWeigh}, $T\lhd A$ a.s. if and only if
(1) $a_1=0$ or $(0,0)\in\Sigma_2(A)$, and (2) for each $j\geq 1$, either $(\beta_{j,1}, \cdots, \beta_{j,{m_j}},0)=(a_1,\cdots,a_{m_j+1})$ or $(0,\beta_{j,1}, \cdots, \beta_{j,{m_j}},0) \in\Sigma_{m_j+2}(A)$.
 The rest of the proof is divided into two cases.

  {\it Case 1.} $a_1=0$. Then there exists no $j\geq 1$ such that
$$(\beta_{j,1}, \cdots, \beta_{j,{m_j}},0)=(a_1,\cdots,a_{m_j+1}),$$ since each $\beta_{j,i}$ is positive.
In this case, $T\lhd A$ a.s. if and only if
$$(0,\beta_{j,1}, \cdots, \beta_{j,{m_j}},0) \in\Sigma_{m_j+2}(A),\ \ \forall j\geq 1.$$
Since $\{\xi_j: j\geq 1\}$ are all finite tuples with entries in $\Delta\setminus\{0\}$, the latter implies that $\{0\}\times \Gamma_1^{n}\times \{0\}\subset\Sigma_{n+2}(A)$ for  $n\geq 1$.

  {\it Case 2.} $a_1\ne 0$.
 If $(0,\beta_{j,1}, \cdots, \beta_{j,{m_j}},0) \in\Sigma_{m_j+2}(A)$ for  $j\geq 1$ or, equivalently, $\{0\}\times \Gamma_1^{n}\times \{0\}\subset\Sigma_{n+2}(A)$ for  $n\geq 1$, then $T\lhd A$ a.s. if and only $(0,0)\in \Sigma_{2}(A)$.
On the other hand, if there exists $k\geq 1$ such that $(0,\beta_{k,1}, \cdots, \beta_{k,{m_k}},0) \notin\Sigma_{m_k+2}(A)$, then
$$(\beta_{k,1}, \cdots, \beta_{k,{m_k}},0)=(a_1,\cdots,a_{m_k+1});$$ hence
$(\beta_{j,1}, \cdots, \beta_{j,{m_j}},0)=(a_1,\cdots,a_{m_j+1})$ for  $j\geq 1$ with $j\ne k$.
So $$(0,\beta_{j,1}, \cdots, \beta_{j,{m_j}},0) \in\Sigma_{m_j+2}(A),\ \ \forall j\geq 1, j\ne k.$$ In this case, $T\lhd A$ a.s. if and only if $(0,0)\in \Sigma_{2}(A)$,
$$\{0\}\times \Big[\Gamma_1^{m_k}\setminus\{(a_1,\cdots,a_{m_k})\}\Big]\times \{0\}\subset\Sigma_{m_k+2}(A)$$
and $$\{0\}\times \Gamma_1^{n}\times \{0\}\subset\Sigma_{n+2}(A),\ \ \forall n\geq 1, ~n\ne m_k.$$
Thus the proof is complete.
\end{proof}

  Now we apply Theorem \ref{T:UnilateralAUE} to two examples.

\begin{eg}\label{E:ThreeCounterEg}
Let $T\sim \{X_n\}_{n=1}^\infty$  with
$$\bP(X_1=0)=\bP(X_1=1)=\bP(X_1=2)=\frac{1}{3}.$$ Thus $0$ is an isolated point of $\essran X_1$. Denote $\Gamma=\essran X_1$ and $\Gamma_1=\Gamma\setminus\{0\}$.
Thus $\Gamma=\{0,1,2\}$ and $\Gamma_1=\{1,2\}$.

  Assume that $\eta_1,\eta_2,\eta_3,\cdots$ are all finite tuples with entries in $\{1,2\}$. For each $i\geq 1$, assume that $\eta_i=(a_{i,1},\cdots, a_{i,n_i})$ and $W_i$ is a truncated weighted shift with weights $(a_{i,1},\cdots, a_{i,n_i})$. Denote by $W_0$ the zero operator on $\bC$.
 We define four unilateral weighted shifts as  $$A_1=  W_1\oplus W_2 \oplus W_3\oplus \cdots,\quad A_2=W_0\oplus W_1\oplus W_2 \oplus W_3\oplus \cdots $$ and
$$A_3=W_0\oplus W_1\oplus W_0\oplus W_2 \oplus W_3\oplus W_4\oplus \cdots,\ A_4=W_1\oplus W_0\oplus W_2 \oplus W_3\oplus  W_4\oplus\cdots. $$
Then one can check that \begin{enumerate}
\item[(i)] the first one in the weight sequence of $A_1$ is not $0$ and $(0,0)\notin\Sigma_2(A_1)$;
\item[(ii)] the first one in the weight sequence of $A_2$ is $0$ and $\{0\}\times\Gamma_1^n\times\{0\}\subset\Sigma_{n+2}(A_2)$ for  $n\geq 1;$
\item[(iii)] $\{0\}\times\Gamma_1^n\times\{0\}\subset\Sigma_{n+2}(A_3)$ for  $n\geq 0$;
\item[(iv)] if $\{\lambda_i\}_{i=0}^\infty$ is the weight sequence of $A_4$, then $$(\lambda_1,\cdots\lambda_{n_1+1})=(a_{1,1},\cdots, a_{1,n_1},0)\in\Gamma_1^{n_1}\times\{0\},$$ $$\{0\}\times\Big[\Gamma_1^{n_1}\setminus\{(\lambda_1,\cdots\lambda_{n_1})\}\Big]\times\{0\}\subset\Sigma_{n_1+2}(A_4),\ \ \textup{and}$$
    \[\{0\}\times\Gamma_1^n\times\{0\}\subset\Sigma_{n+2}(A_4), \ \ \forall  n\geq 0, n\ne n_1.\]
\end{enumerate}
Thus, by Theorem \ref{T:UnilateralAUE} (iv), $A_1$ does not satisfy any one of (a), (b) and (c). So $T \ntriangleleft A_1$ a.s.
Furthermore, $A_2$, $A_3$ and $A_4$ satisfy respectively (a), (b) and (c) of Theorem \ref{T:UnilateralAUE} (iv). Then almost surely we have
$ T\lhd A_i$ for $i=2,3,4.$ On the other hand, we note that $\Sigma_1(|A_i|)\subset\Gamma$ for $1\leq i\leq 4$. By Theorem \ref{T:UnilateralAUE} (i), we obtain $A_i\lhd T$ a.s. for $1\leq i\leq 4$.
\end{eg}

\begin{eg}\label{E:CounterEg}
Let  $T\sim\{X_n\}_{n=1}^\infty$ with
$$\bP(X_1=1)=\frac{1}{2}=\bP(X_1=2).$$
There exist countably infinitely many finite tuples with entries in $\{1,2\}$, denoted as $\eta_1,\eta_2,\eta_3,\cdots$.
For each $i\geq 1$, assume that $\eta_i=(a_{i,1},\cdots, a_{i,n_i})$ and $A_i$ is a truncated weighted shift with weights $(a_{i,1},\cdots, a_{i,n_i})$.
Denote $\Lambda=\{i\in\bN: a_{i,1}=2\}$ and
set $A=  \oplus_{i\in\Lambda} A_i.$ It is easy to see \begin{equation}\label{(4.3)}\{0\}\times\{2\}\times\{1,2\}^n\subset\Sigma_{n+2}(A), \ \ \forall n\ge 0\end{equation} and \begin{equation}\label{(4.4)}\{0\}\times\{1\}\times\{1,2\}^n\cap\Sigma_{n+2}(A)=\emptyset,\ \ \forall n\ge 0.\end{equation}

  Without loss of generality, we assume that $\ran X_i\subset\{1,2\}$ for  $i\geq 1$.
For each $\omega\in\Omega$, in view of Lemma \ref{P:URSDomiUniWS}, $T(\omega)\lhd A$ if and only if $$(0,X_1(\omega),\cdots,X_s(\omega))\in\Sigma_{s+1}(A),\ \ \forall s\geq 1.$$ In view of (\ref{(4.3)}) and (\ref{(4.4)}),  the latter implies that $X_1(\omega)=2$.
Thus $$\bP(T\lhd A)=\bP(X_1=2)=1/2.$$
\end{eg}

In view of Theorem \ref{T:UnilateralAUE} (ii), when $T$ is a left-invertible random weighted shift, one can not expect a unilateral weighted shift $A$ to satisfy that $A\lhd T$ a.s. So we turn to consider a weaker relation.
A {\it hereditary polynomial} is a polynomials $p(z,w)$ in two free variables of the form $$p(z, w)=\sum_{0\leq i,j\leq n} \alpha_{i,j}z^i{w}^j.$$
We write $A\lhd_h B$ to denote that $\|p(A,A^*)\|\leq \|p(B,B^*)\|$ for all hereditary polynomials $p(z,w)$.

\begin{lem}\label{T:vonNeum-US}
Let  $T\sim\{X_n\}_{n=1}^\infty$ with $r<R=1.$
If $S$ is the unilateral shift, then
\begin{enumerate}
\item[(i)] $T \lhd_h S$ a.s.
\item[(ii)] If $0\in\essran X_1$, then $S \lhd_h T$ a.s.; otherwise, $S \ntriangleleft_h T$ a.s.
\end{enumerate}
\end{lem}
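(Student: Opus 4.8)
The plan is to prove the two inequalities separately, in each case reducing everything to the comparison operator $p(S,S^*)$ and to truncated (finite) shifts. The observation that drives part (i) is that \emph{hereditary} polynomials are compatible with dilation. Let $C$ be any contraction, $V$ its minimal isometric dilation on $\cK\supseteq\cH$, so that $\cH$ is invariant for $V^*$ and $C^*=V^*|_\cH$. Then $C^i=P_\cH V^i|_\cH$ and $(C^*)^j=(V^*)^j|_\cH$, and since $(V^*)^j$ preserves $\cH$ these combine, for every hereditary $p(z,w)=\sum_{i,j}\alpha_{ij}z^iw^j$, into the single compression formula $p(C,C^*)=P_\cH\,p(V,V^*)|_\cH$; hence $\|p(C,C^*)\|\le\|p(V,V^*)\|$. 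By the Wold decomposition $V=U\oplus S^{(\kappa)}$ with $U$ unitary, so $\|p(V,V^*)\|=\max\{\|p(U,U^*)\|,\|p(S,S^*)\|\}$, where the shift summand (if $\kappa\ge 1$) contributes $\|p(S,S^*)\otimes I\|=\|p(S,S^*)\|$, while $\|p(U,U^*)\|=\max_{z\in\sigma(U)}|p(z,\bar z)|\le\max_{|z|=1}|p(z,\bar z)|=\|p(S,S^*)\|_e\le\|p(S,S^*)\|$. Thus \emph{every} contraction satisfies $C\lhd_h S$, and since $\|T\|=R=1$ a.s. by Lemma \ref{T:NormSpectral}, part (i) follows at once.

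For part (ii) I first record the reduction $\|p(S,S^*)\|=\sup_N\|p(S_N,S_N^*)\|$, where $S_N$ is the all-ones truncated shift of order $N$: a direct verification gives $p(S_N,S_N^*)=P_N\,p(S,S^*)P_N$, and compressions to the increasing projections $P_N\to I$ have norms increasing to $\|p(S,S^*)\|$. Now suppose $0\in\essran X_1$. By Theorem \ref{T:ApprUnitEqui}(iii) we have $T\cong_a W$ a.s. for the weighted shift $W=\oplus_iW_i^{(\infty)}$ constructed there, whose blocks $W_i$ exhaust the truncated shifts with weights all finite tuples from a fixed dense subset of $\essran X_1$; as $T\cong_a W$ implies $T\approx W$, we get $\|p(T,T^*)\|=\|p(W,W^*)\|$ a.s. Because $R=1$, that dense subset contains values $\alpha\to 1$, so for every $N$ the constant-weight block $C_{\alpha,N}$ (order $N$, all weights $\alpha$) occurs as a summand of $W$; letting $\alpha\to1$ and using continuity of $\text{(weights)}\mapsto\|p(\cdot,\cdot)\|$ yields $\|p(W,W^*)\|\ge\|p(S_N,S_N^*)\|$ for all $N$, hence $\|p(W,W^*)\|\ge\|p(S,S^*)\|$. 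Therefore $S\lhd_h T$ a.s.

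For the remaining case $0\notin\essran X_1$ (so $0<r\le X_n\le1$ a.s., and since $r<R=1$ there are infinitely many $n$ with $X_n<1$) I will exhibit, for a.s.\ $\omega$, a single hereditary polynomial witnessing $S\ntriangleleft_h T$. Let $k=k(\omega)$ be the first index with $X_k<1$ and put $p_k(z,w)=z^k(1-zw)=z^k-z^{k+1}w$. Then $p_k(S,S^*)=S^k(I-SS^*)=e_{k+1}\otimes e_1$ has norm $1$, whereas $p_k(T,T^*)=T^k(I-TT^*)$ is the weighted shift with weights $w_1=X_1\cdots X_k$ and $w_n=(1-X_{n-1}^2)\prod_{l=0}^{k-1}X_{n+l}$ for $n\ge2$. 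Since $X_{n-1}\ge r$ forces $w_n\le 1-r^2<1$ for $n\ge2$, while $w_1\le X_k<1$, we obtain $\|p_k(T,T^*)\|<1=\|p_k(S,S^*)\|$, so $S\ntriangleleft_h T(\omega)$; as the event $\{S\lhd_h T\}$ obeys a zero–one law this gives $S\ntriangleleft_h T$ a.s.

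I expect the main obstacle to be the negative half of (ii): one must locate a hereditary polynomial whose norm for $S$ is produced entirely at the \emph{dead} boundary $e_1$ (where $S^*e_1=0$) rather than by the bulk symbol $p(e^{i\theta},e^{-i\theta})$, and then check that $T$ cannot reproduce that value. The point is that the positivity $r>0$ prevents any interior window of $T$ from acquiring a zero weight, hence from furnishing a clean reducing block that models such a boundary; this is exactly why the dichotomy turns on whether $0\in\essran X_1$, and the polynomials $p_k$ make the heuristic quantitative. A secondary technical care point is justifying $\|p(T,T^*)\|=\|p(W,W^*)\|$ from $T\cong_a W$, which I handle through the algebraic equivalence $T\approx W$ rather than through strong-operator limits.
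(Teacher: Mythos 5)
Your proof is correct, and it diverges from the paper's own argument in instructive ways. For part (i) you and the paper do essentially the same thing—dilate the a.s.\ contraction $T$ to an isometry and compare the isometry with $S$—except that the paper writes down an explicit Sch\"affer-type isometric dilation and then quotes the fact that every isometry $A$ satisfies $A\lhd S$ (in effect, Coburn's universality of $C^*(S)$), whereas you prove that step by the Wold decomposition plus the functional calculus for the unitary summand; your version is more self-contained. For the positive half of (ii) the paper is much shorter: since $0\in\essran X_1$ and $1=R\in\essran X_1$, Theorem \ref{T:UnilateralAUE}(i) immediately gives the stronger conclusion $S\lhd T$ a.s.\ (all polynomials in two free variables), hence $S\lhd_h T$ a.s. Your route through the model $W$ of Theorem \ref{T:ApprUnitEqui}(iii), the constant-weight blocks $C_{\alpha,N}$ with $\alpha\to 1$, and the truncation identity $p(S_N,S_N^*)=P_N\,p(S,S^*)|_{\ran P_N}$ is also valid, but note that this identity holds only because $p$ is hereditary ($\ran P_N$ is $S^*$-invariant; the identity already fails for $p=wz$), so this route yields only $\lhd_h$—sufficient here, but weaker than what the paper's citation provides. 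For the negative half of (ii) your mechanism is the same as the paper's—positivity of $r$ caps every weight of $p(T,T^*)$ coming from $I-TT^*$ by $1-r^2$, and the remaining boundary weight is damped as soon as some $X_i<1$—but the implementations differ: the paper uses the fixed family $p_n(z,w)=z^nw^{n+1}-z^{n+1}w^{n+2}$ together with the estimate $\bP(S\lhd_h T)\le\bP(X_1=1)^n\to 0$, whereas you use a single $\omega$-dependent witness $z^k(1-zw)$, with $k=k(\omega)$ the first index where $X_k<1$, and obtain failure on an almost sure event directly; this is legitimate (disproving $\lhd_h$ requires only one witness per sample, so no zero--one law is even needed), and it trades the paper's limit in $n$ for a random choice of polynomial.
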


\begin{proof}
(i) By the hypothesis, we have $\|T\|=1$ a.s., that is, $T$ is almost surely a contraction. Thus, almost surely, $T$ can be dilated to an isometry $A$. For example, $A$ can be chosen as
\[A=\begin{bmatrix}
T&&&& \\
\sqrt{I-T^*T}& 0&&\\
&I&0&&\\
&&I&0&\\
&&&\ddots&\ddots
\end{bmatrix}.\] If $p(z,w)$ is a hereditary polynomial, then it is easy to verify that
$p(A,A^*)$ is a dilation of $p(T,T^*)$ and $\|p(T,T^*)\|\leq\|p(A,A^*)\|$. It follows that $T\lhd_h A$.
On the other hand, since $A$ is an isometry, we have $A\lhd S$ a.s. So $T\lhd_h S$ a.s.

  (ii) If $0\in\essran X_1$, then by Theorem \ref{T:UnilateralAUE} (i), we have $S\lhd T$ a.s. Therefore $S\lhd_h T$ a.s.
 We assume that $0\notin\essran X_1$.
For each $n\geq 1$, set $p_n(z,w)=z^n{w}^{n+1}-z^{n+1}{w}^{n+2}$. Then $p_n(T,T^*)$ is a backward unilateral random weighted shift with weights
$\{Y_i\}_{i\geq 1}$, where
\[Y_i=\begin{cases}
0,& 1\leq i\leq n,\\
(X_1\cdots X_n)^2X_{n+1},&  i=n+1,\\
(1-X_{i-n-1}^2)(X_{i-n}\cdots X_{i-1})^2X_{i},& i>n+1.
\end{cases}\]
Also we note that $\|p_n(S,S^*)\|=1$. If $\|p_n(S,S^*)\|\leq \|p_n(T,T^*)\|$, then $\|p_n(T,T^*)\|\geq 1$. Note that $1\geq X_i\geq \min\essran X_1>0$ for  $i\geq 1$. This implies that
$(X_1\cdots X_n)^2X_{n+1}\geq 1$ or, equivalently, $(X_1\cdots X_n)^2X_{n+1}=1$. Then
\begin{align*}
\bP(S\lhd_h T)&\leq \bP(\|p_n(S,S^*)\|\leq \|p_n(T,T^*)\|)\\
&\leq \bP(X_1=\cdots =X_n=1)=\bP(X_1=1)^n.
\end{align*}
The proof is complete as $n\to \infty$.

\end{proof}

\begin{rmk} When $T$ is a left-invertible random weighted shift, it is somewhat difficult (for us) to
determine which unilateral weighted shifts $A$ satisfy $A\lhd_h T$ a.s.
\end{rmk}

%%%%%%%%%%%%%%%%%%%%%%%%%%%%%%%%5
\subsection{Bilateral weighted shifts}

This subsection is devoted to describing when a bilateral weighted shift $A$ satisfies $A\lhd T$ or $T\lhd A$, where $T$ is a random weighted shift.   The main result of this subsection is the following

\begin{thm}\label{T:BilateralAUE}
Let  $T\sim\{X_n\}_{n=1}^\infty$
and $A$ be a bilateral weighted shift with nonnegative weights. Then
\begin{enumerate}
\item[(i)] $\bP(A \lhd T)=\begin{cases}
1,& \Sigma_1(A)\subset\Gamma,\\
0,& \Sigma_1(A)\nsubseteq\Gamma,
\end{cases}$ where $\Gamma=\essran X_1.$
    \item[(ii)] If $0\notin\Gamma$, then $T\lhd A$ a.s. if and only if $\{0\}\times\Gamma^n\subset\Sigma_{n+1}(A)$ for  $n\geq 1$.
\item[(iii)] If $0$ is an accumulation point of $\Gamma$, then $T\lhd A$ a.s. if and only if $\Gamma^n\subset\Sigma_n(A)$ for  $n\geq 1$.
    \item[(iv)] If $0$ is an isolated point of $\Gamma$, then $T\lhd A$ a.s. if and only if $\{0\}\times\Gamma_1^{n}\times\{0\}\subset\Sigma_{n+2}(A)$ for  $n\geq 0$, where $\Gamma_1=\Gamma\setminus\{0\}$.
        \end{enumerate}
\end{thm}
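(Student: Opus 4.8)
The plan is to mirror the structure of the unilateral Theorem~\ref{T:UnilateralAUE}, treating part~(i) (the relation $A\lhd T$) separately from parts~(ii)--(iv) (the relation $T\lhd A$), and in the latter splitting according to the position of $0$ relative to $\Gamma=\essran X_1$. The two deterministic ingredients I will lean on throughout are the pattern identity $\Sigma_n(T)=\Gamma^n$ a.s.\ (Lemma~\ref{P:NSpectrum}) and the two characterizations of $\lhd$: the $*$-homomorphism description (Lemma~\ref{L:Homomor}) with its spectral consequence $\sigma(|A|)\subset\sigma(|B|)$ (Corollary~\ref{L:SpecDomina}, Remark~\ref{R:Modulus}), and the joint-spectrum comparison for the commuting family $(|A^*|,|A|,|A^2|,\dots)$ (Lemma~\ref{L:JointSpec}), exactly as in the proofs of Lemmas~\ref{P:TruncUniWeigh} and \ref{P:URSDomiUniWS}.

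For part~(i), the necessity is immediate: if $\Sigma_1(A)\nsubseteq\Gamma$ then $\sigma(|A|)=\Sigma_1(A)\nsubseteq\Gamma=\Sigma_1(T)=\sigma(|T|)$ a.s., so $A\lhd T$ would violate Corollary~\ref{L:SpecDomina}. For sufficiency, suppose $\Sigma_1(A)\subset\Gamma$, so every weight of the bilateral shift $A$ lies in $\Gamma$ and hence every finite window of its weights is a tuple in $\Gamma^k$. Fix a polynomial $p(x,y)$ of degree $l$. Since $p(A,A^*)$ is a band operator, its norm is approximated by $\|p(A,A^*)Qx\|$ for a unit vector $x$ supported on a finite window, and this compression depends only on finitely many weights of $A$ (the Lemma~\ref{L:truncated} phenomenon). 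By Lemma~\ref{P:NSpectrum} that same finite tuple of weights appears, up to arbitrarily small error, as consecutive weights of $T(\omega)$ for a.e.\ $\omega$, and matching it gives $\|p(A,A^*)\|\le\|p(T,T^*)\|$ a.s. Intersecting over a countable dense set of polynomials yields $A\lhd T$ a.s.

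For parts~(iii) and (iv), where $0\in\Gamma$, I would first invoke $T\cong_a W$ a.s.\ (Theorem~\ref{T:ApprUnitEqui}(iii)), which reduces $T\lhd A$ a.s.\ to the deterministic statement $W\lhd A$, where $W$ is the canonical direct sum of truncated weighted shifts running over a dense family of $\Gamma$-patterns; since $W\lhd A$ is equivalent to $W_i\lhd A$ for every truncated summand $W_i$, the task becomes a bilateral analog of Lemmas~\ref{P:TruncUniWeigh}--\ref{P:URSDomiUniWS}, namely that a truncated weighted shift $W_i\lhd A$ iff the corresponding zero-bordered tuple lies in $\Sigma_{\bullet}(A)$, the two borders reflecting the two ends of a truncated shift. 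When $0$ is isolated, $W$ is built from truncations with weights in $\Gamma_1$, forcing $\{0\}\times\Gamma_1^n\times\{0\}\subset\Sigma_{n+2}(A)$; when $0$ is an accumulation point, the leading and trailing zeros are absorbed as limits of small nonzero elements of $\Gamma$, collapsing the condition to $\Gamma^n\subset\Sigma_n(A)$. Part~(ii), with $0\notin\Gamma$, cannot use $T\cong_a W$ and must be argued directly: sufficiency follows from a bilateral version of Lemma~\ref{C:URSDomiUniWS} (perturb a single weight of $A$ to zero to split off a one-sided reducing summand matching a window of $T(\omega)$), applied to $(0,X_1(\omega),\dots,X_n(\omega))\in\{0\}\times\Gamma^n\subset\Sigma_{n+1}(A)$; for necessity one extracts, from $T(\omega)\lhd A$ and the left-boundary vector $e_1$ (where $|T(\omega)^*|e_1=0$, so the $k=1$ diagonal point of $(|T^*|,|T|,|T^2|,\dots)$ is $(0,X_1,X_1X_2,\dots)$), that $(0,X_1(\omega),\dots,X_n(\omega))\in\Sigma_{n+1}(A)$, and then fills out $\{0\}\times\Gamma^n$ using the positive-probability estimate $\bP(|X_i-\gamma_i|<\eps,\,1\le i\le n)>0$ together with the closedness of $\Sigma_{n+1}(A)$.

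I expect the main obstacle to be the necessity directions for the bilateral domination: proving that $T\lhd A$ forces the prescribed zero-bordered $\Gamma$-patterns into $\Sigma_{\bullet}(A)$. This is where the joint-spectrum machinery of Lemma~\ref{L:JointSpec} must be pushed through for a bilateral $A$, together with the modulus-disentangling step (passing from the products $|a_k\cdots a_{k+m-1}|$ back to the individual weights by dividing consecutive coordinates, as in Lemma~\ref{P:TruncUniWeigh}) and careful bookkeeping of how the boundary of $T$ (or the two ends of each truncated summand of $W$) produces the leading and trailing zeros; this index juggling is more delicate than in the unilateral setting. A secondary difficulty is part~(ii), since the absence of the reduction $T\cong_a W$ means the window-matching and boundary-extraction arguments have to be carried out by hand rather than inherited from the deterministic classification.
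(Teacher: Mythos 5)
Your proposal is correct and follows essentially the same route as the paper's proof: part (i) by the modulus-spectrum obstruction (Corollary \ref{L:SpecDomina}) for necessity and finite-window compression matching (Lemma \ref{L:constantCase}/\ref{P:NSpectrum} together with the bilateral analogue of Lemma \ref{L:truncated}, namely Lemma \ref{L:BilatTruncated}) for sufficiency; parts (iii)--(iv) by reducing $T\lhd A$ a.s.\ to the deterministic statement $E\lhd A$ via $T\cong_a E^{(\infty)}$ and then applying the bilateral truncated-shift criterion (Lemma \ref{P:TruncWeigh}, where the absence of a ``beginning'' removes the unilateral exceptional cases); and part (ii) by the direct bilateral domination lemma (Lemma \ref{P:URSDominated}) combined with the positive-probability and closedness argument to fill out $\{0\}\times\Gamma^n$. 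The difficulties you flag (the necessity directions via the joint-spectrum machinery and the boundary-zero bookkeeping) are precisely the ones the paper resolves in Lemmas \ref{P:TruncWeigh} and \ref{P:URSDominated}.
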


\begin{cor}\label{C:bi-extreme}
Let $T\sim \{X_n\}_{n=1}^\infty$ with $\essran X_1=[0,1]$.
If $A$ is a bilateral weighted shift, then  $A\lhd T$ a.s. if and only if $\|A\|\leq 1$.
\end{cor}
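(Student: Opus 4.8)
The plan is to read the result directly off Theorem \ref{T:BilateralAUE} (i), which is the only part of that theorem governing the relation $A\lhd T$. Under the hypothesis $\essran X_1=[0,1]$ we have $\Gamma=[0,1]$, so part (i) asserts that $\bP(A\lhd T)=1$ when $\Sigma_1(A)\subset[0,1]$ and $\bP(A\lhd T)=0$ otherwise; in particular $A\lhd T$ almost surely if and only if $\Sigma_1(A)\subset[0,1]$. Thus the whole task reduces to translating the condition $\Sigma_1(A)\subset[0,1]$ into the norm condition $\|A\|\leq1$.

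For this translation I would invoke Remark \ref{R:Modulus}: writing $\{\lambda_n\}_{n\in\bZ}$ for the (nonnegative) weights of $A$, we have $\Sigma_1(A)=\sigma(|A|)=\overline{\{\lambda_n:n\in\bZ\}}$. Since the weights are nonnegative and bounded, $\Sigma_1(A)$ is a compact subset of $[0,\infty)$, and its largest element is exactly $\sup_n\lambda_n$. On the other hand, the standard computation for a weighted shift, immediate from $\|Ae_n\|=\lambda_n$ together with the orthogonality of the basis, gives $\|A\|=\sup_n|\lambda_n|=\sup_n\lambda_n$. Hence $\max\Sigma_1(A)=\|A\|$, and because $\Sigma_1(A)\subset[0,\infty)$ automatically, the inclusion $\Sigma_1(A)\subset[0,1]$ holds precisely when $\max\Sigma_1(A)=\|A\|\leq1$.

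Combining the two steps yields that $A\lhd T$ almost surely if and only if $\Sigma_1(A)\subset[0,1]$, which in turn holds if and only if $\|A\|\leq1$; this is the assertion. I expect no genuine obstacle here: the argument is a one-line specialization of Theorem \ref{T:BilateralAUE} (i), exactly parallel to the unilateral case treated in Corollary \ref{C:extreme}, the only point requiring a moment's care being the elementary identity $\|A\|=\max\Sigma_1(A)$ for bilateral shifts with nonnegative weights.
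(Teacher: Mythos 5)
Your proof is correct and is exactly the paper's intended argument: the corollary is stated immediately after Theorem \ref{T:BilateralAUE} with no separate proof, because it is the specialization of part (i) to $\Gamma=[0,1]$ combined with the identity $\max\Sigma_1(A)=\|A\|$ coming from Remark \ref{R:Modulus} and the elementary norm formula for weighted shifts. (The only cosmetic point is that the corollary does not assume nonnegative weights, but since $\Sigma_1(A)$ is defined via the moduli $|\lambda_n|$ and a shift is unitarily equivalent to the one with weights $|\lambda_n|$, your argument applies verbatim.)
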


  To give the proof of Theorem \ref{T:BilateralAUE}, we need to make some preparation.
For the convenience of the reader, we separate a few ingredients to stress the similarity with the unilateral case and list them as results below. Their proofs are not hard by modifying the previous arguments. We only indicate how they can be proved.

\begin{lem}\label{P:TruncWeigh}
Let $A$ be a bilateral weighted shift with nonnegative weights $\{a_i\}_{i\in\bZ}$ and $B$ be a truncated weighted shift with positive weights $\{b_i\}_{i=1}^s$. Then $B\lhd A$ if and only if
$(0,b_1,b_2,\cdots,b_s,0)\in\Sigma_{s+2}(A).$
\end{lem}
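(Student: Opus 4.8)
The plan is to follow the blueprint of Lemma \ref{P:TruncUniWeigh}, adapting each direction to the bilateral setting. The only genuine structural change is that a bilateral shift has no ``leftmost'' basis vector, so the alternative $(b_1,\dots,b_s,0)=(a_1,\dots,a_{s+1})$ appearing in the unilateral statement cannot occur here, and the criterion collapses to the single condition $(0,b_1,\dots,b_s,0)\in\Sigma_{s+2}(A)$.

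For sufficiency I would assume $(0,b_1,\dots,b_s,0)\in\Sigma_{s+2}(A)$ and realize a copy of $B$ as a limit of reducing subspaces, exactly as in the perturbation step of Lemma \ref{P:TruncUniWeigh}. Concretely, for each $\eps>0$ the membership produces an index $i\in\bZ$ with $(a_{i+1},\dots,a_{i+s+2})$ within $\eps$ (in the max-norm) of $(0,b_1,\dots,b_s,0)$. A finite-rank perturbation $K_\eps$ with $\|K_\eps\|<\eps$ then makes these $s+2$ weights exactly $0,b_1,\dots,b_s,0$; because both end weights are now $0$, the span $\cM_\eps=\vee\{e_{i+2},\dots,e_{i+s+2}\}$ reduces $A+K_\eps$ and $(A+K_\eps)|_{\cM_\eps}\cong B$. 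Hence $\|p(B,B^*)\|\le\|p(A+K_\eps,A^*+K_\eps^*)\|$ for every polynomial $p$ in two free variables, and letting $\eps\to 0$ gives $B\lhd A$. The difference from the unilateral case is merely that one exploits the two-sided zeros rather than the structural zero at the start of a unilateral shift.

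For necessity I would invoke Lemma \ref{L:Homomor} to obtain a unital $*$-homomorphism $\varphi\colon C^*(A)\to C^*(B)$ with $\varphi(A)=B$, so that $\varphi(|A^*|)=|B^*|$ and $\varphi(|A^k|)=|B^k|$ for $1\le k\le s+1$. Since $(|A^*|,|A|,|A^2|,\dots,|A^{s+1}|)$ and $(|B^*|,|B|,\dots,|B^{s+1}|)$ are commuting families of diagonal operators, Lemma \ref{L:JointSpec}(ii) yields
$$\sigma(|B^*|,|B|,\dots,|B^{s+1}|)\subset\sigma(|A^*|,|A|,\dots,|A^{s+1}|).$$
As $B$ is nilpotent of index $s+1$, the entry of the left-hand joint spectrum coming from the first basis vector is $(0,b_1,b_1b_2,\dots,b_1\cdots b_s,0)$, so by Lemma \ref{L:JointSpec}(i) there is a sequence $i_n\in\bZ$ with
$$a_{i_n-1}\to 0,\quad a_{i_n}\to b_1,\quad a_{i_n}a_{i_n+1}\to b_1b_2,\ \dots,\ a_{i_n}\cdots a_{i_n+s-1}\to b_1\cdots b_s,\quad a_{i_n}\cdots a_{i_n+s}\to 0.$$
Because every $b_j>0$, successive division recovers $a_{i_n+j}\to b_{j+1}$ for $0\le j\le s-1$ and forces $a_{i_n+s}\to 0$; combined with $a_{i_n-1}\to 0$ this gives $(a_{i_n-1},\dots,a_{i_n+s})\to(0,b_1,\dots,b_s,0)$, i.e. $(0,b_1,\dots,b_s,0)\in\Sigma_{s+2}(A)$.

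The step I expect to require the most care is this division in the necessity argument: turning convergence of the products $a_{i_n}\cdots a_{i_n+k}$ into convergence of the individual weights, with uniform control supplied by the lower bounds $b_j>0$ and the global bound $\|A\|$, just as in the quantitative $\eps$--$\delta$ estimates of Lemma \ref{P:TruncUniWeigh}. The conceptually clarifying observation, and the reason the bilateral statement is cleaner than its unilateral counterpart, is that here every position $i$ satisfies $|A^*|_{ii}=a_{i-1}$ with $a_{i-1}$ a genuine weight; there is no structural zero analogous to the one forced by $A^*e_1=0$ in the unilateral shift, so the leading $0$ of $(0,b_1,\dots,b_s,0)$ can only arise as an honest value or limit of weights, and the ``$B$ sits at the start'' alternative never appears.
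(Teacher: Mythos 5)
Your proposal is correct and follows exactly the route the paper intends: the published proof of this lemma simply says it is ``similar to that of Lemma \ref{P:TruncUniWeigh}'', and your argument is precisely that adaptation --- sufficiency via an $\eps$-small finite-rank perturbation producing a reducing copy of $B$, and necessity via Lemma \ref{L:Homomor} together with the joint-spectrum comparison of Lemma \ref{L:JointSpec}, with the correct observation that the ``$B$ sits at the start'' alternative disappears because $|A^*|$ has no structural zero in the bilateral case. Your sequential division argument for recovering the individual weights from the products is just a cleaner phrasing of the paper's explicit $\eps$--$\delta$ estimates and relies on the same positivity of the $b_j$'s, so no gap remains.
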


\noindent  The proof of the above is similar to that of Lemma \ref{P:TruncUniWeigh}.

\begin{cor}\label{C:Tep}
Let $A$ be a bilateral weighted shift with nonnegative weights $\{a_i\}_{i\in\bZ}$. Then $0\lhd A$ if and only if
$(0,0)\in\Sigma_{2}(A).$
\end{cor}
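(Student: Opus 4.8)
The plan is to prove the two implications separately, following the template of Lemma~\ref{P:TruncUniWeigh} (equivalently Lemma~\ref{P:TruncWeigh}) but exploiting the simplification that a bilateral shift has no distinguished ``first'' weight, so the alternative $a_1=0$ that appears in Corollary~\ref{C:TruncUniWeigh} disappears and only the $\Sigma_2$-condition survives. Throughout I would take $0$ to be the zero operator on $\bC$, i.e.\ the truncated weighted shift of order $1$; since $\|p(0,0)\|$ equals the modulus of the constant term of $p$ irrespective of the underlying space, the relation $0\lhd A$ is insensitive to this choice.

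For the sufficiency direction I would assume $(0,0)\in\Sigma_2(A)$ and fix a polynomial $p(z,w)$ together with $\eps>0$. Writing $Ae_i=a_ie_{i+1}$ in an orthonormal basis $\{e_i\}_{i\in\bZ}$, the hypothesis supplies an index $i\in\bZ$ with $|a_i|<\eps$ and $|a_{i+1}|<\eps$. I would then set $K=-a_ie_{i+1}\otimes e_i-a_{i+1}e_{i+2}\otimes e_{i+1}$, so that $\|K\|=\max\{|a_i|,|a_{i+1}|\}<\eps$ and $A+K$ is again a bilateral weighted shift whose $i$-th and $(i+1)$-th weights vanish. Consequently $(A+K)e_{i+1}=0=(A+K)^*e_{i+1}$, so $\bC e_{i+1}$ reduces $A+K$ and the restriction there is the zero operator; this gives $\|p(0,0)\|\le\|p(A+K,(A+K)^*)\|$. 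Letting $\eps\to0$ drives $A+K\to A$ in norm, whence $p(A+K,(A+K)^*)\to p(A,A^*)$ and $\|p(0,0)\|\le\|p(A,A^*)\|$. Since $p$ is arbitrary, this yields $0\lhd A$.

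For the necessity direction I would start from $0\lhd A$ and invoke Lemma~\ref{L:Homomor} to obtain a unital $*$-homomorphism $\varphi\colon C^*(A)\to C^*(0)$ with $\varphi(A)=0$. Because $*$-homomorphisms commute with the continuous functional calculus, $\varphi(|A|)=|0|=0$ and $\varphi(|A^*|)=|0^*|=0$. Now $|A^*|$ and $|A|$ are the commuting diagonal operators $\textup{diag}\{a_{i-1}\}_{i\in\bZ}$ and $\textup{diag}\{a_i\}_{i\in\bZ}$, so Lemma~\ref{L:JointSpec}(i) identifies their joint spectrum as $\overline{\{(a_{i-1},a_i):i\in\bZ\}}=\Sigma_2(A)$, while the pair $(|0^*|,|0|)=(0,0)$ has joint spectrum $\{(0,0)\}$. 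Restricting $\varphi$ to $C^*(|A^*|,|A|)$ and applying Lemma~\ref{L:JointSpec}(ii) then gives $\{(0,0)\}\subset\Sigma_2(A)$, that is, $(0,0)\in\Sigma_2(A)$.

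I do not expect a serious obstacle; the two points that need care are the bookkeeping in the joint-spectrum step---checking that $(|A^*|,|A|)$ produces exactly the consecutive pairs $(a_{i-1},a_i)$ and that their closure over $i\in\bZ$ is precisely $\Sigma_2(A)$---and the verification that the norm perturbation $A+K$ converges to $A$ so that the compression estimate survives the limit. Both are routine, which matches the earlier remark that the argument parallels Lemma~\ref{P:TruncWeigh}, so the main ``work'' is simply ensuring the bilateral indexing is handled correctly.
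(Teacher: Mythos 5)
Your proof is correct and is exactly the argument the paper has in mind: the paper marks this corollary as ``straightforward'' precisely because it is the order-one specialization of Lemma~\ref{P:TruncWeigh} (proved like Lemma~\ref{P:TruncUniWeigh}), namely the finite-rank perturbation trick for sufficiency and the Lemma~\ref{L:Homomor}/Lemma~\ref{L:JointSpec} joint-spectrum argument for necessity. Both of your directions, including the key simplification that for the pair $(|A^*|,|A|)$ the joint spectrum is the closure of the consecutive pairs $(a_{i-1},a_i)$, hence equals $\Sigma_2(A)$ with no product-unscrambling needed, follow that template.
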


\noindent The proof of the above  is  straightforward.

\begin{lem}\label{L:BilatTruncated}
Let $\{e_i\}_{i=1}^\infty$ be an orthonormal basis of $\cH$ and $\{f_i\}_{i=\in \bZ}$ be an
orthonormal basis of $\cK$. Assume that $$Ae_i=\lambda_i e_{i+1},\quad \forall i\geq 1,$$
and $$Bf_i=\mu_i f_{i+1},\quad \forall i\in \bZ, $$
where $\{\lambda_i\}_{i\in\bN},$ $\{\mu_i\}_{i\in\bZ}$ are bounded sequences of nonnegative real numbers.
Assume that $s\geq 1$, $t\in\bZ$ and $p(x,y)$ is a polynomial in two free variables $x,y$ with degree $l<s$.
\begin{enumerate}
\item[(i)] If $(\lambda_{s+i})_{i=-l}^{n+l}=(\mu_{t+i})_{i=-l}^{n+l}$, then $p(A^*,A)Q_n\cong p(B^*,B)Q_n'$, where $Q_n$ is the orthogonal
projection of $\cH$ onto $\vee\{e_i: s\leq i\leq s+n\}$ and  $Q_n'$ is the orthogonal
projection of $\cK$ onto $\vee\{f_i: t\leq i\leq t+n\}$.
\item[(ii)] If $(0,\lambda_{1},\lambda_{2},\cdots,\lambda_{n+l})=(\mu_{i})_{i=t}^{t+n+l},$ then $p(A^*,A)L_n\cong p(B^*,B)L_n'$, where $L_n$ is the orthogonal
projection of $\cH$ onto $\vee\{e_i: 1\leq i\leq n\}$ and  $L_n'$ is the orthogonal
projection of $\cK$ onto $\vee\{f_i: t+1\leq i\leq t+n\}$.
\end{enumerate}
\end{lem}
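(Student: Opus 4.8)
The plan is to exploit the \emph{band} (locality) structure of $p(A^*,A)$, exactly as in the proof of Lemma \ref{L:truncated}, and then to transport the relevant finite block along the matched window via the obvious basis identification; both parts are the bilateral analogues of that lemma, the only new issue being the left boundary of the unilateral shift in (ii).

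First I would record the locality estimate. Since all weights are real and nonnegative, $A^*e_j=\lambda_{j-1}e_{j-1}$ and $Ae_j=\lambda_j e_{j+1}$, with the convention that there is no $e_j$ for $j\le 0$ (equivalently $\lambda_0=0$), and analogously for $B$. Hence any monomial in two free variables of degree $d\le l$, evaluated at $(A^*,A)$, sends $e_i$ to a scalar multiple of some $e_j$ with $|i-j|\le d$, the scalar being a product of weights $\lambda_k$ whose indices $k$ stay within the interval $[i-l,\,i+l-1]$ traversed by the word. Summing over the (finitely many) monomials of $p$, one obtains that $p(A^*,A)e_i\in\vee\{e_j: i-l\le j\le i+l\}$ and that its coefficients depend only on the weights $\lambda_k$ with $i-l\le k\le i+l$.

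For part (i), the hypothesis $l<s$ forces $s-l\ge 1$, so for every column index $i\in[s,s+n]$ the whole weight window $[s-l,\,s+n+l]$ governing $p(A^*,A)e_i$ lies in the interior of $A$ and no left-boundary effect occurs. The assumption $(\lambda_{s+i})_{i=-l}^{n+l}=(\mu_{t+i})_{i=-l}^{n+l}$ says precisely that, under the identification $e_{s+i}\mapsto f_{t+i}$ for $-l\le i\le n+l$, the weights of $A$ and $B$ on this window agree. By the locality estimate this identification intertwines the action of each monomial, hence of $p(A^*,A)$ and $p(B^*,B)$, on the columns indexed by $[s,s+n]$. Writing $\cH_0=\vee\{e_j: s-l\le j\le s+n+l\}$ and $\cK_0=\vee\{f_j: t-l\le j\le t+n+l\}$, one sees that $p(A^*,A)Q_n=M\oplus 0$ and $p(B^*,B)Q_n'=M'\oplus 0$ relative to $\cH_0\oplus\cH_0^\perp$ and $\cK_0\oplus\cK_0^\perp$, where the finite blocks satisfy $M=M'$ under the identification and both complements are infinite dimensional; this yields the claimed unitary equivalence. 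Part (ii) is identical in spirit, the new feature being that for $i\in[1,n]$ the downward window now meets the wall of $A$. The hypothesis forces $\mu_t=0$, and this single zero weight reproduces that wall on the $B$-side, since $Bf_t=0=B^*f_{t+1}$; thus words starting in $\{f_{t+1},\dots,f_{t+n}\}$ never descend below $f_{t+1}$, while $\mu_{t+i}=\lambda_i$ for $1\le i\le n+l$ matches all interior weights actually used, and one concludes as before.

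The only point I would treat with genuine care — and the step I expect to be the main obstacle — is the boundary bookkeeping in (ii): one must verify that the zero weight $\mu_t$ really seals off the left window, so that the possibly mismatched weights $\mu_{t-1},\mu_{t-2},\dots$ are \emph{never} sampled when $p(B^*,B)$ is evaluated on $\{f_{t+1},\dots,f_{t+n}\}$, and symmetrically that the absence of $e_0$ seals $A$ at index $1$; once this is confirmed the two finite blocks coincide. Everything else is the same locality argument already used for Lemma \ref{L:truncated}, so I would present it briefly.
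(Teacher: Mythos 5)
Your proof is correct, and it is exactly the ``direct verification'' the paper has in mind: the paper in fact omits the proof of this lemma entirely, remarking only that it can be verified directly, and your band/locality estimate with the matched-window basis identification is the natural way to carry that out. The one genuinely delicate point, the left boundary in (ii), you resolve correctly: any path issuing from $\{f_{t+1},\dots,f_{t+n}\}$ that reaches index $t$ or below must first apply $B^*$ to $f_{t+1}$, picking up the factor $\mu_t=0$ (just as any $A$-path reaching below index $1$ picks up $A^*e_1=0$), so the unspecified weights $\mu_{t-1},\mu_{t-2},\dots$ never enter the matrix entries.
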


\noindent Everything in the above lemma, although seemingly technical,  can be verified directly.

\begin{lem}\label{P:URSDominated}
Let $A$ be a bilateral weighted shift with nonnegative weights $\{a_i\}_{i\in\bZ}$ and $B$ be a unilateral weighted shift with positive weights $\{b_i\}_{i=1}^\infty$. Then $B\lhd A$ if and only if
 $(0,b_1,b_2,\cdots,b_s)\in\Sigma_{s+1}(A)$ for  $s\geq 1$.
\end{lem}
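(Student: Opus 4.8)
The plan is to follow the template of the unilateral--unilateral comparison in Lemma \ref{P:URSDomiUniWS} and the truncated comparison in Lemma \ref{P:TruncUniWeigh}, proving the two implications separately. The conceptual point worth isolating first is why the alternative ``$a_i=b_i$ for all $i$'' that appears in Lemma \ref{P:URSDomiUniWS} is absent here: since $A$ is bilateral its weight sequence has no initial segment, so the distinguished ``left boundary'' tuple carried by $B$ (coming from the vanishing of $B^*$ on the first basis vector) can never be matched exactly by $A$ and must instead be approached in the limit, which is exactly the content of $(0,b_1,\dots,b_s)\in\Sigma_{s+1}(A)$.

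For sufficiency, assume $(0,b_1,\dots,b_s)\in\Sigma_{s+1}(A)$ for every $s\geq1$. Fix a polynomial $p(x,y)$ of degree $l$ and an integer $n\geq1$, and set $s=n+l$. By hypothesis there is, for each $\eps>0$, an index $t\in\bZ$ with $\|(0,b_1,\dots,b_s)-(a_{t+1},\dots,a_{t+s+1})\|_{\max}<\eps$. A finite-rank perturbation $K$ with $\|K\|<\eps$ turns the weight $a_{t+1}$ into $0$ and the subsequent weights into $b_1,\dots,b_s$ exactly; introducing a zero weight splits off a reducing subspace $\vee\{f_i:i\geq t+2\}$ on which $A+K$ acts as a \emph{unilateral} weighted shift whose leading weights are $b_1,\dots,b_s$. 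Compressing to the first $n$ coordinates and invoking Lemma \ref{L:BilatTruncated}(ii) gives $p(B^*,B)L_n\cong p((A+K)^*,A+K)Q$ for a suitable projection $Q$, whence $\|p(B^*,B)L_n\|\leq\|p((A+K)^*,A+K)\|\leq\|p(A^*,A)\|+C\eps$ with $C$ depending only on $p$ and $\|A\|$. Letting $\eps\to0$ and then $n\to\infty$ yields $\|p(B^*,B)\|\leq\|p(A^*,A)\|$, i.e.\ $B\lhd A$.

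For necessity, suppose $B\lhd A$. By Lemma \ref{L:Homomor} there is a unital $*$-homomorphism $\varphi\colon C^*(A)\to C^*(B)$ with $\varphi(A)=B$, and hence $\varphi(|A^*|)=|B^*|$ and $\varphi(|A^k|)=|B^k|$ for all $k\geq1$. For each fixed $s$ the tuples $(|A^*|,|A|,|A^2|,\dots,|A^s|)$ and $(|B^*|,|B|,\dots,|B^s|)$ are commuting families of diagonal operators, so Lemma \ref{L:JointSpec} gives $\sigma(|B^*|,|B|,\dots,|B^s|)\subset\sigma(|A^*|,|A|,\dots,|A^s|)$, each joint spectrum being the closure of the corresponding diagonal-entry tuples. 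Evaluating $B$ on its first basis vector produces the distinguished tuple $(0,b_1,b_1b_2,\dots,b_1\cdots b_s)$ in the left-hand spectrum, while the right-hand spectrum is the closure of $\{(a_{i-1},a_i,a_ia_{i+1},\dots,a_i\cdots a_{i+s-1}):i\in\bZ\}$. Thus one can find $i_k\in\bZ$ along which $a_{i_k-1}\to0$ and $a_{i_k}\cdots a_{i_k+j-1}\to b_1\cdots b_j$ for $1\leq j\leq s$; dividing consecutive products and using $b_j>0$ recovers $a_{i_k+j-1}\to b_j$ for each $j$, exactly as in the proof of Lemma \ref{P:TruncUniWeigh}. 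Together with $a_{i_k-1}\to 0$ this shows the $s+1$ consecutive weights $(a_{i_k-1},\dots,a_{i_k+s-1})$ converge to $(0,b_1,\dots,b_s)$, so $(0,b_1,\dots,b_s)\in\Sigma_{s+1}(A)$.

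I expect the main obstacle to be the necessity direction, specifically the passage from the joint-spectrum containment (which only controls the \emph{products} $a_i\cdots a_{i+j-1}$ of consecutive weights) back to the individual weights $a_{i_k+j-1}$. This step relies essentially on the positivity hypothesis $b_j>0$, since it is the uniform lower bound on the partial products $b_1\cdots b_j$ that keeps the divisions bounded; without it the recovery of individual weights would fail, which is also the structural reason the hypothesis on $B$ cannot be weakened.
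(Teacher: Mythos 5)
Your proof is correct and is exactly the argument the paper intends: the paper's own ``proof'' of this lemma is just the remark that it can be modified from Lemma \ref{C:URSDomiUniWS} and Lemma \ref{P:TruncUniWeigh}, and your two directions --- sufficiency via the small finite-rank perturbation creating a zero weight plus Lemma \ref{L:BilatTruncated}(ii), and necessity via Lemma \ref{L:Homomor}, the joint-spectrum containment of Lemma \ref{L:JointSpec}, and the division argument that uses $b_j>0$ to recover individual weights from consecutive products --- are precisely those modifications. Your structural explanation for why the exceptional alternative ``$a_i=b_i$ for all $i$'' of Lemma \ref{P:URSDomiUniWS} disappears (a bilateral shift has no distinguished boundary tuple in its joint spectrum, so the tuple $(0,b_1,\dots,b_s)$ can only arise as a limit of consecutive-weight tuples, which is membership in $\Sigma_{s+1}(A)$) is also the right one.
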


\noindent The proof of the above can be modified from that of Lemma \ref{C:URSDomiUniWS} and Lemma \ref{P:TruncUniWeigh}.

  Now the preparation is done, and we are ready to give

\begin{proof}[Proof of Theorem \ref{T:BilateralAUE}] (i)
If $\Sigma_1(A)\nsubseteq\Gamma$, then it follows from Lemma \ref{P:NSpectrum} that $\Sigma_1(A)\nsubseteq\Sigma_1(T)$ a.s. That is,
 $\sigma(|A|)\nsubseteq\sigma(|T|)$ a.s.  By Corollary \ref{L:SpecDomina}, $A\ntriangleleft T$ a.s.
In the remaining we assume that $\Sigma_1(A)\subset\Gamma$. We shall show that $A\lhd T$ a.s.
Assume that $\{e_i\}_{i=1}^{\infty}$ is an orthonormal basis of $\cH$ and $Te_i=X_i e_i$ for $i\geq 1$.
Assume that $\{f_i\}_{i\in\bZ}$ is an orthonormal basis of $\cK$ and $Af_i=\mu_i f_{i+1}$ for $i\in\bZ$. By the hypothesis, we have $\{\mu_i: i\in\bZ\}\subset\Gamma$.
Choose a denumerable dense subset $\{\alpha_i: i\in\Lambda\}$ of $\Gamma$ and set $$\Omega'=\bigcap_{k\geq 1}\bigcap_{(i_1,\cdots,i_k)\subset\Lambda}\Big\{\liminf_{n\to\infty} \|(X_{n+j})_{j=1}^k-(\alpha_{i_j})_{j=1}^k\|_{\text{max}}=0\Big\}.$$  By Lemma \ref{L:constantCase}, we have $\bP(\Omega')=1$.
Now choose $\omega\in\Omega'$ and denote $\lambda_j=X_j(\omega)$ for each $j\geq 1$.
Thus $T(\omega)$ is a unilateral weighted shift with weights $\{\lambda_j\}$. Since $\{\alpha_i: i\in\Lambda\}$ is dense in $\essran X_1$, for any $k$-tuple $(\beta_1,\cdots,\beta_k)$ in $\Gamma$, we have
\begin{equation}\label{(4.44)}
\liminf_{n\to\infty} \|(\lambda_{n+i})_{i=1}^k-(\beta_{i})_{i=1}^k\|_{\max}=0.
\end{equation}
We denote $B=T(\omega)$.
It suffices to prove $A\lhd B$.
 Fix a polynomial $p(x,y)$ in two free variables and assume that degree $p(x,y)=l$.
For each $n\geq 1$, denote by $Q_n$ the orthogonal projection of $\cK$ onto $\vee\{f_i: -n\leq i\leq n\}$.
Note that $\|p(A^*,A)Q_n\|\rightarrow \|p(A^*,A)\|$ as $n\rightarrow\infty$.
Fix  $n\geq 1$. By the hypothesis, $\{\mu_i: -n-l\leq i\leq n+l\}\subset\Gamma$. In view of (\ref{(4.44)}), for any $\eps>0$, we can find $m_0>n+l$ such that
\[\|(\mu_i)_{i=-n-l}^{n+l}-(\lambda_j)_{j=m_0-n-l}^{m_0+n+l}\|_{\max}<\eps.\]
Then there exists a finite-rank operator $K$ on $\cH$ with $\|K\|<\eps$ such that $B+K$ is a
unilateral weighted shift satisfying $(B+K)e_i=\lambda_i'e_{i+1}$ for $i\geq 1$ and
\[(\lambda_{m_0-n-l}',\lambda_{m_0-n-l+1}',\cdots,\lambda_{m_0+n+l}')=(\mu_{-n-l},\mu_{-n-l+1},\cdots,\mu_{n+l}).\]
By Lemma \ref{L:BilatTruncated} (i), we have $p(A^*,A)Q_n\cong p\big(B^*+K^*,B+K\big)Q_n'$, where $Q_n'$ is the orthogonal projection of $\cH$ onto $\vee\{e_j: m_0-n\leq j\leq m_0+n\}$. It follows that
\[\|p(A^*,A)Q_n\|=\|p(B^*+K^*,B+K)Q_n'\|\leq \|p(B^*+K^*,B+K)\|.\]

   (ii)
By Lemma \ref{P:NSpectrum}, there exists $\Omega'\subset\Omega$ with $\bP(\Omega')=1$ such that $\Sigma_n(T(\omega))=\Gamma^n$ for all $n$ and  $\omega\in\Omega'$.
Since $0\notin\Gamma$, $T(\omega)$ is a unilateral weighted shift with positive weights for $\omega\in\Omega'$.
 For each $\omega\in\Omega'$ and  $n\geq 1$, we have $(0,X_1(\omega),\cdots,X_n(\omega))\in \{0\}\times \Gamma^n \subset\Sigma_{n+1}(A)$.
By Lemma \ref{P:URSDominated}, we have $T(\omega)\lhd A$. This proves the sufficiency.

  Conversely,  by Lemma \ref{P:URSDominated}, $T\lhd A$ a.s. implies that there exists $\Omega''\subset\Omega'$ with $\bP(\Omega'')=1$ such that $(0,X_1(\omega),\cdots,X_n(\omega))\in\Sigma_{n+1}(A)$ for $n\geq 1$ and  $\omega\in\Omega''$.
Hence \begin{equation}\label{(4.5)}\Sigma_{n+1}(A)\supset\{(0,X_1(\omega),\cdots,X_n(\omega)): \omega\in\Omega''\},\quad\forall n\geq 1.\end{equation}

\noindent Fix  $n\geq 1$. Arbitrarily choose $(\alpha_{i})_{i=1}^n\subset\essran X_1$. Then, given $\eps>0$, we have
\[\bP(\max_{1\leq i\leq n}|X_i-\alpha_i|<\eps)=\Pi_{i=1}^n\bP(|X_i-\alpha_i|<\eps)>0.\]
Thus there exists $\omega\in\Omega''$ such that $ |X_i(\omega)-\alpha_i|<\eps$ for $1\leq i\leq n$. Since $\eps$ is arbitrary, by (\ref{(4.5)}), we deduce that $(0,\alpha_1,\cdots\alpha_n)\in\Sigma_{n+1}(A)$.
By the discussion above, we have $\{0\}\times\Gamma^n\subset\Sigma_{n+1}(A)$ for  $n\geq 1$.
This ends the proof.

  (iii)
Choose an at most countable dense subset $\Delta=\{\alpha_i: i\in\Lambda\}$ of $\Gamma\setminus\{0\}$. Since $0$ is an accumulation point of $\essran X_1$, $\Delta$ is also dense in $\Gamma$.
Assume that $\eta_1,\eta_2,\eta_3,\cdots$ are all finite tuples with entries in $\Delta$.
For each $i\geq 1$, we assume that $\eta_i=(\alpha_{i,1}, \cdots, \alpha_{i,{n_i}})$ and write $E_{i}$ to denote the truncated weighted shift on $\bC^{n_i+1}$ with weights $(\alpha_{i,1}, \cdots, \alpha_{i,{n_i}})$.
Set $E=\oplus_{i=1}^\infty E_{i}$. By the proof of Theorem \ref{T:ApprUnitEqui} (iii),  $T\cong_a E^{(\infty)}$ a.s. Thus $T\lhd A$ a.s. if and only if $E\lhd A$.
Note that $E=\oplus_{i=1}^\infty E_{i}$. Then $E\lhd A$ if and only if $E_i\lhd A$ for $i\geq 1$.
In view of Lemma \ref{P:TruncWeigh}, the latter implies that
$(0, \alpha_{i,1}, \cdots, \alpha_{i,{n_i}},0)\in\Sigma_{n_i+2}(A)$ for $i\geq 1.$
Since $\{\eta_i: i\geq 1\}$ are all finite tuples with entries in $\Delta$ which is dense in $\Gamma$, we conclude that
$T\lhd A$ a.s. if and only if $\Gamma^n\subset\Sigma_n(A)$ for  $n\geq 1$.

  (iv) In the case that $\Gamma=\{0\}$, by Corollary \ref{C:Tep}, the result is clear. Next we deal with the case that $\card~\Gamma>1$.
We proceed as in the proof of Theorem \ref{T:UnilateralAUE} (iv).
Choose an at most countable dense subset $\Delta=\{\alpha_i: i\in\Lambda\}$ of $\essran X_1$.
Since $0$ is an isolated point of $\Gamma$, it follows that $0\in\Delta$ and $\Delta\setminus\{0\}$ is dense in $\Gamma_1$.
Suppose $\eta_1,\eta_2,\eta_3,\cdots$ are all finite tuples with entries in $\Delta$.
For each $i\geq 1$, we assume that $\eta_i=(\alpha_{i,1}, \cdots, \alpha_{i,{n_i}})$, and write $E_{i}$ to denote the truncated weighted shift on $\bC^{n_i+1}$ with weights $(\alpha_{i,1}, \cdots, \alpha_{i,{n_i}})$.
Set $E=\oplus_{i=1}^\infty E_{i}$. By the proof of Theorem \ref{T:ApprUnitEqui} (iii),  $T\cong_a E^{(\infty)}$ a.s. Thus $T\lhd A$ a.s. if and only if $E\lhd A$.
There are infinitely many $i$'s such that $(0,0)=(a_{i,s},a_{i,s+1})$ for some
$s$ with $1\leq s \leq n_i-1$. Thus $E$ can be rewritten as
$E=0^{(\infty)}\oplus (\oplus_{j=1}^\infty F_j^{(k_j)}),$
where each $F_j$ is a truncated weighted shift of order $\geq 2$ with weights in $\Delta\setminus\{0\}$.
For each $j\geq 1$, we assume that $F_j$'s weights are $\xi_j=(\beta_{j,1}, \cdots, \beta_{j,{m_j}})$.
Then  $\xi_1,\xi_2,\xi_3,\cdots$ are all finite tuples with entries in $\Delta\setminus\{0\}$.
 Set $F=0\oplus(\oplus_{j=1}^\infty F_j)$. Then $T\lhd A$ a.s. if and only if $F\lhd A$, i.e.,   $0\lhd A$ and $F_j\lhd A$ for $ j\geq 1.$
In view of Lemma \ref{P:TruncWeigh}, the latter implies that
\[(0,0)\in\Sigma_2(A), \quad(0,\beta_{j,1}, \cdots, \beta_{j,{m_j}},0)\in \Sigma_{m_j+2}(A),\quad \forall j\geq 1.\]
Since $\{\xi_j: j\geq 1\}$ are all finite tuples with entries in $\Delta\setminus\{0\}$ and $\Delta\setminus\{0\}$ is dense in $\Gamma_1$, we conclude that
$T\lhd A$ a.s. if and only if $\{0\}\times \Gamma_1^{n}\times \{0\}\subset\Sigma_{n+2}(A)$ for  $n\geq 0$. Thus the proof is complete.
\end{proof}

%%%%%%%%%%%%%%%%%%%%%%%%%%%%%%%%%%%%%%%%

\subsection{Normal operators}

Recall that the original  von Neumann inequality may be interpreted as   comparing a contractive Hilbert space operator with a normal operator. This promotes much research activity to explore the (fruitful)  operator-function relationship.
The main result of this subsection is the following.

\begin{thm}\label{T:vonNeum-Normal}
Let  $T\sim\{X_n\}_{n=1}^\infty.$
If $N$ is a normal operator, then
\begin{enumerate}
\item[(i)] $\bP(N \lhd T)=\begin{cases}
1,& \textup{if} ~\sigma(N)\subset\{z\in\bC: |z|\in\essran X_1\},\\
0,& \textup{otherwise};
\end{cases}$
\item[(ii)] $\bP(T \lhd N)=\begin{cases}
1,& \textup{if} ~ \essran X_1=\{0\}\subset\sigma(N),\\
0,& \textup{otherwise}.
\end{cases} $
\end{enumerate}
\end{thm}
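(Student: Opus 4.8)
The plan is to reduce everything to the elementary fact that, for two normal operators $A$ and $B$, one has $A\lhd B$ if and only if $\sigma(A)\subset\sigma(B)$. This is immediate from the spectral theorem, since $\|p(A,A^*)\|=\max_{z\in\sigma(A)}|p(z,\bar z)|$ for normal $A$ and any polynomial $p$ in two free variables (the hard half of the equivalence uses Stone--Weierstrass to separate points of $\sigma(B)$, but for part (i) only the trivial half is needed). The one structural observation that drives part (i) is that for $\lambda\ge 0$ the scaled bilateral shift $\lambda W$ (the bilateral weighted shift with all weights equal to $\lambda$) is normal with $\sigma(\lambda W)=\lambda\mathbb{T}=\{z\in\bC:|z|=\lambda\}$. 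Thus direct sums of such operators realize exactly the spectra of the form $\{z:|z|\in\essran X_1\}$ by normal operators already covered by Theorem \ref{T:BilateralAUE}(i).

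For part (i) I would treat necessity first. If $N\lhd T(\omega)$, then Corollary \ref{L:SpecDomina}, combined with Remark \ref{R:Modulus} and Lemma \ref{P:NSpectrum}, gives $\sigma(|N|)\subset\sigma(|T(\omega)|)=\Sigma_1(T(\omega))=\essran X_1$ on an event of full probability. Since $N$ is normal, $\sigma(|N|)=\{|z|:z\in\sigma(N)\}$, so the failure of $\sigma(N)\subset\{z:|z|\in\essran X_1\}$ forces $\bP(N\lhd T)=0$. For sufficiency, fix a countable dense subset $\{\lambda_i\}$ of $\essran X_1$ and set $M=\bigoplus_i\lambda_i W$; each summand is a bilateral weighted shift with $\Sigma_1(\lambda_i W)=\{\lambda_i\}\subset\essran X_1$, so $\lambda_i W\lhd T$ a.s.\ by Theorem \ref{T:BilateralAUE}(i). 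Intersecting these countably many almost sure events and using $\|p(\bigoplus_i A_i,(\bigoplus_i A_i)^*)\|=\sup_i\|p(A_i,A_i^*)\|$, we obtain $M\lhd T$ a.s. Now $M$ is normal with $\sigma(M)=\overline{\bigcup_i\lambda_i\mathbb{T}}=\{z:|z|\in\essran X_1\}$, so under the hypothesis $\sigma(N)\subset\sigma(M)$ the normal--normal criterion gives $N\lhd M$, and transitivity of $\lhd$ yields $N\lhd T$ a.s.

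For part (ii) the key move is to feed the commutator polynomial $p(z,w)=zw-wz$ into the definition of $\lhd$: if $T(\omega)\lhd N$, then $\|T(\omega)T(\omega)^*-T(\omega)^*T(\omega)\|\le\|NN^*-N^*N\|=0$, so $T(\omega)$ is normal. A nonzero weighted shift is never normal, and $\|T\|=R$ a.s.\ by Lemma \ref{T:NormSpectral}, so whenever $\essran X_1\ne\{0\}$ we have $R>0$ and $T(\omega)$ is almost surely non-normal, giving $\bP(T\lhd N)=0$. In the remaining degenerate case $\essran X_1=\{0\}$ we have $T=0$ a.s., and $0\lhd N$ reduces, via Corollary \ref{L:SpecDomina} and the normal--normal criterion, to the condition $0\in\sigma(N)$; this accounts exactly for the stated dichotomy.

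The routine parts are genuinely routine here. The only place demanding real care is the sufficiency half of (i), where one must verify that the direct sum $M$ carries precisely the prescribed circular spectrum and that restricting to a countable dense set of radii loses nothing. I expect the main conceptual obstacle to be locating the right comparison object, that is, recognizing that scaled bilateral shifts are normal and therefore reduce the normal-operator comparison to the already-established bilateral-shift comparison of Theorem \ref{T:BilateralAUE}(i), rather than trying to build the dominating $*$-homomorphism of Lemma \ref{L:Homomor} by hand.
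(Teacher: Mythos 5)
Your proposal is correct, and it reaches the theorem by a genuinely different route than the paper, most visibly in the sufficiency half of (i). The paper invokes the Weyl--von Neumann theorem to replace $N$ by a diagonal operator $D\cong_a N$ and then dominates $N$ scalar-by-scalar: each diagonal entry $\lambda_i=re^{\mathrm{i}\theta}$ is handled via $rI\lhd rU\lhd T$ (Theorem \ref{T:BilateralAUE}(i)) together with the circular symmetry $T\cong e^{\mathrm{i}\theta}T$. You instead build a single universal comparison operator $M=\bigoplus_i\lambda_iW$ over a countable dense set of radii, verify $M\lhd T$ a.s.\ summand-by-summand from Theorem \ref{T:BilateralAUE}(i), and then finish with the elementary normal--normal criterion $\sigma(N)\subset\sigma(M)\Rightarrow N\lhd M$ and transitivity; this removes the approximation-theoretic input ($\cong_a$ and Weyl--von Neumann) entirely, at the cost of the (routine, but worth stating) fact that for \emph{normal} summands $\sigma(\bigoplus_iA_i)=\overline{\bigcup_i\sigma(A_i)}$, which holds because normality gives the uniform resolvent bound $\|(A_i-z)^{-1}\|=\operatorname{dist}(z,\sigma(A_i))^{-1}$. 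Your necessity argument in (i) is likewise more direct: you read $\sigma(|N|)\subset\sigma(|T|)=\essran X_1$ straight from Corollary \ref{L:SpecDomina} and the spectral mapping $\sigma(|N|)=\{|z|:z\in\sigma(N)\}$, where the paper again routes through the diagonal model. In (ii), your commutator polynomial $p(x,y)=xy-yx$ is an elementary stand-in for the paper's argument that a unital $*$-homomorphism out of the commutative algebra $C^*(N)$ forces normality of the image (Lemma \ref{L:Homomor}), and proving directly that a nonzero weighted shift is never normal replaces the citation of Corollary \ref{C:EssNormal}; the degenerate case $0\lhd N\Leftrightarrow 0\in\sigma(N)$ is handled identically. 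In short: the paper's proof leans on operator approximation theory, yours leans only on the functional calculus for normal operators plus the already-established bilateral-shift comparison, and both are complete.
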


\begin{proof}
(i) By the Weyl-von Neumann Theorem, there exists a diagonal operator $D$ such that $D\cong_aN$.
Then $\sigma(N)=\sigma(D)$. Assume that $D=\textup{diag}\{\lambda_1,\lambda_2,\lambda_3,\cdots\}$.
Note that $\sigma(D)=\overline{\{\lambda_i: i\geq 1\}}$.
 If $\sigma(N)\nsubseteq\{z\in\bC: |z|\in\Gamma\}$, then there exists $i\geq 1$ such that $|\lambda_i|\notin\Gamma$.
By Lemma \ref{P:NSpectrum}, $\sigma(|T|)=\Sigma_1(T)=\Gamma$ a.s. By Corollary \ref{L:SpecDomina}, we have $\lambda_i\ntriangleleft T$ a.s. It follows that $D\ntriangleleft T$ a.s. and, equivalently, $N\ntriangleleft T$ a.s.
 Assume that $\sigma(N)\subset\{z\in\bC: |z|\in\Gamma\}$. Then $|\lambda_i|\in\Gamma$ for $i\geq 1$.
Fix  $i\geq 1$ and denote $\lambda_i=re^{\textup{i}\theta}$, where $r=|\lambda_i|$. By Theorem \ref{T:BilateralAUE} (i), $r U\lhd T$ a.s., where $U$ is the bilateral (unweighted) shift.
Since $r U$ is normal and $r\in\sigma(r U)$,
$\|p(r,r)\|\leq \|p(rU^*,r U)\|$ for each polynomial $p(x,y)$ in two free variables. That is, $r I\lhd rU$. This implies that $r I\lhd T$ a.s. Note that $T\cong e^{i\theta} T$. We deduce that $re^{\textup{i}\theta}I\lhd T$ a.s., that is, $\lambda_{i} I\lhd T$ a.s. Furthermore, we obtain $D\lhd T$ a.s. or,  equivalently, $N\lhd T$ a.s.

  (ii) By Lemma \ref{L:Homomor} and Corollary \ref{L:SpecDomina}, if an operator $A$ satisfies $A\lhd N$, then $A$ is normal and $\sigma(A)\subset\sigma(N)$.
If $\essran X_1\nsubseteq \{0\}$, then by Corollary \ref{C:EssNormal} in the next section, $T$ is almost surely not normal. So $\bP(T\lhd N)=0$.
 If $\essran X_1=\{0\}$, then $T=0$ a.s. Note that $0\lhd N$ if and only if $0\in\sigma(N)$. The proof is complete.
\end{proof}

\begin{cor}\label{C:BilateralAUE}
Let  $T\sim\{X_n\}_{n=1}^\infty.$ A unitary operator $U$ satisfies $U\lhd T$ a.s. if and only if $1\in\essran X_1$.
\end{cor}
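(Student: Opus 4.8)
The plan is to read this off directly from Theorem \ref{T:vonNeum-Normal}(i), since a unitary operator is in particular normal. First I would record the two elementary spectral facts about a unitary $U$ acting on a nonzero Hilbert space: its spectrum $\sigma(U)$ is nonempty, and it is contained in the unit circle, $\sigma(U)\subset\{z\in\bC:|z|=1\}$. With the choice $N=U$ in Theorem \ref{T:vonNeum-Normal}(i), the assertion ``$U\lhd T$ a.s.'' is precisely the statement $\bP(U\lhd T)=1$, and the theorem says this holds if and only if $\sigma(U)\subset\{z\in\bC:|z|\in\essran X_1\}$, while otherwise $\bP(U\lhd T)=0$. So everything reduces to checking that, for a unitary $U$, the spectral inclusion $\sigma(U)\subset\{z\in\bC:|z|\in\essran X_1\}$ is equivalent to the single condition $1\in\essran X_1$.

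For the direction $1\in\essran X_1\Rightarrow U\lhd T$ a.s., I would argue that every point $z$ on the unit circle satisfies $|z|=1\in\essran X_1$; since $\sigma(U)$ lies in the unit circle, the inclusion $\sigma(U)\subset\{z\in\bC:|z|\in\essran X_1\}$ is automatic, and Theorem \ref{T:vonNeum-Normal}(i) then yields $\bP(U\lhd T)=1$. For the converse I would use the nonemptiness of $\sigma(U)$: assuming the inclusion holds, pick any $z_0\in\sigma(U)$; then $|z_0|=1$ and, by the inclusion, $|z_0|\in\essran X_1$, whence $1\in\essran X_1$. Phrased contrapositively, if $1\notin\essran X_1$ then no point of the unit circle has modulus in $\essran X_1$, so $\sigma(U)\not\subset\{z\in\bC:|z|\in\essran X_1\}$ and Theorem \ref{T:vonNeum-Normal}(i) gives $\bP(U\lhd T)=0$, i.e. $U\lhd T$ fails almost surely.

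There is essentially no serious obstacle here: the corollary is a clean specialization of Theorem \ref{T:vonNeum-Normal}(i) to the unit circle, exploiting only that $\sigma(U)$ sits in $\{|z|=1\}$. The one point that deserves a word of care is the nonemptiness of $\sigma(U)$, since it is exactly what drives the nontrivial implication and prevents the spectral inclusion from being satisfied vacuously; this also gracefully covers the degenerate case $\essran X_1=\{0\}$, where $T=0$ a.s., $1\notin\{0\}$, and indeed $\|U\|=1>0$ shows $U\not\lhd T$, in agreement with the conclusion.
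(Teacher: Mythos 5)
Your proposal is correct and matches the paper's intent: the corollary is stated immediately after Theorem \ref{T:vonNeum-Normal} precisely as the specialization $N=U$ of part (i), using that $\sigma(U)$ is a nonempty subset of the unit circle, which is exactly your argument. Your attention to the nonemptiness of $\sigma(U)$ (needed for the converse direction and for the degenerate case $\essran X_1=\{0\}$) is the right point of care, and nothing is missing.
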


  In view of Theorem \ref{T:vonNeum-Normal}, given a random weighted shift $T$ with non-degenerate weights and a normal operator $N$, one can not expect $T \vartriangleleft N$ a.s. For the weaker relation $\lhd_h$, we have the following result.

\begin{lem}
Let  $T\sim\{X_n\}_{n=1}^\infty$ with
$r<R=1.$ If $\|N\|\leq 1$, then $T \ntriangleleft_h N$ a.s.
\end{lem}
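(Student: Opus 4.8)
The plan is to refute $T\lhd_h N$ by exhibiting a single \emph{deterministic} hereditary polynomial $p$ for which $\|p(T,T^*)\|>\|p(N,N^*)\|$ almost surely. The guiding intuition is that a normal operator is governed entirely by its scalar functional calculus on $\sigma(N)$, whereas a weighted shift genuinely couples consecutive weights; so the separation should come from a polynomial that, when evaluated at $T$, multiplies together two \emph{independent} weights---one forced near $r$ and the remaining ones near $R=1$---in a configuration that no single point of $\sigma(N)$ can reproduce. Concretely, for $k\geq1$ I would use
\[
p_k(z,w)=z^k-z^{k+1}w,
\]
a hereditary polynomial satisfying $p_k(A,A^*)=A^k(I-AA^*)$ for every operator $A$.

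On the normal side, since $N$ and $N^*$ commute, the continuous functional calculus identifies $p_k(N,N^*)=N^k(I-NN^*)$ with the function $\lambda\mapsto\lambda^k(1-|\lambda|^2)$ on $\sigma(N)$. As $\|N\|\leq1$ forces $\sigma(N)\subset\overline{B(0,1)}$, this gives
\[
\|p_k(N,N^*)\|=\sup_{\lambda\in\sigma(N)}|\lambda|^k(1-|\lambda|^2)\leq M_k:=\max_{0\leq t\leq1}t^k(1-t^2),
\]
and $M_k\to0$ as $k\to\infty$ (for instance because $t^k\to0$ uniformly on $[0,1-\delta]$ while $1-t^2\to0$ near $t=1$). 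On the random side, a direct computation from $Te_n=X_ne_{n+1}$ shows that $p_k(T,T^*)=T^k(I-TT^*)$ is an order-$k$ weighted shift sending $e_n$ to $v_ne_{n+k}$, where, with the convention $X_0:=0$,
\[
v_n=(1-X_{n-1}^2)\prod_{j=0}^{k-1}X_{n+j};
\]
consequently $\|p_k(T,T^*)\|=\sup_{n}v_n$.

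It remains to secure the almost-sure lower bound $\sup_n v_n\geq1-r^2$ and then balance the two estimates. Since $r=\min\essran X_1$ and $R=1=\max\essran X_1$ both lie in $\essran X_1$, I would apply Lemma \ref{L:constantCase} to the $(k+1)$-tuple $(\lambda_1,\dots,\lambda_{k+1})=(r,1,\dots,1)$ of elements of $\essran X_1$: almost surely there are infinitely many $n$ with $X_{n+1}$ arbitrarily close to $r$ and $X_{n+2},\dots,X_{n+k+1}$ arbitrarily close to $1$. Reindexing by $m=n+2$ makes $X_{m-1}\approx r$ and $X_m,\dots,X_{m+k-1}\approx1$, so $v_m$ approaches $(1-r^2)\cdot1=1-r^2$; hence $\sup_n v_n\geq1-r^2$ a.s. Because $r<1$ we have $1-r^2>0$, so I may fix a deterministic $k$ with $M_k<1-r^2$. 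For this $k$, almost surely $\|p_k(T,T^*)\|\geq1-r^2>M_k\geq\|p_k(N,N^*)\|$, which violates the defining inequality of $T\lhd_h N$ and yields $T\ntriangleleft_h N$ a.s. The one delicate ingredient is the almost-sure realization of the prescribed pattern of consecutive weights, but this is exactly Lemma \ref{L:constantCase}; everything else is elementary. I emphasize that $k$ is deterministic because $r$ depends only on the law $\mu$, so the same polynomial $p_k$ works for almost every sample.
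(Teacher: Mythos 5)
Your proof is correct, and it differs from the paper's in an interesting, essentially dual way, although both arguments rest on the same separation phenomenon: a normal operator's hereditary calculus is evaluated at single points $\lambda\in\sigma(N)$, which cannot simultaneously sit near $1$ (to make a monomial factor large) and near $r$ (to make a defect factor $1-|\lambda|^2$ or $1-|\lambda|^{4(n-1)}$ large), whereas independent weights can realize both values in adjacent slots. The paper takes $p(x,y)=xy^2-x^{2n-1}y^{2n}$, so that $p(T,T^*)$ is a backward shift of order one whose weights involve a factor $\bigl(1-\prod_{j=2}^{2n-1}X_{k-j}^2\bigr)$; it then pushes the \emph{shift-side} norm up toward $1$ (the lower bound $1-r^{4(n-1)}$, obtained from i.i.d.-ness of the weights along an arithmetic progression rather than from Lemma \ref{L:constantCase}), and beats the normal side via the ad hoc elementary inequality $r^3+r^{4(n-1)}<1$ and a short contradiction argument. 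You instead take $p_k(z,w)=z^k-z^{k+1}w$, so that $p_k(T,T^*)=T^k(I-TT^*)$ is a forward shift of order $k$, keep the \emph{shift-side} bound fixed at $1-r^2>0$ (realized via Lemma \ref{L:constantCase} applied to the pattern $(r,1,\dots,1)$, which is legitimate since $r$ and $R=1$ lie in the closed set $\essran X_1$), and push the \emph{normal-side} bound $M_k=\max_{0\leq t\leq 1}t^k(1-t^2)$ down to $0$. Your version buys a cleaner analytic step --- the monotone fact $M_k\to 0$ replaces the paper's case-checking inequality --- and makes the roles of the parameters more transparent ($k$ is chosen deterministically from $r$ alone, as you note); the paper's choice of polynomial has the mild advantage of matching the family $p_n(z,w)=z^nw^{n+1}-z^{n+1}w^{n+2}$ used in its Lemma \ref{T:vonNeum-US}, keeping the section's computations uniform. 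Both proofs cover the case $r=0$ without modification.
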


\begin{proof}
We choose $n\in\bN$ such that $r^3+r^{4(n-1)}<1$.
Set $p(x,y)=x{y}^2-x^{2n-1}{y}^{2n}$. Noting that $N$ is normal, we have
\[\|p(N,N^*)\|\leq \sup_{z\in B(0,1)}|z|^3(1-|z|^{4(n-1)}).\]
Note that $p(T,T^*)$ is a unilateral backward random weighted shift with weights $\{Z_k\}_{k=1}^{\infty}$, where
$Z_k=(1-\Pi_{j=2}^{2n-1}X_{k-j}^2)X_{k-1}^2X_{k},$ for $ k\geq 2n.$
Note that $\{Z_k: k=2n,4n,6n,\cdots\}$ are \iid. Then
$\|p(T,T^*)\|\geq 1-r^{4(n-1)} $ a.s.
We claim that $\sup_{z\in B(0,1)}|p(z,z^*)|<1-r^{4(n-1)}.$
If not, then we  choose $z\in \overline{B(0,1)}$ such that $1-r^{4(n-1)}\leq |z|^3(1-|z|^{4(n-1)})$. It follows that $|z|<r$. Hence
$
\frac{1-r^{4(n-1)}}{1-|z|^{4(n-1)}}\leq |z|^3< r^3,
$ which implies that $|z|<0,$ a contradiction. This proves the claim, and we deduce that
$\|p(N,N^*)\|<\|p(T,T^*)\|$ a.s.  Therefore $T \ntriangleleft_h N$ a.s.
\end{proof}

%%%%%%%%%%%%%%%%%%%%%%%%%%%%%%%%%%%%%%%%%%%
\subsection{Algebraical equivalence}\label{Subs:Al}

Armed with the results obtained so far, in this subsection we shall provide a classification of samples up to algebraical equivalence.
By definition, two operators $A,B$ on Hilbert spaces are algebraically equivalent if and only if $A\lhd B$ and $B\lhd A$.
The main result of this subsection is the following theorem.

\begin{thm}\label{T:AlgebraEquivalent}
Let  $T\sim\{X_n\}_{n=1}^\infty.$
Denote $\Gamma=\essran X_1$ and $\Gamma_1=\Gamma\setminus\{0\}$.
\begin{enumerate}
\item[(i)]  If $0\notin\Gamma$ and $\card~\Gamma>1$,
 then $\bP^2\big\{(\omega_1,\omega_2)\in\Omega^2: T(\omega_1)\approx T(\omega_2)\big\}=0;$ indeed,
$\bP(T\approx A)=0$ for any deterministic operator $A$.
\item[(ii)] If $0\in\Gamma$, then $\bP^2\big\{(\omega_1,\omega_2)\in\Omega^2: T(\omega_1)\approx T(\omega_2)\big\}=1;$ indeed, there exists a deterministic operator $A$ such that $T\approx A$ a.s.
\item[(iii)] If $0$ is an accumulation point of $\Gamma$ and $A$ is a (unilateral or bilateral) weighted shift, then
$T\approx A$ a.s. if and only if $\Sigma_n(A)=\Gamma^n$ for  $n\geq 1$.
\item[(iv)]  If $0$ is an isolated point of $\Gamma$  and $A$ is a unilateral weighted shift with nonnegative weights $\{a_i\}_{i=1}^\infty$, then
$T\approx A$ a.s. if and only if $\Sigma_1(A)\subset\Gamma$ and one of the following three holds:
\begin{itemize}
    \item[(a)] $a_1=0$ and $\{0\}\times\Gamma_1^n\times\{0\}\subset\Sigma_{n+2}(A)$ for  $n\geq 1;$ \item[(b)]   $a_1\ne0$ and $\{0\}\times\Gamma_1^n\times\{0\}\subset\Sigma_{n+2}(A)$ for  $n\geq 0;$
     \item[(c)]  $\exists m\geq 1$ s.t. $(a_1,\cdots, a_{m+1})\in\Gamma_1^{m}\times\{0\}$, $$\{0\}\times\Big[\Gamma_1^m\setminus\{(a_1,\cdots,a_m)\}\Big]\times\{0\}\subset\Sigma_{m+2}(A)$$ and
    \[\{0\}\times\Gamma_1^n\times\{0\}\subset\Sigma_{n+2}(A), \ \ \forall n\geq 0, n\ne m.\]
\end{itemize}
\item[(v)] If $0$ is an isolated point of $\Gamma$ and $A$ is a bilateral weighted shift, then
$T\approx A$ a.s. if and only if $\Sigma_1(A)\subset\Gamma$ and $\{0\}\times\Gamma_1^{n}\times\{0\}\subset\Sigma_{n+2}(A)$ for  $n\geq 0$.
\end{enumerate}
\end{thm}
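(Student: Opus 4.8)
The starting point is the definition recalled at the opening of Subsection~\ref{Subs:Al}: for operators $A,B$ one has $A\approx B$ if and only if $A\lhd B$ and $B\lhd A$. Hence the whole theorem is obtained by intersecting, part by part, the two one-directional criteria already established in Theorem~\ref{T:UnilateralAUE} and Theorem~\ref{T:BilateralAUE}, and by feeding the diagonal statements (i)--(ii) through the sample-classification results of Section~\ref{S:SampleClassify}. The plan is to dispose of (iii)--(v) first, since they are direct conjunctions, and to reserve (i) for last, as it contains the only genuinely new step.

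For (iii), (iv) and (v) the plan is simply to combine the two conditions. In each of these cases $0\in\Gamma$, so the criterion for $A\lhd T$ a.s.\ is furnished by Theorem~\ref{T:UnilateralAUE}(i) (or Theorem~\ref{T:BilateralAUE}(i)), namely $\Sigma_1(A)\subset\Gamma$; and since the weights of $A$ then lie in $\Gamma$, this is equivalent to $\Sigma_n(A)\subset\Gamma^n$ for every $n$. The criterion for $T\lhd A$ a.s.\ is read off from the part of Theorem~\ref{T:UnilateralAUE} or Theorem~\ref{T:BilateralAUE} matching the position of $0$ in $\Gamma$. For (iv) and (v) the resulting conjunction already has the stated form and nothing more is needed. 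For (iii) I must verify that ``$\Sigma_1(A)\subset\Gamma$ together with the $T\lhd A$ condition'' collapses to $\Sigma_n(A)=\Gamma^n$ for all $n$: in the bilateral case Theorem~\ref{T:BilateralAUE}(iii) gives exactly $\Gamma^n\subset\Sigma_n(A)$, which combined with $\Sigma_n(A)\subset\Gamma^n$ yields equality; in the unilateral case Theorem~\ref{T:UnilateralAUE}(iii) gives $\{0\}\times\Gamma^n\subset\Sigma_{n+1}(A)$, and projecting onto the last $n$ coordinates recovers $\Gamma^n\subset\Sigma_n(A)$, while conversely $\Sigma_n(A)=\Gamma^n$ and $0\in\Gamma$ give $\{0\}\times\Gamma^n\subset\Gamma^{n+1}=\Sigma_{n+1}(A)$.

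For (ii), since $0\in\Gamma$, Theorem~\ref{T:ApprUnitEqui}(iii) supplies a deterministic unilateral weighted shift $W$ with $T\cong_a W$ a.s. Because approximate unitary equivalence implies algebraical equivalence (as observed after Theorem~\ref{T:ApprUnitEqui}), we get $T\approx W$ a.s., which is the deterministic representative. The joint statement then follows because $\approx$ is an equivalence relation: on a full-measure event $T(\omega)\approx W$, so $T(\omega_1)\approx W\approx T(\omega_2)$ for $\bP^2$-almost every $(\omega_1,\omega_2)$.

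The crux is (i), where the main obstacle is to upgrade algebraical equivalence to unitary equivalence. Since $0\notin\Gamma$, the random shift $T$ is almost surely an injective (indeed left-invertible) unilateral weighted shift with $\KH\subset C^*(T)$ by Theorem~\ref{T:compacts}, hence almost surely irreducible. Suppose $T\approx A$; by definition there is a $\ast$-isomorphism $\psi:C^*(T)\to C^*(A)$ with $\psi(T)=A$. Being a $\ast$-isomorphism, $\psi$ is injective, so its restriction to $\KH$ is nonzero; therefore, exactly as in the proof of Lemma~\ref{L:AUE} (Corollary 5.41 in \cite{Douglas}), $\psi$ is spatially implemented by a unitary $V$, whence $A=\psi(T)=V^*TV$, that is, $T\cong A$. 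Thus, on a full-measure set, $T\approx A$ forces $T\cong A$. Applying this to two independent samples and invoking Theorem~\ref{T:ApprUnitEqui}(i) (whose value is $0$ in the regime $0\notin\essran X_1$, $\card\,\Gamma>1$) gives $\bP^2\{(\omega_1,\omega_2):T(\omega_1)\approx T(\omega_2)\}=0$; applying it to a fixed deterministic $A$ and using $\cong\,\Rightarrow\,\sim$ together with Theorem~\ref{T:similarity}(i) gives $\bP(T\approx A)=0$. The only delicate point in the entire argument is this reduction $\approx\,\Rightarrow\,\cong$, which is precisely why the irreducibility-plus-compacts input of Theorem~\ref{T:compacts} is indispensable; everything else is bookkeeping over the cases already resolved for the one-sided relation $\lhd$.
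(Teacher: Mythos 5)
Your handling of (ii)--(v) and of the two-sample statement in (i) coincides with the paper's proof: (iii)--(v) are exactly the conjunctions of Theorem \ref{T:UnilateralAUE} and Theorem \ref{T:BilateralAUE} (including your reduction of (iii) to $\Sigma_n(A)=\Gamma^n$), (ii) follows from Theorem \ref{T:ApprUnitEqui}(iii) together with $\cong_a\Rightarrow\approx$ and transitivity, and your claim that $\approx$ and $\cong$ agree for two independent samples is precisely the paper's Claim, proved the same way from Theorem \ref{T:compacts} and Corollary 5.41 of \cite{Douglas}.

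The gap is the final clause of your proof of (i). The pointwise implication ``$T(\omega)\approx A\Rightarrow T(\omega)\cong A$'' is false for a general deterministic $A$, and the appeal to Corollary 5.41 of \cite{Douglas} is where it breaks: that corollary (and the argument in Lemma \ref{L:AUE}) requires the representation $\psi$ to be irreducible, equivalently that the target algebra $C^*(A)$ act irreducibly; this is available when $A$ is a second sample $T(\omega_2)$, which is a.s.\ irreducible with $\KH\subset C^*(T(\omega_2))$, but it is not a hypothesis on an arbitrary $A$. Concretely, fix a good sample $\omega_0$ and let $A=T(\omega_0)^{(2)}=T(\omega_0)\oplus T(\omega_0)$. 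The map $Z\mapsto Z\oplus Z$ is an isometric $*$-isomorphism of $C^*(T(\omega_0))$ onto $C^*(A)$ carrying $T(\omega_0)$ to $A$, so $T(\omega_0)\approx A$, and it is injective (hence nonzero) on $\KH$; yet it is not spatially implemented: $A$ is reducible and $\ind(A-\lambda)=-2$ for $\lambda\in B(0,r)$, so $T(\omega_0)\ncong A$ and, since the Fredholm index is a similarity invariant, also $T(\omega_0)\nsim A$. Thus neither the route through Theorem \ref{T:ApprUnitEqui}(i) (via $\cong$) nor the route through Theorem \ref{T:similarity}(i) (via $\sim$) can be run pointwise; all that injectivity of $\psi|_{\KH}$ yields is $A\cong B\oplus T(\omega)^{(k)}$ for some cardinal $k\geq 1$ and some $B$ coming from a representation annihilating $\KH$. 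The repair is the paper's: deduce the fixed-$A$ statement from the two-sample statement you have already proved. If $\bP(T\approx A)>0$ for some deterministic $A$, then by symmetry and transitivity of $\approx$ one has $\{\omega_1: T(\omega_1)\approx A\}\times\{\omega_2: T(\omega_2)\approx A\}\subset\{(\omega_1,\omega_2): T(\omega_1)\approx T(\omega_2)\}$, whence $\bP^2\{(\omega_1,\omega_2): T(\omega_1)\approx T(\omega_2)\}\geq \bP(T\approx A)^2>0$, contradicting the two-sample statement.
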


  Statements  (iii), (iv) and (v) follow from Theorem \ref{T:UnilateralAUE} and Theorem \ref{T:BilateralAUE}. %The reader is referred to Example \ref{E:ThreeCounterEg} for an application of Theorem \ref{T:AlgebraEquivalent}.
Now we  prove (i) and (ii).

%\begin{lem}\label{C:AUEE}
%Let $A_i\in\B(\cH_i)$ be irreducible, $i=1,2$. Assume that $\cK(\cH_2)\subset C^*(A_2)$. Then $A_1\approx A_2$ if and only if
%$A_1\cong A_2$.
%\end{lem}
%
%\begin{proof}
%The sufficiency is obvious. We need only prove the necessity.
%
%  Note that $A_1\approx A_2$ implies $A_1\lhd A_2$ and $A_2\lhd A_1$.
%Thus there exists a $*$-isomorphism $\varphi: C^*(A_2)\rightarrow C^*(A_1)$ such that $\varphi(A_2)=A_1$. Thus $\varphi|_{\cK(\cH_2)}\ne 0$ and, by Corollary 5.41 in \cite{Douglas}, there exists unitary $V:\cH_1\rightarrow\cH_2$ such that $\varphi(Z)=V^*ZV$ for $Z\in C^*(A_2)$. Thus $V^*A_2V=A_1$, that is, $A_1\cong A_2$.
%\end{proof}

\begin{proof}[Proof of Theorem \ref{T:AlgebraEquivalent}]
(i) Let $\{e_i\}_{i=1}^\infty$ be an orthonormal basis of $\cH$ and $Te_i=X_ie_{i+1}$ for  $i\geq 1$.
By Theorem \ref{T:compacts}, there exists $\Omega'\subset\Omega$ with $\bP(\Omega')=1$ such that $\KH\subset C^*(T(\omega))$ for  $\omega\in\Omega'$. In particular, $T(\omega)$ is irreducible for $\omega\in\Omega'$.

\medskip

\noindent{\it Claim.} For $\omega_1,\omega_2\in\Omega'$, $T(\omega_1)\approx T(\omega_2)$ if and only if $T(\omega_1)\cong T(\omega_2)$.

\medskip

\noindent  If the claim holds, then in view of Theorem \ref{T:ApprUnitEqui} (i), $\bP(T\approx T(\omega_0))=\bP(T\cong T(\omega_0))=0$ for any $\omega_0\in\Omega'$. It follows that $\bP^2\big\{(\omega_1,\omega_2)\in\Omega^2: T(\omega_1)\approx T(\omega_2)\big\}=0$. In particular, $\bP(T\approx A)=0$ for any operator $A$.
Now the proof of the claim.  The sufficiency is obvious. For necessity, let $A_i=T(\omega_i)$, $i=1,2.$ Since $A_1\approx A_2$, there exists a $*$-isomorphism $\varphi: C^*(A_2)\rightarrow C^*(A_1)$ such that $\varphi(A_2)=A_1$. Thus $\varphi|_{\KH}\ne 0$. By Corollary 5.41 in \cite{Douglas}, there exists unitary $V:\cH\rightarrow\cH$ such that $\varphi(Z)=V^*ZV$ for $Z\in C^*(A_2)$. Thus $A_1=V^*A_2V$, that is, $A_1\cong A_2$.
 (ii) By Theorem \ref{T:ApprUnitEqui} (iii), there exists a deterministic operator $W$ such that $W\cong_a T$ a.s.
Thus $W\approx T$ a.s.
\end{proof}

\begin{rmk}
Let  $T\sim\{X_n\}_{n=1}^\infty.$
If $X_1$ is degenerate and $\essran X_1=\{\lambda\}$, then $T$ is almost surely a  unilateral weighted shift with weight sequence $(\lambda,\lambda,\lambda,\cdots)$, that is, $T\cong\lambda S$ a.s., where $S$ is the unilateral shift.
If, in addition, $\lambda>0$, then $\KH\subset C^*(T)$. In view of the proof of Theorem \ref{T:AlgebraEquivalent} (i), an irreducible operator $A$ satisfies $T\approx A$ a.s. if and only if $A\cong \lambda S$.
\end{rmk}

\begin{cor}\label{C:AppUniEquIrred}
Let $T\sim \{X_n\}_{n=1}^{\infty}$.
If $X_1$ is non-degenerate, then the following are equivalent:
\begin{enumerate}
\item[(i)] There exists an irreducible unilateral weighted shift $W$ such that
$T\cong_a W$ a.s.
\item[(ii)] There exists an irreducible unilateral weighted shift $W$ such that
$T\approx W$ a.s.
\item[(iii)] $0$ is an accumulation point of $\essran X_1$.
\end{enumerate}
\end{cor}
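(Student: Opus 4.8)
The plan is to establish the cycle of implications (i) $\Rightarrow$ (ii) $\Rightarrow$ (iii) $\Rightarrow$ (i). The first implication is immediate: approximate unitary equivalence always implies algebraical equivalence (as observed right after Theorem \ref{T:ApprUnitEqui}), so if $T\cong_a W$ a.s. then $T\approx W$ a.s. with the same irreducible weighted shift $W$.

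For (ii) $\Rightarrow$ (iii) I would argue by contraposition, so assume $0$ is \emph{not} an accumulation point of $\Gamma=\essran X_1$. Since $X_1$ is non-degenerate, $\card\,\Gamma>1$, and there are two cases. If $0\notin\Gamma$, then Theorem \ref{T:AlgebraEquivalent}(i) gives $\bP(T\approx A)=0$ for \emph{every} deterministic operator $A$, so no deterministic $W$ can satisfy $T\approx W$ a.s., and (ii) fails. If instead $0$ is an isolated point of $\Gamma$, I would use that $T\approx W$ a.s. forces $\sigma(|W|)=\sigma(|T|)=\Sigma_1(T)=\Gamma$ a.s.: the $*$-isomorphism carrying $T$ to $W$ carries $|T|$ to $|W|$ and is isometric, hence spectrum-preserving, and here I invoke Remark \ref{R:Modulus} together with Lemma \ref{P:NSpectrum}. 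Thus $0\in\sigma(|W|)=\overline{\{w_n\}}$, where $\{w_n\}\subset\Gamma$ are the weights of $W$. Since $0$ is isolated in the closed set $\Gamma$, some weight $w_n$ must in fact equal $0$; but then $\vee\{e_1,\dots,e_n\}$ reduces $W$, so $W$ is reducible, contradicting (ii). Both cases yield the negation of (ii), completing the contrapositive.

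For (iii) $\Rightarrow$ (i) I would construct an irreducible $W$ by hand. Fix a countable dense subset $\Delta=\{\alpha_i\}_{i\in\Lambda}$ of $\Gamma\setminus\{0\}$; because $0$ is an accumulation point, $\Delta$ is dense in $\Gamma$ as well. Let $W$ be the unilateral weighted shift whose weight sequence $\{\beta_j\}$ is obtained by concatenating, with infinite repetition, all finite tuples with entries in $\Delta$, so that every such tuple occurs as a block of consecutive weights infinitely often. All weights are strictly positive, so $W$ is an injective, hence irreducible, unilateral weighted shift. To see $T\cong_a W$ a.s. I would take $\Omega'$ as in (\ref{(3.7)}), with $\bP(\Omega')=1$ by Lemma \ref{L:constantCase}, and fix $\omega_0\in\Omega'$. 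Writing $\mu_n=X_n(\omega_0)$ and $\mu_0=\beta_0=0$, I verify the two $\liminf$ conditions of Lemma \ref{L:WSAppUniEqu}. For the condition matching blocks of $\{\beta_j\}$ to $(0,\mu_1,\dots,\mu_k)$, I approximate each $\mu_i$ by some $\alpha_{m_i}\in\Delta$ and, crucially, approximate the prepended $0$ by a \emph{positive} $\alpha_{m_0}\in\Delta$ with $\alpha_{m_0}<\eps$ (available precisely because $0$ is an accumulation point); the tuple $(\alpha_{m_0},\dots,\alpha_{m_k})$ recurs infinitely often in $\{\beta_j\}$. For the reverse condition, membership in $\Omega'$ yields that $(\alpha_{m_0},\beta_1,\dots,\beta_k)$ is approached infinitely often by consecutive blocks of $\{X_n(\omega_0)\}$, and again the small $\alpha_{m_0}$ absorbs the prepended $0$. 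Lemma \ref{L:WSAppUniEqu} then gives $T(\omega_0)\cong_a W$ for each $\omega_0\in\Omega'$, hence $T\cong_a W$ a.s.

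The main obstacle is the construction in (iii) $\Rightarrow$ (i): one must realize the entire modulus spectrum $\Gamma$, which contains $0$, using only \emph{nonzero} weights, since a single zero weight destroys irreducibility. This is possible exactly when $0$ is an accumulation point, and that hypothesis enters at the precise step where the prepended $0$ in Lemma \ref{L:WSAppUniEqu} is approximated by small positive elements of $\Delta$. The careful bookkeeping of both $\liminf$ conditions, and the verification that every $\Delta$-tuple recurs infinitely often in $\{\beta_j\}$, is where the real work lies; the other two implications are comparatively soft, resting on the already-established Theorem \ref{T:AlgebraEquivalent}(i) and on the spectrum-preservation property of $*$-isomorphisms.
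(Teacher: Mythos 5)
Your proposal is correct, and the soft parts ((i)$\Rightarrow$(ii), the contrapositive case analysis for (ii)$\Rightarrow$(iii), and the construction of $W$ by concatenating all finite $\Delta$-tuples) coincide with what the paper does: the paper likewise disposes of $0\notin\essran X_1$ via Theorem \ref{T:AlgebraEquivalent}(i) (together with Theorem \ref{T:ApprUnitEqui}(ii)), and handles the isolated-zero case by noting that $T\approx W$ forces $\sigma(|W|)=\essran X_1$ via Corollary \ref{L:SpecDomina}, so a zero weight appears and $W$ is reducible. Where you genuinely diverge is the crucial implication (iii)$\Rightarrow$(i). The paper never touches Lemma \ref{L:WSAppUniEqu} at this point: it first checks $\Sigma_n(W)=\Gamma^n$ and invokes Theorem \ref{T:AlgebraEquivalent}(iii) to get $T\approx W$ a.s., then cites O'Donovan (Theorem 3.2.1 in \cite{Donovan}) to conclude that $C^*(W)$ contains no nonzero compact operators, and finally upgrades $\approx$ to $\cong_a$ via Theorem \ref{T:ApprUnitEqui}(iv), whose proof rests on Voiculescu-type representation theory (Theorem II.5.8 in \cite{David}). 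You instead verify the two $\liminf$ conditions of Lemma \ref{L:WSAppUniEqu} directly, using $\Omega'$ from Lemma \ref{L:constantCase} and absorbing the prepended $0$ by small positive elements of $\Delta$ --- which is exactly the point where the accumulation hypothesis enters, as you note. Your route is more self-contained and elementary: it avoids the external citation and the $C^*$-machinery, and it delivers (i) outright, with (ii) as a trivial consequence; the paper's route is shorter given its already-built apparatus and additionally isolates the $C^*$-algebraic content ($C^*(W)\cap\KH=\{0\}$), which is reusable elsewhere. Two small housekeeping points you should make explicit: assume (as the paper does, WLOG) that $\ran X_n\subset\essran X_1$ so that each $\mu_i$ really lies in the closure of $\Delta$, and note that with the paper's single-concatenation construction every tuple still recurs infinitely often as a sub-block of longer tuples, so your ``infinite repetition'' is a convenience rather than a necessity.
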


\begin{proof}
In view of Theorem \ref{T:ApprUnitEqui} (ii) and Theorem \ref{T:AlgebraEquivalent} (i), we need only consider the case that $0\in\essran X_1$.
 {\it Case 1.} $0$ is an isolated point of $\essran X_1$.
 Choose a unilateral weighted shift $W$ such that $W\approx T$ a.s.
It follows from Corollary \ref{L:SpecDomina} that $\sigma(|W|)=\essran X_1$. Since $0$ is an isolated point of $\essran X_1$, it follows that $0$ appears infinitely many times in the weight sequence of $W$. So $W$ is reducible.
 {\it Case 2.} $0$ is an accumulation point of $\essran X_1$.
 Denote $\Gamma=\essran X_1$. Choose an at most countable dense subset $\Delta=\{\alpha_i: i\in\Lambda\}$ of $\Gamma\setminus\{0\}$. Since $0$ is an accumulation point of $\Gamma$, $\Delta$ is also dense in $\Gamma$.
We assume that $\eta_1,\eta_2,\eta_3,\cdots$ are all finite tuples with entries in $\Delta$. For each $i\geq 1$, assume that $\eta_i=(a_{i,1},\cdots, a_{i,n_i})$. We let $W$ be a unilateral weighted shift with weights
\[(a_{1,1},\cdots, a_{1,n_1},a_{2,1},\cdots, a_{2,n_2},a_{3,1},\cdots, a_{3,n_3},\cdots).\] Thus $\Sigma_n(W)=\Gamma^n$ for  $n\geq 1$ and $W$ is irreducible, since each $a_{i,j}$ is positive. By Theorem \ref{T:AlgebraEquivalent} (iii), $T\approx W$ a.s. By Theorem 3.2.1 in \cite{Donovan}, $C^*(W)$ contains no nonzero compact operators. Thus it follows from Theorem \ref{T:ApprUnitEqui} (iv) that $T\cong_a W$ a.s.
\end{proof}

%\subsection{Distributionally chaotic property}

%%%%%%%%%%%%%%%%%%%%%%%%%%%%%%%%%%%%%%%%%

\section{Invariant subspaces}\label{S:InvSubSpa}

  The  purpose of this section is to investigate the lattice of invariant subspaces of $T$.  A (closed) subspace $\mathcal{M}$ of $\cH$ is said to be {\it invariant}
for an operator $A\in\BH$ if $A(\mathcal{M})\subset \mathcal{M}$.
This lattice is intensely investigated for the three canonical weighted shifts (i.e., Hardy, Bergman, and Dirichlet), as well as many others. Examples considered in the past are usually essentially normal; in other words,  the weights don't oscillate asymptotically.

\subsection{Essential normality and subnormality}
This short subsection answers  two basic questions: (1) $T$ is almost surely not essentially normal, hence not like those familiar weighted shifts in the existing literature; (2) $T$ is almost surely not subnormal, hence not realizable as the multiplication operator on a weighted Hardy or Bergman-type space induced by a measure.

 Recall that an operator $A\in\BH$ is  {\it essentially normal} if $A^*A-AA^*$ is compact; {\it subnormal} if $A$ is unitarily equivalent to the restriction of a normal operator to an invariant subspace \cite{Conway}.

\begin{lem}\label{T:EssNormal}
Let $T\sim \{X_n\}_{n=1}^\infty.$ If $X_1$ is non-degenerate, then
  $$\bP\big(T~\textup{is essentially normal}\big)=0.$$
\end{lem}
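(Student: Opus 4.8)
The plan is to show that $T^*T - TT^*$ fails to be compact almost surely by computing this commutator explicitly and extracting a diagonal subsequence that does not tend to zero. Since $T$ is a weighted shift with weights $\{X_n\}$, on the orthonormal basis $\{e_n\}$ we have $T^*Te_n = X_n^2 e_n$ and $TT^*e_n = X_{n-1}^2 e_n$ (with the convention $X_0 = 0$). Hence $T^*T - TT^*$ is the diagonal operator
\[
\textup{diag}\{X_1^2,\ X_2^2 - X_1^2,\ X_3^2 - X_2^2,\ \ldots\},
\]
with $n$-th diagonal entry $d_n = X_n^2 - X_{n-1}^2$. A diagonal operator is compact if and only if its diagonal entries tend to $0$. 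So the whole problem reduces to showing that almost surely $d_n = X_n^2 - X_{n-1}^2 \not\to 0$, i.e. $\limsup_{n\to\infty} |X_n^2 - X_{n-1}^2| > 0$ a.s.

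To establish this, I would use the non-degeneracy of $X_1$: since $\essran X_1$ contains more than one point, there exist $a, b \in \essran X_1$ with $a \ne b$, so $|a^2 - b^2| =: 2\delta > 0$. The idea is that the consecutive pair $(X_{n-1}, X_n)$ will infinitely often be close to $(a,b)$, forcing $|d_n|$ close to $|a^2 - b^2| = 2\delta$. Concretely, fix a small $\eps > 0$ so that $|X_{n-1}-a| < \eps$ and $|X_n - b| < \eps$ force $|d_n| > \delta$. Passing to the independent subsequence of disjoint consecutive pairs $(X_{2k-1}, X_{2k})$, the events $E_k = [\,|X_{2k-1}-a|<\eps,\ |X_{2k}-b|<\eps\,]$ are i.i.d.\ with $\bP(E_k) = \bP(|X_1-a|<\eps)\,\bP(|X_1-b|<\eps) > 0$, since $a,b \in \essran X_1$. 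By the second Borel–Cantelli lemma, infinitely many $E_k$ occur almost surely, whence $|d_{2k}| > \delta$ for infinitely many $k$, so $d_n \not\to 0$ a.s.\ and $T^*T - TT^*$ is almost surely not compact. This machinery is already encapsulated in Lemma~\ref{L:constantCase} of the paper (applied to the tuple $(a,b)$), so I would invoke it directly rather than rerun the Borel–Cantelli argument.

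The main structural observation — that the self-commutator of a weighted shift is the diagonal of consecutive squared-weight differences — is routine, and the compactness criterion for diagonal operators is standard. The only genuine content is the probabilistic step, and here Lemma~\ref{L:constantCase} does exactly the right job: taking $k=2$, $\lambda_1 = a$, $\lambda_2 = b$ and any subsequence gives $\liminf_n \max(|X_{n+1}-a|, |X_{n+2}-b|) = 0$ a.s., which is more than enough. I expect no serious obstacle; the one point requiring a little care is the bookkeeping for $X_0$ (the $d_1$ term is irrelevant to the tail behaviour) and making sure $a,b$ are chosen as genuine points of the essential range so that the relevant small-ball probabilities are strictly positive. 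A clean writeup would state the diagonal form of $T^*T-TT^*$, recall that such an operator is compact iff its entries vanish, choose distinct $a,b \in \essran X_1$, and then quote Lemma~\ref{L:constantCase} to conclude $\limsup_n |X_n^2 - X_{n-1}^2| \ge \delta > 0$ almost surely, contradicting compactness.
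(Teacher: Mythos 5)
Your proposal is correct and follows essentially the same route as the paper: the paper likewise reduces the claim to showing that consecutive weights stay separated infinitely often almost surely, exploiting independence of the disjoint pairs $(X_{2n-1},X_{2n})$ (it sets $Y_n=X_{2n-1}-X_{2n}$ and uses that an i.i.d.\ sequence attains the extremes of its essential range as $\limsup$/$\liminf$, which is the same Borel--Cantelli mechanism underlying Lemma~\ref{L:constantCase}). The only difference is expository: you make explicit the diagonal form of $T^*T-TT^*$ and the compactness criterion for diagonal operators, steps the paper leaves implicit.
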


\begin{proof}
 %Recall that  $T$ is essentially normal if and only if $\lim\limits_{n\to\infty} (X_{i}-X_{i+1}) =0$.
Set $Y_n=X_{2n-1}-X_{2n}.$ Then $\{Y_n\}_{n=1}^{\infty}$ are i.i.d.   By the fact in the proof of Lemma \ref{L:quasiDiagonal},   almost surely,
\begin{equation*}
\liminf_{n\to\infty}Y_n=\min \essran Y_1,\ \limsup_{n\to\infty}Y_n=\max \essran Y_1.
\end{equation*}
Since  $\min \essran Y_1<0$ and $\max \essran Y_1>0,$ we know  $\lim\limits_{n\to\infty} (X_{i}-X_{i+1}) \neq 0$ a.s.
\end{proof}

\noindent The following corollary is straightforward and we record it  since it is used in the proof of Theorem \ref{T:vonNeum-Normal} in the previous section.

\begin{cor}\label{C:EssNormal}
Let $T\sim \{X_n\}_{n=1}^\infty.$
Then $$\bP\big(T ~\textup{is normal}\big)=\begin{cases}
1,& \essran  X_1=\{0\}, \\
0,& \textup{otherwise}.
\end{cases}$$
\end{cor}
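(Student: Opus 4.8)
The plan is to reduce the statement to the elementary fact that a unilateral weighted shift is normal precisely when all of its weights vanish, and then to read off the probability from the independence of the $X_n$. First I would fix a sample $\omega$ and compute, for the weighted shift $T=T(\omega)$ with (nonnegative) weights $\lambda_n=X_n(\omega)$, the two diagonal operators
\begin{equation*}
T^*T = \textup{diag}\{\lambda_1^2, \lambda_2^2, \lambda_3^2, \dots\}, \qquad TT^* = \textup{diag}\{0, \lambda_1^2, \lambda_2^2, \dots\}
\end{equation*}
with respect to the basis $\{e_n\}$, using $Te_n=\lambda_n e_{n+1}$ together with $T^*e_{n+1}=\lambda_n e_n$ and $T^*e_1=0$. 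Comparing diagonal entries, $T$ is normal (that is, $T^*T=TT^*$) if and only if $\lambda_1=0$ and $\lambda_n=\lambda_{n-1}$ for all $n\geq 2$; an immediate induction then forces $\lambda_n=0$ for every $n$. Hence $T(\omega)$ is normal if and only if $T(\omega)=0$, i.e. if and only if $X_n(\omega)=0$ for all $n$.

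Next I would translate this pointwise characterization into a probability. The event $\{\omega: T(\omega)\ \textup{is normal}\}$ equals $\bigcap_{n=1}^\infty[X_n=0]$, so by the independence and identical distribution of the $X_n$,
\begin{equation*}
\bP\big(T\ \textup{is normal}\big)=\bP\Big(\bigcap_{n=1}^\infty[X_n=0]\Big)=\lim_{N\to\infty}\prod_{n=1}^N\bP(X_n=0)=\lim_{N\to\infty}\big(\bP(X_1=0)\big)^N.
\end{equation*}
This limit is $1$ when $\bP(X_1=0)=1$ and $0$ when $\bP(X_1=0)<1$. It then remains only to note that $\bP(X_1=0)=1$ is equivalent to $\essran X_1=\{0\}$, since the latter says exactly that $X_1$ is (essentially) supported at the single point $0$. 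This produces the stated dichotomy.

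There is essentially no genuine obstacle here; the result is elementary once the normality characterization is in hand, and the only point needing minor care is the bookkeeping of the almost-sure statements and the identification $\essran X_1=\{0\}\Leftrightarrow\bP(X_1=0)=1$. As an alternative that leans on the machinery already developed, one could instead invoke Lemma \ref{T:EssNormal}: when $X_1$ is non-degenerate $T$ is a.s. not essentially normal, hence a.s. not normal, while the degenerate case splits into $\essran X_1=\{0\}$ (where $T=0$ a.s., which is normal) and $\essran X_1=\{\lambda\}$ with $\lambda>0$ (where $T\cong\lambda S$ a.s. for the unilateral shift $S$, which is not normal). I would nonetheless present the direct computation as the primary argument, since it is self-contained and yields the full dichotomy in a single step.
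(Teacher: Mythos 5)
Your proposal is correct. The pointwise characterization is right: for a weighted shift with nonnegative weights $\lambda_n$, one has $T^*T=\textup{diag}\{\lambda_1^2,\lambda_2^2,\dots\}$ and $TT^*=\textup{diag}\{0,\lambda_1^2,\lambda_2^2,\dots\}$, so normality forces $\lambda_1=0$ and $\lambda_n=\lambda_{n-1}$ for $n\geq 2$, hence all weights vanish; the event $[T~\textup{is normal}]$ is then exactly $\bigcap_n[X_n=0]$, whose probability is $\lim_N\big(\bP(X_1=0)\big)^N\in\{0,1\}$, and $\bP(X_1=0)=1$ is indeed the same as $\essran X_1=\{0\}$.

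Your route is genuinely different from the paper's. The paper records this corollary as an immediate consequence of Lemma \ref{T:EssNormal}: in the non-degenerate case $T$ is almost surely not even essentially normal (proved there by a probabilistic $\liminf/\limsup$ argument on $Y_n=X_{2n-1}-X_{2n}$), hence not normal, and the two degenerate cases ($T=0$ a.s., or $T\cong\lambda S$ a.s. with $\lambda>0$) are handled by inspection — this is the alternative you sketch at the end. Your primary argument instead isolates the deterministic fact that a unilateral weighted shift is normal only if it is zero, and then reduces everything to a one-line independence computation. What your approach buys is self-containment and uniformity: no case split on degeneracy, no appeal to the essential-normality lemma, and the full dichotomy falls out of a single zero-one calculation. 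What the paper's route buys is economy given what has already been established — Lemma \ref{T:EssNormal} is a strictly stronger statement (failure of essential normality) that the paper wants anyway, so the corollary comes for free. Both are sound; yours would serve as a valid standalone proof.
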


\begin{lem}\label{P:hypoNormal}
Let $T\sim \{X_n\}_{n=1}^\infty.$
If $X_1$ is non-degenerate, then
  $$\bP\big(T~\textup{is hyponormal}\big)=0.$$
 \end{lem}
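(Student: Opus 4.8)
The plan is to reduce hyponormality of a weighted shift to a monotonicity condition on its weights, and then to show that a non-degenerate i.i.d. sequence is almost surely not monotone. First I would compute the self-commutator. With $Te_n=X_ne_{n+1}$ one has $T^*T=\textup{diag}(X_1^2,X_2^2,X_3^2,\dots)$ and $TT^*=\textup{diag}(0,X_1^2,X_2^2,\dots)$, so that
\[
T^*T-TT^*=\textup{diag}\big(X_1^2,\,X_2^2-X_1^2,\,X_3^2-X_2^2,\,\dots\big).
\]
Being diagonal, this operator is positive semidefinite exactly when every diagonal entry is nonnegative; since the weights are nonnegative, this holds if and only if $X_n\le X_{n+1}$ for all $n\ge 1$. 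Thus, for each fixed sample $\omega$, the operator $T(\omega)$ is hyponormal precisely when the weight sequence $\{X_n(\omega)\}$ is non-decreasing; this is the classical hyponormality criterion for weighted shifts recorded in \cite{shields}. The problem therefore reduces to proving that
\[
\bP\big(X_n\le X_{n+1}\ \text{for all}\ n\ge 1\big)=0.
\]

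Next I would exploit non-degeneracy together with independence. Since $X_1$ is non-degenerate, $\essran X_1$ contains two distinct points, so one can pick reals $\alpha<\beta$ with $\bP(X_1\le\alpha)>0$ and $\bP(X_1\ge\beta)>0$. Independence of $X_1,X_2$ then gives
\[
p:=\bP(X_2<X_1)\ \ge\ \bP(X_1\ge\beta)\,\bP(X_2\le\alpha)\ >\ 0.
\]
Because the pairs $(X_{2n-1},X_{2n})$, $n\ge 1$, involve disjoint blocks of an i.i.d. sequence, the events $A_n:=\{X_{2n-1}\le X_{2n}\}$ are mutually independent, each of probability $1-p<1$. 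The event that $\{X_n\}$ is non-decreasing is contained in $\bigcap_{n\ge 1}A_n$, whose probability is $\prod_{n\ge 1}(1-p)=0$. Hence $\bP\big(T~\textup{is hyponormal}\big)=0$, as claimed.

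The argument is short, and structurally it parallels the proof of Lemma \ref{T:EssNormal}: both turn a property of $T$ into a property of the scalar sequence $\{X_n\}$ and then quote an i.i.d. zero-one phenomenon. The only point that requires genuine (though mild) care is the passage from mere non-degeneracy to a strictly positive probability $p$ of a descent $X_2<X_1$; once that is secured, the independence of descents across disjoint index pairs does all the work, and no appeal to the finer spectral computations of Section \ref{S:spectral} is needed.
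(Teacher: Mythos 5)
Your proof is correct and follows essentially the same route as the paper: reduce hyponormality to the weights being non-decreasing (the paper cites this as a direct calculation), then bound the probability of monotonicity by the $n$-fold product of $\bP(X_{2i-1}\le X_{2i})$ over disjoint index pairs, which decays geometrically since non-degeneracy forces $\bP(X_2<X_1)>0$. Your write-up merely spells out the two steps the paper leaves implicit (the self-commutator computation and the positivity of the descent probability), so no further comparison is needed.
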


  An operator $A$ is {\it hyponormal} if $A^*A\geq AA^*$. It is well known that each subnormal operator is hyponormal \cite{Conway}.

\begin{proof}[Proof of Lemma \ref{P:hypoNormal}]
A direct calculation shows that $T$ is hyponormal if and only if $X_i\leq X_{i+1}$ for  $i\geq 1$. Observe that $$
\bP(X_i\leq X_{i+1}, \forall i\geq 1)
\leq \bP(X_{1}\leq X_{2})^n \leq c^n,$$ for some $c \in (0, 1)$ and for  $n \ge 1$
since $\{X_{2i-1}-X_{2i}\}_{i=1}^\infty$ are \iid.
The proof is complete now.  In particular, $T$ is almost surely not subnormal.
\end{proof}

\begin{rmk} To the best of our knowledge, there is no current example of non-essential normal weighted shift for which the lattice of invariant subspace is even reasonably understood. Next we show that $T=T(\omega)$
admits a rich lattice  by identifying it among the so-called $\mathbb{A}_{\aleph_0}$ class.
\end{rmk}
%%%

%%%%%%%%%%%%%%%%%%%%%%%%%%%%%%%%%%%%%%%%%%%%%
\subsection{$\mathbb{A}_{\aleph_0}$ and consequences}

In 1980s, Apostol, Bercovici, Foia\c{s}, and Pearcy \cite{Apostol} developed a beautiful theory on the structure of preduals of singly generated dual algebras. A dual algebra is a subalgebra $\mathcal{A}$ of $\BH$ that contains the identity $I$, and is closed in the weak*
topology on $\BH$. For dual algebras, they introduced properties $(\mathbb{A}_n), 1\leq n\leq\aleph_0$. For each $n=1,2,\cdots,\aleph_0$, denote by $\mathbb{A}_n$ the set of all operators $A\in\BH$ such that the dual algebras generated by $A$ has the property $(\mathbb{A}_n)$. Operators in $\mathbb{A}_{\aleph_0}$ have a rich lattice of invariant subspaces \cite[Theorem 3.1]{Apostol}. We reformulate the result as below.  Recall that an operator $A$ is  {\it reflexive} if $\{Z\in\BH: \Lat (A)\subset \Lat (Z)\}$ coincides with the (weakly closed) algebra generated by $A$. Reflexive operators represent another way to say that invariant subspaces are many \cite{Conway2, Sarason}.

\begin{thm}[\cite{Apostol}, Theorem 3.1]\label{T:ABFP}
If $A\in \mathbb{A}_{\aleph_0}$, then
\begin{enumerate}
\item[(i)] $A$ is reflexive;
\item[(ii)] $\Lat (A)$ contains a lattice that is isomorphic to the lattice of subspaces of $\cH$;
\item[(iii)] if $B$ is a strict contraction (that is, $\|B\|<1$), then there exist $\mathcal{N}, \mathcal{M}\in\Lat~A$ with $\mathcal{N}\subset \mathcal{M}$ such that the compression $A_{\mathcal{M}\ominus \mathcal{N}}\cong B$.
\end{enumerate}
\end{thm}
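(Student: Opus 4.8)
The plan is to run all three conclusions off the single factorization property that defines $\mathbb{A}_{\aleph_0}$. I would write $\cA$ for the $w^*$-closed (dual) algebra generated by $A$ and $I$, let $Q_{\cA}=\mathcal{C}_1(\cH)/{}^{\perp}\cA$ be its predual (with $\mathcal{C}_1$ the trace class), and for $x,y\in\cH$ let $[x\otimes y]\in Q_{\cA}$ denote the coset of the functional $S\mapsto\la Sx,y\ra$. By definition $A\in\mathbb{A}_{\aleph_0}$ says that every array $\{L_{ij}\}_{i,j\ge1}\subset Q_{\cA}$ factors as $L_{ij}=[x_i\otimes y_j]$ for suitable $\{x_i\},\{y_j\}\subset\cH$; the single-entry case gives property $(\mathbb{A}_1)$: every $L\in Q_{\cA}$ equals some $[x\otimes y]$. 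For (i), given $T$ with $\Lat A\subset\Lat T$, I would show $T$ lies in the weakly closed algebra $\sW_A$ by checking that every WOT-continuous functional vanishing on $\sW_A$ annihilates $T$. Modulo the preannihilator such a functional is represented on $\sW_A$, via $(\mathbb{A}_1)$, by a single coset $[x\otimes y]$, so its vanishing reads $\la A^nx,y\ra=0$ for all $n\ge0$, i.e. $y\perp\cM_x$ with $\cM_x=\bigvee_{n\ge0}A^nx\in\Lat A$. Since $\cM_x\in\Lat A\subset\Lat T$ we get $Tx\in\cM_x$, hence $\la Tx,y\ra=0$, so the functional kills $T$ and $T\in\sW_A$. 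This is the clean part; the identification of WOT-continuous functionals with $Q_{\cA}$ is routine dual-algebra bookkeeping.

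The engine for (ii) and (iii) is a moment-matching lemma. Given a strict contraction $B$ on a separable space $\cK$ with orthonormal basis $\{e_i\}$, the first point — where strictness is essential — is that each functional $p(A)\mapsto\la p(B)e_i,e_j\ra$ is $w^*$-continuous on $\cA$: because $\|B\|<1$ the numbers $\la B^ke_i,e_j\ra$ decay geometrically, so the functional extends along the $H^\infty(\bD)$-calculus to a genuine element $L_{ij}\in Q_{\cA}$. Applying $(\mathbb{A}_{\aleph_0})$ to $\{L_{ij}\}$ produces $\{x_i\},\{y_j\}\subset\cH$ with
\[
\la A^nx_i,y_j\ra=\la B^ne_i,e_j\ra,\qquad n\ge0,\ i,j\ge1.
\]
I would then set $\cM=\bigvee_{n,i}A^nx_i$ and $\cN=\{z\in\cM:\la A^nz,y_j\ra=0\ \forall n\ge0,\ j\ge1\}$, both of which are $A$-invariant with $\cN\subset\cM$. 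The identities above, with $n=0$ giving biorthogonality $\la x_i,y_j\ra=\delta_{ij}$, show that $U:e_i\mapsto P_{\cM\ominus\cN}x_i$ intertwines $B$ with the compression $A_{\cM\ominus\cN}$, and the goal is to verify $U$ is the required unitary, yielding (iii).

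For (ii) I would extract from the same $\aleph_0\times\aleph_0$ solvability a family of vectors behaving like an orthonormal basis relative to the cyclic structure of $A$, so that the invariant subspaces generated by sub-collections form a sublattice of $\Lat A$ order-isomorphic to the subspace lattice of $\cH$. Alternatively this drops out of (iii): choosing $B$ to be a strict contraction whose own invariant-subspace lattice already contains a copy of the subspace lattice of $\cH$, the correspondence $\Lat A_{\cM\ominus\cN}\cong[\cN,\cM]\cap\Lat A$ transports that copy into $\Lat A$.

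The main obstacle will be in the second step: $(\mathbb{A}_{\aleph_0})$ delivers only \emph{biorthogonal} systems $\{x_i\},\{y_j\}$, which a priori model $B$ up to similarity rather than unitary equivalence. Upgrading $U$ to an isometry onto $\cM\ominus\cN$ — that is, showing $\{P_{\cM\ominus\cN}x_i\}$ is an orthonormal, spanning system with exactly the Gram data of $\{e_i\}$ — is the delicate heart of the argument, and is where the full strength of $(\mathbb{A}_{\aleph_0})$ (with norm control on the solutions, together with a symmetrization balancing the two systems) must be invoked. The $w^*$-continuity placing each $L_{ij}$ in $Q_{\cA}$ is precisely the role played by the hypothesis $\|B\|<1$, so I expect strictness to enter in an unavoidable way at exactly this point.
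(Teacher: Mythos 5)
Your proposal is a blind reconstruction of the Apostol--Bercovici--Foia\c{s}--Pearcy argument, and it is worth noting at the outset that the paper itself gives no proof of this statement: Theorem \ref{T:ABFP} is quoted verbatim as \cite{Apostol}, Theorem 3.1, the paper's own contribution being Theorem \ref{P:Universal} (membership of $T(\omega)$ in $\mathbb{A}_{\aleph_0}$ via the $C_{00}$ criterion). Measured against the actual ABFP proof, your sketch has one outright fallacy and one acknowledged but essential hole. The fallacy is in (i). Property $(\mathbb{A}_1)$ gives you $[C]=[x\otimes y]$ as an identity in the predual $Q_{\cA}=\mathcal{C}_1(\cH)/{}^{\perp}\cA$, i.e.\ the identity $\phi(S)=\la Sx,y\ra$ holds \emph{only for $S$ in the dual algebra}. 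You correctly deduce $\la A^nx,y\ra=0$ and then $\la Tx,y\ra=0$, but that scalar is not $\phi(T)$, because $T$ is precisely the operator not yet known to lie in $\sW_A$, so the representation cannot be evaluated at it. This is not a repairable bookkeeping slip: single-coset factorization alone provably cannot yield reflexivity. On $\bC^2$ take $A=E_{12}$ (the $2\times 2$ Jordan nilpotent); every element of the two-dimensional predual of $\{aI+bA\}$ has the form $[x\otimes y]$, yet $A$ is not reflexive, since $E_{11}\in\mathrm{Alg}\,\Lat(A)\setminus\sW_A$. Running your argument with $\phi(S)=\la Se_1,e_1\ra-\la Se_2,e_2\ra$, $x=e_1$, $y=e_2$, every step of your chain goes through while $\phi(E_{11})=1\neq 0$. (This $A$ is not in class $\mathbb{A}$, but your inference never used more than the factorization property, so the example kills the logic.) The ABFP reflexivity proof genuinely needs more than $(\mathbb{A}_1)$.

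For (iii), your skeleton is the right one and matches ABFP: the array $L_{ij}$ of moments of $B$, whose $w^*$-continuity on $\cA$ is where $\|B\|<1$ enters through the $H^\infty$ functional calculus (note that the isometric, weak-star homeomorphic calculus is itself part of the definition of the class $\mathbb{A}_{\aleph_0}$, not a free good); the semi-invariant pair $\cM=\bigvee_{n,i}A^nx_i$, $\cN$, both correctly invariant; and your derivation of (ii) from (iii) is legitimate --- the map $\cL\mapsto\cN\oplus\cL$ is indeed a lattice embedding of $\Lat(A_{\cM\ominus\cN})$ into $\Lat(A)$ (for $z\in\cL$ one has $Az\in\cM$ with $P_{\cM\ominus\cN}Az\in\cL$, hence $Az\in\cN\oplus\cL$), and taking $B=\tfrac12 I$ transports the full subspace lattice of $\cH$ into $\Lat(A)$. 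But what you defer as ``the delicate heart'' is not a detail to be invoked; it is the principal content of ABFP's Theorem 3.1. As written, the biorthogonal data $\la A^nx_i,y_j\ra=\la B^ne_i,e_j\ra$ yield at best a quasi-similarity model of $B$, and there is no ``symmetrization'' lemma one can cite off the shelf: ABFP construct the solutions $\{x_i\},\{y_j\}$ by an infinite iterative scheme with geometric norm control at every stage, so that in the limit the compressed vectors are \emph{exactly} an orthonormal spanning system, and only then does $U$ become unitary. So your proposal correctly identifies the architecture of the cited proof, but the two load-bearing steps --- a valid reflexivity argument in (i) and the quantitative iteration upgrading similarity to unitary equivalence in (iii) --- are missing, and the first, as formulated, is false.
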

\noindent  Moreover, the above theorem is about abstract operators and it was a victory of the theory to discover that the Bergman shift  lies in this class \cite[Proposition 3.2]{Apostol}. Since then the investigation of the operator theory on  the Bergman space was significantly boosted by this fact.
Next we show that the random model provides a new member in $\mathbb{A}_{\aleph_0}$, which is apparently quite different from the Bergman shift.

\begin{thm}\label{P:Universal}
Let $T\sim \{X_n\}_{n=1}^\infty$ with $r<R=1$. Then, almost surely, $T\in\mathbb{A}_{\aleph_0}$.
\end{thm}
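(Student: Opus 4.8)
The goal is to show that the random weighted shift $T$, under the normalization $r<R=1$, almost surely belongs to the class $\mathbb{A}_{\aleph_0}$. My plan is to verify the membership criterion for $\mathbb{A}_{\aleph_0}$ established by Apostol-Bercovici-Foia\c{s}-Pearcy. The standard sufficient condition is that $T$ is an absolutely continuous contraction whose adjoint is sufficiently ``rich'' in point spectrum: concretely, one shows that $T$ lies in $\mathbb{A}$ (the class of absolutely continuous contractions admitting an isometric, weak*-continuous $H^\infty$-functional calculus) and then that the associated evaluation functionals at points of the open unit disk, or suitable convex combinations thereof, can be factored to yield property $(\mathbb{A}_{\aleph_0})$. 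The natural route is to exploit the spectral picture already established in Theorem~\ref{T:SpectPic} together with the von~Neumann-type comparison results of Section~\ref{S:vonNeumanIneq}.

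First I would record the facts that fall out of earlier sections: since $R=1$, Lemma~\ref{T:NormSpectral} gives $\|T\|=1$ almost surely, so $T$ is almost surely a contraction; and by Theorem~\ref{T:SpectPic}(v), when $\bP(X_1=0)=0$ we have $\sigma_p(T^*)=B(0,e^{\bE(\ln X_1)})$, an honest open disk of positive radius, with each eigenspace one-dimensional. (In the case $0\in\essran X_1$ one instead uses part~(iv) and works with the approximately unitarily equivalent shift $W$ from Theorem~\ref{T:ApprUnitEqui}(iii); I would handle this reduction explicitly.) The abundance of eigenvalues of $T^*$ filling a disk is precisely the mechanism by which one verifies property $(\mathbb{A}_{\aleph_0})$: for each $\lambda$ in this disk there is a reproducing-kernel-type eigenvector $k_\lambda\in\ker(T^*-\bar\lambda)$, and rank-one functionals of the form $x\otimes k_\lambda$ annihilate $(T-\lambda)$-type products. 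The key analytic input needed to promote these into a solution of the infinite system of simultaneous equations defining $(\mathbb{A}_{\aleph_0})$ is an isometric $H^\infty(\bD)$-functional calculus, which by the discussion around (\ref{OpeFunc}) and Theorem~\ref{T:alge}(i) holds almost surely when $R=1$.

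The main steps, in order, are as follows. (1) Establish almost surely that $T$ is a completely non-unitary (indeed, pure) contraction, so that the Sz.-Nagy--Foia\c{s} theory applies and the $H^\infty$-calculus is weak*-continuous; this uses $\gamma(T)=R=1$ together with the absence of a unitary summand, which follows from the strict inequality $r<R$ and the spectral structure. (2) Show the calculus is isometric, $\|f(T)\|=\|f\|_\infty$ for $f\in H^\infty(\bD)$, almost surely; this is exactly the content flagged in (\ref{OpeFunc}) and rests on the comparison $S\lhd_h T$ or the normal-operator comparison in Theorem~\ref{T:vonNeum-Normal} specialized to the boundary. (3) Use the one-dimensional eigenspaces $\ker(T^*-\bar\lambda)$ for $\lambda\in B(0,e^{\bE(\ln X_1)})$ to produce, for every weak* functional on the dual algebra, a factorization as a single rank-one functional $x\otimes y$; this is the factorization property $(\mathbb{A}_{\aleph_0})$ in the sense of \cite{Apostol}, and the presence of a full open disk of eigenvalues (rather than a single point) is what allows the simultaneous solvability of the infinitely many factorization equations. (4) Invoke Theorem~\ref{T:ABFP} to transport the conclusion into the reflexivity, lattice-richness, and dilation consequences recorded in Corollary~\ref{T:likeBergman}.

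The hardest part will be step~(3): verifying the full factorization property $(\mathbb{A}_{\aleph_0})$ rather than merely $(\mathbb{A})$ or $(\mathbb{A}_1)$. What makes this delicate in the random setting is that the reproducing kernels $k_\lambda$ have norms growing like the half-exponential rate $e^{\sqrt{n\ln\ln n}}$ dictated by the coefficient estimates of Theorem~\ref{T:LIL our case}, rather than the polynomial growth familiar from the Hardy or Bergman kernels. Consequently the usual estimates controlling the approximate-solution iteration in \cite{Apostol} must be re-examined: one needs uniform-in-$\lambda$ lower bounds on $\|k_\lambda\|$ and control of $\langle k_\lambda,k_{\lambda'}\rangle$ as $\lambda,\lambda'$ range over a compact subdisk, valid almost surely. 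I expect these to follow from the strong law of large numbers applied to $\ln(X_1\cdots X_n)$ (as in the proof of Lemma~\ref{P:PointSpect}), which gives almost sure asymptotics $|k_\lambda|^2\sim\sum_n |\lambda|^{2n}/(X_1\cdots X_n)^2$ that converge uniformly on compacta of $B(0,e^{\bE(\ln X_1)})$; the remaining work is to feed these estimates into the dual-algebra machinery to confirm the property $(\mathbb{A}_{\aleph_0})$ holds on the full-probability event where $\|T\|=1$, the calculus is isometric, and the eigenvalue disk has the asserted radius.
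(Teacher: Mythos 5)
Your approach has a genuine gap, and it sits exactly where you locate the difficulty: step (3) cannot work as described, for a structural rather than a technical reason. Under $r<R=1$ the weights satisfy $X_1\leq 1$ a.s. and $X_1$ is non-degenerate, so $\bE(\ln X_1)<0$ (possibly $-\infty$); hence the eigenvalue set $\sigma_p(T^*)$ you want to use is contained in the disk of radius $e^{\bE(\ln X_1)}$, which is \emph{strictly} less than $1$, and it even degenerates to $\{0\}$ when $\bP(X_1=0)>0$ (Lemma \ref{P:PointSpect}(i)) or when $\bE(\ln X_1)=-\infty$ (possible with $r=0$ yet $\bP(X_1=0)=0$). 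Meanwhile the isometric calculus of your step (2) is an $H^\infty(\bD)$-calculus over the \emph{full} unit disk. The factorization scheme of Brown and Apostol--Bercovici--Foia\c{s}--Pearcy built from eigenvector functionals $[x\otimes k_\lambda]=E_\lambda$ requires the set of available $\lambda$'s to be dominating for $\partial\bD$, i.e.\ $\sup_{\lambda}|f(\lambda)|=\|f\|_\infty$ for all $f\in H^\infty(\bD)$; a set compactly contained in $\bD$ is never dominating --- test against $f(z)=z^n$, whose supremum over $B(0,e^{\bE(\ln X_1)})$ tends to $0$ while $\|f\|_\infty=1$. So no refinement of kernel estimates (uniform lower bounds, control of $\langle k_\lambda,k_{\lambda'}\rangle$, etc.) can make these functionals factor a general element of the predual; they never see the boundary where the norm of the calculus lives, and this route cannot even yield $(\mathbb{A}_1)$, let alone $(\mathbb{A}_{\aleph_0})$. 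Your fallback for $0\in\essran X_1$, replacing $T$ by the shift $W$ with $T\cong_a W$ from Theorem \ref{T:ApprUnitEqui}(iii), is also unjustified: approximate unitary equivalence controls $C^*$-algebraic data, but nothing in the paper or in the standard theory says it preserves membership in $\mathbb{A}_{\aleph_0}$, which is a property of the weak*-closed non-selfadjoint dual algebra. (A smaller inaccuracy: your route to step (2) via $S\lhd_h T$ is only available when $0\in\essran X_1$; by Lemma \ref{T:vonNeum-US}(ii), $S\ntriangleleft_h T$ a.s. otherwise. That part is repairable from $\gamma(T)=\|T\|=1$ and von Neumann's inequality.)

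The paper's proof bypasses adjoint eigenvalues altogether. It invokes Theorem 3.6 of \cite{Apostol}: a weighted shift $A$ belongs to $\mathbb{A}_{\aleph_0}$ as soon as $A\in C_{00}$ and $\gamma(A)=\|A\|=1$; these hypotheses make no reference to $\sigma_p(A^*)$, the boundary contact being supplied by the spectrum itself, which here is the closed unit disk with essential spectrum containing $\partial\bD$ (Theorem \ref{T:SpectPic}). The whole proof then reduces to Lemma \ref{P:C00}: $\gamma(T)=\|T\|=R=1$ a.s. is Lemma \ref{T:NormSpectral}; ${T^*}^n\to 0$ strongly is immediate for any unilateral weighted shift; and $T^n\to 0$ strongly holds a.s. because either $\bP(X_1=0)>0$ and $T$ is a.s. a direct sum of nilpotents, or $\bP(X_1=0)=0$ and the strong law of large numbers gives $\sum_{i=1}^n\ln X_{i+k}\to-\infty$ a.s. (since $\bE(\ln X_1)<0$), whence $\|T^ne_k\|=\prod_{i=1}^{n}X_{i+k-1}\to 0$ for every $k$. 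If you wish to salvage your outline, replace step (3) by this citation (or by reproving ABFP's Theorem 3.6); steps (1), (2) and (4) then become the easy parts they should be.
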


\begin{rmk} The degenerate case, that is, $r=R$,  is not in $\mathbb{A}_{\aleph_0}$.
\end{rmk}

%
%\begin{rmk}
%Let $T$ be the random weighted shift in Theorem \ref{P:Universal}. Then statement (ii) of Theorem \ref{T:ABFP} shows that $T$ has a ``unmanageably large'' lattice of invariant subspaces. While the statement (iii) of Theorem \ref{T:ABFP} shows that, to certain extent, $T$ almost surely contains information about invariant subspaces of all operators. This is very like the Bergman shift, although $T$ and the Bergman shift are quite different from each other in many aspects, such as spectral properties, essential normality and subnormality (see Theorem \ref{T:NormSpectral}, Theorem \ref{T:EssNormal} and Proposition \ref{P:hypoNormal}).
%\end{rmk}

\noindent  The proof of Theorem \ref{P:Universal} is rather short  in view of Theorem 3.6 in \cite{Apostol}, which states that a
weighted shift $A$ lies in $ \mathbb{A}_{\aleph_0}$  if it is a $C_{00}$ operator and $\gamma(A)=\|A\|=1$.  Here $C_{00}$ represents the set of all (completely nonunitary)
contractions $A\in\BH$ for which both sequences $\{A^n\}_{n=1}^\infty$ and $\{{A^*}^n\}_{n=1}^\infty$
converge to $0$ in the strong operator topology. Hence the  proof of Theorem \ref{P:Universal} is reduced to the following, and because of which we have no need to bother with the  technical, precise definition of $\mathbb{A}_{\aleph_0}$.

\begin{lem}\label{P:C00}
Let $T\sim \{X_n\}_{n=1}^\infty$ with $r<R=1$.
Then $ \bP(T\in C_{00})=
1.$
\end{lem}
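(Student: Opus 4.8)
The plan is to verify the three defining properties of the class $C_{00}$ directly on the standard orthonormal basis $\{e_n\}$, exploiting that $T$ is a weighted shift so both $T^n$ and $T^{*n}$ act explicitly. Since $R=1$, Lemma \ref{T:NormSpectral} gives $\|T\|=R=1$ a.s., so $T$ is almost surely a contraction; it then remains to show that $T^n\overset{\sot}{\to}0$ and $T^{*n}\overset{\sot}{\to}0$ almost surely. Complete nonunitarity is automatic: a nonzero reducing subspace on which $T$ were unitary would preserve norms of iterates and so prevent the power sequences from tending to $0$. Because $\{T^n\}$ and $\{T^{*n}\}$ are uniformly bounded in norm by $1$, in each case it suffices to check convergence to $0$ on the dense set $\operatorname{span}\{e_k\}$.

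The adjoint direction I would handle first, as it is immediate and in fact deterministic. Writing $Te_n=X_ne_{n+1}$, one computes $T^*e_1=0$ and $T^*e_m=X_{m-1}e_{m-1}$ for $m\ge 2$, so that $T^{*n}e_k=(X_{k-1}\cdots X_{k-n})e_{k-n}$ when $n<k$ and $T^{*n}e_k=0$ once $n\ge k$. Hence $\|T^{*n}e_k\|=0$ for all large $n$, and $T^{*n}\overset{\sot}{\to}0$ for every sample.

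For the forward direction I would compute $T^ne_k=\big(\prod_{i=k}^{k+n-1}X_i\big)e_{k+n}$, so that $\|T^ne_k\|=\prod_{i=k}^{k+n-1}X_i$. The key point is that the relevant decay rate is the radius $e^{\bE(\ln X_1)}$ rather than $\|T\|$. Since $R=1$ forces $X_1\le 1$ a.s. and non-degeneracy ($r<R$) gives $\bP(X_1<1)>0$, we have $\bE(\ln X_1)\in[-\infty,0)$. Applying the strong law of large numbers to the i.i.d.\ sequence $\{\ln X_i\}_{i\ge k}$ (with the convention $\ln 0=-\infty$) yields, almost surely,
\[
\|T^ne_k\|^{1/n}=\exp\Big(\tfrac1n\sum_{i=k}^{k+n-1}\ln X_i\Big)\longrightarrow e^{\bE(\ln X_1)}<1,
\]
exactly as in Lemma \ref{T:convergence radius}. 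Thus $\|T^ne_k\|\to 0$ a.s.\ for each fixed $k$; intersecting the corresponding null sets over the countably many $k\ge 1$ and invoking uniform boundedness and density gives $T^n\overset{\sot}{\to}0$ a.s., completing the proof that $T\in C_{00}$ a.s.

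I expect the only genuine subtlety to lie in the application of the strong law of large numbers when $\bE(\ln X_1)=-\infty$, i.e.\ when $0\in\essran X_1$ or the law of $X_1$ accumulates mass near $0$; there one must use the form of the SLLN valid for an infinite mean. Alternatively, one can dispatch the case $\bP(X_1=0)>0$ separately, where by Lemma \ref{P:PointSpect} (i) $T$ is almost surely a direct sum of nilpotents and $T^n\to 0$ holds trivially, and reserve the averaging argument for $\bP(X_1=0)=0$ with $\bE(\ln X_1)$ finite. Everything else is routine bookkeeping with weighted shifts.
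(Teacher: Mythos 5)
Your proposal is correct and follows essentially the same route as the paper's proof: $\|T\|=R=1$ a.s.\ via Lemma \ref{T:NormSpectral}, the adjoint direction is immediate since $T^{*n}e_k=0$ once $n\geq k$, and the forward direction is the strong law of large numbers applied to $\{\ln X_i\}$, with the zero-weight case dispatched exactly as in your suggested alternative (Lemma \ref{P:PointSpect}: $T$ is a.s.\ a direct sum of nilpotents). The only point to tidy is that your alternative case split should also let the averaging argument cover $\bP(X_1=0)=0$ with $\bE(\ln X_1)=-\infty$, which your primary argument (the SLLN for a nonpositive summand with possibly infinite mean) already handles.
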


\begin{proof}
By Lemma  \ref{T:NormSpectral}, $\|T\|=R=1$ a.s. The fact that  ${T^*}^n\overset{\sot}{\longrightarrow} 0$ as $n\rightarrow\infty$ is straightforward. The rest of the proof is divided into two cases.
{\it Case 1.} $\bP(X_1=0)>0$.
 By the proof of Lemma \ref{P:PointSpect}, $T$ is almost surely the direct sum of a family nilpotent operators. Thus $T^n\overset{\sot}{\longrightarrow} 0$ a.s. Thus $T\in C_{00}$ a.s.
{\it Case 2.} $\bP(X_1=0)=0$.
 For any $k\geq 1$, we claim that almost surely $\Pi_{i=1}^nX_{i+k}\rightarrow 0$  as $n\rightarrow\infty$.
Since  $\{\ln X_n\}_{n=1}^{\infty}$ are \iid random variables  and
$\bE(\ln X_1)<0,$ $\sum_{i=1}^n \ln X_{i+k}\rightarrow -\infty$ a.s.. Hence $\Pi_{i=1}^nX_{i+k}\rightarrow 0$ a.s. This proves the claim.
 Recall that $Te_n=X_ne_{n+1}$  for some orthonormal basis $\{e_n\}_{n=1}^\infty$  of $\cH.$ Thus $\|T^ne_k\|=\Pi_{i=1}^nX_{i+k-1}$
for $k\geq 1$ and $n\geq 1$. By the preceding claim, for each $k\geq1$, we have almost surely $\|T^ne_k \|\rightarrow 0$ as $n\rightarrow\infty$.
Thus $T\in C_{00}$ a.s.
\end{proof}

%\begin{proof}[Proof of Theorem \ref{P:Universal}]
%By Proposition \ref{P:C00}, $T\in C_{00}$ a.s.  By Proposition \ref{T:NormSpectral}, we have $\|T\|=\gamma(T)=1$ a.s. Then,  $T\in \mathbb{A}_{\aleph_0}$ a.s.
%\end{proof}
%%%%%%%%%%%%%%%%%%%%%%%%%%%%%%%%%%%%%%

%%%%%%%%%%%%%%%%%%%%%%%%%%%%%%%%%%%%%%%%%%%%%%%%%%

The {\it index} of  an invariant subspace $\cM\in
\Lat(A)$ of $A\in\BH$ is defined to be the dimension of $\cM\cap (A\cM)^\bot$.
Beurling \cite{Beurling} showed that the unilateral shift has the codimension-one property, that is, every nonzero invariant subspace of the unilateral shift has an index of one. This fact turns out to be one of the most seminal results in operator theory. A great deal of effort was put into determining the indices of invariant subspaces of weighted shifts and this line of research remains active  as of today. It came as a big surprise when it was first discovered that the indices can be arbitrary in the case of the Bergman space, also due to the powerful work on dual algebras mentioned above. The following is  an immediate consequence of the $\mathbb{A}_{\aleph_0}$ membership in Theorem \ref{P:Universal}.

%\begin{lem}
%Let $A\in\BH$. If $$ is not left invertible, then $\dim\cH/\ran A$
%\end{lem}

\begin{cor}\label{C:Universal}
Let $T\sim \{X_n\}_{n=1}^\infty$ with $R=1$.
Then the following are equivalent:
\begin{enumerate}
\item[(i)] Almost surely, there is a nonzero $\cM\in\Lat(T)$ with index $\ge 2$;
\item[(ii)] For each $n=2, \cdots, \infty$, there is a nonzero $\cM\in\Lat(T)$ with index $n$;
\item[(iii)] $ X_1$ is non-degenerate.
\end{enumerate}
Consequently, $T$ almost surely has the ``codimension-one property" if and only if $ X_1$ is degenerate.
\end{cor}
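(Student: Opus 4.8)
The plan is to establish the cyclic chain of implications $(iii)\Rightarrow(ii)\Rightarrow(i)\Rightarrow(iii)$ and then read off the ``Consequently'' clause from the resulting dichotomy. Throughout I would keep in mind the elementary reduction forced by the hypothesis $R=1$: if $X_1$ is non-degenerate, i.e. $\card\,\essran X_1\ge 2$, then $r=\min\essran X_1<1=R$, so Theorem \ref{P:Universal} applies; whereas if $X_1$ is degenerate with $R=1$, then $\essran X_1=\{1\}$, so $X_1=1$ almost surely and $T=S$, the standard unilateral shift.

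For $(iii)\Rightarrow(ii)$ I would first invoke Theorem \ref{P:Universal}: since $r<R=1$, almost surely $T\in\mathbb{A}_{\aleph_0}$. Membership in $\mathbb{A}_{\aleph_0}$ then supplies, through Theorem \ref{T:ABFP}(ii) and the underlying Apostol--Bercovici--Foia\c{s}--Pearcy theory \cite{Apostol}, invariant subspaces realizing every prescribed index: the copy of the full subspace lattice of $\cH$ sitting inside $\Lat(T)$ is what yields, for each $n\in\{2,3,\dots,\infty\}$, a nonzero $\cM\in\Lat(T)$ with $\dim\bigl(\cM\cap(T\cM)^{\perp}\bigr)=n$, on the almost sure event where $T\in\mathbb{A}_{\aleph_0}$. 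The implication $(ii)\Rightarrow(i)$ is then immediate, as taking $n=2$ already produces a nonzero invariant subspace of index $\ge 2$.

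For $(i)\Rightarrow(iii)$ I would argue by contraposition. If $X_1$ is degenerate then $T=S$ almost surely, and by the Beurling theorem \cite{Beurling} every nonzero invariant subspace of $S$ has the form $\theta H^2$ for an inner function $\theta$, whence $\dim\bigl(\cM\cap(S\cM)^{\perp}\bigr)=\dim\bigl(\theta H^2\ominus z\theta H^2\bigr)=1$. Thus $S$ has the codimension-one property and admits no nonzero invariant subspace of index $\ge 2$, so $(i)$ fails; this is exactly the contrapositive of $(i)\Rightarrow(iii)$. Closing the cycle gives $(i)\Leftrightarrow(ii)\Leftrightarrow(iii)$. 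The final assertion is then read off from the dichotomy: if $X_1$ is degenerate, $T=S$ a.s. and codimension-one holds almost surely by Beurling; if $X_1$ is non-degenerate, $(ii)$ holds almost surely, so almost surely an index-$2$ subspace exists and the codimension-one property fails. Hence $T$ almost surely has the codimension-one property precisely when $X_1$ is degenerate.

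The step I expect to be the main obstacle is $(iii)\Rightarrow(ii)$. Promoting the abstract richness of $\Lat(T)$ guaranteed by $\mathbb{A}_{\aleph_0}$-membership to invariant subspaces of each \emph{exact} index $n$ requires converting the lattice isomorphism of Theorem \ref{T:ABFP}(ii) into genuine control of the quantity $\dim\bigl(\cM\cap(T\cM)^{\perp}\bigr)$, and this is where the full dual-algebra machinery of \cite{Apostol} is needed rather than Theorem \ref{T:ABFP} as literally quoted. By contrast, $(ii)\Rightarrow(i)$ is trivial and $(i)\Rightarrow(iii)$ is routine once Beurling's theorem is invoked, so essentially all of the content is carried by Theorem \ref{P:Universal} together with the index consequences of $\mathbb{A}_{\aleph_0}$.
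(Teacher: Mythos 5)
Your proof is correct and follows essentially the same route as the paper, which offers no separate argument but presents the corollary as an immediate consequence of Theorem \ref{P:Universal}: non-degeneracy (with $R=1$) forces $r<R$, hence $T\in\mathbb{A}_{\aleph_0}$ almost surely, the Apostol--Bercovici--Foia\c{s}--Pearcy dual-algebra theory \cite{Apostol} then supplies invariant subspaces of every index $n\in\{2,\dots,\infty\}$, while in the degenerate case $T=S$ almost surely and Beurling's theorem \cite{Beurling} forces every index to be one. Your closing caveat—that exact control of $\dim\bigl(\cM\cap(T\cM)^{\perp}\bigr)$ requires the index results of the dual-algebra literature rather than Theorem \ref{T:ABFP} as literally quoted—is accurate, but the paper leans on exactly the same borrowed machinery, so this is not a deviation from its proof.
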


\begin{rmk}
Suppose that $T\sim \{X_n\}_{n=1}^\infty.$
If $r>0$, then almost surely each $\cM\in\Lat(T)$ with $\dim\cH\ominus\cM<\infty$ satisfies $\cM\ominus T(\cM)=1$.
\end{rmk}

%%%%%%%%%%%%%%%%%%%%%%%%%%%%%%%%%%%%%%%%%

\subsection{Toward a Beurling-type theorem}\label{S:sub Beurling-type}

This subsection concerns our (clearly unsuccessful) effort toward a Beurling-type theorem for $T(\omega)$. We feel that the presentation here is still of some interests primarily because of  the questions it brings forth. (Accordingly, for simplicity, we skip most details of proofs.) Throughout this subsection we assume that  $A\in \BH$ is left-invertible and analytic, that is,  $\cap_{n=1}^{\infty}A^n\cH=\{0\}.$    We say that the {\it Beurling-type theorem} holds  for $A$ if $\cM=[\cM\ominus A\cM]$ for each $\cM\in \text{Lat}(A)$; in other words, $\cM$ is the smallest invariant subspace containing $\cM\ominus A\cM.$  The origin of this property is the celebrated Beurling-Halmos-Lax theorem \cite{Beurling,Halmos61,Lax} which, among other things, states that a $z$-invariant subspace $\cM$ of the vector-valued  Hardy space over the unit disk is generated by $\cM\ominus z\cM.$

  Although it has been more than half a century since \cite{Beurling}, this property is still a focus of current interests. In particular, an attractive open problem is to prove, or dis-prove, it for the entire $\D_\al$ family $(-\infty<\al<\infty),$ which has been introduced in Section \ref{S:random Hardy}. Recall that
\begin{equation}\label{E:D al}
\D_{\al}=\Big\{f(z)=\sum_{n=0}^{\infty}a_nz^n\in H(\dd):\ \|f\|_{\al}^2=\sum_{n=0}^{\infty}(n+1)^{\al}|a_n|^2<\infty\Big\},
\end{equation}
where $H(\dd)$ denotes the collection of analytic functions over $\dd.$ Note that $\D_{-1}, \D_0$ and $\D_1$ corresponds to the classical Bergman, Hardy and Dirichlet spaces, respectively.

  In 1988, S. Richter \cite{1988Richter Dirichlet} proved  the Beurling-type theorem  for  $\D_{\al}$,  $0< \al\leq 1$. A more general theorem was proved there indeed: If $A\in \BH$ is  analytic  and concave, that is,
\begin{equation}\label{E:concavity}
\|A^2x\|^2+\|x\|^2\leq 2\|Ax\|^2,\quad  \forall x\in\cH,
\end{equation}
then
$
\mathcal{H}=[\mathcal{H}\ominus A\mathcal{H}].
$
One can observe  two consequences of  the concavity condition (\ref{E:concavity}): First,  the operator is expansive, that is, $\|Ax\| \ge \|x\|$; second,  the {\it moment sequence} $\{\|A^nx\|^2\}_{n=0}^{\infty}$ grows at most linearly for each  $x\in\cH$:
\begin{equation}\label{E:linear}
\|A^nx\|^2\leq n(\|Ax\|^2-\|x\|^2)+\|x\|^2,\quad x\in \cH.
\end{equation}
These two facts  play  key roles in the proof in \cite{1988Richter Dirichlet}.
The following lemma is essentially a reformulation of some arguments in   \cite{1988Richter Dirichlet}.

\begin{lem}\label{L:Sufficy for WSP}
Let $A \in \BH$ be  left-invertible and analytic, let $S=A|_\cM$ be the restriction of $A$ to an invariant subspace $\cM\in\Lat (A)$,  and let $L$ be the standard left-inverse of $S,$ i.e. $L=(S^{\ast}S)^{-1}S^{\ast}.$  If for any $x \in \cM$,
\begin{equation}\label{E:y_n has finite c-point}
\liminf_{n\to\infty}\|S^nL^nx\|<\infty,
\end{equation}
then $\cM=[\cM\ominus S\cM].$
\end{lem}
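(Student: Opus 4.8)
The plan is to follow the wandering-subspace strategy: expand an arbitrary $x\in\cM$ along powers of $S$ using the left inverse $L$, and then show that the resulting ``remainder'' is negligible. Since $A$ is left-invertible, $S=A|_\cM$ is bounded below on $\cM$, so $S^{*}S$ is invertible and $SL=S(S^{*}S)^{-1}S^{*}$ is the orthogonal projection of $\cM$ onto $S\cM$; hence $P:=I-SL$ is the orthogonal projection onto the wandering subspace $E:=\cM\ominus S\cM$. Writing $x=Px+SLx$, then expanding $Lx\in\cM$ the same way and substituting, one obtains by induction the telescoping identity
\begin{equation*}
x=\sum_{k=0}^{n-1}S^{k}P L^{k}x+S^{n}L^{n}x,\qquad n\geq 1 .
\end{equation*}
Each summand $S^{k}PL^{k}x$ lies in $S^{k}E\subset\cN:=[\cM\ominus S\cM]$, so the partial sum $s_{n}:=\sum_{k=0}^{n-1}S^{k}PL^{k}x$ belongs to the closed subspace $\cN$, and $x-s_{n}=S^{n}L^{n}x$.

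Next I would record two elementary facts about $S$. Because $S$ is bounded below, so is $S^{m}$, whence $S^{m}\cM$ is a closed (and therefore weakly closed) subspace for every $m$. Moreover $S^{m}\cM=A^{m}\cM\subset A^{m}\cH$, so the analyticity hypothesis on $A$ gives $\bigcap_{m\geq 1}S^{m}\cM\subset\bigcap_{m\geq 1}A^{m}\cH=\{0\}$; that is, the decreasing sequence of ranges $\{S^{m}\cM\}_{m}$ shrinks to $\{0\}$.

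The core of the argument is to convert the hypothesis $\liminf_{n}\|S^{n}L^{n}x\|<\infty$ into the membership $x\in\cN$. The key realization is that one should not attempt to prove norm convergence of the remainder $S^{n}L^{n}x$ (which may genuinely fail), but only locate a weak limit. Choose a subsequence $\{n_{j}\}$ with $\sup_{j}\|S^{n_{j}}L^{n_{j}}x\|<\infty$; by weak compactness of bounded sets in a Hilbert space, pass to a further subsequence so that $S^{n_{j}}L^{n_{j}}x\rightharpoonup \ell$ for some $\ell\in\cM$. On the one hand, $S^{n_{j}}L^{n_{j}}x\in S^{m}\cM$ whenever $n_{j}\geq m$, and $S^{m}\cM$ is weakly closed, so $\ell\in S^{m}\cM$ for every $m$; hence $\ell\in\bigcap_{m}S^{m}\cM=\{0\}$, i.e. $\ell=0$. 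On the other hand $s_{n_{j}}=x-S^{n_{j}}L^{n_{j}}x\rightharpoonup x-\ell=x$, and since each $s_{n_{j}}\in\cN$ and $\cN$ is weakly closed, $x\in\cN$. As $x\in\cM$ was arbitrary and $\cN\subset\cM$ trivially, we conclude $\cM=\cN=[\cM\ominus S\cM]$.

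The step I expect to demand the most care is the passage through weak limits: checking that $SL$ really is the orthogonal projection onto $S\cM$ (so that the telescoping identity places its summands in $\cN$), and using that both $\cN$ and each $S^{m}\cM$ are weakly closed so that the weak limit splits as ``$0$ plus an element of $\cN$''. This is precisely where the two hypotheses cooperate: the analyticity of $A$ pins the weak limit of the remainders at $0$, while the finiteness of $\liminf\|S^{n}L^{n}x\|$ is exactly what guarantees that a bounded, hence weakly convergent, subsequence of remainders exists.
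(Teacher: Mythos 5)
Your proof is correct and is essentially the argument the paper intends: the paper states this lemma without proof, describing it as a reformulation of Richter's arguments, and those arguments are exactly your telescoping identity $x=\sum_{k=0}^{n-1}S^{k}PL^{k}x+S^{n}L^{n}x$ (with $SL$ the orthogonal projection onto the closed subspace $S\cM$) followed by the weak-compactness step. The hypotheses enter just as you say: $\liminf_{n}\|S^{n}L^{n}x\|<\infty$ produces a weakly convergent subsequence of remainders, whose limit must lie in $\bigcap_{m}S^{m}\cM\subset\bigcap_{m}A^{m}\cH=\{0\}$ (each $S^{m}\cM$ being closed, hence weakly closed, since $S$ is bounded below), so $x$ is a weak limit of elements of the weakly closed subspace $[\cM\ominus S\cM]$.
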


The Bergman case  ($\al=-1$) was proved several years later, in 1996 by Aleman-Richter-Sundberg \cite{Bergman}, via sophisticated  arguments. An early motivation is Hedenmalm's \cite{Hed91}.  S. Shimorin \cite{2001 Shimorin Crell, 2003 Shimorin PAMS} extended it to  $-2\leq \al<0$ by much easier but elegant tricks. Indeed he  reduced the problem to arguments for $\al=1$. The Bergman case was reproved by Sun-Zheng in \cite{ShunZheng} by lifting the Bergman space to the bidisk Hardy space. This proof was simplified by K. Izuchi \cite{Izuchi}.   If $\al<-5,$ examples of  Hedenmalm-Zhu \cite{1992Zhu} disproves the property. Furthermore, Hedenmalm and Perdomo \cite{HedJMPA} gave counterexamples for $\alpha<-2.04$. As of today, the problem is still open for $-2.04\leq \alpha<-2$.
   %
%As of today, the problem is still open for $-5\leq \al<-2,$ although \cite{2003 Shimorin PAMS} suggested a negative answer.

On the other hand, almost nothing is known for $\al >1$. But this is probably the range more relevant to the present paper, only with more severe difficulty for the random model. Two new technical challenges are present for this range, say, when compared with the Dirichlet case, i.e., $\al=1$. First, the multiplication operator is strictly convex, as opposed to the concavity condition in (\ref{E:concavity}). To the best of our knowledge, there is no result  for convex operators in this area so far. Second,   the moment sequence exhibits super-linear growth, indeed of order $n^\alpha$ for $\D_\al$,  as opposed to (\ref{E:linear}).

It seems that the difficulty associated with $\alpha >1$ is only aggravated when it comes to the random model $T(\omega)$.   We argue that a Beurling-type theorem for $T(\omega)$, if eventually established,   needs to treat ``very convex" operators (Lemma \ref{L:m-convex}) and ``very fast-growing" moment sequences (Lemma \ref{P:LIL App}).

As a first step, we offer a simple result for convex operators, as well as a conjecture, to spur further interests. The following lemma is formulated in a way so that it is meaningful to compare it with Richter's result on concave operators. The proof is not hard for any serious reader by modifying the arguments of \cite{1988Richter Dirichlet}, or by Lemma \ref{L:Sufficy for WSP}.

\begin{lem}\label{L:convex}
If an analytic operator $A\in \BH$ is  expansive, convex, and the moment sequence  $\|A^nx\|^2$ satisfies linear growth, that is, there exists $c_1, c_2$ such that $\|A^nx\|^2 \leq (c_1 n+c_2)\|x\|^2$ for $x\in \mathcal{H},$
then the Beurling-type theorem holds for $A$.
\end{lem}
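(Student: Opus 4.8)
The plan is to deduce the statement from Lemma \ref{L:Sufficy for WSP}. Fix $\cM\in\Lat(A)$ and put $S=A|_{\cM}$, $W=\cM\ominus S\cM$, and $L=(S^*S)^{-1}S^*$. First I would check that $S$ inherits the three hypotheses: since $A$ is expansive it is bounded below, hence so is $S$, which makes $S$ left-invertible; analyticity passes to $S$ because $\cap_n S^n\cM\subset\cap_n A^n\cH=\{0\}$; and because $\cM$ is invariant one has $S^kx=A^kx$ for $x\in\cM$, so expansiveness, convexity $\|S^2x\|^2+\|x\|^2\ge 2\|Sx\|^2$, and the linear growth $\|S^nx\|^2\le(c_1n+c_2)\|x\|^2$ all descend to $S$. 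By Lemma \ref{L:Sufficy for WSP} it then suffices to prove that $\liminf_{n\to\infty}\|S^nL^nx\|<\infty$ for every $x\in\cM$.

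The main tool is the sequence of moment operators $M_n=S^{*n}S^n$, which satisfies $M_n=S^*M_{n-1}S$ and $\la M_ny,y\ra=\|S^ny\|^2$. Convexity, applied to $u=S^{n-1}y$, gives the operator inequality $M_{n+1}+M_{n-1}\ge 2M_n$, so the increments $D_k=M_{k+1}-M_k$ form an increasing sequence of positive operators with $D_0=S^*S-I\ge 0$. Linear growth forces these increments to be uniformly bounded: since $\sum_{k=0}^{n-1}D_k=M_n-I\le(c_1n+c_2-1)I$ and $D_k\ge D_m$ for $k\ge m$, one has $(n-m)D_m\le(c_1n+c_2-1)I$, and letting $n\to\infty$ yields $0\le D_m\le c_1I$ for all $m$. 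I would also record that the Cauchy dual $S'=S(S^*S)^{-1}$ satisfies $S'^*S'=(S^*S)^{-1}\le I$, so $S'$, and hence $L=(S')^*$, is a contraction; consequently $x_n:=L^nx$ has $\|x_n\|$ non-increasing.

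Next I would use the canonical decomposition obtained by iterating the orthogonal splitting $\cM=W\oplus S\cM$: writing $w_k=P_WL^kx$ one gets $x=\sum_{k=0}^{n-1}S^kw_k+S^nL^nx$, with $v_n:=S^nL^nx=S^nx_n$ and $a_n:=\|v_n\|^2=\la M_nx_n,x_n\ra$. A computation using $M_n=S^*M_{n-1}S$, the relation $x_{n-1}=w_{n-1}+Sx_n$, and the orthogonality $w_{n-1}\perp S\cM$ turns the telescoping $v_n=v_{n-1}-S^{n-1}w_{n-1}$ into
\[
a_n=a_{n-1}-\|S^{n-1}w_{n-1}\|^2-2\sum_{k=0}^{n-2}\la D_kSx_n,\,w_{n-1}\ra .
\]
Since $\sum_k\|w_k\|^2\le\|x\|^2<\infty$ and $0\le D_k\le c_1I$, the diagonal terms are summable; the crux is to show that the indefinite cross terms $\la D_kSx_n,w_{n-1}\ra$ cannot drive $a_n$ to infinity.

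The hard part will be precisely this cross-term control. Using only expansiveness and the bound $D_k\le c_1I$, a crude Cauchy--Schwarz estimate degrades to $a_n=O(n)$ (equivalently, $\|S^nL^nx\|^2\le(c_1n+c_2)\|x_n\|^2$ with no better-than-$O(1)$ decay of $\|x_n\|^2$), which is not enough. I expect that closing the gap requires genuinely exploiting the \emph{monotonicity} of the increments $D_k$ furnished by convexity, which should let one compare $\la D_kSx_n,w_{n-1}\ra$ against the summable quantities $\la D_kw_{n-1},w_{n-1}\ra$ and recover $\liminf_n a_n<\infty$ (indeed $a_n\to 0$, i.e. the full wandering subspace property $\cM=[\cM\ominus S\cM]$). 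This is the step that mirrors, and must be adapted from, Richter's treatment of the concave case in \cite{1988Richter Dirichlet}, the only structural difference being that the increments here increase rather than decrease.
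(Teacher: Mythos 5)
Your reduction to Lemma \ref{L:Sufficy for WSP} and all of your structural bookkeeping are correct: $S=A|_{\cM}$ inherits expansiveness, convexity, analyticity and linear growth; convexity gives $D_{k}\le D_{k+1}$ for the increments $D_k=M_{k+1}-M_k$ of $M_k=S^{*k}S^k$; linear growth forces $D_k\le c_1I$; $L$ is a contraction; and the identity $a_n=a_{n-1}-\|S^{n-1}w_{n-1}\|^2-2\,\mathrm{Re}\sum_{k\le n-2}\la D_kSx_n,w_{n-1}\ra$ is right (you omitted a real part, and your parenthetical claim that the diagonal terms $\|S^{n-1}w_{n-1}\|^2$ are summable is unjustified, since it would require $\sum_n n\|w_n\|^2<\infty$; but those terms enter with a favorable sign, so neither slip matters). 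The genuine problem is that the proposal stops exactly where the lemma begins: the assertion that monotonicity of the $D_k$ ``should let one compare $\la D_kSx_n,w_{n-1}\ra$ against $\la D_kw_{n-1},w_{n-1}\ra$ and recover $\liminf_n a_n<\infty$'' is the statement to be proved, and no argument for it is offered. A proof that defers its decisive estimate to an expectation is not a proof; this is a missing idea, not a missing verification.

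To see that the missing step is the whole content, compare with the concave case that Lemma \ref{L:Sufficy for WSP} was distilled from. Concavity gives $D_k\le D_0$, hence $a_n=\la M_nx_n,x_n\ra\le t_n+n\la D_0x_n,x_n\ra$ with $t_n=\|x_n\|^2$; the telescoping identity $\la D_0x_n,x_n\ra=t_{n-1}-t_n-\|w_{n-1}\|^2$ makes these quantities summable, so $\liminf_n n\la D_0x_n,x_n\ra=0$ and $\liminf_n a_n\le\lim_n t_n<\infty$. Under convexity the operator inequality reverses, $D_k\ge D_0$, and the natural majorant becomes $M_n\le I+nD_{n-1}$, i.e. $a_n\le t_n+n\la D_{n-1}x_n,x_n\ra$, where $D_{n-1}=S^{*(n-1)}D_0S^{n-1}$, so one now needs $\liminf_n n\la D_0S^{n-1}x_n,S^{n-1}x_n\ra<\infty$. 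The quantity that is summable, $\la D_0x_n,x_n\ra$, and the quantity that must decay, $\la D_0S^{n-1}x_n,S^{n-1}x_n\ra$, are not comparable: already for an expansive, convex weighted shift of linear growth (e.g.\ the Dirichlet weights with the first weight flattened to $1$) there are unit vectors $e$ with $\la D_0e,e\ra=0$ but $\la D_1e,e\ra>0$, so no inequality $D_k\le CD_0$ exists. The only unconditional bound, $\la D_{n-1}x_n,x_n\ra\le c_1t_n$, yields $a_n\le t_n+c_1nt_n$, which helps only if one also proves $\liminf_n n\|L^nx\|^2<\infty$ --- again not established, and false in general for contractions $L$. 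So your cross terms cannot be tamed by Cauchy--Schwarz plus monotonicity alone; some new use of the interplay between convexity and linear growth is required, and it is absent. (For comparison, the paper itself supplies no more than the same reduction, asserting the rest follows ``by modifying the arguments of \cite{1988Richter Dirichlet}''; what your attempt actually documents is that this modification is not mechanical, since Richter's summability mechanism is destroyed when the increments increase rather than decrease.)
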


\begin{con}\label{Con:quadratic}
 The above result holds when the linear growth condition is relaxed to quadratic growth.
\end{con}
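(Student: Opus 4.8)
The plan is to verify the hypothesis of Lemma \ref{L:Sufficy for WSP}. Fix $\cM\in\Lat(A)$ and set $S=A|_\cM$, $L=(S^*S)^{-1}S^*$. First I would check that $S$ inherits every hypothesis: left-invertibility and analyticity are immediate, expansiveness ($S^*S\ge I$) and convexity (the reverse of \eqref{E:concavity}, i.e. $\|S^2y\|^2+\|y\|^2\ge 2\|Sy\|^2$) hold because the defining inequalities are pointwise and $\cM$ is $A$-invariant, and the quadratic moment bound $\|S^ny\|^2\le(c_1n^2+c_2n+c_3)\|y\|^2$ passes to the restriction verbatim. By Lemma \ref{L:Sufficy for WSP} it then suffices to prove, for every $x\in\cM$, that $\liminf_{n\to\infty}\|S^nL^nx\|<\infty$, which yields the Beurling-type identity $\cM=[\cM\ominus S\cM]$.

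Second, I would set up the exact telescoping that governs $q_n:=\|S^nL^nx\|^2$. Writing $v_0=x$, $v_n=L^nx$, and $w_n=v_n-Sv_{n+1}$, one has $w_n\in\cM\ominus S\cM$ (indeed $SLv_n$ is the orthogonal projection of $v_n$ onto $\overline{S\cM}$), so that $x=\sum_{k=0}^{n-1}S^kw_k+S^nL^nx$ is the expansion of $x$ along the wandering subspace. Expanding $q_n=\|S^n(Sv_{n+1}+w_n)\|^2$ and summing yields
\[\|x\|^2-\|S^nL^nx\|^2=\sum_{k=0}^{n-1}\|S^kw_k\|^2+2\sum_{k=0}^{n-1}\mathrm{Re}\,\la S^{k+1}v_{k+1},S^kw_k\ra.\]
Since $S$ is expansive, $L$ is a contraction, $\|v_n\|$ decreases, and telescoping gives $\sum_k\|w_k\|^2\le\|x\|^2$. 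Because $q_n\ge 0$, proving $\liminf q_n<\infty$ is equivalent to bounding the right-hand side above, that is, to showing the cross-sum $\sum_k\mathrm{Re}\,\la S^{k+1}v_{k+1},S^kw_k\ra$ stays bounded below uniformly in $n$. (In the model case $\cM=\cH$ of the unweighted shift these cross terms vanish identically, since there $S^{k+1}v_{k+1}\perp S^kw_k$; the general task is to show that convexity prevents them from becoming too negative.)

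Third, the estimate I would try to prove is that convexity of the moment sequences $j\mapsto\|S^jy\|^2$ forces the needed decay and cancellation in $\mathrm{Re}\,\la S^{k+1}v_{k+1},S^kw_k\ra$, with the quadratic bound used to control the accumulated curvature $\sum_k\|S^kw_k\|^2$. An alternative avenue is Shimorin's Cauchy-dual method \cite{2001 Shimorin Crell}: since $L=(S')^*$ for the Cauchy dual $S'=S(S^*S)^{-1}$, which is a contraction, one may try to transfer the problem to a co-analytic statement about $S'$ and exploit the duality between convexity of $S$ and the geometry of $S'$.

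The main obstacle is precisely the passage from linear to quadratic growth. For convex operators linear growth is rigid: if $a_j=\|S^jy\|^2$ is convex with $a_j\le c_1 j+c_2$, then its increments are bounded (from $(n-m)(a_{m+1}-a_m)\le a_n\le c_1 n+c_2$ with $n=2m$), so the moment sequences are essentially affine and the curvature terms are summable — this is what underlies Lemma \ref{L:convex}. Under quadratic growth the increments may grow linearly, the naive bound $\|S^kw_k\|\le\|S^k\|\,\|w_k\|\lesssim k\|w_k\|$ is no longer summable against $\sum_k\|w_k\|^2\le\|x\|^2$, and genuinely new control of the cross terms is required. This is the same difficulty that keeps the Beurling-type problem open for $\D_\al$ with $\al>1$ in \eqref{E:D al}, and it is why the statement is recorded as a conjecture rather than a theorem.
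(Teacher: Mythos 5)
You are addressing Conjecture \ref{Con:quadratic}, and the first thing to say is that the paper contains no proof of it: it is posed as an open problem precisely because the authors do not have one, so there is no ``paper's proof'' to compare against. The only question is whether your proposal closes the conjecture, and it does not. Your first two steps are sound: the restriction $S=A|_\cM$ of an expansive, convex, analytic operator with quadratic moment growth to $\cM\in\Lat(A)$ inherits all hypotheses (the inequalities are evaluated on vectors of $\cM$, where $S$ and $A$ agree), the reduction to $\liminf_n\|S^nL^nx\|<\infty$ via Lemma \ref{L:Sufficy for WSP} is legitimate, and the telescoping identity
\[
\|x\|^2-\|S^nL^nx\|^2=\sum_{k=0}^{n-1}\|S^kw_k\|^2+2\sum_{k=0}^{n-1}\mathrm{Re}\,\la S^{k+1}v_{k+1},S^kw_k\ra,
\qquad \sum_{k}\|w_k\|^2\le\|x\|^2,
\]
is correct algebra (here $SL$ is indeed the orthogonal projection onto $\overline{S\cM}$, so $w_k\in\cM\ominus S\cM$). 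But the entire content of the conjecture is concentrated in your third step, and there you only announce ``the estimate I would try to prove'': nothing in the proposal shows that convexity of $j\mapsto\|S^jy\|^2$ together with the quadratic bound forces the cross-sum $\sum_k\mathrm{Re}\,\la S^{k+1}v_{k+1},S^kw_k\ra$ to be bounded below uniformly in $n$, and the alternative route through Shimorin's Cauchy dual \cite{2001 Shimorin Crell} is named but not developed. Your own closing paragraph concedes exactly this.

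To be precise about why this is a genuine gap rather than a routine verification: your diagnosis of the linear case is correct --- for a convex sequence $a_j=\|S^jy\|^2$ with $a_j\le c_1j+c_2$, convexity gives $(n-m)(a_{m+1}-a_m)\le a_n$, so the increments are bounded, which is what makes Lemma \ref{L:convex} ``easy.'' Under quadratic growth the increments may grow linearly, $\|S^kw_k\|$ may be of order $k\|w_k\|$, and $\sum_k k^2\|w_k\|^2$ is not controlled by $\sum_k\|w_k\|^2\le\|x\|^2$, so the Cauchy--Schwarz bound on the cross terms diverges and some cancellation mechanism must be found; no such mechanism is supplied. That missing mechanism \emph{is} the conjecture --- the same obstruction the paper cites for $\D_\al$ with $\al>1$ in (\ref{E:D al}). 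In short, your write-up is a reasonable research plan and a faithful account of where the difficulty sits, but it is a proof outline with the decisive estimate left open, not a proof.
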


\noindent Eventually, in order to treat the random model, one needs to take care of half-exponential growth. Indeed, the precise growth rate is $e^{\lambda \sqrt{ n\ln\ln n}}$ for a specific $\lambda$ (Lemma \ref{P:LIL App}).

  Now we analyze the random model in details. The next lemma says that it is convex of any order in the mean. This suggests that the moment sequence is fast-growing. The proof is by direct calculation.

\begin{lem}\label{L:m-convex}
Let  $T\sim \{X_n\}_{n=1}^{\infty}.$ Then for any $m\geq 1,$
\begin{equation*}
\bE \Big(\sum_{l=0}^m(-1)^l\binom{m}{l}\|T^{m-l}x\|^2\Big)=\big(\bE (X_1^2)-1\big)^m\|x\|^2,\quad x\in H.
\end{equation*}
\end{lem}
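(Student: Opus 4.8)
The identity to prove is
\begin{equation*}
\bE \Big(\sum_{l=0}^m(-1)^l\binom{m}{l}\|T^{m-l}x\|^2\Big)=\big(\bE (X_1^2)-1\big)^m\|x\|^2,\quad x\in \cH.
\end{equation*}
The plan is to fix $x=\sum_{k} x_k e_k$ (with $\{e_k\}$ the orthonormal basis defining $T$, indexed so that $Te_k=X_ke_{k+1}$), and to reduce the whole computation to the action of $T$ on a single basis vector by linearity of $\bE$ and orthogonality. The key observation is that $T^{j}e_k = (X_k X_{k+1}\cdots X_{k+j-1})\, e_{k+j}$, so $\|T^{j}e_k\|^2 = \prod_{i=0}^{j-1}X_{k+i}^2$, and because the resulting vectors $e_{k+j}$ for distinct $k$ are orthogonal, we get the clean formula $\|T^{j}x\|^2=\sum_k |x_k|^2 \prod_{i=0}^{j-1}X_{k+i}^2$. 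Thus it suffices to prove the scalar identity for each fixed $k$, namely
\begin{equation*}
\bE\Big(\sum_{l=0}^m (-1)^l\binom{m}{l}\prod_{i=0}^{m-l-1}X_{k+i}^2\Big)=\big(\bE(X_1^2)-1\big)^m,
\end{equation*}
and then sum against $|x_k|^2$ to recover $\|x\|^2$.

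First I would set $j=m-l$ and rewrite the inner alternating sum as $\sum_{j=0}^{m}(-1)^{m-j}\binom{m}{j}\prod_{i=0}^{j-1}X_{k+i}^2$, with the convention that the empty product (when $j=0$) equals $1$. Taking expectations and using the i.i.d.\ structure, each term factors: since $X_{k},\dots,X_{k+j-1}$ are independent with common law $\mu$, I get $\bE\big(\prod_{i=0}^{j-1}X_{k+i}^2\big)=\big(\bE(X_1^2)\big)^{j}$. This is exactly where the independence hypothesis is used, and it is the crux of why the random model produces such a tidy answer. Substituting, the expectation of the whole alternating sum becomes $\sum_{j=0}^{m}(-1)^{m-j}\binom{m}{j}\big(\bE(X_1^2)\big)^{j}$, which is precisely the binomial expansion of $\big(\bE(X_1^2)-1\big)^m$.

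The remaining step is purely the binomial theorem: $\big(\bE(X_1^2)-1\big)^m=\sum_{j=0}^{m}\binom{m}{j}\big(\bE(X_1^2)\big)^{j}(-1)^{m-j}$, matching the expression above term by term. Finally I would sum over $k$ weighted by $|x_k|^2$ and interchange the (finite or, after a monotone/dominated convergence justification using boundedness of $X_1$, convergent) sum with $\bE$; boundedness of the weights ($R<\infty$) guarantees all moments $\bE(X_1^{2})$ are finite and that Fubini/Tonelli applies, so the interchange of $\bE$ with the sum over $k$ is legitimate. This yields $\big(\bE(X_1^2)-1\big)^m\sum_k|x_k|^2=\big(\bE(X_1^2)-1\big)^m\|x\|^2$, as claimed.

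I do not expect a genuine obstacle here, which is consistent with the paper's remark that the proof is ``by direct calculation.'' The only points requiring a modicum of care are the bookkeeping of the index substitution $j=m-l$ together with the empty-product convention, and the routine justification for commuting $\bE$ with the (possibly infinite) orthogonal expansion of $\|T^{j}x\|^2$; both are handled by the uniform boundedness of the weights. The conceptual content is entirely contained in the factorization $\bE\big(\prod_{i=0}^{j-1}X_{k+i}^2\big)=\big(\bE(X_1^2)\big)^{j}$ via independence, after which the identity collapses to the binomial theorem.
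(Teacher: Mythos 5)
Your proof is correct: the paper offers no argument beyond the remark that the proof is ``by direct calculation,'' and your computation---expanding $\|T^{j}x\|^2$ in the orthonormal basis as $\sum_k |x_k|^2\prod_{i=0}^{j-1}X_{k+i}^2$, factoring $\bE\big(\prod_{i=0}^{j-1}X_{k+i}^2\big)=\big(\bE(X_1^2)\big)^{j}$ by independence, and collapsing the alternating sum via the binomial theorem---is precisely that calculation. Your attention to the empty-product convention and to the Tonelli justification for interchanging $\bE$ with the sum over $k$ (using $R<\infty$) covers the only points where care is needed.
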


  Note that, unless in the degenerate case, we  have $\bE (X_1^2) >1$ since the standard normalization is $e^{\bE(\ln X_1)}=1.$

  Next we study  the precise  growth rate of the moment sequence $\{\|T^nx\|^2\}_{n=0}^{\infty}.$  The proof of the following lemma is by direct applications of the law of iterated logarithm \cite{Hartman-Wintner}.

\begin{lem}\label{P:LIL App}
Let $T\sim \{X_n\}_{n=1}^{\infty}$,  $\{e_k\}_{k=1}^{\infty}$ be the associated orthonormal basis for $\cH$,  $\bE(\ln X_1)=0$ and $\bE(\ln X_1^2)^2 =\sig^2<\infty.$ Then for each $k,$
\begin{equation}\label{E:165}
\limsup_{n\to\infty} \frac{\ln \|T^ne_k\|^2}{\sqrt{2\sig^2n\ln\ln n}}=1, \qquad
\liminf_{n\to\infty} \frac{\ln \|T^ne_k\|^2}{\sqrt{2\sig^2n\ln\ln n}}=-1\ \ a.s.
\end{equation}

\end{lem}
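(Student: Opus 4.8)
The key observation is that for the weighted shift $T$ with $Te_k = X_k e_{k+1}$, one has the explicit formula $\|T^n e_k\|^2 = \prod_{i=0}^{n-1} X_{k+i}^2$, so that
\[
\ln\|T^n e_k\|^2 = \sum_{i=0}^{n-1} \ln X_{k+i}^2 = 2\sum_{i=0}^{n-1} \ln X_{k+i}.
\]
This reduces the problem to a statement about partial sums of the i.i.d.\ sequence $\{\ln X_{k+i}^2\}_{i\geq 0}$. First I would record this identity, noting that $\|T^n e_k\| = X_k X_{k+1}\cdots X_{k+n-1}$ follows immediately from the shift action on the basis.

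Next I would set $Z_i = \ln X_{k+i}^2$ and check the hypotheses of the Hartman--Wintner law of iterated logarithm (already invoked in the proof of Theorem~\ref{T:LIL our case}): the $Z_i$ are i.i.d., have mean $\bE(\ln X_1^2) = 2\bE(\ln X_1) = 0$ by assumption, and have finite variance $\bE\big((\ln X_1^2)^2\big) = \sigma^2 < \infty$. The LIL then yields, almost surely,
\[
\limsup_{n\to\infty} \frac{S_n^{(k)}}{\sqrt{2\sigma^2 n \ln\ln n}} = 1, \qquad
\liminf_{n\to\infty} \frac{S_n^{(k)}}{\sqrt{2\sigma^2 n \ln\ln n}} = -1,
\]
where $S_n^{(k)} = \sum_{i=0}^{n-1} Z_i = \ln\|T^n e_k\|^2$. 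Since $S_n^{(k)}$ is exactly the numerator in \eqref{E:165}, this is precisely the desired conclusion for each fixed $k$.

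The only genuine subtlety is the dependence on $k$: the LIL as usually stated applies to partial sums starting at index $1$, whereas here the summation begins at the shifted index $k$. I would handle this by noting that for fixed $k$ the sequence $\{\ln X_{k+i}^2\}_{i\geq 0}$ is itself i.i.d.\ with the same law as $\{\ln X_i^2\}_{i\geq 1}$, so the LIL applies verbatim to each shifted sequence; alternatively one observes that $S_n^{(k)} = S_{n+k-1}^{(1)} - S_{k-1}^{(1)}$ differs from $S_{n+k-1}^{(1)}$ by the fixed finite quantity $S_{k-1}^{(1)}$, which is negligible against the normalizing factor $\sqrt{2\sigma^2 n\ln\ln n}\to\infty$, and that replacing $n$ by $n+k-1$ does not alter the $\limsup$ or $\liminf$ of the normalized ratio. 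Finally, since there are only countably many values of $k$, the intersection over $k$ of the full-measure events on which \eqref{E:165} holds is still a full-measure event, giving the statement simultaneously for all $k$ almost surely. I do not anticipate any serious obstacle here; the content is entirely carried by the LIL, and the proof amounts to the bookkeeping above, which is why the paper flags it as following ``by direct applications of the law of iterated logarithm.''
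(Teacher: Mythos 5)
Your proof is correct and takes exactly the route the paper intends: the paper's entire proof consists of the remark that the lemma follows ``by direct applications of the law of iterated logarithm,'' i.e.\ the Hartman--Wintner LIL applied to the i.i.d.\ partial sums $\ln\|T^ne_k\|^2=\sum_{i=0}^{n-1}\ln X_{k+i}^2$, whose mean-zero and variance-$\sig^2$ hypotheses you verify just as required. Your extra bookkeeping (the shifted starting index $k$ and the countable intersection over $k$) is routine and fills in precisely what the paper leaves implicit.
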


%\begin{rmk}
%Roughly speaking, (\ref{E:165}) implies that $\{\|T^ne_k\|^2\}_{n=0}^{\infty}$ is half exponential growth, which is more rapidly than that of $M_z$ on $\D_{\al}$ with $\al>1$ (\ref{E:150}). From this viewpoint,  it is quit difficult to solve the random version of Beurling type theorem.
%\end{rmk}

Indeed, to be more specific,
for any $\ep>0,$ let
\begin{equation*}
A_n=\Big[\|T^ne_k\|^2<e^{(1+\ep)\sqrt{2\sig^2n\ln\ln n}}\Big],\quad
B_n=\Big[\|T^ne_k\|^2>e^{(-1-\ep)\sqrt{2\sig^2n\ln\ln n}}\Big].
\end{equation*}
Then
\begin{equation}\label{E:262}
\bP(\liminf_{n\to\infty}A_n)=\bP(\liminf_{n\to\infty}B_n)=1.
\end{equation}

  In order to treat the random model, one might need more estimates beyond the mere growth rate in the mean. Then the tail probability of $\|T^ne_k\|^2$ offered by, say,  the classical  Hoeffding's inequality \cite{Hoeffding inequality} or the like, can be helpful.

 % , and we record a consequence for the convenience of readers with an operator theory background:  for any $t>0,$
% $$
% P\Big([\|T^ne_k\|^2>t]\cup \big[\|T^ne_k\|^2<\frac{1}{t}\big]%\Big)<2\exp\Big(-\frac{\ln^2t}{2nC^2}\Big),
% $$ where $C=\ln\frac{R}{r}$ and
  %$0<r=\min \text{ess} X_1\leq \max \text{ess} X_1=R.$

%\begin{cor}
%Suppose that $E(\ln X_1)=0$ and $E(\ln X_1^2)^2 =\sig^2<\infty.$
% For any $\ep>0,$ let
%\begin{equation}
%A_n=\Big[\|T^ne_1\|^2<e^{(1+\ep)\sqrt{2\sig^2n\log\log n}}\Big],
%\end{equation}
%then
%\begin{equation}\label{E:262}
%P(\liminf_{n\to\infty}A_n)=1.
%\end{equation}
%\end{cor}
%\begin{proof}
%Recall  that $\|z^n\|^2=X_1^2\cdots X_n^2.$  Proposition \ref{P:LIL App} implies: For any $\ep>0,$ almost surely,
%\begin{equation}
%\ln X_1^2+\cdots+\ln X_n^2<(1+\ep)\sqrt{2\sig^2n\ln\ln n}
%\end{equation}
%when $n$ (depending on $\om$) is large enough. So the equality (\ref{E:262}) becomes clear.
%\end{proof}
  For a general moment sequence $\{\|T^nx\|^2\}_{n=1}^{\infty}$, let $x=\sum_{k=1}^{\infty}a_ke_k\in \cH.$ Then
\begin{equation}\label{E:173}
\|T^nx\|^2=\sum_{k=1}^{\infty}|a_k|^2X_{k}^2\cdots X_{k+n-1}^2.
\end{equation}

\begin{rmk}This is a sum of  inter-correlated random variables from a stationary series.
For  sums of independent r.v.'s, plenty of techniques are available \cite{Petrov, Stout}.   Unfortunately, for dependent random variables, existing knowledge in probability does not readily yield satisfactory answers for us. For instance, one can apply the theory of orthogonal series to the above (\ref{E:173})
if $\mathbb{E}(X_1^2)=0$, which is  absurd for us.
As mentioned in the introduction, we plan to start a separate work to sort out the general theory of (\ref{E:powerseries}) and (\ref{E:powerseries2}) which are elegant examples of dependent summation.
\end{rmk}

\begin{con}\label{con:growth}
 Lemma \ref{P:LIL App}  holds for  $x\in \cH$ (instead of only $e_k \in \cH)$.
 \end{con}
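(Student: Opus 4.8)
The plan is to reduce everything to the mean-zero random walk $U_m=\sum_{j=1}^m\ln X_j^2$. Writing $c_k=|a_k|^2$ and $S_{k,n}:=U_{k+n-1}-U_{k-1}=\sum_{j=k}^{k+n-1}\ln X_j^2$, equation (\ref{E:173}) becomes $\|T^nx\|^2=\sum_{k\ge1}c_k\,e^{S_{k,n}}$, and I would normalize throughout by $b_n:=\sqrt{2\sig^2 n\ln\ln n}$. Lemma \ref{P:LIL App} is exactly the single-window statement $\limsup_n S_{k,n}/b_n=1$ and $\liminf_n S_{k,n}/b_n=-1$ a.s. Two of the four inequalities are then immediate and need no new idea: for a fixed $k_0$ with $a_{k_0}\ne0$ the one-term domination $\|T^nx\|^2\ge c_{k_0}e^{S_{k_0,n}}$ yields $\ln\|T^nx\|^2\ge 2\ln|a_{k_0}|+S_{k_0,n}$, hence $\limsup_n \ln\|T^nx\|^2/b_n\ge1$ and $\liminf_n\ln\|T^nx\|^2/b_n\ge-1$ a.s. So the whole content lies in the two upper estimates $\limsup\le1$ and $\liminf\le-1$, which concern the full series rather than a single term.

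The natural route is truncation: write $x=x_K+\tilde x_K$ with $x_K=\sum_{k\le K}a_ke_k$. For the polynomial $x_K$ both claims follow from Lemma \ref{P:LIL App}. The bound $\limsup\le1$ holds because a finite intersection of a.s. events gives $S_{k,n}\le(1+\ep)b_n$ simultaneously for all $k\le K$ once $n$ is large, so $\|T^nx_K\|^2\le\|x\|^2 e^{(1+\ep)b_n}$; the bound $\liminf\le-1$ holds because along the times of a deep negative LIL excursion of $U$ near index $n$ the finitely many values $S_{k,n}$ ($k\le K$) are all comparably negative, since over a window of fixed length $K$ the walk moves only $O(\sqrt K)=o(b_n)$, an increment control supplied along the excursion subsequence by a Borel--Cantelli argument. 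It then remains to show the tail $\sum_{k>K}c_k e^{S_{k,n}}$ is negligible on the $b_n$-log scale, and this is the crux.

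Here the plan meets what I expect to be the genuine obstacle, and it is not merely technical. The tail is a weighted sum of the strongly dependent family $\{e^{S_{k,n}}\}_{k>K}$ of overlapping windows of one walk, and its first moment is already exponential: $\bE\|T^nx\|^2=\big(\bE X_1^2\big)^n\|x\|^2$ with $\rho:=\bE X_1^2>1$ by Jensen. The trouble is that this exponential mean is realized almost surely, not merely in expectation, once the coefficients decay slowly. Passing to the independent non-overlapping subwindows $k_j=(j-1)n+1$ and a large-deviation count, a window with $S_{k_j,n}\approx\theta n$ appears around $j\approx e^{nI(\theta)}$, where $I$ is the rate function of $\ln X_1^2$; for $c_k\asymp k^{-\beta}$ it contributes $c_{k_j}e^{\theta n}\approx e^{n(\theta-\beta I(\theta))}$, and optimizing gives $\max_\theta(\theta-\beta I(\theta))=\beta\Lambda(1/\beta)>0$, with $\Lambda(\lam)=\ln\bE X_1^{2\lam}$ strictly positive away from $0$ because $\Lambda(0)=\Lambda'(0)=0$ and $\Lambda$ is strictly convex. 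A routine Borel--Cantelli count then forces $\liminf_n \ln\|T^nx\|^2/n\ge\beta\Lambda(1/\beta)>0$ a.s., so $\ln\|T^nx\|^2/b_n\to+\infty$. In other words, the upper LIL bound $\limsup\le1$, and with it the conjecture as literally stated, appears to fail for merely $\ell^2$ coefficients: rare, far-out windows of atypically large weights dominate. The realistic target is therefore to prove the conjecture under a coefficient-decay hypothesis strong enough to suppress these windows (super-polynomial decay of $|a_k|$ defeats the count above, and the sharp threshold is itself an attractive question), using exponential tail bounds such as Hoeffding's inequality \cite{Hoeffding inequality} or the Feller test Theorem \ref{T:upper and lower} arranged as a double Borel--Cantelli over the pair $(k,n)$. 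The main obstacle is precisely the tension between the exponential mean $\rho^n$ and the sought sub-exponential a.s.\ scale $e^{\pm b_n}$, which is exactly the dependent-summation phenomenon flagged around (\ref{E:powerseries}) and (\ref{E:powerseries2}).
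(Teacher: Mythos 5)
The first thing you should know is that there is nothing in the paper to compare your proposal against: the statement you were given is Conjecture \ref{con:growth}, which the paper leaves open. The authors prove only the single-vector case (Lemma \ref{P:LIL App}, via the Hartman--Wintner law of the iterated logarithm applied to one window $S_{k,n}=\sum_{j=k}^{k+n-1}\ln X_j^2$), and in the remark surrounding (\ref{E:173}) they explicitly flag the dependent-summation obstacle --- the same one you identify --- as the reason they cannot handle general $x$, deferring it to future work.

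Judged on its own merits, your proposal is not a proof of the conjecture but an argument that the conjecture is \emph{false} as literally stated, and as far as I can check the argument is sound. The two one-term lower bounds are correct, the finite-support case is correct (for the $\liminf$ you can even replace your $O(\sqrt K)$ fluctuation estimate by the deterministic bound $\ln X_j^2\le 2\ln R$, since only upper control of the extra increments is needed), and the large-deviation count at the heart of your refutation checks out: for $c_k\asymp k^{-\beta}$ with $\beta>1$ (so $x\in\cH$), the non-overlapping blocks $S_{(j-1)n+1,n}$, $j\ge 1$, are i.i.d., Cram\'er's lower bound applies (unconditionally in $\bR$, and here $\ln X_1^2\le 2\ln R$ in any case), and Borel--Cantelli yields, almost surely for all large $n$, a block $j\le e^{n(I(\theta)+\ep)}$ with $S_{(j-1)n+1,n}\ge\theta n$, whence
\begin{equation*}
\liminf_{n\to\infty}\frac{\ln\|T^nx\|^2}{n}\;\ge\;\sup_{\theta}\bigl(\theta-\beta I(\theta)\bigr)\;=\;\beta\Lambda(1/\beta)\;>\;0,
\end{equation*}
the positivity following from $\Lambda(0)=\Lambda'(0)=0$ (the normalization $\bE\ln X_1=0$) and strict convexity of $\Lambda$. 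Hence $\ln\|T^nx\|^2/\sqrt{2\sig^2n\ln\ln n}\to+\infty$ a.s., so both assertions of Lemma \ref{P:LIL App} fail for such $x$. Two corrections to your write-up: your claim that the exponential mean $(\bE X_1^2)^n\|x\|^2$ ``is realized almost surely'' overstates what you prove --- the a.s. rate you obtain is $\beta\Lambda(1/\beta)$, which is strictly below $\Lambda(1)=\ln\bE X_1^2$ for $\beta>1$ (since $\lam\mapsto\Lambda(\lam)/\lam$ is increasing), and Markov plus Borel--Cantelli caps the a.s. rate at $\Lambda(1)$ in any case; and your statement that super-polynomial decay of $|a_k|$ ``defeats the count'' shows only that this particular construction fails in that regime, not that the conjecture holds there --- that threshold question remains open, as you note. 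In summary: as a proof the proposal fails, but the failure is the content; written up carefully, your block/large-deviation construction refutes the conjecture for all non-degenerate laws and identifies coefficient decay as the missing hypothesis, which is strictly more than the paper establishes.
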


\section{Algebras generated by $T$}\label{S:algebra}

  The aim of this section is to describe various algebras generated by a random weighted shift $T(\omega)$. The list includes
Banach algebras, weakly closed algebras, commutant algebras/multipliers, dual algebras and $C^*$-algebras. The proofs rest on some past results, especially from \cite{shields}.

\subsection{Non-selfadjoint algebras}

Let $A\in\BH$. We denote
$$\sA_A=\textup{the norm closure of polynomials in}\ A $$ and
$$\sW_A=\textup{the closure in the weak operator topology of polynomials in}\ A.$$ Thus $\sA_A$ is the Banach algebra generated by $A$.
By \cite[P. 477]{Dunford}, $\sW_A$ coincides with the closure, in the strong operator topology, of polynomials in $A$.
We denote by $\{A\}'$ the commutant of $A$, and by $\{A\}''$ the double commutant of $A$. It is easy to see that $\{A\}'$ and $\{A\}''$ are both closed in the weak operator topology and $\sW_A\subset\{A\}'$.

  Since $\BH$ is the dual space of the ideal of trace-class operators, it makes sense to define
$$\sO_A=\textup{the closure, in weak* topology, of polynomials in}\ A,$$ which is called the
{\it dual algebra} generated by $A$ \cite{Apostol}.

%\begin{rmk}
%Suppose that  $T\sim\{X_n\}_{n=1}^\infty.$
%\end{rmk}

%The main result of this subsection is the following theorem.
%
%\begin{thm}[\cite{Apostol}, Theorem 3.7]
%Let $A$ be a unilateral weighted shift in $\BH$ such that $\|A\|=\gamma(A)$. Then $\sO_A=\sW_A$, the weak* and weak operator
%topologies coincide on $\sW_A$, and $A$ is reflexive.
%\end{thm}

\begin{thm}\label{T:GenerAlgebra}
Let  $T\sim\{X_n\}_{n=1}^\infty$ with $R=1$. Then
\begin{enumerate}
\item[(i)] $\sA_T$ is isometrically isomorphic to the disk algebra $\mathcal{A}(\bD)$  almost surely.
\item[(ii)] $\sO_T=\sW_T$ a.s. Moreover, if   $\bP(X_1=0)=0$, then $\sO_T=\sW_T=\{T\}'=\{T\}''$ is isometrically isomorphic to $H^\infty(\bD)$  almost surely.
\end{enumerate}
\end{thm}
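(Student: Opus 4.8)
The plan for (i) is a two–sided estimate on polynomials. Since $\|T\|=R=1$ a.s.\ (Lemma \ref{T:NormSpectral}), the von Neumann inequality gives $\|p(T)\|\le\sup_{|z|\le1}|p(z)|=\|p\|_{\mathcal A(\bD)}$ for every polynomial $p$. Conversely $\sigma(T)=\overline{B(0,1)}$ a.s.\ (Lemma \ref{T:NormSpectral}), so the spectral mapping theorem yields $\gamma(p(T))=\max_{z\in\sigma(T)}|p(z)|=\|p\|_{\mathcal A(\bD)}$, whence $\|p(T)\|\ge\gamma(p(T))=\|p\|_{\mathcal A(\bD)}$. Thus $p\mapsto p(T)$ is isometric on polynomials, and as polynomials are dense in $\mathcal A(\bD)$ and in $\sA_T$ respectively, it extends to an isometric algebra isomorphism $\mathcal A(\bD)\to\sA_T$ a.s.

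For (ii) I would first record that, a.s., $T$ is an absolutely continuous, completely non-unitary contraction: $\|T\|=1$, and $T\in C_{00}$ by Lemma \ref{P:C00}, so $T$ has no unitary summand. Hence the Sz.-Nagy--Foias $H^\infty$-functional calculus $\Phi\colon H^\infty(\bD)\to\BH$, $\Phi(\phi)=\phi(T)$, is a well-defined, contractive, weak$^*$-continuous, injective algebra homomorphism. Since the ultraweak (weak$^*$) topology is finer than the weak operator topology, the inclusion $\sO_T\subseteq\sW_T$ is automatic; the content is the reverse inclusion. I would obtain it from the standard dual-algebra theory for such contractions, which identifies $\Phi\big(H^\infty(\bD)\big)$ simultaneously as the weak$^*$-closure and the weak-operator closure of the polynomials in $T$, giving $\sO_T=\sW_T=\Phi\big(H^\infty(\bD)\big)$ a.s. The mechanism behind the reverse inclusion is that each $\phi(T)$ is the limit, in the strong (hence weak) operator topology, of its Fej\'er means $\sigma_n(\phi)(T)$, which are polynomials uniformly bounded by $\|\phi(T)\|$; a bounded net converging in the weak operator topology also converges weak$^*$.

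The crux is that $\Phi$ is isometric for the \emph{full unit-disk} norm, $\|\phi(T)\|=\sup_{|z|<1}|\phi(z)|$, even though, by Lemma \ref{T:convergence radius}, the elements of $H^2_\mu$ converge only on the smaller disk of radius $e^{\bE(\ln X_1)}<1$. Point evaluations at the eigenvalues of $T^*$ (Lemma \ref{P:PointSpect}) recover only the supremum over $B(0,e^{\bE(\ln X_1)})$, so a different device is needed, and this is where I expect the main difficulty. For the upper bound I would use that the Fej\'er means satisfy $\|\sigma_n(\phi)\|_{\mathcal A(\bD)}\le\|\phi\|_\infty$, so by (i) $\|\sigma_n(\phi)(T)\|\le\|\phi\|_\infty$; passing to the weak-operator limit and using lower semicontinuity of the norm gives $\|\phi(T)\|\le\|\phi\|_\infty$. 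For the lower bound I would exploit the circular symmetry of weighted shifts: with the gauge unitaries $U_\theta e_n=e^{in\theta}e_n$ one has $U_\theta T U_\theta^*=e^{i\theta}T$, hence $U_\theta\phi(T)U_\theta^*=\sum_k c_k e^{ik\theta}T^k$ for $\phi(z)=\sum_k c_kz^k$. Averaging against the Poisson kernel $P_r$ yields
\[\phi_r(T):=\sum_k c_k r^k T^k=\frac{1}{2\pi}\int_0^{2\pi}P_r(\theta)\,U_\theta\phi(T)U_\theta^*\,d\theta,\qquad \phi_r(z)=\phi(rz),\]
so $\|\phi_r(T)\|\le\|\phi(T)\|$. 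On the other hand $\phi_r$ is holomorphic on a neighbourhood of $\sigma(T)=\overline{B(0,1)}$, so by the spectral mapping theorem $\|\phi_r(T)\|\ge\gamma(\phi_r(T))=\sup_{|w|\le r}|\phi(w)|$. Letting $r\uparrow1$ gives $\|\phi(T)\|\ge\|\phi\|_\infty$, so $\Phi$ is isometric; it is the large spectral radius $\gamma(T)=1$, not the small convergence radius, that forces the unit-disk norm.

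Finally, under $\bP(X_1=0)=0$ the weights are a.s.\ nonzero, so $T$ is a.s.\ an injective unilateral weighted shift and, identifying it with $M_z$ on $H^2_\mu$, its commutant $\{T\}'$ consists exactly of the analytic multiplication operators $M_\phi$ \cite{shields}. The isometric estimate of the previous paragraph shows that every such $\phi$ lies in $H^\infty(\bD)$ with $\|M_\phi\|=\|\phi\|_\infty$, so $\{T\}'=\Phi\big(H^\infty(\bD)\big)=\sW_T$. Since $\{T\}'$ is abelian and $T\in\{T\}'$, anything commuting with all of $\{T\}'$ commutes with $T$, giving $\{T\}''\subseteq\{T\}'$, while commutativity gives $\{T\}'\subseteq\{T\}''$; hence $\{T\}'=\{T\}''$. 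Collecting these identities, $\sO_T=\sW_T=\{T\}'=\{T\}''$ is isometrically isomorphic to $H^\infty(\bD)$ a.s., and this common algebra is the multiplier algebra of $H^2_\mu$.
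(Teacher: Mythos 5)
Your proof of (i) coincides with the paper's (von Neumann inequality for the upper bound, $\sigma(T)=\overline{B(0,1)}$ a.s.\ plus the spectral mapping theorem for the lower bound, then density), and your gauge-unitary/Poisson-kernel device in (ii) is a legitimate, self-contained replacement for the paper's citation of Shields' Theorems 10 and 12: it correctly yields $\|\phi(T)\|=\|\phi\|_\infty$ for every $\phi\in H^\infty(\bD)$, with the large spectral radius, not the small convergence radius, doing the work.

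However, two steps in (ii) have genuine gaps. (a) In your last paragraph you claim that this isometric estimate ``shows that every such $\phi$ lies in $H^\infty(\bD)$'', where $\phi$ ranges over the symbols of operators in $\{T\}'$. This is a non sequitur: Shields' commutant theorem only says that each $A\in\{T\}'$ equals $M_\phi$ for a \emph{formal power series} $\phi$, and a priori $\phi=M_\phi 1\in H^2_\mu$ is analytic only on the disk of radius $e^{\bE(\ln X_1)}<1$ (Lemma \ref{T:convergence radius}); your estimate applies only to $\phi$ already known to lie in $H^\infty(\bD)$, so it cannot be quoted here. The missing ingredient is a coefficient bound: writing $f_n=z^n/w_n$ for the standard orthonormal basis of $H^2_\mu$, one has $|\la M_\phi f_n,f_{n+k}\ra|=|\hat\phi(k)|\,w_{n+k}/w_n$, hence $|\hat\phi(k)|\,\|T^k\|\le\|M_\phi\|$; since a.s.\ $\|T^k\|=1$ for all $k$ (because $\gamma(T)=\|T\|=1$), the series of $\phi$ has radius of convergence at least $1$. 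Only after this does $\phi_r(z)=\phi(rz)$ become holomorphic across $\overline{\bD}$, so that $M_{\phi_r}=\sum_k\hat\phi(k)r^kT^k$ converges in norm and your Poisson/spectral-mapping argument gives $\sup_{|w|\le r}|\phi(w)|\le\|M_{\phi_r}\|\le\|M_\phi\|$, whence $\phi\in H^\infty(\bD)$ on letting $r\uparrow 1$. This is precisely the content of the Shields results the paper cites. (b) For the unconditional statement $\sO_T=\sW_T$, your Fej\'er-means mechanism proves only $\Phi\big(H^\infty(\bD)\big)\subseteq\sW_T\cap\sO_T$, which are the \emph{easy} inclusions; the content is $\sW_T\subseteq\Phi\big(H^\infty(\bD)\big)$, i.e.\ that $\Phi\big(H^\infty(\bD)\big)$ is WOT-closed. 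When $\bP(X_1=0)=0$ you can close the loop through the commutant once (a) is repaired, via $\sO_T\subseteq\sW_T\subseteq\{T\}'=\Phi\big(H^\infty(\bD)\big)\subseteq\sO_T$; but when $\bP(X_1=0)>0$ this route fails, since then $\{T\}'$ strictly contains $\Phi\big(H^\infty(\bD)\big)$ (it contains the projections onto the nilpotent blocks), and your phrase ``standard dual-algebra theory'' is carrying all the weight --- it is exactly the ABFP result (Theorem 3.7 of \cite{Apostol}) that the paper cites, and should be invoked as such. A minor further point: Lemma \ref{P:C00} assumes $r<R$, and in the degenerate case $T$ is the isometric unweighted shift, which is not of class $C_{00}$; your functional calculus nonetheless survives because every unilateral weighted shift satisfies $T^{*n}\rightarrow 0$ in the strong operator topology and is therefore completely non-unitary.
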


\begin{proof}[Proof of Theorem \ref{T:GenerAlgebra}]
We first observe that $\sigma(T)=\overline{\bD}$ a.s.  by Lemma  \ref{T:NormSpectral}.
(i)
For any polynomial $p(z)$, by the von Neumann inequality and the spectral mapping theorem,   $\gamma(p(T)=\|p\|_\infty$ a.s. Now we easily  obtain
$\|p(T)\|=\|p\|_{\infty}$ a.s. Since polynomials are dense in ${A}(\bD)$, the map $p\mapsto p(T)$ extends to an isometrically  isomorphism $\varphi$ of ${A}(\bD)$ onto $\sA_{T}$ a.s.
 (ii)  By Lemma  \ref{T:NormSpectral} and  Theorem 3.7 in \cite{Apostol}, one easily has $\sO_T=\sW_T$ a.s. Since  $X_i>0$ for  $i\geq 1$ a.s.,  $T$ is injective and $\|T\|=\gamma(T)=1$ a.s.   By Theorem 3 in \cite{shields} and its corollary, we have $\{T\}'=\{T\}''$ a.s. Furthermore, using Theorem 10 and Theorem 12 in \cite{shields}, we deduce that $\{T\}'$ is isometrically isomorphic to $H^\infty(\bD)$ a.s.
\end{proof}

\begin{rmk}
Take $T$ as in Theorem \ref{T:GenerAlgebra}. By Theorem \ref{T:GenerAlgebra} (ii), a functional calculus of $T$ with respect to functions in $H^\infty(\bD)$ can be defined. In particular, we have
(\ref{OpeFunc}). But we should keep in mind that the assumption $R=1$ is not well suited for function theory since, according to Lemma \ref{T:convergence radius}, functions in the random Hardy spaces are now not defined over the unit disk.
\end{rmk}

\begin{rmk}\label{R:multiplier alg}
If we realize $T$ as $M_z$ on $H^2_{\mu}$ (Section \ref{S:random Hardy}), then Theorem 3 in \cite{shields} also implies that the multiplier algebra of $H^2_{\mu}$  is isometrically  isomorphic to $H^\infty(\bD)$ a.s. under the assumption of (ii) in Theorem \ref{T:GenerAlgebra}.
\end{rmk}

%%%%%%%%%%%%%%%%%%%%%%%%%%%%
\subsection{$C^*$-algebras}
In this subsection we give some applications of results obtained in Section  \ref{S:SampleClassify} and Section \ref{S:vonNeumanIneq}. The results obtained here may be viewed as the first step to understand the structures of the $C^*$-algebras generated by $T$. We answer two basic questions: when they are simple and when they are GCR. The results suggest  that the structure of $C^*(T)$ is  nontrivial.

 GCR algebras are thought to be among the most tractable $C^*$-algebras. A $C^*$-algebra $\cA$ is said to be CCR if, for every irreducible representation $\rho$ of $\cA$, $\rho(\cA)$ consists of compact operators. A GCR algebra is a $C^*$-algebra $\cA$ such that every nonzero quotient $C^*$-algebra of $\cA$
possesses a nonzero CCR ideal. A $C^*$-algebra is said to be NGCR if it has no nonzero CCR ideals.
An operator $A$ is said to be GCR if $C^*(A)$ is a GCR algebra (\cite{arveson}, Definition 1.5.6).
The class of all GCR operators contains the normal operators, the compact operators and the unilateral shift (\cite{arveson}, Exercise 1.5.D).
NGCR operators are similarly defined.
An irreducible operator $A$ is NGCR if and only if $C^*(A)$ contains no nonzero compact operators (\cite{arveson}, Exercise 1.5.C).
The reader is referred to Section 1.5 in \cite{arveson} for more details.

  The main result of this subsection is the following result. Recall that a $C^*$-algebra $\mathcal{A}$ is {\it simple} if it has no proper two-sided ideal.

\begin{thm}\label{T:GCR}
Let  $T\sim\{X_n\}_{n=1}^\infty.$
Then
\begin{enumerate}
\item[(i)] $\bP(C^*(T)\ \textup{is simple})=\begin{cases}
1,& \essran X_1=\{0\},\\
0,& \textup{otherwise};
\end{cases}$
\item[(ii)] $\bP(T\ \textup{is GCR})=\begin{cases}
1,& \card\big(\essran X_1\setminus\{0\}\big)\leq1,\\
0,& \textup{otherwise}.
\end{cases}$
\end{enumerate}
\end{thm}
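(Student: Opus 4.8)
The plan is to prove each case of Theorem \ref{T:GCR} by reducing to the $C^*$-algebraic classification results developed in Sections \ref{S:SampleClassify} and \ref{S:vonNeumanIneq}, using the dichotomy from Theorem \ref{T:compacts} as the organizing principle. The key structural fact is that the behavior of $C^*(T)$ splits sharply according to whether $0\in\essran X_1$: when $0\notin\essran X_1$ the algebra contains all compacts (so it is far from simple but potentially GCR), while when $0\in\essran X_1$ the algebra contains no nonzero compacts (so it is NGCR-like and cannot be GCR unless the underlying operator is trivial).

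For part (i) (simplicity), first I would dispose of the degenerate case $\essran X_1=\{0\}$: here $T=0$ a.s., so $C^*(T)=\bC I$, which is trivially simple. For the main ``otherwise'' direction, I would split on $0\in\essran X_1$. If $0\notin\essran X_1$, then by Theorem \ref{T:compacts} we have $\KH\subset C^*(T)$ a.s.; since $\KH$ is a proper nonzero two-sided ideal (as $\dim\cH=\infty$ and $C^*(T)$ contains $I$, which is noncompact), $C^*(T)$ is not simple. If $0\in\essran X_1$ with $\essran X_1\ne\{0\}$, I would use Theorem \ref{T:ApprUnitEqui}(iii) to write $T\cong_a W\cong W^{(\infty)}$ for a suitable weighted shift $W$. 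A simple $C^*$-algebra has no nontrivial two-sided ideals; one can exhibit a proper ideal by using the infinite-multiplicity structure, or more directly by noting that $C^*(W^{(\infty)})$ admits the identity representation together with a genuinely different irreducible GNS representation coming from a character supported on the boundary of $\sigma_e$, and these generate distinct closed ideals. The cleanest route is to produce two inequivalent irreducible representations of $C^*(T)$; the existence of a nonzero such algebra with more than one irreducible representation up to kernel forces a nontrivial ideal (the kernel of one representation).

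For part (ii) (GCR), I would again use the trichotomy on $\card(\essran X_1\setminus\{0\})$. When $\card(\essran X_1\setminus\{0\})=0$, i.e. $\essran X_1=\{0\}$, then $T=0$ is normal, hence GCR. When $\card(\essran X_1\setminus\{0\})=1$, write $\essran X_1\setminus\{0\}=\{\lambda\}$ with $\lambda>0$. If $0\notin\essran X_1$ then $\essran X_1=\{\lambda\}$, $X_1$ is degenerate, and $T\cong\lambda S$ is a scalar multiple of the unilateral shift, which is GCR by (\cite{arveson}, Exercise 1.5.D). If $0\in\essran X_1$ (so $\essran X_1=\{0,\lambda\}$, $X_1$ discrete with two atoms), then by the proof of Theorem \ref{T:ApprUnitEqui}(i) Case 3, $T$ is a.s. unitarily equivalent to a deterministic direct sum $0^{(\infty)}\oplus\big(\bigoplus_j A_j^{(\infty)}\big)$ of truncated shifts with weights in $\{\lambda\}$; I would verify this is GCR by checking that $C^*(T)$ is built from finite-dimensional pieces of controlled type, so every irreducible representation is finite-dimensional (hence compact-valued), giving the CCR—and therefore GCR—conclusion. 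Finally, when $\card(\essran X_1\setminus\{0\})\ge 2$, I would show $T$ is a.s. NGCR. Here $T$ is a.s. irreducible (via Theorem \ref{T:compacts} when $0\notin\essran X_1$, giving $\KH\subset C^*(T)$, but then one must argue more carefully since containing $\KH$ is compatible with GCR).

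The main obstacle I anticipate is the case $0\notin\essran X_1$ with $\card\,\essran X_1\ge 2$ for part (ii): here $\KH\subset C^*(T)$, so $C^*(T)$ has the nonzero CCR ideal $\KH$, and naive reasoning might suggest GCR. The resolution must come from examining the quotient $C^*(T)/\KH$, which is the Calkin-image algebra generated by the essential part of $T$; I would need to show this quotient is itself NGCR, i.e. has no nonzero CCR ideal, contradicting the GCR requirement that every nonzero quotient have one. To do this I expect to invoke that the essential spectrum is the full annulus $\{r\le|z|\le R\}$ with $r<R$ (Theorem \ref{T:SpectPic}(iii)) together with the essential non-normality from Lemma \ref{T:EssNormal}, forcing the image to generate an irreducible infinite-dimensional NGCR algebra; establishing that the quotient contains no compacts and is irreducible in a suitable sense is the delicate technical point, and I would lean on the $n$-spectrum machinery of Lemma \ref{P:NSpectrum} and the representation-theoretic arguments (Voiculescu's theorem, as cited after Corollary \ref{L:SpecDomina}) to pin it down.
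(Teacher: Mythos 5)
Your case analysis reproduces the paper's easy cases correctly ($\essran X_1=\{0\}$ giving $C^*(T)=\bC I$; $\essran X_1=\{\lambda\}$ giving $T\cong\lambda S$; and non-simplicity via $\KH\subset C^*(T)$ when $0\notin\essran X_1$, which is a legitimate alternative to what the paper does there), but each of the three hard steps has a genuine gap. For part (i) with $0\in\essran X_1$ and $\essran X_1\ne\{0\}$: exhibiting two unitarily inequivalent irreducible representations does not produce an ideal --- a simple $C^*$-algebra such as the CAR algebra has uncountably many inequivalent irreducible representations, all with zero kernel --- so what you must produce is a representation with \emph{nonzero kernel}, i.e.\ exactly the character you allude to but never construct. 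The paper gets this uniformly in both cases from its von Neumann machinery: for any nonzero $\lambda\in\essran X_1$, Theorem \ref{T:vonNeum-Normal}(i) gives $\lambda\lhd T$ a.s., Lemma \ref{L:Homomor} turns this into a unital $*$-homomorphism $\varphi\colon C^*(T)\to\bC$ with $\varphi(T)=\lambda$, and $\ker\varphi$ is a nonzero proper ideal because $C^*(T)$ is not one-dimensional.

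For part (ii) the two remaining cases both fail as written. In the case $\essran X_1=\{0,\lambda\}$ your verification is false: $C^*(T)$ \emph{does} have infinite-dimensional irreducible representations and is not CCR. Indeed $\Sigma_1(\lambda S)=\{\lambda\}\subset\essran X_1$, so Theorem \ref{T:UnilateralAUE}(i) gives $\lambda S\lhd T$ a.s., whence by Lemma \ref{L:Homomor} $C^*(T)$ surjects onto the Toeplitz algebra $C^*(S)$; the identity representation of $C^*(S)$ is irreducible, infinite-dimensional, with noncompact image, and pulling it back gives an infinite-dimensional irreducible representation of $C^*(T)$. (Concretely: truncated shifts of arbitrarily large order occur in $T$ a.s., and they agree with $\lambda S$ on finitely supported vectors, which is why $\lambda S$ is dominated.) The conclusion GCR is still true, but it needs the Bunce--Deddens theorem (Theorem 4.1 of \cite{BunceDeddens74}), which is what the paper cites. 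Finally, in the main case $\card(\essran X_1\setminus\{0\})\ge 2$ you have only a plan, and the invariants you propose cannot carry it: essential non-normality does not preclude GCR (a periodic weighted shift with weights $1,2,1,2,\dots$ is not essentially normal yet generates a GCR algebra, again by \cite{BunceDeddens74}), and you give no mechanism for showing that the Calkin image has no nonzero CCR ideal. The missing idea is constructive: choose distinct nonzero $\lambda_1,\lambda_2\in\essran X_1$ and build an aperiodic bilateral weighted shift $A$ whose weight sequence contains every finite word over $\{\lambda_1,\lambda_2\}$; then $A$ is irreducible (Problem 159 in \cite{HilbertProblem}) and $C^*(A)$ contains no nonzero compact operators (Theorem 2.5.1 of \cite{Donovan}), so $A$ is NGCR, and Theorem \ref{T:BilateralAUE}(i) gives $A\lhd T$ a.s., hence a surjective unital $*$-homomorphism $C^*(T)\to C^*(A)$. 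Since GCR passes to homomorphic images (the fact from \cite{arveson} that the paper invokes), $T$ is a.s.\ not GCR. Without this construction, or an equivalent one, the hard direction of (ii) remains unproved.
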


\begin{rmk}
Let $T\sim \{X_n\}_{n=1}^\infty$ with $\bP(X_1=0)=0$. Then $T$ is almost surely irreducible.
If $0\in\essran X_1$, then by Theorem \ref{T:compacts}, $C^*(T)$ almost surely contains no nonzero
compact operators, which implies that $T$ is almost surely NGCR.
\end{rmk}

\begin{proof}[Proof of Theorem \ref{T:GCR}]  (i) If $\essran X_1=\{0\}$, then $C^*(T)=\bC I$ a.s. So $T$ is almost surely simple.
If $\essran X_1\nsubseteq\{0\}$, we can choose nonzero $\lambda\in\essran X_1$. By Theorem \ref{T:vonNeum-Normal}, $\lambda\lhd T$ a.s.
By Lemma \ref{L:Homomor}, almost surely, there exists a unital $*$-homomorphism $\varphi: C^*(T)\rightarrow\bC$
so that $\varphi(T)=\lambda$. Hence $\ker\varphi$ is a proper ideal of $C^*(T)$.

  (ii) First, we deal with the case that $\essran X_1$ contains at least two nonzero points. We choose distinct $\lambda_1,\lambda_2\in\essran X_1\cap(0,\infty)$. We claim that there exists a deterministic NGCR bilateral weighted shift $A$ with weights lying in $\{\lambda_1,\lambda_2\}$.
 Assume that $\eta_1,\eta_2,\eta_3,\cdots$ are all finite tuples with entries in $\{\lambda_1,\lambda_2\}$.
For each $i\geq 1$, assume that $\eta_i=(c_{i,1},\cdots, c_{i,n_i})$.
Then we can construct a bilateral sequence $\{d_i\}_{i\in\bZ}$ with entries in $\{a,b\}$ such that each $\eta_i$ appears in
it. That is, for each $i\geq 1$, there exists $j\in\bZ$ such that $(d_{j+1},\cdots, d_{j+n_i})=\eta_i$. For example, to construct $\{d_i\}$, we may put these $\eta_i$'s in the following order:
\[\cdots,\eta_6,\eta_4,\eta_2,\eta_1,\eta_3,\eta_5,\cdots \] Let $A$ be a bilateral weighted shift with weights $\{d_i\}_{i\in\bZ}$.
Thus it is easy to see that \begin{enumerate}
\item[(a)] $\Sigma_n(A)=\{\lambda_1,\lambda_2\}^n$ for  $n\geq 1$, and
\item[(b)] $\card\{k\in\bZ: (d_{i})_{i=-n}^n=(d_{k+i})_{i=-n}^n\}=\infty$ for any $n\geq 1$.
\end{enumerate}

\noindent By (a),   $\{d_n\}_{n\in\bZ}$ is not periodic. In fact, if $\{d_n\}_{n\in\bZ}$ is periodic with period $m$, then
$$\card\Sigma_k(A)=\card\{(d_{j+1}, \cdots, d_{j+k}): j\in\bZ\}\leq m,$$which contradicts (a).
Then $A$ is not periodic and, by Problem 159 in \cite{HilbertProblem}, $A$ is irreducible. On the other hand,
by Theorem 2.5.1 of \cite{Donovan}, condition (b) implies that $C^*(A)$ contains no nonzero compact operators. Thus $A$ satisfies all requirements.
This proves the  claim.

% By Proposition \ref{T:NGCRBilaWeighShift},
%we can find a deterministic NGCR bilateral weighted shift $A$ with weights lying in $\{\lambda_1,\lambda_2\}$.

  In view of Theorem \ref{T:BilateralAUE} (i), we have $A\lhd T$ a.s. Then there  exists almost surely a $*$-homomorphism $\varphi: C^*(T)\rightarrow C^*(A)$ such that $\varphi(T)=A$. Since $C^*(A)$ contains no nonzero compact operators, by Proposition 1.54 in \cite{arveson}, $T$ is almost surely not GCR.

  Next we assume that $\essran X_1$ contains at most one nonzero point. Then there exists positive number $\lambda$ such that $\essran X_1\subset\{0,\lambda\}$.
 {\it Case 1.} $\essran X_1=\{0\}$.
 Then $C^*(T)=\bC I$ is almost surely a commutative $C^*$-algebra. Thus $T$ is almost surely GCR.
 {\it Case 2.} $\essran X_1=\{\lambda\}$.
 In this case, $T$ is almost surely unitarily equivalent to $\lambda S$, where $S$ is the unilateral shift.
Thus $C^*(T)$ is almost surely $*$-isomorphic to $C^*(S)$. By Exercise 1.5.D in \cite{arveson}, $T$ is almost surely GCR.
{\it Case 3.} $\essran X_1=\{0,\lambda\}$.
 Without loss of generality, we assume that $\lambda=1$. Then $\bP(X_1=0)>0$ and $\bP(X_1=1)>0$.
Then $T$ is almost surely a unilateral weighted shift with weights lying in $\{0,1\}$.
Then, almost surely, $T=\oplus_{i=1}^\infty T_i$, where each $T_i$ is either a truncated weighted shift with weights in $\{1\}$ or a degenerate truncated shift of order $1$ (namely $0$ acting a subspace of dimension $1$). Moreover, for each $n\geq 1$, it follows from Lemma \ref{L:constantCase} that the $(n+2)$-tuple \[(0,\underbrace{1,1,\cdots,1}_n,0)\] almost surely appears infinitely many times in the weight sequence of $T$.
Then, almost surely, $T\cong \oplus_{k=1}^\infty J_k^{(\infty)}$, where $J_k$ is a truncated weighted shift with weights in $\{1\}$ of order $k$.
By Theorem 4.1 in \cite{BunceDeddens74}, $\oplus_{k=1}^\infty J_k$ is GCR. So $T$ is almost surely GCR. The proof is complete now.
\end{proof}

%%%%%%%%%%%%%%%%%%%%%%%%%%%%%%%%%%%%%%%%%%%%%%%%

\section{Dynamical properties}\label{S:Dynamic}

%For some technical reasons, we need only consider backward random weighted shifts. Thus, in this section, we always assume that $T$ is a random weighted shift with positive weights $\{X_n\}$, and we shall consider whether and when $T^*$ possesses these dynamical properties.

  It is natural to wonder whether random models  exhibit more chaotic behaviors. This is indeed the case. The main result of this section is the following theorem which addresses the most common questions one might ask from the outset. Notations will be explained in subsequent subsections as we proceed to the proofs.
The reader is referred to \cite{DynamicLinOper} for the basic theory of dynamics of linear operators which indeed covers most technical tools we need here.

\begin{thm}\label{T:Dynamics}
Let  $T\sim\{X_n\}_{n=1}^\infty$ with $X_1$ being non-degenerate and $\bP(X_1=0)=0$.
Then\begin{enumerate}
\item[(i)] $ T^*$ is almost surely supercyclic;
\item[(ii)] $\bP(T^* ~\textup{is hypercyclic})=\begin{cases}
1,& \bE(\ln X_1)\geq 0,\\
0,& \bE(\ln X_1)<0;
\end{cases}$
\item[(iii)] $
\bP(T^* ~\textup{is Li-Yorke chaotic})=\begin{cases}
1,& R> 1,\\
0,& R\leq 1;
\end{cases}  $
\item[(iv)] $
\bP(T^* ~\textup{is toplogically mixing})
=\begin{cases}
1,& \bE(\ln X_1)> 0,\\
0,& \bE(\ln X_1)\leq 0;
\end{cases}   $
\item[(v)] $\bP(T^* ~\textup{is chaotic})=\bP(T^* ~\textup{is frequently hypercyclic})
=\begin{cases}
1,& \bE(\ln X_1)> 0,\\
0,& \bE(\ln X_1)\leq 0.
\end{cases}  $
\end{enumerate}\end{thm}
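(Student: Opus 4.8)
The plan is to translate every dynamical property of $T^{*}$ into a statement about the one-dimensional random walk $S_{n}=\sum_{i=1}^{n}\ln X_{i}$, and then read off the threshold from the drift $\mu=\bE(\ln X_{1})$ together with $R=\max\essran X_{1}$. Under the standing hypotheses $T$ is a.s.\ an injective unilateral weighted shift with positive weights, so $T^{*}$ is the associated backward weighted shift with $T^{*}e_{1}=0$, $T^{*}e_{n}=X_{n-1}e_{n-1}$, and for $n<k$ one has $\|T^{*n}e_{k}\|=X_{k-1}\cdots X_{k-n}=\exp(S_{k-1}-S_{k-n-1})$, while $T^{*n}e_{k}=0$ for $n\ge k$. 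Every criterion I intend to quote (from \cite{DynamicLinOper}) for a backward weighted shift is phrased through the products $\prod_{\nu\le n}w_{\nu}=\exp(S_{n-1})$, so the only probabilistic inputs are the strong law ($S_{n}/n\to\mu$ a.s.) and the recurrence dichotomy for one-dimensional walks: if $\mu>0$ then $S_{n}\to+\infty$, if $\mu<0$ then $S_{n}\to-\infty$, and if $\mu=0$ then $\limsup_{n}S_{n}=+\infty$ and $\liminf_{n}S_{n}=-\infty$ a.s.\ (\cite{Durrett}, Prop.~4.1.2).

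Parts (i), (ii) and (iv) are then short. For (i) I would verify the Supercyclicity Criterion with $X_{0}=Y_{0}$ the finitely supported vectors and right inverse $Se_{k}=X_{k}^{-1}e_{k+1}$: since $T^{*n}x=0$ eventually for finitely supported $x$ and $T^{*n}S^{n}y=y$, both hypotheses hold with no restriction on the weights, giving supercyclicity a.s. For (ii) and (iv) I would invoke Salas' criterion (hypercyclic iff $\limsup_{n}\prod_{\nu\le n}w_{\nu}=\infty$) and the mixing criterion (topologically mixing iff $\prod_{\nu\le n}w_{\nu}\to\infty$); by the first paragraph $\limsup_{n}S_{n}=\infty$ a.s.\ exactly when $\mu\ge0$, and $S_{n}\to\infty$ a.s.\ exactly when $\mu>0$, which are the stated thresholds.

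For (v) I would use the eigenvalue structure $\sigma_{p}(T^{*})=B(0,e^{\mu})$ from Lemma~\ref{P:PointSpect}. When $\mu>0$ the open disk $B(0,e^{\mu})$ contains $\{|\lambda|<1\}$, an annulus $\{1<|\lambda|<e^{\mu}\}$, and all roots of unity, so the Godefroy--Shapiro criterion gives Devaney chaos a.s.; when $\mu\le0$ no root of unity is an eigenvalue, the only periodic point is $0$, and chaos fails. For frequent hypercyclicity I would quote the characterization of frequently hypercyclic weighted shifts, namely that $T^{*}$ is frequently hypercyclic iff $\sum_{n}(\prod_{\nu\le n}w_{\nu})^{-2}=\sum_{n}e^{-2S_{n-1}}<\infty$, and then settle this series by probability: if $\mu>0$ the strong law gives $S_{n}\ge(\mu/2)n$ eventually, so the series converges a.s.; if $\mu\le0$ then $\liminf_{n}S_{n}=-\infty$ forces $e^{-2S_{n}}\ge1$ infinitely often and the series diverges a.s.

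The genuinely different and hardest case is (iii). For the converse ($R\le1$) I note that then $\|T^{*}\|=\|T\|=R\le1$ a.s.\ by Lemma~\ref{T:NormSpectral}, so $T^{*}$ is a Hilbert space contraction and $n\mapsto\|T^{*n}z\|$ is non-increasing for every $z$; hence $\lim_{n}\|T^{*n}z\|$ exists, which rules out semi-irregular vectors and Li--Yorke pairs, so $T^{*}$ is a.s.\ not Li--Yorke chaotic. For $R>1$ I would use the equivalence that $T^{*}$ is Li--Yorke chaotic iff it admits a semi-irregular vector $x$, i.e.\ $\liminf_{n}\|T^{*n}x\|=0<\limsup_{n}\|T^{*n}x\|$. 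When $\mu>0$ this is free, since hypercyclicity from (ii) already implies Li--Yorke chaos (and $\mu>0$ forces $R>1$ for non-degenerate $X_{1}$). The remaining regime $R>1$, $\mu\le0$ is where the work lies: Lemma~\ref{L:constantCase} supplies a.s., for every length $L$, infinitely many windows on which all weights exceed a fixed $\rho\in(1,R)$, yielding blocks with $\prod\ge\rho^{L}$; placing basis vectors $e_{k_{j}}$ just past such windows with coefficients $c_{j}\asymp\rho^{-L_{j}}$ produces times at which $\|T^{*n}x\|\gtrsim1$, giving $\limsup>0$, while the recurrence $\liminf_{n}S_{n}=-\infty$ (valid as $\mu\le0$) drives the trailing products small at other times. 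The main obstacle is precisely this coordination: I must select the sparse positions $k_{j}$ and coefficients $c_{j}$, measurably along sample paths, so that at a common sequence of times \emph{every} surviving coordinate of $T^{*n}x$ is simultaneously small despite long stretches where the weights remain near $R>1$; securing $\liminf_{n}\|T^{*n}x\|=0$ for the single vector $x$, rather than merely for individual basis vectors, is the delicate point.
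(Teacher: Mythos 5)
Your parts (i), (ii) and (iv) are correct and essentially reproduce the paper's argument: (i) is just the proof behind the fact the paper cites from \cite{DynamicLinOper} (every backward unilateral weighted shift with nonzero weights is supercyclic), and (ii), (iv) combine Salas' and Costakis--Sambarino's criteria with the trichotomy for $S_n=\sum_{i\le n}\ln X_i$ from \cite{Durrett}, exactly as the paper does. In (v), your frequent-hypercyclicity argument is the paper's (via \cite{Bayart}); for Devaney chaos you take a genuinely different route: Godefroy--Shapiro applied to $\sigma_p(T^*)=B\big(0,e^{\bE(\ln X_1)}\big)$, versus the paper's use of the Grosse--Erdmann characterization that chaos for backward shifts is equivalent to $\sum_n(X_1\cdots X_n)^{-2}<\infty$ (the same series as for frequent hypercyclicity, which is why the paper handles both at once). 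Your route works, but it carries extra obligations the paper's does not: you must prove density of the spans of eigenvectors (analyticity of the eigenvector field $\lambda\mapsto\sum_{n\ge0}\frac{\lambda^n}{X_1\cdots X_n}\,e_{n+1}$ on the open disk, plus the fact that the roots of unity accumulate inside it when $\bE(\ln X_1)>0$), and, for the negative direction, that absence of unimodular eigenvalues forces every nonzero periodic point to vanish.

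The genuine gap is part (iii) in the regime $R>1$, $\bE(\ln X_1)\le 0$, which you yourself flag as unresolved. Your plan to build a single semi-irregular vector $x=\sum_j c_j e_{k_j}$ runs into exactly the coordination problem you describe: at time $n$ one has $\|T^{*n}x\|^2=\sum_{k_j>n}|c_j|^2\,(X_{k_j-n}\cdots X_{k_j-1})^2$, and forcing all of these spatially separated window-products to be simultaneously small along one common sequence of times, while other windows stay large, is not something Lemma \ref{L:constantCase} or the recurrence $\liminf_n S_n=-\infty$ delivers by itself; as written, the hard half of (iii) is not proved. The paper avoids any such construction: by Corollary 3.7 of \cite{hou}, a backward unilateral weighted shift with positive weights is Li--Yorke chaotic if and only if it is sensitive, and by Proposition 2.2 of \cite{Feldman} sensitivity is equivalent to $\sup_n\|A^n\|=\infty$ (this is the paper's Lemma \ref{L:Sensive}). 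Since $\|T^{*n}\|=\|T^n\|=R^n$ a.s.\ (proof of Lemma \ref{T:NormSpectral}), the case $R>1$ is immediate --- the norm condition is witnessed by the operator, not by a single vector, so no coordination along sample paths is needed; your contraction argument for $R\le 1$ then matches the paper's converse. Replacing your unfinished construction by this equivalence closes the gap.
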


%%%%%%%%%%%%%%%%%%%%%%%%%%%%%%%%%%%%%%%%%%%%%%%%
\subsection{Supercyclicity and hypercyclicity}

Let $A\in\BH$. Recall that $A$ is said to be {\it supercyclic} if there exists a vector $x\in\cH$
such that the set $\{\lambda A^nx: \lambda\in\bC, n\geq 0\}$ is a dense subset of $\cH$.
If there exists $y\in\cH$ such that
the set $\{A^ny: n\geq 0\}$ is a dense subset of $\cH$, then $A$ is said to be {\it hypercyclic}.
Note that there exists no hypercyclic operator on a finite dimensional space.

\begin{lem}\label{L:supeCyclic}
Let $A$ be a backward unilateral weighted shift with weights $\{\lambda_n\}_{n=1}^\infty$.
Then $A$ is supercyclic if and only if $\lambda_i\ne 0$ for  $i\geq 1$.
\end{lem}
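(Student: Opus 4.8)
The plan is to prove the two implications separately, fixing the convention that there is an orthonormal basis $\{e_n\}_{n=1}^\infty$ of $\cH$ with $Ae_1=0$ and $Ae_{n+1}=\lambda_n e_n$ for $n\geq 1$, so that $A^m e_j=\big(\prod_{i=j-m}^{j-1}\lambda_i\big)e_{j-m}$ when $m<j$ and $A^m e_j=0$ when $m\geq j$. The substantive content is the classical dichotomy between supercyclicity and the presence of a vanishing weight; both directions are short once this bookkeeping is in place.

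For \emph{necessity}, suppose $\lambda_k=0$ for some $k$, and set $\cM=\vee\{e_1,\ldots,e_k\}$ and $\cN=\vee\{e_n:n\geq k+1\}$. Since $Ae_1=0$ and $Ae_{j+1}=\lambda_j e_j\in\cM$ for $1\leq j\leq k-1$, we get $A\cM\subseteq\cM$; since $Ae_{k+1}=\lambda_k e_k=0$ and $Ae_{j+1}=\lambda_j e_j\in\cN$ for $j\geq k+1$, we get $A\cN\subseteq\cN$. As $\cH=\cM\oplus\cN$ orthogonally, $A=A_1\oplus A_2$ with $A_1=A|_\cM$ nilpotent on the finite-dimensional space $\cM\cong\bC^k$ (indeed $A_1^k=0$). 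Hence for any $x=x_1\oplus x_2$ and any $m\geq k$ one has $A^m x=0\oplus A_2^m x_2\in\{0\}\oplus\cN$, so the projective orbit of $x$ satisfies
$$\{\mu A^m x:\mu\in\bC,\,m\geq 0\}\subseteq\Big(\bigcup_{m=0}^{k-1}\bC\,A^m x\Big)\cup\big(\{0\}\oplus\cN\big).$$
The right-hand side is a finite union of one-dimensional subspaces together with the proper closed subspace $\{0\}\oplus\cN$; by the Baire category theorem it is nowhere dense in $\cH$, hence not dense, so no $x$ is supercyclic and $A$ is not supercyclic.

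For \emph{sufficiency}, suppose $\lambda_i\neq 0$ for all $i$. I would invoke the Supercyclicity Criterion (see \cite{DynamicLinOper}) with $\cD_1=\cD_2$ the dense set of finitely supported vectors, with $n_k=k$, and with the right inverse $S$ defined by $Se_j=\lambda_j^{-1}e_{j+1}$, so that $AS=I$ and therefore $A^mS^m=I$ for every $m$. For finitely supported $x$ one has $A^m x=0$ once $m$ exceeds the largest index occurring in the support of $x$; consequently $\|A^m x\|\,\|S^m y\|\to 0$ for all finitely supported $x,y$ (the product is eventually $0$), while $A^m S^m y=y\to y$. The criterion then gives that $A$ is supercyclic.

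The main obstacle, such as it is, lies entirely in the necessity direction: one must confirm that the set trapping every orbit is nowhere dense, which is precisely where Baire category is needed (a proper closed subspace has empty interior, a finite union of lines is nowhere dense, and a finite union of nowhere dense closed sets is again nowhere dense). The sufficiency direction is then a routine verification, made transparent by the fact that $A$ annihilates each finitely supported vector after finitely many steps; this same feature is why the product form $\|A^m x\|\,\|S^m y\|\to 0$ survives even when $\|S^m y\|$ grows, and hence why \emph{super}cyclicity---rather than hypercyclicity---is exactly the conclusion one gets.
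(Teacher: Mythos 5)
Your proof is correct. The sufficiency half follows the same route as the paper: the paper simply cites Example 1.15 of \cite{DynamicLinOper}, whose content is exactly the application of the Supercyclicity Criterion you spell out (dense sets of finitely supported vectors, the right inverse $Se_j=\lambda_j^{-1}e_{j+1}$ with $AS=I$, and the observation that $A^m x$ vanishes for large $m$, so $\|A^mx\|\,\|S^my\|\to 0$ while $A^mS^my=y$). The necessity half is where you genuinely diverge. The paper notes that a vanishing weight $\lambda_{i_0}=0$ gives $A^*e_{i_0}=\bar{\lambda}_{i_0}e_{i_0+1}=0$, hence $0\in\sigma_p(A^*)$, and then quotes Proposition 1.26 of \cite{DynamicLinOper}, which rules out supercyclicity; you instead argue from scratch: the vanishing weight splits $\cH=\cM\oplus\cN$ into two $A$-invariant pieces with $A|_{\cM}$ nilpotent, so after $k$ steps every orbit falls into $\{0\}\oplus\cN$, and the full projective orbit lies in finitely many lines together with this proper closed subspace, which is nowhere dense by Baire category. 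The two mechanisms are close cousins: your trapping subspace $\{0\}\oplus\cN$ sits inside the hyperplane $e_k^{\perp}$, and $e_k\in\ker A^*$ is precisely the eigenvector the cited proposition exploits (if $A^*y=0$, every projective orbit is contained in $\bC x\cup y^{\perp}$). What your version buys is self-containedness and an explicit picture of why orbits collapse, at the cost of length; what the paper's version buys is brevity and a structural formulation (the point spectrum of the adjoint as an obstruction) that it reuses immediately afterwards, e.g. via Proposition 1.17 of \cite{DynamicLinOper} when treating hypercyclicity.
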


\begin{proof}
If there exists $i_0$ such that $\lambda_{i_0}=0$, then $0\in\sigma_p(A^*)$. By Proposition 1.26 in \cite{DynamicLinOper}, $A$ is not supercyclic.
If $\lambda_i\ne 0$ for  $i\geq 1$, then by Example 1.15 in \cite{DynamicLinOper}, $A$ is supercyclic.
\end{proof}

  By Proposition 1.17 in \cite{DynamicLinOper}, each hypercyclic operator $A$ satisfies that $\sigma_p(A^*)=\emptyset$.
Then a forward unilateral weighted shift is never hypercyclic. When considering hypercyclic backward weighted shifts, one needs only consider those  with positive weights.

%%%%%%%%%%%%%%%%%%%%%%%%%%%%%%%%%%%%%%%%

%For $Z\in\BH$, we denote by $\cA_Z$ the closure, in the weak
%operator topology, of the polynomials in $Z$.
%$Z$ is called {\it strictly
%cyclic} if there exists a vector $f_0$ such that $\mathcal{A}_Zf_0 = \cH$. Such a vector is called a
%{\it strictly cyclic vector} for $Z$. Recall that $f_0$ is said to be a {\it cyclic vector} for $Z$ if $\cA_Z f_0$ is dense
%in $\cH$.
%

%
%\begin{thm}\label{T:hypcyclic}
%Let $\{X_n\}_{n=1}^\infty$ be bounded, positive, \iid random variables  with $\card\essran X_1\ne 1$ and $T$ be a random unilateral weighted shift with weights $\{X_n\}$.
%\begin{enumerate}
%\item[(i)] If $\bE(\ln X_1)\geq 0$, then $T^*$ is almost surely hypercyclic.
%\item[(ii)] If $\bE(\ln X_1)< 0$, then $T^*$ is almost surely not hypercyclic.
%\end{enumerate}
% \end{thm}

\begin{lem}[\cite{Salas}, Theorem 2.8]\label{L:salas}
If $A$ is a backward unilateral weighted shift with positive weights $\{\lambda_i\}_{i=1}^\infty$, then $A$ is hypercyclic if and only if \[\sup_{n\geq 1} \prod_{i=1}^n\lambda_i=\infty.\]
\end{lem}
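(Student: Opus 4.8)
The plan is to prove both implications through the standard ``forward/backward'' analysis of a backward shift combined with the Gethner--Shapiro Hypercyclicity Criterion. Fix an orthonormal basis $\{e_n\}_{n=1}^\infty$ with $Ae_1=0$ and $Ae_{n+1}=\lambda_n e_n$ for $n\ge1$, and write $P_n=\prod_{i=1}^n\lambda_i$ with $P_0=1$. Since $A$ is a bounded operator, $R:=\sup_i\lambda_i<\infty$; note that if $\sup_nP_n=\infty$ then necessarily $R>1$, because $R\le1$ would force $P_n$ to be non-increasing and hence bounded. The set $\mathcal D$ of finitely supported vectors is dense in $\cH$, and the map $S$ defined by $Se_n=\lambda_n^{-1}e_{n+1}$ is a right inverse of $A$, i.e. $AS=I$; thus $A^{k}S^{k}=I$ and $\|S^{k}e_m\|=\big(\prod_{i=m}^{m+k-1}\lambda_i\big)^{-1}=P_{m-1}/P_{m+k-1}$.

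For necessity I would argue by contraposition. Suppose $\sup_nP_n=C<\infty$. For any $x=\sum_m x_m e_m\in\cH$ the first coordinate of the iterates is $\langle A^nx,e_1\rangle=\lambda_1\cdots\lambda_n\,x_{n+1}=P_n x_{n+1}$, so $|\langle A^nx,e_1\rangle|\le C|x_{n+1}|\to0$ because $x\in\ell^2$. Hence every orbit has its $e_1$-coordinate tending to $0$ and cannot approximate $e_1$ itself; in particular no orbit is dense, so $A$ is not hypercyclic.

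For sufficiency I would verify the Hypercyclicity Criterion along a suitable increasing sequence $(n_k)$, using $\mathcal D$ as the common dense set and $S$ as the reverse maps. The condition ``$A^{n}x\to0$ for $x\in\mathcal D$'' is immediate, since $A^{n}$ annihilates any finitely supported vector once $n$ exceeds its largest index; the condition ``$A^{n_k}S^{n_k}y=y$'' holds for every $y$ because $AS=I$. The remaining condition ``$S^{n_k}e_m\to0$ for each fixed $m$'' reduces, by the norm formula above, to producing one increasing sequence $(n_k)$ with $P_{m+n_k-1}\to\infty$ for every fixed $m$. Here the upper bound $R$ enters decisively: given $\sup_nP_n=\infty$, I would choose inductively $s_k\nearrow\infty$ with $P_{s_k}>kR^{k}$ and set $n_k=s_k-k+1$; since $\lambda_i\le R$ yields $P_{s_k-i}\ge P_{s_k}R^{-i}>k$ for $0\le i\le k-1$, the partial products exceed $k$ on the entire block of indices $\{n_k,\dots,n_k+k-1\}$. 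Consequently, for each fixed $m$ and all large $k$ one has $P_{m+n_k-1}>k\to\infty$, so $\|S^{n_k}e_m\|\to0$, and the Criterion yields that $A$ is hypercyclic.

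The main obstacle is exactly the construction of a single common subsequence $(n_k)$ that works simultaneously for all the basis vectors $e_m$. For each fixed $m$ the relation $\sup_n\prod_{i=m}^{m+n-1}\lambda_i=\infty$ follows trivially from $\sup_nP_n=\infty$, but a naive diagonalization would demand blocks of arbitrarily many consecutive indices on which $P$ is large, and this is false for general sequences. The point I would stress is that boundedness of the weights, equivalently that $A$ is a bounded operator, forces large values of $P_n$ to occur on long runs of consecutive indices, since a high value $P_s$ is flanked by the geometric lower bounds $P_{s-i}\ge P_sR^{-i}$; this structural feature is precisely what makes $\sup_nP_n=\infty$ sufficient. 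Without the upper bound, isolated spikes of $P_n$ would still give $\sup_nP_n=\infty$ while destroying topological transitivity, so this step genuinely cannot be bypassed.
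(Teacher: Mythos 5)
Your proof is correct, so the main thing to say is about how it sits relative to the paper: the paper does not prove this lemma at all — it is quoted from Salas \cite{Salas} — so the comparison is with the literature rather than with an internal argument. Your route, via the Gethner--Shapiro Hypercyclicity Criterion with the finitely supported vectors as the common dense set, the right inverse $S$ (given by $Se_n=\lambda_n^{-1}e_{n+1}$, so $AS=I$) as the backward maps, and a block argument producing one sequence $(n_k)$ that works for every $e_m$, is the standard modern proof (it is essentially the derivation one finds in the paper's own reference \cite{DynamicLinOper}), whereas Salas's original argument constructs a hypercyclic vector by hand without the Criterion. Your write-up correctly isolates the genuine crux of the sufficiency direction: $\sup_n P_n=\infty$ by itself only provides isolated spikes, and it is the boundedness $\lambda_i\le R$ (forced by $A$ being a bounded operator, with $R>1$ whenever $\sup_n P_n=\infty$) that spreads a spike $P_{s_k}$ into the geometric bounds $P_{s_k-i}\ge P_{s_k}R^{-i}$ over the whole block $\{n_k,\dots,n_k+k-1\}$, so that the single sequence $n_k=s_k-k+1$ serves all $m$ simultaneously; your computation $P_{m+n_k-1}\ge kR^m>k$ for $k\ge m$ is right. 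Two small patches you should make. First, in the necessity direction, ``cannot approximate $e_1$'' needs one more line: since $\langle A^nx,e_1\rangle=P_nx_{n+1}\to 0$, only finitely many orbit points have first coordinate $\ge 1/2$, and a finite set accumulates only at its own points; so either run the argument against the uncountable family $\{ce_1:c\ge 2\}$ (each would have to be an orbit point, contradicting countability), or note that if some $A^{n_0}x=e_1$ then $A^{n_0+1}x=0$ and the orbit is finite — in every case no orbit is dense. Second, record that the $s_k$ can be chosen so that $(n_k)$ is increasing and $n_k\ge 1$, as the Criterion requires: arbitrarily large indices carry arbitrarily large partial products, because $P_s>\max_{n\le N}P_n$ forces $s>N$, so one may demand $s_{k+1}\ge s_k+k+2$ in the inductive choice.
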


\begin{proof}[Proof of Theorem \ref{T:Dynamics} (i) \& (ii)]
(i) By the hypothesis, $T^*$ is almost surely a backward unilateral weighted shift with positive weights. By Lemma \ref{L:supeCyclic},
$T^*$ is almost surely supercyclic.
 (ii) The proof depends on a straightforward consequence (\cite{Durrett}, Theorem 4.1.2) of the strong law of large numbers.  {\it Fact:} Let $\{Y_n\}_{n=1}^\infty$ be positive, bounded, i.i.d., non-degenerate random variables on $(\Omega,\mathcal{F},\bP)$.
\begin{enumerate}
\item[(a)] If $\bE(\ln Y_1)>0$, then $\lim\limits_{n\to\infty}\big(\Pi_{i=1}^n Y_i\big)=\infty$ a.s.
\item[(b)] If $\bE(\ln Y_1)<0$, then $\lim\limits_{n\to\infty}\big(\Pi_{i=1}^n Y_i\big)=0$ a.s.
\item[(c)] If $\bE(\ln Y_1)=0$, then
$\liminf\limits_{n\to\infty}\big(\Pi_{i=1}^n Y_i\big)=0$ a.s. and $\limsup\limits_{n\to\infty}\big(\Pi_{i=1}^n Y_i\big)=\infty$ a.s.
\end{enumerate}
In view of Lemma \ref{L:salas}, the result is clear.
\end{proof}

\subsection{Li-Yorke chaoticity and topologically mixing property}

Let $A\in\BH$. If $\{x,y\}\subset\cH$ and
\[\limsup_{n\to\infty}\|A^n(x-y)\|>0,\quad \liminf_{n\to\infty}\|A^n(x-y)\|=0,\] then $\{x,y\}$ is called a {\it Li-Yorke chaotic pair} for $A$.
Furthermore, $A$ is called {\it Li-Yorke chaotic} \cite{LiYorke}, if there exists an uncountable subset
$G\subset\cH$ such that each pair of two distinct points in $G$ is a Li-Yorke chaotic pair for $A$.
 An operator $A\in\BH$ is said to  have {\it sensitive dependence on
initial conditions} (or simply $A$ is {\it sensitive}) if there exists $\delta > 0$ such that,
for any $x\in \cH$ and every neighborhood $G$ of $x$, one can find $y\in G$ and an
integer $n\geq 0$ such that $\|A^nx-A^ny\| \geq \delta$.

\begin{lem}\label{L:Sensive}
If $A$ is a backward unilateral weighted shift with positive weights, then $A$ is Li-Yorke chaotic if and only if $\sup_n\|A^n\|=\infty$.
\end{lem}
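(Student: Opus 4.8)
The plan is to prove both implications through the pointwise behaviour of the orbits $\{\|A^nx\|\}_n$, exploiting that a backward weighted shift sends the tail of the orthonormal basis to mutually orthogonal vectors. Fix the orthonormal basis $\{e_k\}$ with $Ae_k=\lambda_{k-1}e_{k-1}$ and $Ae_1=0$, so that for $x=\sum_k a_ke_k$ one has the clean identity $\|A^nx\|^2=\sum_{k>n}|a_k|^2\big(\prod_{i=k-n}^{k-1}\lambda_i\big)^2$, while $\|A^n\|=\sup_{m\ge 1}\prod_{i=m}^{m+n-1}\lambda_i$. This identity is what makes everything transparent, since the factors appearing are exactly windowed products of the weights.

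For the forward implication I would argue the contrapositive. If $\sup_n\|A^n\|=M<\infty$, then every window product $\prod_{i=k-n}^{k-1}\lambda_i$ is bounded by $M$, so the displayed identity yields $\|A^nx\|^2\le M^2\sum_{k>n}|a_k|^2$, the tail of a convergent series. Hence $A^n\to 0$ in the strong operator topology, and therefore $\limsup_n\|A^n(x-y)\|=0$ for all $x,y\in\cH$; no pair is a Li--Yorke pair, so $A$ is not Li--Yorke chaotic.

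For the converse, assuming $\sup_n\|A^n\|=\infty$, the plan is to produce one irregular vector $u$ (that is, $\liminf_n\|A^nu\|=0$ and $\limsup_n\|A^nu\|=\infty$) by a Baire category argument, and then take the uncountable set $\{\alpha u:\alpha\in(0,1)\}$ as the scrambled set: for $\alpha\ne\beta$ one has $\|A^n(\alpha u-\beta u)\|=|\alpha-\beta|\,\|A^nu\|$, whence $\liminf=0$ and $\limsup=\infty$, giving a Li--Yorke pair. To obtain $u$, set $R_\infty=\{x:\limsup_n\|A^nx\|=\infty\}=\bigcap_N\bigcup_n\{x:\|A^nx\|>N\}$ and $R_0=\{x:\liminf_n\|A^nx\|=0\}=\bigcap_N\bigcup_n\{x:\|A^nx\|<1/N\}$. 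Each set $\bigcup_n\{\|A^nx\|>N\}$ is open and, by a uniform boundedness argument (if it failed to be dense on some ball $B(x_\ast,r)$, linearity would force $\|A^n\|\le 2N/r$ for all $n$, contradicting $\sup_n\|A^n\|=\infty$), dense; hence $R_\infty$ is a dense $G_\delta$. Each set $\bigcup_n\{\|A^nx\|<1/N\}$ is open and contains every finitely supported vector, whose orbit is eventually $0$, so it is dense; hence $R_0$ is a dense $G_\delta$. Since $\cH$ is a Baire space, $R_\infty\cap R_0\ne\emptyset$, producing the desired $u$.

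The technically most delicate point is the density of the sets $\bigcup_n\{\|A^nx\|>N\}$: this is precisely where the hypothesis $\sup_n\|A^n\|=\infty$ is consumed, and it must be handled through the Banach--Steinhaus principle rather than by an explicit choice of $x$. The remaining steps are routine. I would also note that the full strength ``irregular'' is not strictly needed — a semi-irregular $u$ with merely $\limsup_n\|A^nu\|>0$ already yields the scrambled set — but the category argument delivers the stronger conclusion at no additional cost, so I would state it that way.
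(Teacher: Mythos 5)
Your proof is correct, but it takes a genuinely different route from the paper's. The paper disposes of this lemma in two lines of citations: by Corollary 3.7 of Hou--Liao--Cao \cite{hou}, a backward unilateral weighted shift is Li--Yorke chaotic if and only if it is sensitive, and by Proposition 2.2 of Feldman \cite{Feldman} this is in turn equivalent to $\sup_n\|A^n\|=\infty$. You instead prove everything from first principles. Your forward implication (contrapositive: if $\sup_n\|A^n\|=M<\infty$ then $\|A^nx\|^2\le M^2\sum_{k>n}|a_k|^2\to 0$, so $\limsup_n\|A^n(x-y)\|=0$ for every pair) correctly exploits the backward-shift structure, and this is exactly where it must be used, since power-bounded operators in general do not tend to zero strongly. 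Your converse manufactures an irregular vector $u$ by Baire category: the sets $\bigcup_n\{x:\|A^nx\|>N\}$ are open and dense (density being the Banach--Steinhaus-type step where $\sup_n\|A^n\|=\infty$ is consumed), the sets $\bigcup_n\{x:\|A^nx\|<1/N\}$ are open and dense because they contain the finitely supported vectors, which the backward shift eventually annihilates, and then the segment $\{\alpha u:\alpha\in(0,1)\}$ is an uncountable scrambled set by homogeneity of the norm. All of these steps check out, including the identifications $R_\infty=\bigcap_N\bigcup_n\{\|A^nx\|>N\}$ and $R_0=\bigcap_N\bigcup_n\{\|A^nx\|<1/N\}$. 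What the paper's route buys is brevity and reliance on the literature; what yours buys is self-containedness plus strictly more information: you show the irregular vectors form a residual set and that the scrambled set can be taken to be a ray through a single vector, which is essentially the content of the cited results of Feldman and Hou--Liao--Cao specialized to this situation.
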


\begin{proof}
In view of Corollary 3.7 in \cite{hou}, a unilateral backward weighed shift $A$
is Li-Yorke chaotic if and only if $A$ is sensitive.
By Proposition 2.2 in \cite{Feldman},   $A$ is Li-Yorke chaotic if and only if $\sup_n\|A^n\|=\infty$.
\end{proof}

  An operator $A\in\BH$ is called {\it topologically
mixing} (Definition 2.1, \cite{DynamicLinOper}), if for any nonempty open subsets $G_1$ and $G_2$ , there exists a positive integer
$n$ such that $A^m(G_1)\cap G_2\ne\emptyset$ for every $m\geq n$.

\begin{lem}[\cite{Costakis},Theorem 1.2]\label{L:Mixing}
Let $A\in\BH$ be a backward weighted shift with positive weights $\{\lambda_i\}_{i=1}^\infty$. Then $A$ is topologically mixing if and only if $$\lim_{n\to\infty} (\Pi_{i=1}^n\lambda_i)=\infty.$$
\end{lem}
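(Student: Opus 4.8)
The plan is to prove both implications directly, using the full–sequence mixing criterion for sufficiency and a first–coordinate estimate for necessity, rather than invoking \cite{Costakis} as a black box. Throughout, let $\{e_k\}_{k=1}^\infty$ be the orthonormal basis on which $Ae_{k+1}=\lambda_k e_k$ and $Ae_1=0$, let $X_0=\mathrm{span}\{e_k:k\geq 1\}$, which is dense in $\cH$, and let $S\colon X_0\to X_0$ be the linear right inverse determined by $Se_k=\lambda_k^{-1}e_{k+1}$, so that $AS=I$ on $X_0$ and hence $A^nS^n=I$ on $X_0$ for every $n$. A direct computation gives $\|S^ne_k\|=\bigl(\prod_{i=k}^{k+n-1}\lambda_i\bigr)^{-1}$, while $A^ne_k=0$ once $n\geq k$, and the first coordinate of $A^mx$ equals $\bigl(\prod_{i=1}^m\lambda_i\bigr)x_{m+1}$ for $x=\sum_k x_k e_k$.

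For the sufficiency, suppose $\lim_{n\to\infty}\prod_{i=1}^n\lambda_i=\infty$. I would verify the standard sufficient condition for topological mixing on the dense set $X_0$: (a) $A^nx\to 0$ for each $x\in X_0$, which is immediate since $A^n$ annihilates each $e_k$ for $n\geq k$; (b) $S^nx\to 0$ for each $x\in X_0$, for which it suffices to treat $x=e_k$, and since $\prod_{i=k}^{k+n-1}\lambda_i=\bigl(\prod_{i=1}^{k+n-1}\lambda_i\bigr)\big/\bigl(\prod_{i=1}^{k-1}\lambda_i\bigr)\to\infty$, we obtain $\|S^ne_k\|\to 0$; and (c) $AS=I$ on $X_0$. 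Given these, for nonempty open $U,V$ I pick $u\in U\cap X_0$ and $v\in V\cap X_0$ and set $z_n=u+S^nv$; then $z_n\to u$, so $z_n\in U$ for large $n$, while $A^nz_n=A^nu+v\to v$, so $A^nz_n\in V$ for large $n$. Thus $A^n(U)\cap V\neq\emptyset$ for all large $n$, i.e.\ $A$ is topologically mixing. The crucial point is that every limit is taken along the full sequence, which yields mixing rather than merely hypercyclicity along a subsequence.

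For the necessity I argue by contraposition. If $\prod_{i=1}^n\lambda_i\not\to\infty$, then there exist $M>0$ and a subsequence $\{n_j\}$ with $\prod_{i=1}^{n_j}\lambda_i\leq M$. Choose $\eps>0$ with $\eps M<\tfrac12$ and put $U=B(e_1,\eps)$ and $V=B(e_1,\tfrac12)$. For any $x=\sum_k x_k e_k\in U$ one has $|x_{m+1}|<\eps$, so the first coordinate estimate gives $|(A^{n_j}x)_1|=\bigl(\prod_{i=1}^{n_j}\lambda_i\bigr)|x_{n_j+1}|<\eps M<\tfrac12$; hence $A^{n_j}x\notin V$. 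Therefore $A^{n_j}(U)\cap V=\emptyset$ for every $j$, contradicting topological mixing and completing the proof.

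I expect the only delicate point to be the bookkeeping in the mixing criterion: one must keep $S$ inside the dense set $X_0$ so that $A^nS^n=I$ holds exactly, and insist that all three limits hold along the full sequence, so that one obtains mixing and not just hypercyclicity. The necessity direction, by contrast, reduces to an elementary coordinate estimate once the correct pair of balls is selected.
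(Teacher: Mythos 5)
Your proof is correct, but it necessarily takes a different route from the paper: the paper offers no proof of this lemma at all, quoting it verbatim as Theorem 1.2 of Costakis--Sambarino \cite{Costakis}, so your self-contained argument is new relative to the text. Your sufficiency direction is the standard full-sequence mixing criterion made concrete: with $X_0$ the finite linear span of the basis and $S$ the exact right inverse, the points $z_n=u+S^nv$ witness $A^n(U)\cap V\neq\emptyset$ for \emph{all} large $n$, precisely because all three limits ($A^nu\to 0$, $S^nv\to 0$, $A^nS^n=I$ on $X_0$) hold along the full sequence rather than a subsequence; the computations $\|S^ne_k\|=\bigl(\prod_{i=k}^{k+n-1}\lambda_i\bigr)^{-1}$ and $(A^mx)_1=\bigl(\prod_{i=1}^m\lambda_i\bigr)x_{m+1}$ are both right, and your indexing convention $Ae_{k+1}=\lambda_k e_k$, $Ae_1=0$ matches how the lemma is applied to $T^*$ in the paper. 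The necessity by contraposition is also sound: along a subsequence with $\prod_{i=1}^{n_j}\lambda_i\leq M$, the balls $U=B(e_1,\eps)$ and $V=B(e_1,\tfrac12)$ with $\eps M<\tfrac12$ give $A^{n_j}(U)\cap V=\emptyset$, which rules out mixing since mixing demands nonempty intersections for every sufficiently large exponent. As for what each approach buys: the paper's citation keeps Section \ref{S:Dynamic} short and defers to the literature, while your argument makes the dynamical toolkit self-contained and exposes the result as elementary --- essentially Salas' weighted-shift computation upgraded from ``some $n$'' (transitivity/hypercyclicity) to ``all large $n$'' (mixing).
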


\begin{proof}[Proof of Theorem \ref{T:Dynamics} (iii) \& (iv)]
(iii) Recall that  $R=\max\essran X_1$. By the proof of Lemma \ref{T:NormSpectral},
$\|T^n\|=\|X_1\|_\infty^n=R^n$ a.s.
If $R\leq 1$, then $\sup_n\|T^n\|\leq 1$ a.s. By Lemma \ref{L:Sensive}, $T$ is almost surely not Li-Yorke chaotic.
If $R> 1$, then $\sup_n\|T^n\|=\infty$ a.s. By Lemma \ref{L:Sensive}, $T$ is almost surely  Li-Yorke chaotic.
(iv) By the Fact in the proof of Theorem \ref{T:Dynamics} (ii) and Lemma \ref{L:Mixing}, the result is clear.
\end{proof}

%%%%%%%%%%%%%%%%%%%%%%%%%
\subsection{Chaoticity and frequent hypercyclicity}

Chaoticity and frequent hypercyclicity are two qualitative strengthenings of hypercyclicity.
 An operator $A\in\BH$ is said to be {\it chaotic} \cite{Devaney} if
\begin{enumerate}
\item[(i)] for each pair of nonempty open subsets
$G_1, G_2$ of $\cH$ there exists $n\in\bN$ such that $A^n(G_1) \cap G_2 \ne\emptyset$;
 \item[(ii)] $A$ has a dense set of periodic points $(x\in \cH$ is a periodic point of $A$ if $A^kx = x$
for some $k\geq 1)$;
 \item[(iii)] $A$ is sensitive.
\end{enumerate}

  We recall that the {\it lower density} of a set of natural numbers $\Lambda$ is defined by
$$\textup{dens}(\Lambda)= \liminf_{n\rightarrow\infty}\frac{\card(\Lambda \cap [1,n])}{n}.$$
An operator $A\in\BH$ is said to be {\it frequently hypercyclic} if there exists some vector $x\in\cH$
such that $\{k\in\bN: A^kx\in G\}$ has positive lower density for every nonempty open set $G\subset\cH$.

  If an operator $A$ is chaotic or frequently hypercyclic, then $A$ is always hypercyclic  \cite{DynamicLinOper}. Thus, by Proposition 1.17 in \cite{DynamicLinOper}, it is necessary that $\sigma_p(A^*)=\emptyset$. So when considering the chaoticity and the frequent hypercyclicity of backward weighted shifts, one needs only to consider   positive weights.

\begin{lem}[\cite{DynamicLinOper}, Theorem 6.12 or \cite{Erdmann}]\label{L:Chaotic}
If $A$ is a weighted shift with positive weights $\{\lambda_i\}_{i=1}^\infty$, then $A^*$ is chaotic if and only if $\sum_{n=1}^\infty(\lambda_1\cdots\lambda_n)^{-2}<\infty$.
\end{lem}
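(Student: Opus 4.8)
The plan is to reduce the chaoticity of $A^{*}$ to the $\ell^2$-size of its eigenvectors, along the lines of the eigenvector (Godefroy--Shapiro) method. Write $Ae_n=\lambda_n e_{n+1}$, so that $A^{*}$ is the backward weighted shift $A^{*}e_{n+1}=\lambda_n e_n$, $A^{*}e_1=0$. For $\mu\in\bC$ I would solve $A^{*}x=\mu x$ termwise: writing $x=\sum_n x_n e_n$ forces $\lambda_m x_{m+1}=\mu x_m$, which gives the unique (up to scalar) formal eigenvector
$$x^{(\mu)}=\sum_{n=0}^{\infty}\frac{\mu^{n}}{\lambda_1\cdots\lambda_n}\,e_{n+1},$$
with the empty product read as $1$. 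This lies in $\cH$ exactly when $\sum_n |\mu|^{2n}(\lambda_1\cdots\lambda_n)^{-2}<\infty$; in particular, for $|\mu|=1$ membership is precisely the stated condition $\sum_n(\lambda_1\cdots\lambda_n)^{-2}<\infty$. The computation also shows each eigenspace $\ker(A^{*}-\mu I)$ is at most one-dimensional.

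For sufficiency, assume $\sum_n(\lambda_1\cdots\lambda_n)^{-2}<\infty$. This forces $(\lambda_1\cdots\lambda_n)^{-2}\to0$, i.e. $\prod_{i=1}^{n}\lambda_i\to\infty$, so $\sup_n\prod_{i=1}^n\lambda_i=\infty$ and Lemma \ref{L:salas} yields that $A^{*}$ is hypercyclic. Since on a separable infinite-dimensional space a linear operator is Devaney chaotic exactly when it is hypercyclic and has a dense set of periodic points, it remains to produce dense periodic points. The periodic vectors are spanned by the $x^{(\mu)}$ with $\mu$ a root of unity, all of which now lie in $\cH$. To show their span is dense I would take $y=\sum_n y_n e_n\in\cH$ orthogonal to every such eigenvector and form $f(\mu)=\la x^{(\mu)},y\ra=\sum_n \frac{\mu^{n}}{\lambda_1\cdots\lambda_n}\overline{y_{n+1}}$; by Cauchy--Schwarz its coefficients are absolutely summable, so $f$ is continuous on $\overline{B(0,1)}$ and analytic on $B(0,1)$. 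It vanishes on the roots of unity, a dense subset of the unit circle, so $f\equiv0$ on the circle by continuity and then $f\equiv0$ by the maximum principle; reading off coefficients gives $y=0$. Hence periodic points are dense and $A^{*}$ is chaotic.

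For necessity, suppose $A^{*}$ is chaotic. Its periodic points are then dense, hence nonzero, so there is $x\neq0$ with $(A^{*})^{k}x=x$ for some $k\geq1$. Using the primary decomposition $\ker\big((A^{*})^{k}-I\big)=\bigoplus_{\mu^{k}=1}\ker(A^{*}-\mu I)$, the nonvanishing of $x$ forces $x^{(\omega^{j})}\in\cH$ for at least one $k$-th root of unity $\omega^{j}$; since $|\omega^{j}|=1$, this is exactly $\sum_n(\lambda_1\cdots\lambda_n)^{-2}<\infty$.

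The eigenvector computation and the reduction ``chaotic $\Leftrightarrow$ hypercyclic $+$ dense periodic points'' are routine. The one step needing genuine care is the density of the periodic eigenvectors: the main obstacle is to verify that $f$ is well defined and continuous up to the boundary, so that the boundary-vanishing plus maximum-modulus argument applies. This is precisely where the summability hypothesis enters, and it is what makes the equivalence sharp — if the series diverges there are no nonzero periodic points at all, so chaos is impossible.
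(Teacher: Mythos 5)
Your proof is correct, but there is an important caveat about the comparison: the paper gives \emph{no} proof of this lemma at all --- it is quoted verbatim from the literature (\cite{DynamicLinOper}, Theorem 6.12, or \cite{Erdmann}), so the only meaningful comparison is with the standard arguments in those sources. Measured against them, your route is a genuine alternative. The textbook proof works with periodic points directly: $(A^*)^N x = x$ forces the recurrence $x_{n+N} = x_n/(\lambda_n\cdots\lambda_{n+N-1})$, so a period-$N$ point is determined by its first $N$ coordinates, membership in $\ell^2$ is governed precisely by $\sum_n(\lambda_1\cdots\lambda_n)^{-2}$, and density of periodic points is obtained by ``periodizing'' finitely supported vectors; this argument has the advantage of working in general F-sequence spaces. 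Your proof instead goes through unimodular eigenvectors in Godefroy--Shapiro style: for sufficiency you invoke Lemma \ref{L:salas} for hypercyclicity and then prove density of the span of root-of-unity eigenvectors by the orthogonal-complement argument, where Cauchy--Schwarz (this is exactly where the summability enters) makes $f(\mu)=\la x^{(\mu)},y\ra$ a power series with absolutely summable coefficients, hence continuous on $\overline{B(0,1)}$, so that vanishing on the dense set of roots of unity plus the maximum principle forces $f\equiv 0$ and $y=0$; for necessity you use the algebraic splitting $\ker\big((A^{*})^{k}-I\big)=\oplus_{\mu^{k}=1}\ker(A^{*}-\mu I)$ to extract a unimodular eigenvector in $\cH$ from a single nonzero periodic point. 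All steps check out, including the reduction of Devaney chaos to ``hypercyclic $+$ dense periodic points'': this is consistent with the definition in Section \ref{S:Dynamic}, since transitivity is equivalent to hypercyclicity by Birkhoff's transitivity theorem and sensitivity is automatic (Banks et al.) once one has transitivity and dense periodic points. What your approach buys is an especially clean necessity direction and it exploits the Hilbert-space structure; what it gives up is the generality of the sequence-space argument.
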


\begin{lem}[\cite{Bayart}, Theorem 4]\label{L:FreHyp}
If $A$ is a weighted shift with positive weights $\{\lambda_i\}_{i=1}^\infty$, then $A^*$ is frequently hypercyclic
 if and only if $\sum_{n=1}^\infty$ $(\lambda_1\cdots\lambda_n)^{-2}$ $<\infty$.
\end{lem}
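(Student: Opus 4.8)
The plan is to transfer everything to the backward weighted shift $B:=A^*$, which acts by $Be_{i+1}=\lambda_ie_i$ and $Be_1=0$, and to split the equivalence into its two implications: the ``if'' part is soft, while the ``only if'' part carries all of the difficulty. Throughout I write $P_n=\lambda_1\cdots\lambda_n$ with $P_0=1$, so the hypothesis reads $\sum_{n\ge1}P_n^{-2}<\infty$. Note that $\la B^nx,e_j\ra=c_{n+j}\,P_{n+j-1}/P_{j-1}$ whenever $x=\sum_kc_ke_k$, a formula I will use in both directions.

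For sufficiency I would invoke the Frequent Hypercyclicity Criterion (\cite{DynamicLinOper}): it is enough to produce a dense set $X_0\subset\cH$ and a map $S\colon X_0\to X_0$ with $BS=\operatorname{id}$ on $X_0$ such that $\sum_{n\ge0}B^nx$ and $\sum_{n\ge0}S^nx$ converge unconditionally for every $x\in X_0$. Take $X_0=\operatorname{span}\{e_k:k\ge1\}$ and define $Se_k=\lambda_k^{-1}e_{k+1}$. Then $BSe_k=e_k$; the series $\sum_nB^ne_k$ is a finite sum, since $B^ne_k=0$ for $n\ge k$; and $S^ne_k=(P_{k-1}/P_{k+n-1})\,e_{k+n}$ is a sequence of pairwise orthogonal vectors, so its unconditional convergence is equivalent to $\sum_n\|S^ne_k\|^2=P_{k-1}^2\sum_{m\ge k-1}P_m^{-2}<\infty$, which holds as a tail of the convergent series. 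Hence $B$ is frequently hypercyclic. This direction I can carry out in full.

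For necessity I would start from a frequently hypercyclic vector $x=\sum_kc_ke_k\in\cH$ and read off the first coordinate. Since the orbit hits a neighborhood of $2e_1$ along a set $\Lambda\subset\bN$ with $\textup{dens}(\Lambda)>0$, the quantity $|c_{n+1}|P_n$ stays near $2$ for $n\in\Lambda$, so $P_n^{-2}\le 4|c_{n+1}|^2$ there; as $x\in\ell^2$ this yields $\sum_{n\in\Lambda}P_n^{-2}<\infty$.

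The main obstacle is precisely this gap: the elementary coordinate estimate controls $\sum P_n^{-2}$ only along a positive-lower-density subset $\Lambda$, not along all of $\bN$, and it therefore does not by itself forbid divergence of the full series. Upgrading it is the substantive content of \cite{Bayart}. One natural attempt I would pursue is a ``thickening'' of the targets: aiming simultaneously at $2(e_1+\cdots+e_N)$ controls $P_n^{-2},\dots,P_{n+N-1}^{-2}$ on the corresponding return set, so $\bigcup_{0\le i<N}(\Lambda_N+i)$ carries a convergent $P^{-2}$-sum, and letting $N$ grow one tries to exhaust $\bN$. The delicate point, and where I expect to spend the real effort, is bookkeeping the loss of lower density $\textup{dens}(\Lambda_N)$ as $N\to\infty$, since a single enlarged target cannot cover a full-density set. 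The conceptual reason the series condition is the right one is that it is exactly the condition for the unimodular eigenvectors $v_\theta=\sum_k e^{i(k-1)\theta}P_{k-1}^{-1}e_k$ (satisfying $Bv_\theta=e^{i\theta}v_\theta$) to lie in $\cH$; the cleanest route to close necessity is to show that for weighted shifts frequent hypercyclicity forces these eigenvectors into $\cH$ — equivalently, that $B$ is then chaotic, matching the same series condition in Lemma \ref{L:Chaotic} — using the Gaussian-measure and ergodicity machinery of Bayart–Grivaux as the bridge.
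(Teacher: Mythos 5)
First, a point of comparison you could not have seen: the paper contains no proof of this lemma. Like Lemma \ref{L:Chaotic}, it is imported verbatim from the literature --- the bracketed attribution to \cite{Bayart} is the entire justification --- and both implications are then consumed as a black box in the proof of Theorem \ref{T:Dynamics} (v) (convergence of the series when $\bE(\ln X_1)>0$ gives frequent hypercyclicity; divergence when $\bE(\ln X_1)\leq 0$ rules it out). Measured against the actual source, your sufficiency half is correct and complete: the Frequent Hypercyclicity Criterion applied with $X_0=\operatorname{span}\{e_k:k\geq 1\}$ and $Se_k=\lambda_k^{-1}e_{k+1}$, with orthogonality reducing unconditional convergence of $\sum_n S^ne_k$ to the tail estimate $P_{k-1}^2\sum_{m\geq k-1}P_m^{-2}<\infty$, is exactly the standard argument, and your coordinate formula $\la B^nx,e_j\ra=c_{n+j}P_{n+j-1}/P_{j-1}$ is correct.

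The necessity half, however, has a genuine gap, which you candidly flag, and neither of your proposed repairs would close it. The thickening scheme controls $\sum_{n\in\Lambda_N}\sum_{j=1}^{N}P_{n+j-1}^{-2}$ for each fixed $N$, but the return sets $\Lambda_N$ have lower densities tending to $0$, and nothing forces the translates $\bigcup_{0\leq i<N}(\Lambda_N+i)$ to exhaust $\bN$; what \cite{Bayart} actually does at this point is an additive-combinatorial analysis of difference sets of positive-lower-density sets (hence the title of that paper), which no density bookkeeping of the above kind reproduces. Your ``cleanest route'' is moreover backwards: the Bayart--Grivaux Gaussian machinery runs from a perfectly spanning family of unimodular eigenvectors \emph{to} frequent hypercyclicity, not conversely, and frequent hypercyclicity does not in general force the eigenvectors $v_\theta$ into the space. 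Indeed, a byproduct of \cite{Bayart} is a frequently hypercyclic weighted backward shift on $c_0$ that is not chaotic; for a backward shift, $v_\theta$ lies in the underlying sequence space if and only if essentially the same weight condition as chaoticity holds, so that example has no unimodular eigenvectors at all. On $\ell^2$ the implication ``frequently hypercyclic $\Rightarrow$ chaotic'' for weighted shifts is precisely the content of the theorem you are trying to prove, so deducing necessity from it would be circular; the honest status of your necessity direction is an appeal to \cite{Bayart}, which is exactly what the paper itself does.
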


\begin{proof}[Proof of Theorem \ref{T:Dynamics} (v)]
In view of Claim in the proof of Lemma \ref{P:PointSpect}, Lemma \ref{L:Chaotic} and Lemma \ref{L:FreHyp}, the result is clear.
\end{proof}

%%%%%%%%%%%%%%%%%%%%%%%%%%%%%%%%%%%%%%%%%%%%

\section{Regularization of weights}\label{S:Aluthge}

  Since the weights of the random model oscillates, in this (short) section we study a way to regularize them.
 The Aluthge transform of an operator originally arose in the study for $p$-hyponormal operators \cite{Aluthge,Aluth96}.
For $A\in\BH$ with polar decomposition $A=U|A|$, the Aluthge transform of $A$ is $\Delta(A)=|A|^{1/2}U|A|^{1/2}$.
For a weighted shift, the Aluthge transform is still a weighted shift, but with more regular weights.
Jung-Ko-Pearcy \cite{Jung} proved that an operator $A$ has a nontrivial
invariant subspace if and only if $\Delta(A)$ does.
 We write $\Delta^n(A)$ for the  iterated Aluthge transform.
 For each $A\in\BH$, Jung-Ko-Pearcy  \cite{Jung3} conjectured that the sequence of iterates $\{\Delta^n(A)\}_{n=1}^\infty$ converges in the strong operator topology (\sot). Antezana-Pujals-Stojanoff \cite{Antezana} proved that
the iterated Aluthge transforms of a matrix always converge. In \cite{Jung2}, Ch\={o}-Jung-Lee gave an example based on a weighted shift where the sequence of iterates does not converge
even with respect to the weak operator topology.

\begin{thm}\label{T:AluthgeTrans}
Let  $T\sim \{X_n\}_{n=1}^\infty$.
Then almost surely
\begin{equation*}\Delta^n(T)\overset{\sot}{\longrightarrow} e^{\bE(\ln X_1)}S\qquad \text{as} \quad n\rightarrow\infty,\end{equation*}
where $S$ is the unilateral (unweighted) shift. Moreover, the above convergence does not hold in the norm topology except that $ X_1$ is degenerate.
\end{thm}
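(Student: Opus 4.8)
The plan is to compute the Aluthge transform explicitly on weighted shifts and then pass to the limit entrywise. First I would recall the standard fact that if $A$ is a unilateral weighted shift with nonnegative weights $\{\lambda_n\}_{n\geq 1}$, then its polar decomposition $A=U|A|$ has $U$ the unweighted shift and $|A|=\mathrm{diag}\{\lambda_1,\lambda_2,\dots\}$, so that $\Delta(A)$ is again a unilateral weighted shift with weights $\{\sqrt{\lambda_n\lambda_{n+1}}\}_{n\geq 1}$. Iterating, $\Delta^k(A)$ is the weighted shift whose $n$-th weight is the geometric-mean-type product
\begin{equation*}
\lambda_n^{(k)}=\prod_{j=0}^{k}\lambda_{n+j}^{\,\binom{k}{j}/2^{k}}.
\end{equation*}
Applying this to $T\sim\{X_n\}$, the $n$-th weight of $\Delta^k(T)$ is $\prod_{j=0}^{k}X_{n+j}^{\binom{k}{j}/2^{k}}$. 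Taking logarithms, $\ln\lambda_n^{(k)}=\sum_{j=0}^{k}2^{-k}\binom{k}{j}\ln X_{n+j}$, which is exactly the expectation of $\ln X_{n+B_k}$ where $B_k$ is a $\mathrm{Binomial}(k,1/2)$ variable independent of the $X$'s.

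The key step is to show that $\lambda_n^{(k)}\to e^{\bE(\ln X_1)}$ almost surely as $k\to\infty$, for each fixed $n$. Here I would fix a sample $\omega$ with $0\notin\{X_i(\omega)\}$ (which has full measure when $\bP(X_1=0)=0$; the degenerate and $\bP(X_1=0)>0$ cases are handled separately, the former being trivial since all weights equal a constant) and work with the real sequence $y_m=\ln X_m(\omega)$. The quantity $\ln\lambda_n^{(k)}=2^{-k}\sum_{j=0}^{k}\binom{k}{j}y_{n+j}$ is a binomial (Bernstein-polynomial) average of the $y_m$'s. Since $\{X_m\}$ are i.i.d. bounded and bounded away from $0$ a.s. on this sample set, $\{y_m\}$ is a bounded sequence whose Cesàro and empirical averages converge to $\bE(\ln X_1)$ by the strong law of large numbers. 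The concentration of the $\mathrm{Binomial}(k,1/2)$ mass near $k/2$ with spread $O(\sqrt{k})$ means the binomial average samples $y_m$ over a window of width $O(\sqrt k)$ centered at $n+k/2$; combining the a.s. convergence of block averages of $\{y_m\}$ with the vanishing weight $\max_j 2^{-k}\binom{k}{j}=O(k^{-1/2})$ on any individual term, I would conclude $\ln\lambda_n^{(k)}\to\bE(\ln X_1)$ a.s. This gives $\lambda_n^{(k)}\to e^{\bE(\ln X_1)}$ a.s. for every $n$, hence, since weighted shifts with convergent weight sequences converge \textsc{sot} to the corresponding limiting weighted shift (weights being uniformly bounded), $\Delta^k(T)\overset{\sot}{\longrightarrow}e^{\bE(\ln X_1)}S$ a.s.

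For the failure of norm convergence in the non-degenerate case, I would argue that $\|\Delta^k(T)-e^{\bE(\ln X_1)}S\|=\sup_n|\lambda_n^{(k)}-e^{\bE(\ln X_1)}|$ stays bounded away from $0$. Even though each fixed-index weight converges, one can find, for each $k$, some index $n$ where the binomial window falls on an atypically large block of weights (close to $R=\max\essran X_1$); by Lemma \ref{L:constantCase}-type arguments, $\limsup_n X_n=R>e^{\bE(\ln X_1)}$ a.s., and the persistence of long runs near $R$ forces $\sup_n\lambda_n^{(k)}$ to stay near $R$ for all $k$, so the supremum norm does not shrink. The main obstacle I anticipate is precisely this interchange of limits in the supremum: proving the pointwise-in-$n$ convergence is a clean law-of-large-numbers computation, but controlling $\sup_n$ uniformly in $k$ — both to confirm \textsc{sot} (via uniform boundedness, which is easy) and to rule out norm convergence (which requires locating bad indices for every $k$) — is where the real probabilistic work lies, and I expect it to rely on the four-radii separation $r\le r_0<R$ recorded in \eqref{E:fourradii}.
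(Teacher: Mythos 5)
Your overall strategy is the same as the paper's: the explicit weight formula $\lambda_n^{(k)}=\prod_{j=0}^{k}X_{n+j}^{\binom{k}{j}/2^{k}}$ for $\Delta^k(T)$, pointwise a.s.\ convergence of each weight to $e^{\bE(\ln X_1)}$, passage from pointwise weight convergence plus uniform boundedness to \sot-convergence, and, for the failure of norm convergence, the observation that $\|\Delta^k(T)\|$ stays equal to $R=\|X_1\|_\infty$ a.s.\ for every $k$ while the limit operator has strictly smaller norm $e^{\bE(\ln X_1)}$. (The paper gets $\|\Delta^k(T)\|=\|X_1\|_\infty$ a.s.\ by noting that the weights of $\Delta^k(T)$ along an arithmetic progression of step $k+1$ are i.i.d.\ with essential supremum $\|X_1\|_\infty$, via Lemma \ref{L:iidRV}; your route through Lemma \ref{L:constantCase} --- long runs of weights near $R$ recur a.s.\ --- gives the same conclusion and is fine.)

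The gap is in your key probabilistic step, in two respects. First, the assertion that on the full-measure event $\{X_i>0\ \forall i\}$ the weights are ``bounded away from $0$'' is false: when $0\in\essran X_1$ but $\bP(X_1=0)=0$, one has $\liminf_i X_i=r=0$ a.s., so $y_m=\ln X_m$ is unbounded below (and $\bE(\ln X_1)$ may even be $-\infty$, in which case the asserted limit is $0$). Your concentration/window argument therefore covers only the case $r>0$; the paper instead quotes a theorem of Stout \cite{Stout1968} on a.s.\ convergence of weighted sums of i.i.d.\ variables, which is the tool designed for exactly this step. Second, even when $r>0$, a.s.\ convergence of block averages over windows of width $O(\sqrt{k})$ centered at $k/2$ does \emph{not} follow from the SLLN, which is what you cite: writing such a block average as a difference of two initial-segment averages multiplies the unquantified SLLN error by $N/m\sim\sqrt{k}$, and even the LIL only bounds the block deviation by $O(\sqrt{\log\log k})$, which diverges. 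What makes the step true is concentration plus Borel--Cantelli; the cleanest repair is to skip the windows entirely and apply Hoeffding's inequality directly to the weighted sum: since $\sum_{j}\bigl(\binom{k}{j}/2^{k}\bigr)^{2}=\binom{2k}{k}4^{-k}=O(k^{-1/2})$, the probability that $\ln\lambda_n^{(k)}$ deviates from its mean $\bE(\ln X_1)$ by more than $\eps$ is at most $2\exp(-c\eps^{2}\sqrt{k})$, which is summable in $k$. With that substitution (and an actual argument for the case $\bP(X_1=0)>0$, which the paper settles by writing $T$ as a direct sum of nilpotent truncated shifts), your proof goes through for $r>0$, but the case $r=0$, $\bP(X_1=0)=0$ remains uncovered in your approach.
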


%\begin{thm}\label{T:AluthgeTrans}
%Suppose that  $T\sim \{X_n\}_{n=1}^\infty$.
%Then almost surely
%\begin{equation*}\Delta^n(T)\overset{\sot}{\longrightarrow}\begin{cases}
%e^{\bE(\ln X_1)}S,&  \bP(X_1=0)=0,\\
%0,&  \bP(X_1=0)>0
% \end{cases}\qquad \textup{as} \quad n\rightarrow\infty,\end{equation*}
%where $S$ is the unilateral (unweighted) shift. Moreover, the above convergence holds in the norm topology if and only if $ X_1$ is degenerate.
%\end{thm}

%  We need a lemma. ???
%
%\begin{lem}\label{L:Binomial}
%Let $\{X_n\}$ be bounded, \iid random variables.
%Then we have almost surely
%\[\frac{\sum_{k=0}^n C_{n}^k X_{k+1}}{2^n}\longrightarrow \bE(X_1)\qquad \textup{as}\quad n\rightarrow\infty,\]
%where $C_n^k=\frac{n!}{k!(n-k)!}$.\end{lem}
%
%\begin{cor}
%Let $\{X_n\}$ be \iid random variables with $\bE(X_1)=-\infty$.
%Then almost surely
%\[\frac{\sum_{k=0}^n C_{n}^k X_{k+1}}{2^n}\longrightarrow -\infty\qquad \textup{as}\quad n\rightarrow\infty.\]
%\end{cor}

\begin{proof}
Let $A$ be a unilateral weighted shift with weights $\{\lambda_i\}_{i=1}^\infty$.
It is easy to check that the Aluthge transform of $A$ is still a weighted shift, whose weights are
$\{\sqrt{\lambda_i\lambda_{i+1}}\}_{i=1}^\infty$. In particular, $\Delta^n(T)$ is a random weighted shift with weights $
Y_k(n):=\left(\prod_{i=0}^{n}X_{k+i}^{C_n^i}\right)^{1/2^n}.$
By Lemma \ref{L:iidRV}, $\|Y_k(n)\|_\infty=\|X_1\|_\infty.$
Since $\{Y_{1+s(n+1)}(n)\}_{s=0}^\infty$ are \iid, it follows from the Fact in the proof of Lemma \ref{L:quasiDiagonal}  that
$\|\Delta^n(T)\|=\|X_1\|_\infty$ a.s.
If $\bP(X_1=0)>0$, then by Lemma \ref{P:PointSpect}, $T$ is almost surely a direct sum of countably many truncated weighted shifts.
It is easy to check that the iterated Aluthge transforms of a truncated weighted shift always converges to $0$ in the norm topology.
Thus,  $\Delta^n(T)\overset{\sot}{\longrightarrow} 0=e^{\bE(\ln X_1)}S$ a.s.
Since $\|\Delta^n(T)\|=\|X_1\|_\infty$ a.s. for each $n\geq 1$, we have $\Delta^n(T)\overset{\|\cdot\|}{\longrightarrow} 0$ a.s. if and only if $\|X_1\|_\infty=0$ or equivalently $X_1$ is degenerate.
 We assume $\bP(X_1=0)=0$. Fix  $k\geq 1$.
By Theorem 2 in  \cite{Stout1968},
$\ln Y_k(n)=\frac{\sum_{i=0}^{n} C_n^i \ln X_{k+i} }{2^n}\longrightarrow \bE(\ln X_1)$ a.s. It follows that $\{Y_k(n)\}_{n=1}^\infty$ converges to $ e^{\bE(\ln X_1)}$ a.s. For all $k$ and $n$, since $Y_k(n)\leq\|X_1\|_\infty$ a.s., so almost surely,
\begin{equation}\label{(8.1)}
\Delta^n(T)\overset{\sot}{\longrightarrow} e^{\bE(\ln X_1)}S\qquad \textup{as}\quad n\rightarrow\infty. \end{equation}
\noindent
If $ X_1$ is non-degenerate, then $\bE(\ln X_1)<\ln(\|X_1\|_\infty)$ and for each $n \ge 1$,
$$\|e^{\bE(\ln X_1)} S\|=e^{\bE(\ln X_1)}<\|X_1\|_\infty=\|\Delta^n(T)\|\quad a.s.$$
%
%
%  {\it Case 2.} $\bP(X_1=0)>0$.
Now we may conclude the proof.
\end{proof}

\section*{Acknowledgements}

G. Cheng is supported by NSFC (11871482).  X. Fang is supported by MOST
of Taiwan (106-2115-M-008-001-MY2) and NSFC (11571248) during his visits
to Soochow University in China. S. Zhu is supported by NSFC (11671167). Part of this work was done during Zhu's visit to NCU of Taiwan in the Fall semester of 2016.

\end{document}